\theoremstyle{definition}
\newtheorem{theorem}{Theorem}[subsection]
\newtheorem{theoremletters}{Theorem}
\theoremstyle{definition}
\newtheorem{lemma}[theorem]{Lemma}
\theoremstyle{definition}
\newtheorem{corollary}[theorem]{Corollary}
\theoremstyle{definition}
\newtheorem{proposition}[theorem]{Proposition}
\theoremstyle{definition}
\theoremstyle{definition}
\newtheorem{remark}[theorem]{Remark}
\theoremstyle{definition}
\newtheorem{example}[theorem]{Example}
\theoremstyle{definition}
\newtheorem{question}[theorem]{Question}
\theoremstyle{definition}
\newtheorem{definition}[theorem]{Definition}
\numberwithin{equation}{section}
\numberwithin{equation}{section}
\theoremstyle{definition}
\newtheorem{warning}[theorem]{Warning}
\theoremstyle{definition}
\newtheorem{notation}[theorem]{Notation}
\theoremstyle{definition}
\theoremstyle{definition}
\newtheorem{claim}[theorem]{Claim}
\theoremstyle{definition}
\newtheorem{digression}[theorem]{Digression}
\theoremstyle{definition}
\DeclareMathOperator{\supp}{supp}
\DeclareMathOperator*{\esssup}{ess\,sup}
\newcommand{\norm}[1]{\left\lVert#1\right\rVert}
\newcommand{\vertiii}[1]{{\left\vert\kern-0.25ex\left\vert\kern-0.25ex\left\vert #1 
		\right\vert\kern-0.25ex\right\vert\kern-0.25ex\right\vert}}
\newcommand{\abs}[1]{\left\lvert#1\right\rvert}
\newcommand{\E}{{\bf E}}
\newcommand\xrowht[2][0]{\addstackgap[.5\dimexpr#2\relax]{\vphantom{#1}}}
\def\author@andify{
	\nxandlist {\unskip ,\penalty-1 \space\ignorespaces}
	{\unskip {} \@@and~}
	{\unskip \penalty-2 \space \@@and~}
}
\newcommand\xqed[1]{%
	\leavevmode\unskip\penalty9999 \hbox{}\nobreak\hfill
	\quad\hbox{#1}}
\newcommand\demo{\xqed{{\Large $\blacktriangle$}}}
\def\l@subsection{\@tocline{2}{0pt}{1pc}{5pc}{}} \def\l@subsection{\@tocline{2}{0pt}{2pc}{6pc}{}}
\begin{document}
	\title{Arithmetic, interpolation and factorization of amalgams}
	
	\author{Tomasz Kiwerski}
	\address[Tomasz Kiwerski]{Pozna\'{n} University of Technology, Institute of Mathematics, Piotrowo 3A, 60-965 Pozna\'{n}, Poland}
	\email{\href{mailto:tomasz.kiwerski@gmail.com}{\tt tomasz.kiwerski@gmail.com}}

	\author{Jakub Tomaszewski}
	\address[Jakub Tomaszewski]{Pozna\'{n} University of Technology, Institute of Mathematics, Piotrowo 3A, 60-965 Pozna\'{n}, Poland}
	\email{\href{mailto:tomaszewskijakub@protonmail.com}{\tt tomaszewskijakub@protonmail.com}}
	
	\maketitle
	
	\begin{center}
		{\it Dla Aleksandra}
	\end{center}
	
	\begin{abstract}
		Building upon Bennett's and Grosse-Erdmann's ideas falling under the conceptual umbrella of factorization of inequalities, we propose
		a unified approach towards the structure of certain Banach ideal spaces defined in terms of the least decreasing majorant.
		The key to our results, and, it seems, the main novelty in general, is the synthesis of discretization process,
		usually called the blocking technique, along with some tools from the interpolation theory.
		This blend allows us to obtain an abstract versions of several remarkable results proposed by Bennett and to show certain phenomena
		in new, somehow more complete perspective.
		Furthermore, with the help of technology we have developed, we re-prove and sometimes also improve many more recent results
		belonging to this circle of ideas.
	\end{abstract}
	
	\tableofcontents
	
	 \footnotetext[0]{
		{\it Date:} \longdate{\today}.
		
		2020 \textit{Mathematics Subject Classification}. Primary: 46E30; Secondary: 46B03, 46B20, 46B42.
		
		\textit{Key words and phrases}. blocking technique; decreasing majorant; factorization; Hardy's inequality; interpolation.
	}

	\section{{\bf Introduction}} \label{SECTION: Introduction}
	
	\subsection{Motivation and goal}
	
	Throughout the following we fix a Banach ideal space $X$.
	The main object of our studies will be the space $\widetilde{X}$, which is understood as a vector space of all measurable functions $f$
	such that their {\it least decreasing majorant} $\widetilde{f}$ belongs to $X$, furnished with the norm
	$\norm{f}_{\widetilde{X}} \coloneqq \Vert \widetilde{f} \Vert_X$.
	Here, $\widetilde{f}$ can be explicitly expressed as $\widetilde{f}(x) = \esssup_{t \geqslant x} \abs{f(t)}$.
	
	Dual object to $\widetilde{X}$ can be identified with the so-called {\it down space} $(X^{\times})^{\downarrow}$, where $X^{\times}$ is a (K{\" o}the) dual of $X$.
	Following Gord Sinnamon, these spaces are defined by the norm $\norm{f}_{X^{\downarrow}} \coloneqq \sup \int \abs{f(t)}g(t)dt$, where the supremum
	is taken over all positive and decreasing functions $g$ from the unit ball in $X^{\times}$ (see \cite{Si94} and \cite{HS23}; cf. \cite{LM15a} and \cite{Si07}).
	This construction obviously resembles the K{\" o}the dual construction but restricted to the cone of decreasing functions.
	There are many reasons why it should be considered natural and, most likely, also important. For example,
	$\bullet$ it leads to an improved version of the H{\" o}lder--Rogers inequality (this idea goes back to Israel Halperin and his {\it $D$-type H{\" o}lder inequality}
	with the {\it exact} norm given in terms of the level functions; see \cite{Hal53}; cf. \cite{Lor53});
	$\bullet$ can be used to describe the duals of Lorentz spaces $\Lambda_{p,w}^{\times}$ and $\Gamma_{p,w}^{\times}$ (see \cite{Hal53}, \cite{Lor53} and \cite{KM07}; cf. \cite{Ste93});
	$\bullet$ gives a general variant of {\it Sawyer's duality} theorem which can be seen as a variant of $D$-type H{\" o}lder inequality
	(the idea is that the weighted inequalities for positive functions are {\it equivalent} to the corresponding inequalities for decreasing functions;
	see \cite{GP00}, \cite{GHS96}, \cite{KM07}, \cite{PRP22} and \cite{Saw90});
	$\bullet$ supports the developments of {\it level functions} (see \cite{Si91}, \cite{Si94}, \cite{Si01} and \cite{Si07}; see also \cite{FLM16} where a comparison
	between Halperin's and Sinnamon's approach to the construction of level function is offered);
	$\bullet$ is applied to prove the boundedness of Hardy's operator and Fourier transform on weighted $L_p$ spaces (also on weighted Lorentz spaces;
	see, for example, \cite{Saw90} and \cite{Ste93}).
	
	In many situations, but not always, the space $X^{\downarrow}$ can be equipped with an equivalent norm given as
	$\norm{f}_{X^{\downarrow}} \approx \norm{x \rightsquigarrow \frac{1}{x}\int_0^x \abs{f(t)}dt}_X$ (see \cite{Si07}; cf. \cite{KMS07}).
	From this perspective, the space $X^{\downarrow}$ can be viewed as the {\it optimal domain} for Hardy's operator
	$\mathscr{H} \colon f \rightsquigarrow \left[ x \rightsquigarrow \frac{1}{x}\int_0^x f(t)dt \right]$,
	that is, as the largest Banach ideal space with the property that $\mathscr{H}$ is still bounded when acting into $X$.
	By no means is this an isolated phenomenon. For example, $\bullet$ real interpolation spaces; $\bullet$ extrapolation spaces;
	$\bullet$ Hardy spaces; $\bullet$ tent spaces; $\bullet$ Bochner spaces; and $\bullet$ Besov spaces all fall into the same pattern
	- they can be seen as the optimal domains for {\it some} positive and sublinear operators
	(studies on the structure of these spaces can be found, for example, in \cite{Ast13} and \cite{Mas91}; see also the monograph \cite{ORS08} and their references).
	Typically, the study of optimal domains $[T,X]$, where $T$ is a positive and sublinear operator, involves their representation as $L_1(\nu_X)$
	for some vector measure $\nu_X \colon \Sigma \rightarrow X$ associated with the operator $T$.
	This is not always possible, but in many situations leads to interesting results
	(see, for example, \cite{CR16}, \cite{DS07} and \cite{NP11}; cf. \cite{ORS08}). It should be mentioned that in many places in the literature
	(this paper is among them), the optimal domains $[\mathscr{H},X]$ are denoted $\mathscr{C}X$ and called {\it Ces{\' a}ro spaces}
	(see, for example, \cite{ALM19}, \cite{CR16}, \cite{KLM19}, \cite{Les15}, \cite{LM14}, \cite{LM15a} and \cite{LM16}).
	
	After roughly setting the scene, let us try to explain the context for what we are going to do here.
	The following result from Bennett's memoir \cite{Be96} should be considered as the starting point for our work: for $p > 1$, we have
	\begin{equation} \label{INTRODUCTION: Bennett factorization of H}
		\ell_p \odot g_q = ces_p.
	\end{equation}
	Here,
	\begin{equation*}
		\ell_p \odot g_q = \left\{ x = yz \colon y \in \ell_p \text{ and } z \in g_q \right\}
	\end{equation*}
	with the (quasi-)norm
	\begin{equation*}
		\norm{x}_{\ell_p \odot g_q} = \inf \left\{ \norm{y}_{\ell_p} \norm{z}_{g_q} \colon x = yz,\, y \in \ell_p \text{ and } z \in g_q \right\};
	\end{equation*}
	the space $ces_p$ is just the optimal domain $\mathscr{C}\ell_p$, that is,
	\begin{equation} \label{INTRODUCTION: norm cesp}
		ces_p = \left\{ x = \{x_n\}_{n=1}^{\infty} \colon \left\{ \frac{1}{n} \sum_{k=1}^n \abs{x_n} \right\}_{n=1}^{\infty} \in \ell_p \right\},
			\quad \norm{x}_{ces_p} = \norm{\left\{ \frac{1}{n} \sum_{k=1}^n \abs{x_n} \right\}_{n=1}^{\infty}}_{\ell_p},
	\end{equation}
	\begin{equation*}
		g_q = \left\{ x = \{x_n\}_{n=1}^{\infty} \colon \left\{ \frac{1}{n} \sum_{k=1}^n \abs{x_n}^q \right\}_{n=1}^{\infty} \in \ell_{\infty} \right\},
			\quad \norm{x}_{g_q} = \sup_{n \in \mathbb{N}} \left( \frac{1}{n} \sum_{k=1}^n \abs{x_k}^q \right)^{1/q};
	\end{equation*}
	and, finally, $q$ is the conjugate exponent to $p$, that is, $1/p + 1/q = 1$.
	It is not a mere coincidence that \eqref{INTRODUCTION: Bennett factorization of H} is usually referred to as the {\it factorization of Hardy's inequality}.
	In fact, it is straightforward to see that since the constant sequence ${\bf 1} = (1,1,...)$ belongs to $g_q$, so
	\begin{equation} \label{INTRODUCTION: H ineq}
		\ell_p = \ell_p \odot \ell_{\infty} \hookrightarrow ces_p.
	\end{equation}
	The above inclusion \eqref{INTRODUCTION: H ineq} is noting else, but a {\it qualitative version of Hardy's inequality},
	which is typically presented as
	\begin{equation*}
		\left[ \sum_{n=1}^{\infty} \left( \frac{1}{n} \sum_{k=1}^n \abs{x_k} \right)^p \right]^{1/p} \leqslant C_p \left( \sum_{k=1}^n \abs{x_k}^p \right)^{1/p}
	\end{equation*}
	for $p > 1$ and some constant $C_p > 0$ (see \cite{Har25}; cf. \cite{HLP52} and \cite{KMP07}).
	Again, this is not an isolated situation and many celebrated inequalities like, for example, Hilbert's inequality or Copson's inequality,
	behave in a similar way (briefly, this idea and its consequences are the subject of \cite{Be96}).
	More recently, factorization \eqref{INTRODUCTION: Bennett factorization of H} has been extended to $L_p$ spaces and later even to weighted
	$L_p$ spaces (see \cite{AM09} and, respectively, \cite{BMM18}; cf. \cite{Lei98}).
	
	In retrospect, factorization of Hardy's inequality \eqref{INTRODUCTION: Bennett factorization of H} fits into the much broader topic of
	{\it factorization of \enquote{function spaces}} such as
	$\bullet$ Hardy spaces \cite{CRW76};
	$\bullet$ Bergman spaces \cite{Hor77} and \cite{PZ15};
	$\bullet$ tent spaces \cite{CV00}; and
	$\bullet$ Banach function spaces \cite{Gil81}, \cite{JR76}, \cite{Rei81} and \cite{Sch10}
	(like, Lorentz spaces, Marcinkiewicz spaces and Orlicz spaces; see \cite{CS17}, \cite{KLM12}, \cite{KLM14}, \cite{LT17} and \cite{LT21}).
	From this perspective, the space $ces_p$ can not only be decomposed into the pointwise product \eqref{INTRODUCTION: Bennett factorization of H} but,
	more importantly, a rather {\it ad-hoc} construction of the space $g_q$ can be now explained by introducing the space $M(\ell_p,ces_p)$. Simply, $g_q = M(\ell_p,ces_p)$.
	Here, $M(\ell_p,ces_p)$ is the space of {\it pointwise multipliers} between $\ell_p$ and $ces_p$, that is,
	\begin{equation*}
		M(\ell_p,ces_p) = \left\{ x \colon xy \in ces_p \text{ for all } y \in \ell_p \right\},
			\quad \norm{x}_{M(\ell_p,ces_p)} = \sup_{\norm{y}_{\ell_p} \leqslant 1} \norm{xy}_{ces_p}.
	\end{equation*}
	Now, even though the factorization
	\begin{equation} \label{INTRODUCTION: lp o M(lp,cesp) = cesp}
		\ell_p \odot M(\ell_p,ces_p) = ces_p
	\end{equation}
	follows from some general facts (for instance, it is enough to know that the space $ces_p$ is strictly $p$-concave... and it is;
	see \cite[pp.~316--318]{Sch10} and \cite[Example~6.5(c)]{KKM21}), they say nothing about an explicit description of the space
	$M(\ell_p,ces_p)$. Moreover, accepting that factorization \eqref{INTRODUCTION: H ineq} is important, the question whether
	\begin{equation*}
		ces_p \odot M(ces_p,ces_q) = ces_q \quad \text{ and } \quad ces_p \odot M(ces_p, \widetilde{\ell_q}) = \widetilde{\ell_q},
	\end{equation*}
	lurks on the horizon.
	Here things are even worse, because {\it a priori} we do not know if the above factorizations hold.
	Therefore, in some sense, we are forced to compute the spaces of multipliers $M(ces_p,ces_q)$ and $M(ces_p, \widetilde{\ell_q})$.
	This, however, is not an easy task (an instructive example showing how complicated this can be is the description of the space
	$M(ces_p,\ell_1) = ces_p^{\times}$ proposed by Jagers \cite{Ja74} or a characterization of pointwise multipliers between the weighted
	variants of $Ces_p$ spaces given by Gogatishvili, Pick and {\" U}nver \cite{GPU22}).
	
	A remedy (as long as we feel good about the results modulo some constants, and that is the case here) is the {\it blocking technique}.
	It turns out, and this is one of the most important lessons from Grosse-Erdmann's book \cite{GE98},
	that on both spaces $ces_p$ and $\widetilde{\ell_p}$ one can introduce an equivalent norms
	\begin{equation} \label{INTRODUCTION: blocking technique}
		\norm{x}_{ces_p} \approx \left( \sum_{j=0}^{\infty} 2^{j(1-p)} \left[ \sum_{k = 2^j}^{2^{j+1}-1} \abs{x_k} \right]^p \right)^{1/p},
			\quad
		\norm{x}_{{\widetilde{\ell_p}}} \approx \left( \sum_{j=0}^{\infty} 2^{-j} \left[ \sup_{k = 2^j, 2^j+1, ..., 2^{j+1}-1} \abs{x_k} \right]^p \right)^{1/p}.
	\end{equation}
	In other words, up to some weights, we have
	\begin{equation} \label{INTRODUCTION: amalgams}
		ces_p = \Bigl( \bigoplus_{n=0}^{\infty} \ell_{1}^{2^n} \Bigr)_{\ell_p}
			\quad \text{ and } \quad
		\widetilde{\ell_p} = \Bigl( \bigoplus_{n=0}^{\infty} \ell_{\infty}^{2^n} \Bigr)_{\ell_p}.
	\end{equation}
	The above norms \eqref{INTRODUCTION: blocking technique} are usually referred to as norms in the {\it block form}, while the process
	of converting \eqref{INTRODUCTION: norm cesp} into \eqref{INTRODUCTION: blocking technique} (or {\it vice versa}) is called the {\it blocking technique}.
	Moreover, since we prefer to look at direct sums \eqref{INTRODUCTION: amalgams} as sequence spaces, we will call them {\it amalgams}.
	The point is that many operations that are difficult to do on $ces_p$ are easy (or at least easier) to do on $\bigl( \bigoplus_{n=0}^{\infty} \ell_{1}^{2^n} \bigr)_{\ell_p}$.
	
	In the outlined context, it is quite natural to ask the following
	
	\begin{question}
		{\it What does the blocking technique have to offer in the general context, that is, when the $\ell_p$ space is replaced
		by a Banach sequence space and, analogously, the $L_p$ space is replaced by a Banach function space?
		What type of factorization results can be obtained in this way?
		What are the consequences of this approach to the interpolation structure of these spaces?}
	\end{question}
	
	A rough goal of this paper is to propose an answer to the above question.
	To put it briefly, just as the blocking technique itself was successfully used by Grosse-Erdmann in \cite{GE98} to reproduce and,
	in some sense, explain a significant part of Bennett's memoir \cite{Be96}, our main motivation here is to create a unified theory
	that will allow us to do essentially the same with a large body of more recent results obtained by various authors.
	Moreover, but this is merely our pious wish, we would like to convince the curious reader that the blocking technique approach to the
	structure of these spaces is the \enquote{right} one.
	
	As far as we know, this paper is the first attempt to gather all this ideas \enquote{under one roof}.
	
	\subsection{Results}
	Here we want to present our main results in more detail. To avoid unnecessary complications, we will usually formulate
	them in the less generality and for non-atomic measure spaces only.
	
	At the heart of our work is Section~\ref{SECTION: BF representations}.
	Our main result reads as follows (see Theorem~\ref{Thm: Tandori sequence representation}, Theorem~\ref{Thm: Tandori function representation}
	and Corollary~\ref{COR: blocking technique CX}).
	
	\vspace{5pt} \noindent {\bf Synopsis 1. Blocking technique.}
	{\it Let $1 < p < q < \infty$. Further, let $X$ be an interpolation space between $L_p(0,\infty)$ and $L_{q}(0,\infty)$.
	Then the space $\widetilde{X}$ admits an equivalent norm in the block form, that is,}
	\begin{equation*}
		\norm{f}_{\widetilde{X}} \approx \norm{\left\{ \norm{f\chi_{\Delta_j}}_{L_\infty} \right\}_{j \in \mathbb{Z}}}_{\mathbf{E}(X)},
	\end{equation*}
	{\it where $\Delta_j \coloneqq [2^{j},2^{j+1})$ for $j \in \mathbb{Z}$ and $\mathbf{E}(X)$ is a vector space of all sequences
	$x = \{x_j\}_{j \in \mathbb{Z}}$ such that $\sum_{j \in \mathbb{Z}} x_j \chi_{\Delta_j} \in X$, endowed with the norm
	$\norm{x}_{\mathbf{E}(X)} \coloneqq \norm{\sum_{j \in \mathbb{Z}} x_j \chi_{\Delta_j}}_X$.
	Moreover, by duality, we have}
	\begin{equation*}
		\norm{f}_{X^{\downarrow}}
			\approx \norm{f}_{\mathscr{C}X}
			\approx \norm{\left\{ 2^{-j} \norm{f\chi_{\Delta_j}}_{L_1} \right\}_{j \in \mathbb{Z}}}_{\mathbf{E}(X)}.
	\end{equation*}
	\vspace{5pt} \noindent
	
	In other words, $\widetilde{X} = \bigl( \bigoplus_{j \in \mathbb{Z}} L_{\infty}(\Delta_j) \bigr)_{{\bf E}(X)}$ and
	$X^{\downarrow} = \bigl( \bigoplus_{j \in \mathbb{Z}} L_{1}(\Delta_j) \bigr)_{{\bf E}(X)(w)}$, where $w(j) = 2^{-j}$ for $j \in \mathbb{Z}$.
	Looking back, this is, of course, a generalization of Grosse-Erdmann's block form representation \eqref{INTRODUCTION: blocking technique}
	(see also Corollary~\ref{COR: reprezentacje ces_p i Ces_p}).
	
	Among many consequences of the above results (which we will discuss later), perhaps the most immediate is the fact that isomorphically
	the structure of these spaces is rather simple. More precisely, we have
	\begin{align*}
		 \widetilde{L_p} \approx \ell_p(\ell_{\infty}) \quad \text{ and }
		 	\quad L_p^{\downarrow} \approx \ell_p(L_1);
	\end{align*}
	\begin{equation*}
		\widetilde{\Lambda_\varphi} \approx \ell_1(\ell_\infty) \quad \text{ and } \quad \Lambda_{\varphi}^{\downarrow} \approx L_1;
			\quad \widetilde{M_\varphi} \approx \ell_\infty \quad \text{ and } \quad M_{\varphi}^{\downarrow} \approx \ell_\infty(\ell_1).
	\end{equation*}
	Thus, they coincide with a subclass of Lebesgue--Bochner spaces (see Propositions~\ref{PROPOSITION: ALM19} and \ref{PROP: CCX = CX}).
	
	As for Section~\ref{SECTION: Interpolation structure}, our most important result regarding the interpolation structure can be formulated
	as follows (see Theorems~\ref{THM: CM-couples} and \ref{THM: relative CM-couples}).
	
	\vspace{5pt} \noindent {\bf Synopsis 2. Interpolation structure.}
	{\it Let $X$ and $Y$ be two interpolation spaces between $L_1(0,\infty)$ and $L_{\infty}(0,\infty)$.
	Then the couples $(X,Y)$, $(\widetilde{X},\widetilde{Y})$ and $(X^{\downarrow},Y^{\downarrow})$ are a Calder{\' o}n--Mityagin couples if,
	and only if, one of them is a Calder{\' o}n--Mityagin couple.}
	\vspace{5pt} \noindent
	
	This means, in particular, that the couples $(\widetilde{L_p},L_{\infty})$ and $(L_1,L_q^{\downarrow}) = (L_1,Ces_q)$ are a Calder{\' o}n--Mityagin couples
	(see Corollaries~\ref{COR: MS06} and \ref{COR: Lesnik}). On the other hand, for $1 < p < q < \infty$, the couples $(L_p,\widetilde{L_q})$,
	$(L_p^{\downarrow}, L_q) = (Ces_p, L_q)$ and $(L_p^{\downarrow}, \widetilde{L_q}) = (Ces_p,\widetilde{L_q})$ are not Calder{\' o}n--Mityagin
	couples (see Example~\ref{THM: NOT CM-couples}).
	
	Technical side of our results (apart from, perhaps, Propositions~\ref{Prop: E komutuje z Kothe dualem} and \ref{PROP: EX <-> carrier})
	mainly covers Section~\ref{SECTION: Products and factors: Grosse-Erdmann's style}. Henceforth, we will prefer to talk about the construction
	$\mathscr{C}X$ instead of $X^{\downarrow}$ (even if this requires some additional assumptions). This is dictated by the fact that both \cite{Be96}
	and \cite{GE98}, which we are obviously inspired by, deal only with $ces_p$ and $Ces_p$. Our most relevant results here can be summarized as follows
	(see Theorem~\ref{Cor: multipliers between Cesaro and Tandori}, Remark~\ref{REMARK: M komutuje z E} and Theorem~\ref{PROP: Tandori komutuje z produktem}).
	
	\vspace{5pt} \noindent {\bf Synopsis 3. Products and factors.}
	{\it Let $X$ and $Y$ be two interpolation spaces between $L_1(0,\infty)$ and $L_{\infty}(0,\infty)$. Then
	\begin{equation*}
		M(\widetilde{X},\widetilde{Y}) = \widetilde{M(X,Y)} \quad \textit{ and } \quad M(\mathscr{C}X, \mathscr{C}Y) = \mathscr{C}M(X,Y)
	\end{equation*}
	if, and only if, the discretization construction ${\bf E}$ commutes with the pointwise multipliers construction $M$, that is,
	${\bf E}(M(X,Y)) = M({\bf E}(X),{\bf E}(Y))$. Moreover, if either we have factorization $Y = X \odot M(X,Y)$ or the spaces $X$
	and $Y$ are strongly separated, that is, $\alpha_X > \beta_Y$, then ${\bf E}(M(X,Y)) = M({\bf E}(X),{\bf E}(Y))$. On the other hand,
	we also have}
	\begin{equation*}
		\widetilde{X} \odot \widetilde{Y} = \widetilde{X \odot Y}.
	\end{equation*}
	\vspace{5pt} \noindent
	
	We will also analyze the situation in which the above assumptions are not met. For example, we will show that
	for two Orlicz sequence spaces $\ell_M$ and $\ell_N$ such that $M(\ell_M,\ell_N) = \ell_{\infty}$ (this means that
	$\ell_N \neq \ell_M \odot M(\ell_M,\ell_N)$), we have
	\begin{equation*}
		M(ces_M,ces_N) = \ell_{\infty}(w_{M \to N}),
	\end{equation*}
	where $w_{M \to N}(n) \coloneqq \norm{\sum_{k=1}^n e_k}_{\ell_M} / \norm{\sum_{k=1}^n e_k}_{\ell_N}$ for $n \in \mathbb{N}$ (see Example~\ref{przyklad Orlicze infty}).
	In this way, taking $M$ and $N$ as power functions, we will recover the striking Bennett's result that
	\begin{equation*}
		M(ces_p,ces_q) = \ell_{\infty}(n^{1/q - 1/p})
	\end{equation*}
	provided $1 < p \leqslant q < \infty$ (see Corollary~\ref{COR: Bennett M(ces_p,ces_q)}). Similarly, we will also check what happens
	when we replace Orlicz sequence spaces with Lorentz sequence spaces or Marcinkiewicz sequence spaces, that is, we compute
	\begin{equation*}
		M(\mathscr{C}m_{\varphi},\mathscr{C}m_{\psi}), \quad M(\mathscr{C}m_{\varphi},\mathscr{C}\lambda_{\psi})
			\quad \text{ and } \quad M(\mathscr{C}\lambda_{\varphi},\mathscr{C}\lambda_{\psi})
	\end{equation*}
	(see Example~\ref{EXAMPLE: CLorentz i CMarcinkiewicz a'la Bennett}; cf. Example~\ref{EXAMPLE: Lorentz i Marcinkiewicz Bennett <=>}).
	
	Finally, let us move on to the last Section~\ref{SECTION: Factorization}.
	Our results about factorization can be roughly summarized as follows (see Theorems~\ref{THM: factorization CX with Lp}, \ref{THM: Lp factorized through Tandori X},
	\ref{THEOREM: Factorization CX through CY}, \ref{THEOREM: factorization Tandori przez Tandori} and \ref{THEOREM: Mixed factorizations}; we deliberately omit here
	some special cases involving Lorentz spaces $\Lambda_{\varphi}$ and Marcinkiewicz spaces $M_{\psi}$).
	
	\vspace{5pt} \noindent {\bf Synopsis 4. Factorizations.}
	{\it Let $1 < p,q < \infty$ be given. Let $X$ and $Y$ be two interpolation spaces between $L_q(0,\infty)$ and $L_{\infty}(0,\infty)$.
	Suppose that either $X = L_p \odot M(L_p,X)$ or $L_p = X \odot M(X,L_p)$. Then}
	\begin{equation*}
		\mathscr{C}X = L_p \odot M(L_p,\mathscr{C}X) \quad \textit{ and, respectively, } \quad L_p = \widetilde{X} \odot M(\widetilde{X},L_p).
	\end{equation*}
	{\it Now, suppose that ${\bf E}(Y) = {\bf E}(X) \odot M({\bf E}(X),{\bf E}(Y))$. Then}
	\begin{equation*}
		\mathscr{C}Y = \mathscr{C}X \odot M(\mathscr{C}X,\mathscr{C}Y), \quad \widetilde{Y} = \widetilde{X} \odot M(\widetilde{X},\widetilde{Y}),
	\end{equation*}
	\begin{equation*}
		\mathscr{C}Y = \widetilde{X} \odot M(\widetilde{X},\mathscr{C}Y) \quad \textit{ and } \quad \widetilde{Y} = \mathscr{C}X \odot M(\mathscr{C}X,\widetilde{Y}).
	\end{equation*}
	\vspace{5pt} \noindent
	
	It is worth mentioning that we were able to calculate the spaces of multipliers and only then deduce the corresponding factorizations.
	Note also that the assumption ${\bf E}(Y) = {\bf E}(X) \odot M({\bf E}(X),{\bf E}(Y))$ is strictly weaker than the perhaps more natural
	condition $Y = X \odot M(X,Y)$ and this alone improves some existing results (see Proposition~\ref{PROPOSITION: faktoryzacja z E lepsza niz bez E}).
	
	\subsection{Overview of the key ideas} \label{SUBSECTION: opis techniczny}
	This outline should only be taken merely as a quick indication of the new and interesting points of our results (which mainly consists of what can be found
	in Sections~\ref{SUBSECTION: E(X)}, \ref{SUBSECTION: interpolation of E(X)}, \ref{SUBSECTION: revisiting Bukhvalov}, \ref{SUBSECTION: pointwise multipliers}
	and \ref{SUBSECTION: pointwise products}) and their proofs. We will also try to highlight some analogies with previous results. To simplify the presentation,
	we will assume that from this point onward $X$ is a rearrangement invariant function space with the Fatou property and non-trivial Boyd's indices (these assumptions
	can be relaxed and, with some modifications, everything we say below has its sequence counterparts).
	For this reason, some notation or details used below may differ slightly from what we will present later.
	
	As we have already mentioned above \eqref{INTRODUCTION: blocking technique}, the fact that the space $Ces_p$ can be represented (up to an isomorphism)
	as $\bigl( \bigoplus_{n=1}^{\infty} L_{1}(\Delta_j) \bigr)_{\ell_p}$ has basically been known since the appearance of Grosse-Erdmann's book \cite{GE98}.
	For this reason, even though the prediction that the space $\mathscr{C}X$ can be represented as $\bigl( \bigoplus_{n=1}^{\infty} L_{1}(\Delta_j) \bigr)_{\blacksquare}$
	seems to be intuitively obvious, the right choice of the space $\blacksquare$ brings unexpected complications. There are basically two reasons for this:
	we know that $\mathscr{C}\Lambda_{\varphi} \approx \bigl( \bigoplus_{n=1}^{\infty} L_{1}(\Delta_j) \bigr)_{\ell_1}$
	and $\mathscr{C}M_{\psi} \approx \bigl( \bigoplus_{n=1}^{\infty} L_{1}(\Delta_j) \bigr)_{\ell_{\infty}}$
	(see \cite[Theorem~4.1]{ALM19} and \cite[Theorem~4.4]{DS07}).
	This irrevocably eliminates the most naive choice of $\blacksquare$ as the {\it discretization} of $X$, which should be specified as a vector space
	\begin{equation*}
		{\bf D}(X) \coloneqq \left\{ x = \{x_n\}_{n=1}^{\infty} \colon \sum_{n=1}^{\infty} x_n \chi_{\diamondsuit_n} \in X \right\},
	\end{equation*}
	where $\diamondsuit_n \coloneqq [n,n+1)$ for $n \in \mathbb{N}$, endowed with the norm $\norm{x}_{{\bf D}(X)} \coloneqq \norm{\sum_{n=1}^{\infty} x_n \chi_{\diamondsuit_n}}_X$.
	Intuitively speaking, such a discretization procedure should yield the representations $\bigl( \bigoplus_{n=1}^{\infty} L_{1}(\Delta_j) \bigr)_{\lambda_{\varphi}}$
	and $\bigl( \bigoplus_{n=1}^{\infty} L_{1}(\Delta_j) \bigr)_{m_{\psi}}$, which evidently are isomorphically distinct from $\bigl( \bigoplus_{n=1}^{\infty} L_{1}(\Delta_j) \bigr)_{\ell_1}$
	and, respectively, $\bigl( \bigoplus_{n=1}^{\infty} L_{1}(\Delta_j) \bigr)_{\ell_{\infty}}$.
	Several attempts show that the right choice of $\blacksquare$ is to take the {\it dyadic discretization} of $X$, that is, a vector space
	\begin{equation*}
		\mathbf{E}(X) \coloneqq \left\{ x = \{x_j\}_{j \in \mathbb{Z}} \colon \sum_{j \in \mathbb{Z}} x_j \chi_{\Delta_j} \in X \right\},
	\end{equation*}
	where $\Delta_j \coloneqq [2^{j},2^{j+1})$ for $j \in \mathbb{Z}$, equipped with the norm $\norm{x}_{\mathbf{E}(X)} \coloneqq \norm{\sum_{j \in J} x_j \chi_{\Delta_j}}_X$
	(see Definition~\ref{Def: E(X) function}).
	
	\begin{remark}
		To the best of our knowledge, the construction $X \rightsquigarrow {\bf E}(X)$ was used for the
		first time in the early 1990s by Nigel Kalton to describe the Calder{\' o}n--Mityagin couples of rearrangement invariant spaces (see \cite{Kal92}).
		However, even more recently, Sergey Astashkin found another use for it to describe (in terms of Boyd indices) the symmetric finite representability
		of $\ell_p$'s in rearrangement invariant function spaces (see \cite{Ast22}).
		Without going into details, his idea was that the dilation operator on a rearrangement invariant space $X$ behaves like the shift operator
		on the discretization ${\bf E}(X)$.
		On the other hand, Amiran Gogatishvili and Vladimir Ovchinnikov came up with a seemingly distant application of the construction
		$X \rightsquigarrow {\bf E}(X)$ in the context of the optimal Sobolev's embeddings (see \cite{GO07}).
		They did this somewhat unconsciously by emphasizing the concept of {\it orbital equivalence} rather than the construction $X \rightsquigarrow {\bf E}(X)$.
	\end{remark}
	
	The right choice of discretization allows us to represent the space $\widetilde{X}$ (and, through duality, also the space $\mathscr{C}X$ - we will come
	back to this later) in the block form (see Theorem~\ref{Thm: Tandori sequence representation} and Theorem~\ref{Thm: Tandori function representation}).
	This looks roughly like this
	\begin{equation} \label{Introduction EQ: 1}
		\Vert \widetilde{f} \Vert_X
			\approx \norm{ \sum_{j \in J} \norm{f\chi_{\Delta_j}}_{L_{\infty}} \chi_{\Delta_j} }_X
			= \norm{\left\{ \norm{f\chi_{\Delta_j}}_{L_\infty} \right\}_{j \in \mathbb{Z}}}_{\mathbf{E}(X)}.
	\end{equation}
	In other words, $\widetilde{X} = \bigl( \bigoplus_{j \in \mathbb{Z}} L_{\infty}(\Delta_j) \bigr)_{{\bf E}(X)}$.
	
	\begin{warning}
		Pedantically speaking, we should say that both spaces $\widetilde{X}$ (with the equivalent norm given by the right-hand side of \eqref{Introduction EQ: 1})
		and $\bigl( \bigoplus_{j \in \mathbb{Z}} L_{\infty}(\Delta_j) \bigr)_{{\bf E}(X)}$ are isometrically isomorphic via the mapping
		$\Delta \colon f \rightsquigarrow \{f\chi_{\Delta_j}\}_{j \in \mathbb{Z}}$.
		However, the later space can naturally be treated as a Banach function space and, moreover, the isomorphism $\Delta$ not only preserves the lattice structure,
		but also is an isomorphism in the sense of interpolation (see \cite[Definition~2.1.12, p.~96]{BK91}). For this reason, we will forgo long-winded formulations
		and {\it par abus} we will write $\widetilde{X} = \bigl( \bigoplus_{j \in \mathbb{Z}} L_{\infty}(\Delta_j) \bigr)_{{\bf E}(X)}$.
	\end{warning}

	\begin{remark}[Wiener amalgam spaces] \label{REMARK: Wiener amalgams}
		Blocking technique in the context of harmonic analysis leads to the notion of {\it amalgams}, that is, a class of function spaces
		defined by a norm which amalgamates, or simply mixes, a local and global beheviour of a given function. More precisely, for a Banach
		function space $X$ defined on $\Omega$ and a Banach sequence space $E$ defined on $J$, their {\it Wiener amalgam $\mathscr{W}(X,E)$}
		is defined as a vector space
		\begin{equation*}
			\mathscr{W}(X,E) \coloneqq \left\{ f \in L_0(\Omega) \colon \left\{ \norm{f\chi_{\Omega_j}}_X \right\}_{j \in J} \in E \right\}
		\end{equation*}
		equipped with the norm
		\begin{equation*}
			\norm{f}_{\mathscr{W}(X,E)} \coloneqq \norm{\left\{ \norm{f\chi_{\Omega_j}}_X \right\}_{j \in J}}_E,
		\end{equation*}
		where $\{\Omega_j\}_{j \in J}$ is a decomposition of $\Omega$ into disjoint measurable subsets.
		Roughly speaking, the local data $X$ is pasted together along $E$, that is, the function $f$ belongs to the amalgam $\mathscr{W}(X,E)$
		provided it is \enquote{locally} in the space $X$ and \enquote{globally} in the space $E$. Clearly, both spaces $\mathscr{W}(X,E)$ and
		$\bigl( \bigoplus_{j \in J} X(\Omega_j) \bigr)_E$ are isometrically isomorphic.
		Historically, while working on Tauberian theorems, Norbert Wiener defined the spaces $\mathscr{W}(L_1,\ell_2)$, $\mathscr{W}(L_2,\ell_1)$,
		$\mathscr{W}(L_1,\ell_\infty)$ and $\mathscr{W}(L_\infty,\ell_1)$, but we owe the naming and the general construction to Hans Georg Feichtinger
		(see \cite{Fei90} and \cite{Wie32}; cf. \cite{Beu49}, \cite{Hed69} and \cite{Hol75}). Note also that $\mathscr{W}(L_p,\ell_p)$
		is isometrically isomorphic to $L_p$, that is, the $L_p$-norm does not distinguish between local and global properties (see \cite{Pes22} for more).
	\end{remark}
	
	A few comments about the proof of the equivalence \eqref{Introduction EQ: 1} seem in order. The argument behind the more interesting of the two inequalities
	is based on the following observation
	\begin{align*}
		\norm{f}_{\widetilde{X}}
			& \leqslant \norm{\sum_{j \in \mathbb{Z}} \norm{f\chi_{\Delta_j}}_{L_\infty}\chi_{\Delta_j}}_{\widetilde{X}} \quad (\text{since $X$ has the ideal property}) \\
			& = \norm{\left\{ \norm{f\chi_{\Delta_j}}_{L_\infty} \right\}_{j \in \mathbb{Z}}}_{\widetilde{\mathbf{E}(X)}}
					\quad (\text{by the very definition of $\widetilde{X}$ and ${\bf E}(X)$}) \\
			& \leqslant \norm{\mathscr{T}}_{\mathbf{E}(X)
					\rightarrow \mathbf{E}(X)} \norm{\left\{ \norm{f\chi_{\Delta_j}}_{L_\infty} \right\}_{j \in \mathbb{Z}}}_{\mathbf{E}(X)} \quad (\text{by interpolation (\ding{80})}),
	\end{align*}
	where the positive and sublinear operator $\mathscr{T}$ is defined for $x = \{ x_j \}_{j \in \mathbb{Z}}$ as
	\begin{equation*}
		\mathscr{T} \colon x \rightsquigarrow \mathscr{T}(x) \coloneqq \left\{ \sup_{k \geqslant j} \abs{x_k} \right\}_{j \in \mathbb{Z}}.
	\end{equation*}
	Only the interpolation part (\ding{80}) requires some further explanation. Since $X = (L_1,L_{\infty})^{\mathscr K}_{\mathscr X}$ for some parameter space $\mathscr{X}$
	(this is due to the celebrated result of Calder{\' o}n and Mityagin; see \cite{Cal64} and \cite{Mit65}) and, as is easy to see, the space ${\bf E}(X)$ is a
	$1$-complemented subspace in $X$, so the couple $({\bf E}(L_1),{\bf E}(L_{\infty}))$ is the {\it $\mathscr{K}$-subcouple} of the couple $(L_1,L_{\infty})$
	(in the sense of Petree, Pisier and Janson; see \cite{Jan93} and \cite[p.~465]{BK91}). In consequence,
	\begin{equation*}
		{\bf E}(X) = {\bf E}( (L_1,L_{\infty})^{\mathscr K}_{\mathscr X} ) = \left( {\bf E}(L_1),{\bf E}(L_{\infty}) \right)^{\mathscr K}_{\mathscr X}.
	\end{equation*}
	This means, that we only need to know that $\mathscr{T}$ is bounded when acting from ${\bf E}(L_1)$ into ${\bf E}(L_1)$ and, respectively, form ${\bf E}(L_{\infty})$
	into ${\bf E}(L_{\infty})$. The latter is very simple, because ${\bf E}(L_{\infty}) \equiv \ell_{\infty}$. For the remaining one, since ${\bf E}(L_1) \equiv \ell_1(w)$
	with $w(j) = 2^{j}$ for $j \in \mathbb{Z}$, so the boundedness of $\mathscr{T} \colon {\bf E}(L_1) \rightarrow {\bf E}(L_1)$ will follow once we can show the inequality
	\begin{equation} \label{INTRODUCTION: principle of duality}
		\sum_{j \in \mathbb{Z}} 2^j \left(\sup\limits_{k \geqslant j} \abs{x_k} \right) \leqslant C \sum_{j \in \mathbb{Z}} 2^j \abs{x_j}.
	\end{equation}
	Surprisingly, with $C = 2$ this is an immediate consequence of Goldman, Heinig and Stepanov's result (see \cite[Proposition~2.1]{GHS96}; cf. \cite[Lemma~3.1(i)]{GP03}).
	Our interpolation argument is thus complete.
	
	\begin{remark}
		In the context of \eqref{INTRODUCTION: principle of duality}, note that the \enquote{principle of duality} from \cite{GHS96},
		which through the notion of {\it discretizing sequences} leads, for example, to the description of the space $\Gamma_{p,w}^{\times}$,
		were further developed by Gogatishvili and Pick \cite{GP03} in the form of \enquote{discretization} and \enquote{anti-discretization}
		of rearrangement invariant norms.
		We will not go into intricate technical details of these constructions and refer the interested reader directly to \cite{GHS96}
		and \cite{GP03} for more information.
		However, let us only mention that the main innovation of \cite{GP03} is that certain conditions from \cite{GHS96} expressed with the help of
		discretizing sequences (for which direct verification is somehow ineffective) can be formulated in a more manageable integral forms
		(compare Theorems~3.1, 3.2 and 3.3 from \cite{GHS96} with Theorems~4.2, 5.1 and, respectively, 6.2 from \cite{GP03}).
		While the term \enquote{discretization} is intuitively clear, \enquote{anti-discretization} refers to the process just described.
		Parenthetically speaking, the blocking technique is also used in \cite{GP03}.
	\end{remark}
	
	Back to $\mathscr{C}X$. We know, thanks to the duality theory proposed by Sinnamon \cite{Si94} (cf. \cite{KMS07}) and Le{\' s}nik and Maligranda
	\cite{LM15a}, that the space $\widetilde{X^{\times}}$ is a (K{\" o}the) dual to $\mathscr{C}X$.
	We take this fact as a \enquote{black box} here.
	However, together with \eqref{Introduction EQ: 1}, it allows us to deduce that
	\begin{equation} \label{Introduction EQ: 2}
		\Vert f \Vert_{\mathscr{C}X}
			\approx \norm{\left\{ 2^{-j} \norm{ f\chi_{\Delta_j}}_{L_1} \right\}_{j \in \mathbb{Z}}}_{\mathbf{E}(X)}.
	\end{equation}
	In other words, $\mathscr{C}X = \bigl( \bigoplus_{j \in \mathbb{Z}} L_{1}(\Delta_j) \bigr)_{{\bf E}(X)(w)}$, where $w(j) \coloneqq 2^{-j}$
	for $j \in \mathbb{Z}$ or, equivalently, $\mathscr{C}X = \bigl( \bigoplus_{j \in \mathbb{Z}} L_{1}(\Delta_j,W_j) \bigr)_{{\bf E}(X)}$,
	where $W_j(t) \coloneqq 2^{-j}$ for $t \in \Delta_j$ and $j \in \mathbb{Z}$ (see, again, Theorems~\ref{Thm: Tandori sequence representation}
	and \ref{Thm: Tandori function representation}).

	Block form representations \eqref{Introduction EQ: 1} and \eqref{Introduction EQ: 2} allow us to look much deeper into the structure of those spaces.
	This, bearing in mind the context of \cite{GE98}, is not in itself a special surprise. From a technical point of view, however, the reason for this can
	be explained as follows: the six functors listed below
	\begin{center}
		\begin{table}[ht]
			\begin{tabular}{ | c | c | c | } 
				\hline \xrowht{15pt}
				\text{K{\" o}the dual} & $\times \colon X \rightsquigarrow X^{\times}$ & see Section~\ref{SUBSECTION: Foreground roles} \\
				\hline \xrowht{15pt}
				\text{interpolation functor} & ${\bf F} \colon \vv{X} \rightsquigarrow {\bf F}(\vv{X})$ & see Section~\ref{SECTION: interpolation} \\
				\hline \xrowht{15pt}
				\text{$p$-convexification/concavification} & $(p) \colon X \rightsquigarrow X^{(p)}$ & see Section~\ref{SECTION: ri spaces} \\
				\hline \xrowht{15pt}
				\text{pointwise multipliers} & $M \colon (X,Y) \rightsquigarrow M(X,Y)$ & see Section~\ref{SECTION: pointwise multipliers} \\
				\hline \xrowht{15pt}
				\text{pointwise products} & $\odot \colon (X,Y) \rightsquigarrow X \odot Y$ & see Section~\ref{SECTION: pointwise products} \\
				\hline \xrowht{15pt}
				\text{symmetrization} & $\bigstar \colon X \rightsquigarrow X^{\bigstar}$ & see Section~\ref{SECTION: symmetization} \\
				\hline
			\end{tabular}
		\end{table}
	\end{center}
	commute (with some minor exceptions and, maybe, additional assumptions) with
	\begin{center}
		\begin{table}[ht]
			\begin{tabular}{ | c | c | c | } 
				\hline \xrowht{15pt}
				amalgam & $\mathbf{A} \colon \left( J, X_j, E \right)
					\rightsquigarrow \mathbf{A}\left( J, X_j, E \right) \coloneqq \bigl( \bigoplus_{j \in J} X_j \bigr)_{E}$ & see Definition~\ref{DEF: direct sum} \\
				\hline \xrowht{15pt}
				discretization & ${\bf E} \colon X \rightsquigarrow {\bf E}(X)$ & see Definition~\ref{DEF: E(X)} \\
				\hline
			\end{tabular}
		\end{table}
	\end{center}
	More precisely, we have
	\begin{center}
		\begin{table}[ht]
			\begin{tabular}{ | c | c | } 
				\hline \xrowht{15pt}
				$\mathbf{F}$ (sometimes) commute with $\mathbf{A}$ & see Theorem~\ref{THM: a'la Bukhvalov} (cf. Remark~\ref{REMARK: Maligranda on CL-construction}) \\
				\hline \xrowht{15pt}
				Calder{\' o}n product commute with $\mathbf{A}$ & see Theorem~\ref{THM: Calderon product of amalgams} \\
				\hline \xrowht{15pt}
				$M$ commute with $\mathbf{A}$ & see Theorem~\ref{Thm: komutowanie M z cdot} (see also Proposition~\ref{Prop: Podstawowe wlasnosci sum prostych}) \\
				\hline \xrowht{15pt}
				$\odot$ commute with $\mathbf{A}$ & see Theorem~\ref{PROP: Pointwise multipliers of amalgams} \\
				\hline \xrowht{15pt}
				$(p)$ commute with $\mathbf{A}$ & see the proof of Theorem~\ref{PROP: Pointwise multipliers of amalgams} \\
				\hline
			\end{tabular}
		\end{table}
	\end{center}
	and, moreover,
	\begin{center}
		\begin{table}[ht]
			\begin{tabular}{ | c | c | }
				\hline \xrowht{15pt}
				$\bigstar$ annihilates with $\mathbf{E}$ & see Lemma~\ref{PROP: EX <-> carrier} \\
				\hline \xrowht{15pt}
				$\mathbf{F}$ commute with $\mathbf{E}$ & see Theorem~\ref{Proposition: E komutuje z interpolacja} \\
				\hline \xrowht{15pt}
				$M$ (sometimes) commute with $\mathbf{E}$ & see the proof of Theorem~\ref{Cor: multipliers between Cesaro and Tandori} (cf. Remark~\ref{REMARK: M komutuje z E}) \\
				\hline \xrowht{15pt}
				$\odot$ commute with $\mathbf{E}$ & see Theorem~\ref{PROP: E komutuje z produktem} \\
				\hline \xrowht{15pt}
				$(p)$ commute with $\mathbf{E}$ & see the proof of Theorem~\ref{PROP: Tandori komutuje z produktem} \\
				\hline
			\end{tabular}
		\end{table}
	\end{center}	
	
	Once established, the above (rather incomplete) list of rules create a kind of {\it arithmetic} that allows us to obtain the most important
	results of Sections~\ref{SECTION: Interpolation structure}, \ref{SECTION: Products and factors: Grosse-Erdmann's style} and \ref{SECTION: Factorization}
	(cf. Synopsis~2, 3 and 4).
	We would like to briefly say more about this.
	However, due to the accumulation of technical details, we decided to limit ourselves to illustrating the general method with a rather simple example
	(which, however, is general enough, because can be seen as an \enquote{integral} analogue of Bennett's factorization \eqref{INTRODUCTION: Bennett factorization of H};
	see \cite[Proposition~1]{AM09} and Theorem~\ref{THM: factorization CX with Lp}).
	
	\begin{example}[S. V. Astashkin and L. Maligranda, 2009] \label{EXAMPLE: AM introduction}
		Fix $p > 1$. To factorize the integral version of Hardy's inequality $L_p \hookrightarrow Ces_p$ we need to find a space, say $\spadesuit$, such that
		\begin{equation} \label{INTRODUCTION: factorization AM}
			L_p \odot \spadesuit = Ces_p.
		\end{equation}
		We know that $\spadesuit = M(L_p, Ces_p)$. The point is, however, to find a more explicit description of $\spadesuit$.
		We will do this in three steps.
		
		{\bf Step 1. Discretization.} First, we need to represent both spaces $L_p$ and $Ces_p$ as a direct sums. Although, quite trivially,
		$L_p = \bigl( \bigoplus_{j \in \mathbb{Z}} L_p(\Delta_j) \bigr)_{\ell_p}$, to discretize $Ces_p$ we need to invoke \eqref{Introduction EQ: 2}.
		Anyway, $Ces_p = \bigl( \bigoplus_{j \in \mathbb{Z}} L_1(\Delta_j, W_j) \bigr)_{{\bf E}(L_p)}$, where $W_j(t) = 2^{-j}$ for $t \in \Delta_j$ and $j \in \mathbb{Z}$.
		
		{\bf Step 2. Arithmetic.} Now we have to do some calculations. We will use some of the above-mentioned results regarding the constructions
		$\bigoplus$ and ${\bf E}$ (along with the well-known facts about the functors $M$ and $\odot$; see \cite{KLM14}, \cite{KLM19} and \cite{Sch10}).
		We have
		\begin{align*}
			M(L_p, Ces_p)
				& = M\left[ \Bigl( \bigoplus_{j \in \mathbb{Z}} L_p(\Delta_j) \Bigr)_{\ell_p}, \Bigl( \bigoplus_{j \in \mathbb{Z}} L_1(\Delta_j, W_j) \Bigr)_{{\bf E}(L_p)} \right]
					\quad (\text{by Step 1}) \\
				& = \Bigl( \bigoplus_{j \in \mathbb{Z}} M(L_p(\Delta_j), L_1(\Delta_j, W_j)) \Bigr)_{M(\ell_p, {\bf E}(L_p))} \quad (\text{since $M$ commute with $\mathbf{A}$ and $\mathbf{E}$}).
		\end{align*}
		Let us divide our attention in two. On the one hand,
		\begin{align*}
			M(L_p(\Delta_j), L_1(\Delta_j, W_j))
				& \equiv M(L_p, L_1)(\Delta_j,W_j) \quad (\text{since $M(X,Y(W)) = M(X,Y)(W)$}) \\
				& \equiv [L_p]^{\times}(\Delta_j,W_j) \quad (\text{because $M(X,L_1) \equiv X^{\times}$}) \\
				& \equiv L_q(\Delta_j,W_j),
		\end{align*}
		where $q$ is the conjugate exponent to $p$, that is, $1/p + 1/q = 1$. On the other,
		\begin{align*}
			M(\ell_p, {\bf E}(L_p))
				& \equiv M(\ell_1^{(p)}, {\bf E}(L_1)^{(p)}) \quad (\text{since $\mathbf{E}$ commute with $(p)$}) \\
				& \equiv M(\ell_1,{\bf E}(L_1))^{(p)} \quad (\text{because $M$ commute with $(p)$}) \\
				& \equiv M(\ell_1, \ell_1(w))^{(p)} \quad (\text{of course, ${\bf E}(L_1) \equiv \ell_1(w)$, where $w(j) = 2^{j}$}) \\
				& \equiv [M(\ell_1,\ell_1)(w)]^{(p)} \quad (\text{because $M(X,Y(w)) = M(X,Y)(w)$}) \\
				& \equiv \ell_{\infty}(w^{1/p}) \quad (\text{since $M(X,X) = L_{\infty}$}),
		\end{align*}
		where $w^{1/p}(j) = 2^{j/p}$ for $j \in \mathbb{Z}$. Putting things together, we get
		\begin{align*}
			M(L_p, Ces_p)
				& = \Bigl( \bigoplus_{j \in \mathbb{Z}} L_q(\Delta_j,W_j) \Bigr)_{\ell_{\infty}(w^{1/p})}\\
				& = \Bigl( \bigoplus_{j \in \mathbb{Z}} L_q(\Delta_j,W_j^{1/q}) \Bigr)_{\ell_{\infty}} \quad (\text{since $2^{-j}2^{j/p} = 2^{-j(1-1/p)} = 2^{-j/q}$}),
		\end{align*}
		where $W_j^{1/q}(t) = 2^{-j/q}$ for $t \in \Delta_j$ and $j \in \mathbb{Z}$.
	
		{\bf Step 3. Anti-discretization.} It remains to describe the space $\bigl( \bigoplus_{j \in \mathbb{Z}} L_q(\Delta_j,W_j^{1/q}) \bigr)_{\ell_{\infty}}$
		in the language of known constructions. Or, to put it another way, reverse the discretization process. For this, one can observe that
		\begin{align*}
			M(L_p, Ces_p)
			& = \Bigl( \bigoplus_{j \in \mathbb{Z}} L_q(\Delta_j,W_j^{1/q}) \Bigr)_{\ell_{\infty}} \quad (\text{by Step 2}) \\
			& = \Bigl( \bigoplus_{j \in \mathbb{Z}} [L_1(\Delta_j,W_j)]^{(q)} \Bigr)_{[\ell_{\infty}]^{(q)}} \quad (\text{since $[\ell_{\infty}]^{(q)} \equiv \ell_{\infty}$}) \\
			& = \left[ \Bigl( \bigoplus_{j \in J} L_1(\Delta_j,W_j) \Bigr)_{{\bf E}(L_{\infty})} \right]^{(q)} \quad (\text{because $\mathbf{A}$ commute with $(q)$}) \\
			& = [Ces_{\infty}]^{(q)} \quad (\text{again, using \eqref{Introduction EQ: 2}}).
		\end{align*}
		Consequently, $\spadesuit = M(L_p, Ces_p) = [Ces_{\infty}]^{(q)}$ and we are done.
	\end{example}

	\begin{remark}[About Example~\ref{EXAMPLE: AM introduction}]
		Using the nomenclature from \cite{AM09} (which has its roots in \cite{Be96}), we should denote the space $[Ces_{\infty}]^{(q)}$ as $G_q$.
		For this reason, factorization \eqref{INTRODUCTION: factorization AM} takes the form
		\begin{equation*}
			L_p \odot G_q = Ces_p.
		\end{equation*}
		For historical reasons, the space $G_q$ should most likely be called the {\it Korenblyum, Kre{\u \i}n and Levin space}
		(see \cite{KKL48}; cf. \cite[Example~1.3]{LZ65}). Note, however, that the last step, which we called \enquote{anti-discretization} above,
		was done {\it a posteriori} (in the sense that we already knew what space we were looking for). In general, finding a natural description
		of the space obtained after the discretization process is not at all obvious (this problem was already pointed out by Grosse-Erdman; see \cite[p.~5]{GE98}).
		Later, we will propose an appropriate generalization of the space $G_q$ (see Remark~\ref{REMARK: CqX construction}).
	\end{remark}
	
	\section{{\bf Statements and Declarations}}
	
	\subsection{Acknowledgments}
	The core ideas of this work were born in embryonic form during a coffee break at the conference dedicated to the memory of
	Professor Pawe{\l} Doma{\' n}ski in 2018. Since then, we have informally announced some of our results on at least a few occasions.
	
	We would also like to express our heartfelt thanks to Lech Maligranda for all the materials, especially those from his
	private collection, made available to us. His comments about interpolation proved, as usual, indispensable.
	We also thank Karol Le{\' s}nik for stimulating discussions and valid suggestions about factorization problems.
	Finally, it is also a pleasure to thank Pawe{\l} Kolwicz for all the translations from Russian that he was kind enough
	to do for us.
	
	\subsection{Funding}
	Our research was carried out at the Pozna{\' n} University of Technology (grant number 0213/SBAD/0116).
	
	\subsection{Conflict of Interest}
	The authors declare that they have no known competing financial interest or personal relationship that could have appeared
	to influence the work reported in this paper.
	
	\subsection{Data Availibility}
	All data generated or analysed during this study are included in this article.
	
	\section{{\bf Toolbox}} \label{Section: Toolbox}
	
	Below for the reader's convenience, we provide in detail the most important definitions, terminology and some essential facts which we will use later.
	Other, possibly unfamiliar, definitions and concepts will be introduced in the sections in which they are used.
	
	\subsection{Notation and terminology}
	Most of our notation and terminology deserves to be called standard and is basically a melange of what is used in the monographs by
	Brudny{\u \i} and Krugljak \cite{BK91}, and Lindenstrauss and Tzafriri \cite{LT79}.
	
	\begin{notation}[Vinogradov's notation]
		Given two quantities, say $A$ and $B$, depending (maybe) on certain parameters, we will write $A \approx B$ understanding that there exist
		absolute constants (that is, independent of all involved parameters), say $c, C > 0$, such that $cA \leqslant B \leqslant CA$.
	\end{notation}

	\begin{notation}[Definition]
		Defined terms are always distinguished from the rest of the text by {\bf bold} font (also when, instead of providing a full definition,
		we only refer to the literature). Sometimes (but there is no rule here), we specify them additionally in the form of {\bf Definition}.
	\end{notation}

	\begin{notation}[Digression]
		Some additional comments, (hopefully) interesting but not necessary for the general understanding of this work have been placed in specific
		parts of the text titled {\bf Digression}.
	\end{notation}

	\begin{notation}
		By tradition, we will use \enquote{halmos} $\blacksquare$ at the end of each proof. Anyway, we find it useful to also use the symbol
		{\large $\blacktriangle$} to clearly indicate the end of each Remark and Digression.
	\end{notation}

	\begin{notation}
		Let, as usual, $\mathbb{N} = \{1,2, ...\}$, $\mathbb{Z}$ and $\mathbb{R}$ denote the set of natural numbers, integers and reals, respectively.
		Moreover, we set $\mathbb{Z}_+ = \{0,1,2,...\}$ and $\mathbb{Z}_{-} = \{..., -2, -1\}$.
	\end{notation}
	
	\subsection{Foreground roles} \label{SUBSECTION: Foreground roles}
	Let $(\Omega, \Sigma, \mu)$ be a complete and $\sigma$-finite measure space, that is, $\Sigma$ does not have atoms of infinite measure
	and any subset of a set of measure zero is measurable. Further, let $L_0(\Omega,\Sigma,\mu)$ (briefly, $L_0(\Omega)$ or even $L_0$,)
	be the set consisting of all equivalence classes, by the Kolmogorov quotient modulo equality almost everywhere, of locally integrable
	real or complex valued functions on $\Sigma$.
	
	A Banach space $X$ is called a {\bf Banach ideal space} (in another nomenclature a {\bf K{\" o}the space}) if the following two conditions hold:
	\begin{itemize}
		\item[$\bullet$] $X$ is a linear subspace of $L_0(\Omega,\Sigma,\mu)$;
		\item[$\bullet$] if $\abs{f(\omega)} \leqslant \abs{g(\omega)}$ almost everywhere on $\Omega$, with $f$ measurable
		and $g \in X$, then $f \in X$ and $\norm{f} \leqslant \norm{g}$ (this is the so-called {\bf ideal property}).
	\end{itemize}
	To make things less annoying, we will also assume (unless we expressly write otherwise) that
	\begin{itemize}
		\item[$\bullet$] for every $A \in \Sigma$ with $\mu(A) < \infty$ the characteristic function $\chi_{A}$ belongs to $X$.
	\end{itemize}

	\begin{notation}[Banach function and sequence spaces]
		Hereafter, whenever we use the term {\bf Banach sequence space} we mean a Banach ideal space over a purely atomic measure space,
		while, the name {\bf Banach function space} is reserved for a Banach ideal spaces over a non-atomic measure space. Aside from the
		fact that every sequence is a function, this nomenclature seems to be well established in the existing literature. Anyway, it follows
		from Maharam's theorem that every Banach ideal space is, roughly speaking, a mixture of a certain Banach sequence and Banach function space.
	\end{notation}
	
	\begin{notation}[Continuous inclusions]
		Due to the closed graph theorem, a formal inclusion of two Banach ideal spaces, say $X$ and $Y$, is a bounded operator and to clearly
		emphasize this fact we will write $X \hookrightarrow Y$ instead of just $X \subset Y$. Moreover, in some situations, we will use the
		symbol $X \overset{C}{\hookrightarrow} Y$ to indicate that the norm of the inclusion $X \subset Y$ is not greater than $C > 0$.
	\end{notation}

	\begin{notation}
		The symbols $X = Y$ and $X \equiv Y$ mean that the spaces $X$ and $Y$ are the same as sets and their norms are equivalent, respectively, equal.
		Moreover, we will write $X \approx Y$ if the spaces $X$ and $Y$ are isomorphic.
	\end{notation}
	
	Given a Banach ideal space $X$ defined on $(\Omega,\Sigma,\mu)$ the {\bf K{\" o}the dual space $X^{\times}$} (or the {\bf associated space})
	of $X$ is defined as a vector space of all measurable functions $f$ such that $\int_{\Omega} \abs{fg} d\mu$ is finite for all $g \in X$,
	equipped with the operator norm $\norm{f}_{X^{\times}} \coloneqq \sup_{\norm{g}_X \leqslant 1} \int_{\Omega} \abs{fg} d\mu$.
	Recall that always we have the inclusion $X \overset{1}{\hookrightarrow} X^{\times \times}$, but $X \equiv X^{\times \times}$ if, and only if,
	the norm in $X$ has the so-called {\bf Fatou property}, meaning that the closed unit ball in $X$ is also closed in $L_0(\Omega,\Sigma,\mu)$
	with respect to the topology of local convergence in measure.
	
	A Banach ideal space $X$ is said to be {\bf rearrangement invariant space} (briefly, {\bf r.i. space}) if it posses the following additional property,
	namely, for any two measurable functions, say $f$ and $g$, such that
	$\mu\left( \left\{ \omega \in \Omega \colon \abs{f(\omega)} > \lambda \right\} \right) = \mu\left( \left\{ \omega \in \Omega \colon \abs{g(\omega)} > \lambda \right\} \right)$
	for all $\lambda \geqslant 0$ (such functions are called {\bf equimeasurable}) and $f \in X$, we have $g \in X$ and $\norm{f}_X = \norm{g}_X$.
	In particular, $\norm{f}_X = \norm{f^{\star}}_X$, where $f^{\star}$ is the {\bf non-increasing rearrangement} of $f$, that is,
	$f^{\star}(t) \coloneqq \inf \left\{ \lambda > 0 \colon \mu\left( \left\{ \omega \in \Omega \colon \abs{f(\omega)} > \lambda \right\} \right) \leqslant t \right\}$
	for $t > 0$.
	
	\begin{notation}[Rearrangement invariant sequence and function spaces]
		Due to Luxemburg's representation theorem (see \cite[Theorem~4.10, p.~62]{BS88}) it is enough to consider only r.i. spaces defined either
		on a set of positive integers $\mathbb{N}$ with the counting measure $\#$ or a half-line $(0,\infty)$ with the usual Lebesgue measure $m$.
		Henceforth, we will refer to this situations by writing that $X$ is a {\bf rearrangement invariant sequence space} or, respectively,
		a {\bf rearrangement invariant function space}.
	\end{notation}
	
	We refer to the books by Bennett and Sharpley \cite{BS88}, Kre{\u \i}n, Petunin and Semenov \cite{KPS82}, Lindenstrauss and Tzafriri \cite{LT79}
	and Maligranda \cite{Mal04} for a much more complete treatment of Banach ideal spaces and, in particular, rearrangement invariant spaces.
	
	\subsection{Kalton's zone}
	Although we will not deal with quasi-Banach spaces {\it per se}, the road to results about Banach spaces often leads through
	a little more exotic world of quasi-norms and quasi-Banach spaces. In this topic, however, we will limit ourselves to the bare
	minimum while referring to \cite{Kal03} for more.
	
	A {\bf quasi-norm} $\norm{\cdot}$ on a given, real or complex, vector space $X$ is a functional $x \rightsquigarrow \norm{x}$ which
	satisfies the usual norm axioms, except that the familiar triangle inequality is replaced by a more perverse version of it, namely,
	\begin{equation*}
		\norm{f + g} \leqslant \Delta \bigl(\norm{f} + \norm{g} \bigr)
	\end{equation*}
	for all $f, g \in X$ and with some constant\footnote{Usually called the {\bf modulus of concavity} of $\norm{\cdot}$.} $\Delta \geqslant 1$
	independent of $f$ and $g$. A {\bf quasi-normed space} is a vector space equipped with a quasi-norm. The quasi-norm $\norm{\cdot}$ induces
	a linear topology on $X$, namely, the sets $\left\{f \in X \colon \norm{f} < \frac{1}{n} \right\}$ with $n \in \mathbb{N}$ form a countable
	base of neighbourhoods of the origin. Therefore, $(X,\norm{\cdot})$ is a locally bounded topological vector space.
	
	Thanks to the Aoki--Rolewicz theorem, each quasi-norm $\norm{\cdot}$ with the modulus of concavity $\Delta$ can be equivalently replaced by
	the so-called {\bf $p$-norm} $\sharp \, \cdot \, \sharp$, where $0 < p \leqslant 1$ is given as a solution to the equation $\Delta = 2^{1/p - 1}$,
	given as
	\begin{equation*}
		\sharp \, f \, \sharp \coloneqq \inf \left\{ \left( \sum_{i=1}^n \norm{f_i}^p \right)^{1/p} \colon f = \sum_{i=1}^n f_i \right\}.
	\end{equation*}
	In this situation, for two measurable functions $f$ and $g$, we have
	\begin{equation*}
		\sharp \, f + g \, \sharp^p \leqslant \sharp \, f \, \sharp^p + \sharp \, g \, \sharp^p.
	\end{equation*}
	Consequently, the above $p$-norm $\sharp \, \cdot \, \sharp$ defines an invariant metric by $d(f,g) \coloneqq \sharp \, f - g \, \sharp^p$.
	In other words, quasi-normed spaces are metrizable. A quasi-normed space $(X,\norm{\cdot})$ is called a {\bf quasi-Banach space}
	if it is complete for this metric.
	By replacing the concept of \enquote{norm} with \enquote{quasi-norm} (in the appropriate definitions from Section~\ref{SUBSECTION: Foreground roles}),
	it is not difficult to guess the proper meaning hidden under the term {\bf quasi-Banach ideal space} or {\bf rearrangement invariant quasi-Banach space}.
	
	\begin{warning}
		The unit ball in the quasi-Banach space need no longer be convex, thus tools such as the Hahn--Banach theorem fall out of the available arsenal.
		For this reason, a quasi-Banach space might have trivial dual (just like $L_p$ with $0 < p < 1$) and a quasi-norm can be discontinuous with respect
		to the self-induced topology\footnote{The use of an equivalent $p$-norm eliminates this disadvantage.}. However, the closed graph theorem,
		the open mapping theorem and Banach--Steinhaus's theorem are still available.
	\end{warning}
	
	\subsection{Boyd indices} Let $s > 0$. For a measurable function, say $f$, on $(0,\infty)$ the {\bf dilation operator}
	$D_s$ is defined as $D_s \colon f \rightsquigarrow [t \rightsquigarrow f(t/s)]$.
	Dilation operators $D_s$ are bounded on any r.i. function space $X$ with
	\begin{equation} \label{INEQ: D_s < max 1,s}
		\norm{D_s}_{X \rightarrow X} \leqslant \max\{1,s\}
	\end{equation}
	(see \cite[p.~148]{BS88}, \cite[pp.~96--98]{KPS82} or \cite[pp.~130--131]{LT79}). However, they can also be bounded on some Banach function
	spaces which are {\it not} rearrangement invariant. For example, $D_s$ is bounded on $L_p(w)$ with $\norm{D_s}_{L_p(w) \rightarrow L_p(w)} \leqslant s^{1/p + \alpha}$
	for all $s > 0$. Here, $w(t) = t^{\alpha}$, $1 \leqslant p < \infty$ and $-1/p < \alpha < 1 - 1/p$.
	
	Let $X$ be a quasi-Banach function space with the property that the dilation operators $D_s$ are bounded on $X$ for all $s > 0$.
	Then the {\bf Boyd indices} $\alpha_X$ and $\beta_X$ of $X$ are defined by\footnote{With the convention that, if $\norm{D_s}_{X \rightarrow X} = 1$
	for some $s > 1$, we put $\alpha_X = \infty$, and, similarly, if $\norm{D_s}_{X \rightarrow X} = 1$ for all $0 < s < 1$, we put $\beta_X = \infty$.}
	\begin{equation*}
		\alpha_X \coloneqq \lim\limits_{s \rightarrow \infty} \frac{\log s}{\log \norm{D_s}_{X \rightarrow X}} = \sup\limits_{s > 1} \frac{\log s}{\log \norm{D_s}_{X \rightarrow X}}
	\end{equation*}
	and
	\begin{equation*}
		\beta_X \coloneqq \lim\limits_{s \rightarrow 0^{+}} \frac{\log s}{\log \norm{D_s}_{X \rightarrow X}} = \inf\limits_{0 < s < 1} \frac{\log s}{\log \norm{D_s}_{X \rightarrow X}}.
	\end{equation*}
	By the very definition, $0 < \alpha_X \leqslant \beta_X \leqslant \infty$.
	Moreover, for any $\varepsilon > 0$ there is $C = C(\varepsilon,X) > 0$ such that for all $f \in X$ we have
	\begin{equation} \label{EQ: D_s < CMAXs^1/p,s^1/q}
		\norm{D_s}_{X \rightarrow X} \leqslant C \max \left\{ s^{\frac{1}{\alpha_X}+\varepsilon}, s^{\frac{1}{\beta_X}-\varepsilon} \right\}.
	\end{equation}
	Additionally, if $X$ is a r.i. function space, then
	\begin{equation} \label{EQ: Boydy X i dualu X}
		\frac{1}{\alpha_X} + \frac{1}{\beta_{X^{\times}}} = 1 = \frac{1}{\beta_X} + \frac{1}{\alpha_{X^{\times}}}
	\end{equation}
	(see \cite[Proposition~2.b.2, p.~131]{LT77} and \cite[Theorem~4.11, p.~106]{KPS82}). Let us also mention that for $X = L_p$
	with $0 < p \leqslant \infty$ we have\footnote{In several other places in the literature, headed by Boyd's article \cite{Bo69}
	itself, the indices of $X$ are taken as reciprocals of the above introduced indices $\alpha_X$ and $\beta_X$; exactly as
	in \cite[p.~131]{LT79}, we belive that our choice is more natural.} $\alpha_X = \beta_X = p$.
	
	As mentioned in \cite[p.~131]{LT79} (see also \cite[p.~165]{KPS82}), the Boyd indices can be defined also for sequence spaces.
	The only difference is that in this case the dilation operators $D_m$ and $D_{1/m}$ with $m \in \mathbb{N}$ are defined in the following way
	\begin{equation*}
		D_m(x) \coloneqq \left\{ x_{[\frac{m-1+n}{m}]} \right\}_{n=1}^\infty = \{ \underbrace{x_1, x_1, ..., x_1}_{m}, \underbrace{x_2, x_2, ...,x_2}_{m}, x_3, ... \}
	\end{equation*}
	and
	\begin{equation*}
		D_{1/m}(x) \coloneqq \left\{ \frac{1}{m} \sum_{k=(n-1)m+1}^{nm} x_k \right\}_{n=1}^\infty
			= \left\{ \frac{1}{m}\sum_{k=1}^m x_k, \frac{1}{m}\sum_{k=m+1}^{2m} x_k, ..., \frac{1}{m}\sum_{(n-1)m+1}^{nm}x_k, ... \right\}.
	\end{equation*}
	
	\begin{notation}
		The Boyd indices of a given (quasi-)Banach ideal space $X$ are said to be {\bf non-trivial} provided $\alpha_X > 1$ and $\beta_X < \infty$.
	\end{notation}
	
	\begin{remark}
		The usefulness of Boyd's indices is mainly due the fact that they make it possible to reduce the characterization of r.i.
		spaces on which certain known operators are bounded to simple inequalities. Of the most famous examples, let us mention the
		following ones:
		\begin{itemize}
			\item[$\bullet$] Hilbert's transform $H \colon f \rightsquigarrow [x \rightsquigarrow \frac{1}{\pi}\text{ p.v.} \int_{0}^{\infty} \frac{f(t)}{x-t}dt]$
			is bounded on a r.i. function space $X$ if, and only if, Boyd's indices of $X$ are non-trivial;
			\item[$\bullet$] Hardy--Littlewood's maximal operator
			$\mathscr{M} \colon f \rightsquigarrow [x \rightsquigarrow \sup_{\varepsilon > 0} \frac{1}{2\varepsilon} \int_{x-\varepsilon}^{x+\varepsilon} \abs{f(t)}dt]$
			is bounded on a r.i. function space $X$ if, and only if, $\alpha_X > 1$;
			\item[$\bullet$] Hardy's operator $\mathscr{H} \colon f \rightsquigarrow [x \rightsquigarrow \frac{1}{x}\int_0^x f(t)dt]$ is bounded on a r.i. function space $X$ if,
			and only if, $\alpha_X > 1$;
			\item[$\bullet$] Copson's operator $\mathscr{C} \colon f \rightsquigarrow [x \rightsquigarrow \int_x^{\infty} \frac{f(t)}{t}dt]$ is bounded on a r.i. function space $X$ if,
			and only if, $\beta_X < \infty$.
		\end{itemize}
		For the details we refer to \cite{BS88} and \cite{KPS82}.
		\demo
	\end{remark}
	
	The {\bf lower dilation exponent $\gamma_\varphi$} and the {\bf upper dilation exponent $\delta_\varphi$} of an arbitrary positive function
	$\varphi$ on $(0,\infty)$ are defined as
	\begin{equation} \label{dilation indices}
		\gamma_\varphi \coloneqq \lim\limits_{s \rightarrow \infty} \frac{\log s}{ \log \, \sup_{t > 0} \frac{\varphi(st)}{\varphi(t)} }
	\end{equation}
	and
	\begin{equation}
		\delta_\varphi \coloneqq \lim\limits_{s \rightarrow 0^{+}} \frac{\log s}{ \log \, \sup_{t > 0} \frac{\varphi(st)}{\varphi(t)} }.
	\end{equation}
	For a quasi-concave function\footnote{A non-negative function, say $\varphi$, defined on the half-line $(0,\infty)$ is said to be {\bf quasi-concave}
	if $\varphi$ is non-decreasing, vanishes only at zero and $t \rightsquigarrow \frac{\varphi(t)}{t}$ is non-increasing. Clearly, every non-negative concave function
	on $(0,\infty)$ that vanishes only at the origin is quasi-concave, but not {\it vice versa} (see \cite[p.~69]{BS88}).} $\varphi$ on $(0,\infty)$,
	we have $1 \leqslant \gamma_\varphi \leqslant \delta_\varphi \leqslant \infty$ (see \cite[Theorem~4.12, p.~107]{KPS82} or \cite[pp.~81--95]{Ma89}).
	
	\subsection{Elements on interpolation spaces} \label{SECTION: interpolation}
	Recall that the pair $\vv{X} \coloneqq (X_0,X_1)$ of Banach spaces is said to form a {\bf Banach couple} (briefly, a {\bf couple}) if both spaces $X_0$ and $X_1$
	can be linearly and continuously embedded into a common Hausdorff topological vector space (see \cite[2.9.3, p.~37]{BL76}).
	A Banach space $X$ is called an {\bf intermediate space} for a couple
	$\vv{X}$ if
	\begin{equation*}
		\Delta(\vv{X}) \hookrightarrow X \hookrightarrow \Sigma(\vv{X}),
	\end{equation*}
	where the {\bf intersection space} $\Delta(\vv{X}) \coloneqq X_0 \cap X_1$ is equipped with the norm $\norm{f}_{\Delta(\vv{X})} \coloneqq \max\left\{ \norm{f}_{X_0},\norm{f}_{X_1} \right\}$,
	while the {\bf sum space} $\Sigma(\vv{X}) \coloneqq X_0 + X_1$ is equipped with the norm $\norm{f}_{\Sigma(\vv{X})} \coloneqq \inf\left\{ \norm{g}_{X_0} + \norm{h}_{X_1} \right\}$,
	where the infimum taken over all decompositions of the form $f = g + h$ with $g \in X_0$ and $h \in X_1$.
	
	An operator $T \colon \Sigma(\vv{X}) \rightarrow \Sigma(\vv{Y})$ is called a {\bf bounded operator from a couple $\vv{X}$ into the couple $\vv{Y}$}
	if $T$ acts as a bounded operator from $X_0$ into $Y_0$ and from $X_1$ into $Y_1$. In this situation, we will briefly write $T \colon \vv{X} \rightarrow \vv{Y}$.
	Moreover, the norm of $T \colon \vv{X} \rightarrow \vv{Y}$ is defined to be
	\begin{equation*}
		\norm{T}_{\vv{X} \rightarrow \vv{Y}} \coloneqq \max \left\{ \norm{T}_{X_0 \rightarrow Y_0}, \norm{T}_{X_1 \rightarrow Y_1} \right\}.
	\end{equation*}

	The intermediate space $X$ for a couple $\vv{X}$ is called an {\bf interpolation space} relative to the couple $\vv{X}$ if for any operator
	$T \colon \Sigma(\vv{X}) \rightarrow \Sigma(\vv{X})$ we have $T(X) \subset X$. The closed graph theorem guarantees that
	$\norm{T}_{X \rightarrow X} \leqslant C \norm{T}_{\vv{X} \rightarrow \vv{X}}$ for some $C \geqslant 1$. If $C = 1$ we say that $X$ is an
	{\bf exact interpolation space}.
	
	A functor $\mathbf{F}$ acting from the category of all Banach couples into the category of Banach spaces is called an {\bf interpolation functor}
	if for every couple $\vv{X}$ a Banach space $F(\vv{X})$ is an intermediate space for $\vv{X}$ and $\mathbf{F}(T) = T$ for every $T \colon \vv{X} \rightarrow \vv{Y}$.
	The simplest interpolation functors are $\Delta \colon \vv{X} \rightsquigarrow \Delta(\vv{X})$, $P_0 \colon \vv{X} \rightsquigarrow X_0$,
	$P_1 \colon \vv{X} \rightsquigarrow X_1$ and $\Sigma \colon \vv{X} \rightsquigarrow \Sigma(\vv{X})$.
	
	To give other examples of interpolation functors, recall that {\bf Peetre's $\mathscr{K}$-functional} of a couple $\vv{X} = (X_0,X_1)$
	is defined for $f \in \Sigma(\vv{X})$ and $t > 0$ as follows
	\begin{equation*}
		\mathscr{K}(f,t;X_0,X_1) \coloneqq \inf\left\{ \norm{f_0}_{X_0} + \norm{f_1}_{X_1} \colon f = f_0 + f_1 \text{ with } f_0 \in X_0 \text{ and } f_1 \in X_1 \right\}.
	\end{equation*}
	Moreover, a {\bf parameter of the real $\mathscr{K}$-method} is a Banach function space, say $\mathscr{X}$, defined over the half-line
	$(0,\infty)$ equipped with the multiplicative Haar measure $dt/t$ that contains the function $t \rightsquigarrow \min\{1,t\}$.
	Then the {\bf real $\mathscr{K}$-method} with parameter $\mathscr{X}$ is the interpolation functor that takes the couple $\vv{X}$
	to the space $(X_0,X_1)^{\mathscr{K}}_{\mathscr{X}}$ consisting of all $f \in \Sigma(\vv{X})$ such that
	\begin{equation*}
		\norm{f}_{(X_0,X_1)^{\mathscr{K}}_{\mathscr{X}}} \coloneqq \norm{t \rightsquigarrow \mathscr{K}(f,t;X_0,X_1)}_{\mathscr{X}} < \infty.
	\end{equation*}
	In particular, the choice of the weighted $L_p$-space $L_p(w)$, where $w(t) = t^{-\theta}$ for $t > 0$, $0 < \theta < 1$ and $1 \leqslant p \leqslant \infty$,
	in place of $\mathscr{X}$ leads to the well-known {\bf Lions--Petree real method} $(\cdot,\cdot)_{\theta,p}$.

	An interpolation functor $\mathbf{F}$ is called the {\bf $\mathscr{K}$-method functor} if there exists a Banach function space, say $\mathscr{X}$,
	as above such that $\mathbf{F}(X_0,X_1) = (X_0,X_1)^{\mathscr{K}}_{\mathscr{X}}$.
	
	Another celebrated interpolation functor is the so-called {\bf Calder{\' o}n first method} or the {\bf lower method} (simply, the {\bf complex method})
	$[\cdot,\cdot]_{\theta}$ with $0 \leqslant \theta \leqslant 1$. We will not remind its construction here, because in practice, that is, when
	we actually want to calculate something, it boils down to the {\it Calder{\' o}n product}\footnote{Below we will recall the construction of
	an even more general space, the so-called {\it Calder{\' o}n--Lozanovski{\u \i} space} $\varrho(X,Y)$ (for this, see \eqref{INEQ: CL construction}).}
	$X^{1-\theta}Y^{\theta}$: if $X$ and $Y$ are two complex Banach ideal spaces defined on the same measure space, then
	\begin{equation*}
		[X,Y]_{\theta} \equiv X^{1-\theta}Y^{\theta}
	\end{equation*}
	provided one of the spaces $X$ or $Y$ has an order continuous norm (see \cite{Cal64}).
	
	\begin{remark}
		For completeness, let us also note that the complex method is an offspring of Olof Thorin's proof of Marcel Riesz's \enquote{convexity theorem} from 1939
		(see \cite[pp.~34--36]{BK91} and \cite[Note 1.7.3 on p.~20]{BL76} and \cite[pp.~183--198]{BS88}), whereas the real method is a reincarnation of the J{\' o}zef
		Marcinkiewicz's ideas in the form of the $\mathscr{K}$-functional proposed by Jaak Petree in 1963 (interestingly, Marcinkiewicz also announced his result
		in 1939; see \cite[Chapter~1]{BK91}, \cite[Chapters 1 and 3]{BL76} and \cite[pp.~216--228]{BS88} for more information on the evolution of this circle of ideas).
		\demo
	\end{remark}
	
	Much more detailed study of interpolation spaces, interpolation functors and interpolation methods can be found, for example, in the books by
	Bergh and L{\" o}fstr{\" o}m \cite{BL76}, Brudny{\u \i} and Krugljak \cite{BK91}, and Triebel \cite{Tri78} (see also Bennett and Sharpley \cite{BS88},
	Kre{\u \i}n, Petunin and Semenov \cite{KPS82} and Maligranda \cite{Mal04}).
	
	\subsection{Classical protagonists of rearrangement invariant spaces} \label{SECTION: ri spaces}
	Let $1 \leqslant p < \infty$ and $1 \leqslant q < \infty$. Recall that the {\bf Lorentz space} $L_{p,q}(0,\infty) = L_{p,q}$,
	consists of all equivalence classes, modulo equality almost everywhere, of real-valued Lebesgue measurable functions such that
	\begin{equation} \label{NORM Lpq}
		\norm{f}_{L_{p,q}}
			\coloneqq \left[ \int_0^{\infty} \left( t^{1/p}f^{\star}(t) \right)^{q} \frac{dt}{t} \right]^{1/q}
			= \left[ \frac{p}{q} \int_0^{\infty} \left( f^{\star}(t) \right)^{q} d\left( t^{q/p} \right) \right]^{1/q} < \infty
	\end{equation}
	
	Complementary, the {\bf weak $L_p$-space}, denoted $L_{p,\infty}(0,\infty) = L_{p,\infty}$, is the collection of all functions as above so that
	\begin{equation} \label{NORM Lpinfty}
		\norm{f}_{L_{p,\infty}} \coloneqq \sup\limits_{t > 0} t^{1/p} f^{\star}(t)  < \infty.
	\end{equation}

	In general, both expressions (\ref{NORM Lpq}) and (\ref{NORM Lpinfty}) are not necessarily a norms but merely
	a quasi-norms, giving rise to {\it quasi}-Banach spaces rather than traditionally favored Banach spaces. Nevertheless,
	thanks to Hardy's inequality, at least when $1 < p < \infty$ and $1 \leqslant q < \infty$, these quasi-norms are
	equivalent to norms (see, for example, \cite[Theorem~1.9.12, p.~59]{BK91}; cf. \cite[Excersise~7, p.~16]{BL76}).
	
	Of course, $L_{p,p} \equiv L_p$. The spaces $L_{p,q}$ possess the Fatou property and, moreover, when $1 \leqslant q \leqslant p < \infty$ or $p = q = \infty$,
	are rearrangement invariant with both Boyd indices $\alpha_{L_{p,q}}$ and $\beta_{L_{p,q}}$ equal to $p$ (see \cite[Theorem~1.9.9, p.~55]{BK91}).
	The scale of Lorentz spaces $\left\{ L_{p,q} \right\}_{1 \leqslant q < \infty}$ \enquote{increase} as the exponent $q$ increases
	(see \cite[formula (1.9.17), p.~58]{BK91}; cf. \cite[pp.~142--143]{LT79}), that is, for $1 \leqslant p \leqslant \infty$ and $1 \leqslant q < r \leqslant \infty$ we have
	\begin{equation*}
		L_{p,1} \hookrightarrow L_{p,q} \overset{C}{\hookrightarrow} L_{p,r} \hookrightarrow L_{p,\infty},
	\end{equation*}
	where $C = \left( q/p \right)^{1/q-1/r}$.
	Moreover, the Lorentz spaces $L_{p,q}$ have a natural description as an interpolation space obtained via the Lions--Peetre method applied
	to a couple of Lebesgue spaces; more precisely (see, for example, \cite[Theorem~5.2.1, p.~109]{BL76}), for $1 \leqslant p,q,r,s \leqslant \infty$
	with $r < s$ and $1/p = (1-\theta)/r + \theta/s$, we have
	\begin{equation*}
		L_{p,q} = (L_r,L_s)_{\theta,q}.
	\end{equation*}

	Given a concave function $\varphi \colon (0,\infty) \rightarrow (0,\infty)$ which is non-negative and vanishes only at zero, we define
	the {\bf Lorentz space} $\Lambda_{\varphi}(0,\infty) = \Lambda_{\varphi}$ to be the space of all measurable functions, say $f$, for which
	the following Riemann--Stieltjes integral
	\begin{equation*}
		\norm{f}_{\Lambda_{\varphi}}
			\coloneqq \int_0^{\infty} f^{\star}(t)d\varphi(t)
			= \norm{f}_{L_{\infty}} \varphi(0^+) + \int_0^{\infty} f^{\star}(t)\varphi'(t)dt
	\end{equation*}
	is finite, where $\varphi(0^+) \coloneqq \lim_{t \rightarrow \infty} \varphi(t)$ and $\varphi'$ is the derivative\footnote{Since the function $\varphi$
	is concave, so its derivative $\varphi'$ exists outside a countable set of points.} of $\varphi$. Of course, if $\varphi(t) = t^{1/p}$ with $1 \leqslant p < \infty$,
	then the space $\Lambda_{\varphi}$ coincide, up to the equality of norms, with the Lorentz space $L_{p,1}$ introduced above.
	
	Likewise, for a quasi-concave function $\varphi \colon (0,\infty) \rightarrow (0,\infty)$, by the {\bf Marcinkiewicz space} $M_{\varphi}(0,\infty) = M_{\varphi}$
	we understand the vector space of all functions $f$ as above, with
	\begin{equation*}
		\norm{f}_{M_{\varphi}} \coloneqq \sup\limits_{t > 0} \frac{\varphi(t)}{t} \int_0^t f^{\star}(s)ds  < \infty.
	\end{equation*}
	Due to Boyd's theorem \cite{Bo69}, Hardy's operator $f \rightsquigarrow \left[ x \rightsquigarrow \frac{1}{x}\int_0^{x} f(t)dt \right]$
	is bounded on $M_{\varphi}$ with $\alpha_{M_{\varphi}} = \gamma_{\varphi} > 1$ and, in consequence,
	\begin{equation*}
		\norm{f}_{M_{\varphi}} \approx \sup\limits_{t > 0} \varphi(t) f^{\star}(t).
	\end{equation*}
	In particular, for $\varphi(t) = t^{1/p}$ with $1 < p < \infty$, the space $M_{\varphi}$ coincide, up to the equivalence of norms, with $L_{p,\infty}$.
	
	The above constructions of Lorentz and Marcinkiewicz spaces are dual to each other in the sense that $(\Lambda_{\varphi})^{\times} = M_{\psi}$
	and $(M_{\varphi})^{\times} \equiv \Lambda_{\psi}$, where $\psi(t) \equiv t/\varphi(t)$ for $t > 0$ (see, for example, \cite[Theorem~5.2, p.~112]{KPS82}).
	Moreover, it is easy to see that the lower and upper Boyd indices of Lorentz and Marcinkiewicz spaces coincide with the lower and upper dilation exponent, respectively.
	
	Let us denote by $\mathcal{U}$ the set of all non-negative, concave and positively homogeneous functions
	$\varrho \colon [0,\infty) \times [0,\infty) \rightarrow [0,\infty)$ which vanish only at $(0,0)$. 
	For a function $\varrho$ from $\mathcal{U}$ and two Banach ideal spaces $X$ and $Y$, both defined on the same measure space,
	by the {\bf Calder{\' o}n--Lozanovski{\u \i} space $\varrho(X,Y)$} (or the {\bf Calder{\' o}n--Lozanovski{\u \i} construction})
	we understand a vector space consisting of all measurable functions, say $f$, such that
	\begin{equation} \label{INEQ: CL construction}
		\abs{f(t)} \leqslant \lambda \varrho(\abs{g(t)},\abs{h(t)})
	\end{equation}
	for some $\lambda > 0$ and $g \in \text{Ball}(X)$, $h \in \text{Ball}(Y)$. Moreover, the norm $\norm{f}_{\varrho(X,Y)}$ of a function $f$
	from $\varrho(X,Y)$ is defined as the infimum over all $\lambda > 0$ for which the above inequality (\ref{INEQ: CL construction}) holds.
	It can be shown that
	\begin{equation*}
		\norm{f}_{\varrho(X,Y)} = \inf \bigl\{ \max \left\{ \norm{g}_X, \norm{h}_Y \right\}
			\colon \abs{f(t)} \leqslant \varrho(\abs{g(t)},\abs{h(t)}) \text{ for some } g \in X \text{ and } h \in Y \bigr\}.
	\end{equation*}
	Vladimir I. Ovchinnikov showed that the above construction is an interpolation functor (but not necessary exact)
	on the category of Banach ideal spaces with the Fatou property.
	The importance of Calder{\' o}n--Lozanovski{\u \i} spaces is partly due to the fact that it unifies many other remarkable constructions.
	For example, in the case of power functions, that is, when $\varrho(s,t) = s^{1-\theta}t^{\theta}$ with $0 \leqslant \theta \leqslant 1$,
	the space $\varrho(X,Y)$ coincide with the celebrated Calder{\' o}n interpolation construction $X^{1-\theta}Y^{\theta}$ defined already
	in 1964 (see \cite{Cal64}). In particular, the {\bf $p$-convexification $X^{(p)}$} of $X$, where $1 < p < \infty$, is a special case of
	the Calder{\' o}n product, namely,
	\begin{equation*}
		X^{1/p}(L_{\infty})^{1-1/p} \equiv X^{(p)} \coloneqq \left\{ f \in L_0 \colon \abs{f}^p \in X \right\}
	\end{equation*}
	and $\norm{f}_{X^{(p)}} \coloneqq \norm{\abs{f}^p}^{1/p}_X$. On the other hand, if $\varrho(s,t) = tM^{-1}(s/t)$ for $t > 0$, where $M^{-1}$
	is the right-continuous inverse of an Young function\footnote{As usual, by an {\bf Young function $M$} we shall mean a convex function
	$M \colon [0,\infty) \rightarrow [0,\infty]$ which vanish at zero and is neither identically equal to zero nor infinity. Moreover, we put
	\begin{equation}
		a_M \coloneqq \sup \left\{ u \geqslant 0 \colon M(u) = 0 \right\}
	\end{equation}
	and
	\begin{equation}
		b_M \coloneqq \inf \left\{ u \geqslant 0 \colon M(u) = \infty \right\}.
	\end{equation}
	} $M$, then the space $\varrho(X,L_{\infty})$ coincide with the
	{\bf generalized Orlicz space\footnote{In many places in the literature, the space $X_M$ is called the {\it Calder{\' o}n--Lozanovski{\u \i}
	space} itself. We, however, will stick to the name \enquote{generalized Orlicz space}, to avoid some inconsistency in nomenclature and,
	possibly, subsequent misunderstandings.} $X_M$}, which is defined as
	\begin{equation*}
		X_M \coloneqq \left\{f \in L_0 \colon M\left( \frac{\abs{f}}{\lambda} \right) \in X \text{ for some } \lambda > 0 \right\}
	\end{equation*}
	and equipped with the {\bf Luxemburg--Nakano norm}
	\begin{equation*}
		\norm{f}_{X_M} \coloneqq \inf \left\{ \lambda > 0 \colon \norm{M\left( \frac{\abs{f}}{\lambda} \right)}_X \leqslant 1 \right\}.
	\end{equation*}
	In particular, the space $\varrho(L_1,L_{\infty})$ coincides up to the equality of norms with the familiar {\bf Orlicz space $L_M$}.
	Of course, if $M(u) = u^p$ with $1 \leqslant p < \infty$, then the Orlicz space $L_M$ is nothing else but the classical Lebesgue space $L_p$.
	The Boyd indices of Orlicz spaces $L_M$ coincide with the so-called {\bf Matuszewska--Orlicz indices} $\alpha_M$ and $\beta_M$
	(see \cite[Proposition~2.b.5, p.~139]{LT79} and \cite[Chapter~11]{Mal04}).
	
	\begin{remark}[Sequence spaces]
		All the above constructions, of course, have their discrete counterparts; and so, the {\bf Lorentz sequence space} $\ell_{p,q}$ is the
		collection of all sequences $x = \{x_n\}_{n=1}^{\infty}$ with
		\begin{equation*}
			\norm{x}_{\ell_{p,q}} \coloneqq \left[ \sum_{n=1}^{\infty} x_n^{\star q} \left( (n+1)^{q/p} - n^{q/p} \right) \right]^{1/q} < \infty
		\end{equation*}
		and, moreover, the {\bf weak $\ell_p$-space} $\ell_{p,\infty}$ consists of all sequences $x = \{x_n\}_{n=1}^{\infty}$ such that
		\begin{equation*}
			\norm{x}_{\ell_{p,\infty}} \coloneqq \sup\limits_{n \in \mathbb{N}} n^{1/p} x_n^{\star} < \infty.
		\end{equation*}
		By the {\bf Lorentz sequence space $\lambda_{\varphi}$}, respectively, the {\bf Marcinkiewicz sequence space $m_{\varphi}$}, we understand
		the space of all sequences $x = \{x_n\}_{n=1}^{\infty}$ for which
		\begin{equation*}
			\norm{x}_{\lambda_{\varphi}} \coloneqq \sum_{n=1}^{\infty} x_n^{\star} \bigl( \varphi(n+1) - \varphi(n) \bigr) < \infty,
		\end{equation*}
		respectively,
		\begin{equation*}
			\norm{x}_{m_{\varphi}} \coloneqq \sup\limits_{n \in \mathbb{N}} \frac{\varphi(n)}{n} \sum_{k=1}^n x_k^{\star} < \infty.
		\end{equation*}
		Finally, the {\bf Orlicz sequence space} $\ell_{M}$, where $M$ is a Young function, is a vector space consisting of all sequences,
		say $x = \{x_n\}_{n=1}^{\infty}$, such that $\sum_{n=1}^{\infty} M(\abs{x_n}/\lambda) < \infty$ for some $\lambda > 0$ equipped with
		the so-called {\bf Luxemburg--Nakano norm}
		\begin{equation*}
			\norm{x}_{\ell_{M}} \coloneqq \inf \left\{ \lambda > 0 \colon \sum_{n=1}^{\infty} M\left( \frac{\abs{x_n}}{\lambda} \right) \leqslant 1 \right\}.
		\end{equation*}
		In particular, if $M(u) = u^p$ with $1 \leqslant p < \infty$, then $\ell_M \equiv \ell_p$. Moreover, it is easy to see that $\ell_M = \ell_{\infty}$
		if, and only if, $a_M > 0$.
		\demo
	\end{remark}

	\begin{notation}
		Whenever we consider the function and sequence case {\it together}, we will, unless stated otherwise, prefer to use the function
		space notation. In all these situations, after some obvious modifications, function' arguments can be converted into their sequence
		equivalents and {\it vice versa}. Note, however, that contrary to G. H. Hardy's popular maxim, what goes for functions does not
		necessarily go for sequences in general, so some caution seems advisable
		(see, for example, \cite{BGE06} or Question~\ref{QUESTION: Multipliers Cesaro--Orlicz function}).
	\end{notation}

	For general properties of Banach ideal spaces, rearrangement invariant spaces and, in particular, Lorentz, Marcinkiewicz and Orlicz spaces,
	we refer to the books by Bennett and Sharpley \cite{BS88}, Kre{\u \i}n, Petunin and Semenov \cite{KPS82}, Lindenstrauss and Tzafriri \cite{LT77}, \cite{LT79}
	and Maligranda \cite{Ma89}.

	For more information about Banach lattices and, even more generally, Riesz spaces see, for example, Meyer-Nieberg \cite{MN91} and Zaanen \cite{Za83}.
	
	\subsection{Pointwise multipliers} \label{SECTION: pointwise multipliers}
	Let $X$ and $Y$ be two Banach ideal spaces both defined on the same measure space $(\Omega,\Sigma,\mu)$. The space of
	{\bf pointwise multipliers} $M(X,Y)$ between $X$ and $Y$ is defined as a vector space
	\begin{equation*}
		M(X,Y) \coloneqq \left\{ f \in L_0(\Omega,\Sigma,\mu) \colon fg \in Y \text{ for all } g \in X \right\}
	\end{equation*}
	and furnished with the natural in this situation operator norm $\norm{f}_{M(X,Y)} \coloneqq \sup_{\norm{g}_X \leqslant 1} \norm{fg}_Y$
	(after all, every function $f \in M(X,Y)$ induces the multiplication operator $M_f \colon X \rightarrow Y$ given as $M_f \colon g \rightsquigarrow fg$).
	With this norm $M(X,Y)$ becomes a Banach ideal space itself (see \cite[Proposition~2]{MP89}). Moreover, if $X$ and $Y$ are two r.i. spaces,
	then $M(X,Y)$ is a r.i. space as well (see \cite[Theorem~2.2]{KLM12} and the first part of the proof of Lemma~4.3 in \cite{KT23}).
	
	\begin{digression}
		To shed a familiar light on the above construction, let us mention that the space $M(X, L_1)$ coincide, even up to the equality
		of norms, with the K{\" o}the dual $X^{\times}$, so we can naturally think of the space $M(X,Y)$ as the generalized K{\" o}the dual
		of $X$ (for more papers contemplating this approach see, for example, \cite{CDSP08} and \cite{MP89}; there, by analogy with K{\" o}the dual,
		the space $M(X,Y)$ is denoted somewhat different by $X^Y$ and is called the {\bf $Y$-dual space} of $X$).
		\demo
	\end{digression}
	
	Again, by the very definition,
	\begin{equation} \label{INEQ: Holder-Rogers inequality}
		\norm{fg}_Y \leqslant \norm{f}_X \norm{g}_{M(X,Y)}
	\end{equation}
	and there are good reasons\footnote{Since $M(X,L_1) \equiv X^{\times}$, so the classical H{\" o}lder--Rogers inequality corresponds
	to the situation when $Y \equiv L_1$.} to call the above inequality the {\bf generalized H{\" o}lder--Rogers inequality}.
	
	\begin{example}
		It seems prudent to note now a few very basic facts, which one should be aware of. First of all, let $1 \leqslant q < p < \infty$ and set
		$1/r = 1/q - 1/p$. Then $M(L_p,L_q) \equiv L_r$ and $M(\ell_p,\ell_q) \equiv \ell_r$ (see \cite[Example~1.6]{Cro69}, \cite[Proposition~3]{MP89} or \cite[Example~3.1]{Nak17}).
		Moreover, the space $M(L_q,L_p)$ is trivial, but $M(\ell_q,\ell_p) \equiv \ell_{\infty}$ (see, e.g., \cite[Theorem~2]{MP89}). It is also worth noting
		that in general $M(X,X) \equiv L_{\infty}$ (see, for example, \cite[Theorem~1]{MP89}). Many more sophisticated results\footnote{These results, apart from
			isolated exceptions, are never isometric in nature.} of this type, that is, identifying the space $M(X,Y)$ for different configurations
		of Banach ideal spaces $X$ and $Y$, can be found, for example, in \cite{KLM12}, \cite{KLM14}, \cite{KT23}, \cite{LT17}, \cite{LT21}, \cite{MP89} and \cite{Sch10};
		see also \cite{Nak17} and references given there.
	\end{example}
	
	\subsection{Pointwise products} \label{SECTION: pointwise products}
	Since the space $M(X,Y)$ of pointwise multipliers acting between two Banach ideal spaces $X$ and $Y$ may informally be regarded
	as a \enquote{pointwise quotient} of the space $Y$ by $X$, so in some sense an opposite construction is the {\bf pointwise product
	space $X \odot Y$} defined by
	\begin{equation*}
		X \odot Y \coloneqq \left\{ gh \colon g \in X \text{ and } h \in Y \right\},
	\end{equation*}
	and equipped with the quasi-norm(!)
	\begin{equation*}
		\norm{f}_{X \odot Y} \coloneqq \inf \bigl\{ \norm{g}_X \norm{h}_Y \colon f = gh,\, g \in X \text{ and } h \in Y \bigr\}.
	\end{equation*}
	Note that the product $X \odot Y$ is a vector space due to the ideal property of $X$ and $Y$. Moreover, we have
	\begin{equation*}
		\norm{f}_{X \odot Y} = \norm{\abs{f}}_{X \odot Y} = \inf \bigl\{ \norm{g}_X \norm{h}_Y \colon \abs{f} = gh \text{, } g \in X_+ \text{ and } h \in Y_+ \bigr\},
	\end{equation*}
	where $X_+$ and $Y_+$ are the positive cones in $X$ and, respectively, $Y$ (see \cite[Proposition~1]{KLM14}).
	Note also that $X \odot Y$ is a r.i. quasi-Banach ideal space provided both spaces $X$ and $Y$ were r.i. spaces too.
	
	\begin{example}
		For further reference, it is worth noting that if $1 \leqslant p,q \leqslant \infty$, then $L_p \odot L_q \equiv L_r$, where $1/r = 1/p + 1/q$.
		Even more generally, $X^{(p)} \odot X^{(q)} \equiv X^{(r)}$ (see \cite[Lemma~1]{Mal04} and \cite[Lemma~2.21(i)]{ORS08}).
	\end{example}
	
	The so-called {\bf cancellation property} for multipliers will often prove helpful
	(see \cite[Theorem~4]{KLM14}; cf. \cite[p.~72--74]{Be96}, \cite[p.~46]{GE98} and \cite[Lemma~4(i)]{KLM19})
	
	\begin{theoremletters}[Cancellation property] \label{THM: cancellation property for multipliers}
		{\it For any three Banach ideal spaces, say $E$, $F$ and $G$, with $G$ having the Fatou property, the following holds}
		\begin{equation} \tag{$\div$} \label{CANCELLATION PROPERTY}
			M(E \odot F, E \odot G) \equiv M(F,G).
		\end{equation}
	\end{theoremletters}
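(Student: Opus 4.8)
The plan is to prove the two inclusions $M(E \odot F, E \odot G) \hookrightarrow M(F,G)$ and $M(F,G) \hookrightarrow M(E \odot F, E \odot G)$ separately, with the former being the genuinely delicate one. For the easy inclusion, suppose $h \in M(F,G)$ and take any $u \in E \odot F$. Then $u = ef$ with $e \in E$ and $f \in F$, and since $hf \in G$ we get $hu = e(hf) \in E \odot G$ with the norm estimate $\norm{hu}_{E \odot G} \leqslant \norm{e}_E \norm{hf}_G \leqslant \norm{e}_E \norm{h}_{M(F,G)} \norm{f}_F$; taking the infimum over all factorizations $u = ef$ gives $\norm{hu}_{E \odot G} \leqslant \norm{h}_{M(F,G)} \norm{u}_{E \odot F}$, hence $\norm{h}_{M(E \odot F, E \odot G)} \leqslant \norm{h}_{M(F,G)}$. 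Note this direction does not use the Fatou property of $G$.

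For the reverse inclusion, let $h \in M(E \odot F, E \odot G)$; we must show $hf \in G$ for every $f \in F$ with control on the norm. The idea is: given $f \in F$ and an arbitrary $g \in E$ with $\norm{g}_E \leqslant 1$, the product $gf$ lies in $E \odot F$ with $\norm{gf}_{E \odot F} \leqslant \norm{f}_F$, so $h(gf) = g(hf) \in E \odot G$ and $\norm{g(hf)}_{E \odot G} \leqslant \norm{h}_{M(E \odot F, E \odot G)} \norm{f}_F$. Thus $g(hf) \in E \odot G$ for \emph{every} $g \in \mathrm{Ball}(E)$. The crux is now to deduce from "$g \cdot (hf) \in E \odot G$ uniformly over $g \in \mathrm{Ball}(E)$" that $hf$ itself belongs to $G$. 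This is where the Fatou property of $G$ enters: one picks a sequence $g_n \in \mathrm{Ball}(E)$ of positive functions increasing to a strictly positive function $g_\infty$ (possible since $E$ contains characteristic functions of sets of finite measure and the measure space is $\sigma$-finite, so one can build such a sequence by truncation and scaling), then for each $n$ one has a factorization $g_n \abs{hf} = a_n b_n$ with $a_n \in E_+$, $b_n \in G_+$ and $\norm{a_n}_E \norm{b_n}_G \leqslant \norm{h}_{M(E\odot F,E\odot G)}\norm{f}_F + \varepsilon$. By normalizing one may assume $\norm{a_n}_E = \norm{b_n}_G$ up to a bounded factor, and then a diagonal/convexity argument (for instance, passing to Cesàro-type averages of the $b_n$, using the ideal property, and invoking the Fatou property to extract a limit that dominates $\abs{hf}$ on the support of $g_\infty$) produces an element of $G$ dominating $\abs{hf}$, giving $hf \in G$ with $\norm{hf}_G \lesssim \norm{h}_{M(E\odot F,E\odot G)}\norm{f}_F$.

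The main obstacle is precisely this last step: turning the family of pointwise factorizations $\{g_n\abs{hf} = a_nb_n\}$ into a single factorization or a single dominating element of $G$, since the factorizations for different $n$ need not be compatible. The cleanest route is likely to first reduce to the case where $E$ and $G$ (hence $E \odot G$) have the Fatou property and then exploit the known description $\norm{u}_{E \odot G} = \norm{\abs{u}}_{E\odot G}$ together with the identity $(E \odot G)^{\times\times} = E \odot G$, or alternatively to use the duality $M(E\odot F, E\odot G) = $ (something involving Köthe duals) to transfer the problem to an inclusion between Köthe duals where the Fatou property makes bidual arguments available. In the write-up I would isolate the statement "if $u g \in E\odot G$ for all $g$ in the unit ball of a space $E$ containing enough characteristic functions, and $G$ has the Fatou property, then $u \in G$" as a preliminary lemma, prove it by the monotone-sequence-plus-Fatou argument sketched above, and then the cancellation property follows immediately by combining it with the two norm estimates. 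This mirrors the approach in \cite[Theorem~4]{KLM14}, to which one can also simply appeal.
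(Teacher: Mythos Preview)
The paper does not supply its own proof of this statement: it is recorded as a lettered theorem from the literature, with a direct citation to \cite[Theorem~4]{KLM14} (and related references). Your closing remark that one can ``simply appeal'' to \cite[Theorem~4]{KLM14} is therefore exactly what the paper does, and nothing more is required here.

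As for your sketch: the easy inclusion $M(F,G) \hookrightarrow M(E\odot F, E\odot G)$ is clean and correct, with norm at most $1$. For the reverse inclusion your strategy is the right one in spirit---show that $g\cdot(hf)\in E\odot G$ uniformly over $g\in\mathrm{Ball}(E)$ and then extract $hf\in G$ via Fatou---but, as you yourself flag, the ``diagonal/convexity argument'' turning a family of incompatible factorizations $g_n|hf|=a_nb_n$ into a single $G$-dominant is not a routine step, and your description of it is not yet a proof. The argument in \cite{KLM14} organizes this differently (working through the identification $X\odot Y \equiv (X^{1/2}Y^{1/2})^{(1/2)}$ and Lozanovski{\u\i}-type duality rather than an ad hoc limiting procedure), which is why the Fatou hypothesis on $G$ enters naturally there. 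If you want a self-contained write-up, that route is more robust than the monotone-sequence sketch; but for the purposes of this paper the citation suffices.
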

	
	The construction $(X,Y) \rightsquigarrow X \odot Y$ is inextricably linked with the complex interpolation method. More precisely,
	the product space $X \odot Y$ of two Banach ideal spaces $X$ and $Y$ can be represented as the $\frac{1}{2}$-concavification of the
	Calder{\' o}n space $X^{1/2}Y^{1/2}$, that is to say,
	\begin{equation} \label{EQ: product as 1/2-concavification of Calderon product}
		X \odot Y \equiv \left( X^{1/2}Y^{1/2} \right)^{(1/2)}
	\end{equation}
	(see \cite[Theorem~2.1]{Sch10}; cf. \cite[Theorem~1(iv)]{KLM14}).
	
	\begin{digression}
		If we choose the right perspective, namely, we will think of Banach ideal spaces as modules over $L_{\infty}$,
		then one can argue, that the pointwise product $\odot$ is the \enquote{true} tensor product in the category of quasi-Banach
		modules over $L_{\infty}$ (see \cite[Theorem~1]{CS14}; cf. \cite{OT72}).
		\demo
	\end{digression}
	
	Because the embedding $Z \hookrightarrow X \odot Y$ implies that any function $f$ from $Z$ can be {\bf factorized}
	as the pointwise product $f = gh$ with $g \in X$ and $h \in Y$, so it is not meaningless to call the identification
	of the product space in the form $X \odot Y = Z$ a {\bf factorization}.
	Factorization is a powerful tool with many deep and profound applications in harmonic analysis, interpolation theory and operator theory.
	For example, $\bullet$ Coifman, Rochberg and Weiss (weak) factorization of Hardy spaces in several variables with applications to commutators
	and boundedness of certain Hankel operators \cite{CRW76}, $\bullet$ Cohn and Verbitsky factorization of tent spaces with applications to complex
	interpolation and boundedness of Hankel operators acting in pairs of Triebel spaces \cite{CV00} and $\bullet$ Pau and Zhao (weak) factorization of
	weighted Bergman spaces \cite{PZ15} (see also \cite{CFG17} and \cite{Kal92}).
	
	For more information about the spaces of pointwise products we refer to \cite{Bun87}, \cite{CS14}, \cite{KLM14}, \cite{Sch10}
	and references given there.
	
	\subsection{Factorization}
	Celebrated {\bf Lozanovski{\u \i}'s factorization} theorem (see \cite[Theorem~6]{Loz69}; cf. \cite{Gil81}, \cite{JR76}, \cite[Example~6, p.~185]{Mal04} and \cite{Rei81})
	says that for any $\varepsilon > 0$ each function $f$ from $L_1$ can be written as a pointwise product
	of two functions, say $g$ and $h$, one from $X$ and the other from $X^{\times}$, in such a way that
	\begin{equation} \label{PRE: Lozanovski factorization}
		\norm{f}_{L_1} \leqslant \norm{g}_X \norm{h}_{X^{\times}} \leqslant (1+\varepsilon) \norm{f}_{L_1}.
	\end{equation}
	Moreover, we can set $\varepsilon = 0$ in the above inequality as long as we know that $X$ has the Fatou property.

	On the one hand, we can simply interpret Lozanovski{\u \i}'s result \eqref{PRE: Lozanovski factorization} as the commutativity of the diagram
	\begin{equation*}
		\begin{tikzcd}[column sep=small]
			& X \arrow[dr, dashrightarrow, "M_h"] & \\
			L_{\infty} \arrow[ur, dashrightarrow, "M_g"] \arrow{rr}{M_f} & & L_1
		\end{tikzcd}
	\end{equation*}
	together with the estimate
	\begin{equation*}
		\norm{M_g \colon L_{\infty} \rightarrow X} \norm{M_h \colon X \rightarrow L_1} \leqslant (1+\varepsilon)\norm{M_f \colon L_{\infty} \rightarrow L_1}.
	\end{equation*}
	
	On the other, using the language of pointwise products and pointwise multipliers, we can express Lozanovski{\u \i}'s
	result \eqref{PRE: Lozanovski factorization} in the following form
	\begin{equation*}
		X \odot M(X, L_1) = L_1.
	\end{equation*}
	Due to this, the short road leads to the much more general question: {\it For which pairs of Banach ideal spaces, say $X$ and $Y$, the following factorization
	\begin{equation} \label{EQ : factorization}
		X \odot M(X,Y) = Y
	\end{equation}
	holds?} We will refer to this kind of questions as the {\bf factorization problems}. Note, however, that it is very
	simple to find a pair of Banach ideal spaces $X$ and $Y$ such that \eqref{EQ : factorization} does not hold.
	For example, take $X = \ell_p$ and $Y = \ell_q$ with $1 \leqslant p < q < \infty$. Then
	\begin{equation*}
		\ell_p \odot M(\ell_p, \ell_q) = \ell_p \odot \ell_{\infty} = \ell_p \neq \ell_q.
	\end{equation*}
	Some more sophisticated examples of this kind can be found in \cite[Example~2]{KLM14} and \cite[Proposition~A.1]{KT23}.
	
	Even in some concrete classes of Banach function or sequence spaces, {\it en route} to the solution of the factorization problem problem
	is bumpy and, sometimes, also only one-way. With future needs in mind, we will now roughly summarize what we know about the factorization
	problem in the case when both involved spaces comes from the class of Lorentz spaces $L_{p,q}$, Orlicz spaces $L_M$ and Musielak--Orlicz spaces $L_{\Phi}$.
	
	\begin{theoremletters}[Factorization of Lorentz spaces $L_{p,q}$] \label{THEOREM: Factorization of Lpq}
		{\it Let $1 \leqslant p,q,r,s \leqslant \infty$. Then the Lorentz space $L_{p,q}$ can be factorized through another Lorentz space
		$L_{r,s}$, that is,}
		\begin{equation*}
			L_{p,q} \odot M(L_{p,q},L_{r,s}) = L_{r,s}
		\end{equation*}
		{\it if, and only if, either $p > r$ and $q \geqslant s$, or $p = r$ and $q = s$.}
	\end{theoremletters}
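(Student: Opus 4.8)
I would split the equivalence into its two implications; throughout I use the classical \textbf{product rule for Lorentz spaces}, $L_{p_{0},q_{0}} \odot L_{p_{1},q_{1}} = L_{p,q}$ whenever $1/p = 1/p_{0}+1/p_{1}$ and $1/q = 1/q_{0}+1/q_{1}$ (this is O'Neil's inequality together with its converse, the factorization of Lorentz functions; it also drops out of $X \odot Y \equiv (X^{1/2}Y^{1/2})^{(1/2)}$ and Calder\'on's description of $L_{p_{0},q_{0}}^{1-\theta}L_{p_{1},q_{1}}^{\theta}$ --- see \cite{Cal64} and \cite{KLM14}), and I write $\varphi_{Z}(t) \coloneqq \norm{\chi_{(0,t)}}_{Z}$ for the fundamental function of an r.i.\ space $Z$, so $\varphi_{L_{p,q}}(t) \approx t^{1/p}$. \textbf{Sufficiency.} If $p = r$ and $q = s$ then $L_{p,q}\odot M(L_{p,q},L_{p,q}) = L_{p,q}\odot L_{\infty} = L_{p,q}$. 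If $p > r$ and $q \geqslant s$, set $1/t \coloneqq 1/r - 1/p \in (0,\infty)$ and $1/u \coloneqq 1/s - 1/q \in [0,\infty)$ (so $u \in (0,\infty]$, with $u = \infty$ iff $q = s$); two uses of the product rule give $L_{p,q}\odot L_{t,u} = L_{r,s}$, so in particular $L_{t,u}\odot L_{p,q}\hookrightarrow L_{r,s}$, i.e.\ $L_{t,u}\hookrightarrow M(L_{p,q},L_{r,s})$ by the definition of the multiplier space. Monotonicity of $\odot$ and the generalized H\"older--Rogers inequality \eqref{INEQ: Holder-Rogers inequality} (which gives $X \odot M(X,Y)\hookrightarrow Y$) then sandwich
\begin{equation*}
	L_{r,s} = L_{p,q}\odot L_{t,u} \hookrightarrow L_{p,q}\odot M(L_{p,q},L_{r,s}) \hookrightarrow L_{r,s},
\end{equation*}
forcing equality. (One even identifies the multiplier: writing $L_{p,q} = L_{\infty}\odot L_{p,q}$ and $L_{r,s} = L_{t,u}\odot L_{p,q}$, the cancellation property (Theorem~\ref{THM: cancellation property for multipliers}), valid since $L_{t,u}$ has the Fatou property, gives $M(L_{p,q},L_{r,s}) = M(L_{\infty},L_{t,u}) = L_{t,u}$.)

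\textbf{Necessity.} Suppose $L_{p,q}\odot M = L_{r,s}$ with $M \coloneqq M(L_{p,q},L_{r,s})$. As $L_{r,s}\neq\{0\}$ we get $M\neq\{0\}$, and, being a nontrivial r.i.\ space, $M$ contains $\chi_{A}$ for every finite-measure $A$. I would use two elementary facts about multipliers of r.i.\ spaces $X,Y$. \emph{(i)} $M(X,Y)\hookrightarrow M_{\varphi_{Y}/\varphi_{X}}$: testing against characteristic functions gives $\norm{\chi_{A}}_{M(X,Y)} \geqslant \norm{\chi_{A}}_{Y}/\norm{\chi_{A}}_{X}$, hence $\varphi_{M(X,Y)} \geqslant \varphi_{Y}/\varphi_{X}$, and since the Marcinkiewicz space is the largest r.i.\ space with a prescribed fundamental function (see \cite{BS88}), $M(X,Y)\hookrightarrow M_{\varphi_{M(X,Y)}}\hookrightarrow M_{\varphi_{Y}/\varphi_{X}}$. \emph{(ii)} If in addition $\varphi_{X}\approx\varphi_{Y}$ then $M(X,Y)\hookrightarrow L_{\infty}$: for $g\in M(X,Y)$ and $\lambda<\norm{g}_{L_{\infty}}$, pick $A$ of positive finite measure with $|g|>\lambda$ on $A$; then $\norm{g}_{M(X,Y)}\geqslant\norm{g\chi_{A}}_{Y}/\norm{\chi_{A}}_{X}\geqslant\lambda\,\varphi_{Y}(|A|)/\varphi_{X}(|A|)\geqslant c\lambda$.

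Now I run through the configurations forbidden by the Theorem. \emph{Case $p<r$.} For finite-measure $A$, $\norm{\chi_{A}}_{M}$ is the norm of the formal inclusion $L_{p,q}(A)\hookrightarrow L_{r,s}(A)$, which is infinite, since on a set of finite measure $L_{r,s}(A)\subsetneq L_{p,q}(A)$ when $p<r$; so $\chi_{A}\notin M$, contradicting $M\neq\{0\}$. \emph{Case $p=r$, $q>s$.} Any $g\in M$ with $g\neq0$ has $|g|\geqslant\varepsilon>0$ on some $A$ with $0<|A|<\infty$; choosing $f\in L_{p,q}(A)\setminus L_{p,s}(A)$ (possible because $q>s$ makes $L_{p,s}(A)\subsetneq L_{p,q}(A)$) gives $|gf|\geqslant\varepsilon|f|$, whence $gf\notin L_{p,s}$ by the ideal property, contradicting $g\in M$; so $M=\{0\}$, impossible. \emph{Case $p=r$, $q<s$.} Here $\varphi_{L_{p,q}}\approx\varphi_{L_{p,s}}$, so \emph{(ii)} gives $M\hookrightarrow L_{\infty}$, hence $L_{p,q}\odot M\hookrightarrow L_{p,q}\odot L_{\infty}=L_{p,q}\subsetneq L_{p,s}$ and the factorization fails. \emph{Case $p>r$, $q<s$.} By \emph{(i)}, $M\hookrightarrow M_{\varphi}$ with $\varphi(t)\approx t^{1/r}/t^{1/p}=t^{1/t}$, that is $M\hookrightarrow L_{t,\infty}$ for $1/t=1/r-1/p$; since $q<s$, the product rule gives $L_{p,q}\odot L_{t,\infty}=L_{r,q}\subsetneq L_{r,s}$, so $L_{p,q}\odot M\subsetneq L_{r,s}$ and the factorization fails. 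This exhausts all cases.

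\textbf{Where the difficulty lies.} The sufficiency is a one-line sandwich once the product rule is available; the real work is the necessity, and precisely the two critical configurations $p=r$ with $q\neq s$ and $p>r$ with $q<s$, where the outer exponents already agree with those of a successful factorization, so the obstruction has to be read off the finer inner exponent. The crux is fact \emph{(i)}: the fundamental function of $M(L_{p,q},L_{r,s})$ only detects the power $t^{1/r-1/p}$, so this multiplier space can never exceed the weak Lorentz space $L_{t,\infty}$, and the product $L_{p,q}\odot L_{t,\infty}=L_{r,q}$ sits strictly inside $L_{r,s}$ exactly when $q<s$. A recurring technical nuisance is the degenerate parameter range (for instance $L_{\infty,u}=\{0\}$ for $u<\infty$, and $L_{t,u}$ with an endpoint exponent being merely quasi-normed), which forces several of the identities above --- the product rule, the cancellation property --- to be invoked in quasi-Banach form; this is harmless but should be acknowledged.
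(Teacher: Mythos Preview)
Your proof is correct. The paper's own ``proof'' of this statement is merely the one-line citation ``This follows from \cite{CS17} in tandem with \cite[Theorem~4]{KLM19}'', so you have supplied the actual content that the paper outsources. Your route --- O'Neil's product rule $L_{p_0,q_0}\odot L_{p_1,q_1}=L_{p,q}$ for sufficiency, and the fundamental-function bound $M(X,Y)\hookrightarrow M_{\varphi_Y/\varphi_X}$ (together with the resulting weak-Lorentz cap $M(L_{p,q},L_{r,s})\hookrightarrow L_{t,\infty}$) for necessity --- is precisely the machinery developed in those references, so this is the same approach made explicit rather than a genuinely different one. Your case analysis is exhaustive and the four necessity cases are handled cleanly; the only caveat, which you already flag, is that several of the identities (the product rule, $X\odot Y\equiv (X^{1/2}Y^{1/2})^{(1/2)}$) must be read in their quasi-Banach form at endpoint exponents.
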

	\begin{proof}[\enquote{Proof}]
		This follows from \cite{CS17} in tandem with \cite[Theorem~4]{KLM19}.
	\end{proof}

	To say something about the factorization of Orlicz spaces we need a little more preparation. For a given two Young functions, say $M$ and $N$,
	by the {\bf generalized Young conjugate}\footnote{This construction is intended to naturally generalize the well-known construction of the
	{\it Young conjugate function} appearing in the theory of Orlicz spaces (see \cite[Chapter~8]{Ma89}; cf. \cite[pp.~77--78]{Ma89}). Recall
	that the K{\" o}the dual of the Orlicz space $L_M$ or, what comes to the same thing, the space of pointwise multipliers from the Orlicz space
	$L_M$ into $L_1$, can be described as follows: $L_M^{\times} = M(L_M,L_1) = L_{M^*}$, where $M^*(t) \coloneqq \sup_{s > 0} \left\{ ts - M(s) \right\}$
	is the Young's conjugate function to $M$ (cf. \cite[Theorem~9.1, p.~63]{Ma89}; see also \cite[Remark~1, p.~54]{Ma89}).
	Now, it is evident that $M^* \equiv \text{id} \ominus M$.} of $M$ with respect to $N$ we will understand another Young function $N \ominus M$
	defined in the following way
	\begin{equation*}
		(N \ominus M)(t) \coloneqq \sup_{s > 0} \left\{ N(ts) - M(s) \right\}.
	\end{equation*}
	Let us also note that the description of the space of pointwise multipliers between two Orlicz spaces consists of many partial results of various
	authors scattered over nearly half a century (we will only mention \cite{And60} and \cite{MP89}, referring to \cite{LT17} and \cite{LT21} for a
	much more detailed discussion).
	The complete answer to the factorization problem for Orlicz spaces is provided in the following
		
	\begin{theoremletters}[Factorization of Orlicz spaces] \label{THM: Faktoryzacja Orliczy LT17}
		{\it Let $M$ and $N$ be two Young functions. Then the Orlicz space $L_M$ can be factorized through another Orlicz space $L_N$, that is,}
		\begin{equation*}
			L_N \odot M(L_N,L_M) = L_M
		\end{equation*}
		{\it if, and only if,}
		\begin{equation*}
			N^{-1}(M \ominus N)^{-1} \approx M^{-1} \quad \textit{ for the appropriate values.}
		\end{equation*}
	\end{theoremletters}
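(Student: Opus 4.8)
The plan is to reduce the factorization problem for Orlicz spaces to two concrete ingredients — a computation of a pointwise product of Orlicz spaces and the identification of the multiplier space $M(L_N,L_M)$ with an Orlicz space built out of $M\ominus N$ — and then simply to read off the criterion. Throughout, every $\approx$ between Young functions below, together with the phrase \enquote{for the appropriate values}, is understood in the usual Orlicz sense: two Young functions generate the same space, up to equivalence of norms, exactly when they are equivalent near $0$ in the sequence case, and near $0$ together with near $\infty$ in the function case on $(0,\infty)$.

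First I would establish the product formula for Orlicz spaces,
\begin{equation*}
  L_A\odot L_B=L_C,\qquad C^{-1}\approx A^{-1}B^{-1}.
\end{equation*}
This follows by combining three facts already recorded in the Toolbox: the representation $X\odot Y\equiv(X^{1/2}Y^{1/2})^{(1/2)}$; the Calder\'on--Lozanovski\u{\i} computation for Orlicz spaces, $L_A^{1-\theta}L_B^{\theta}=L_D$ with $D^{-1}\approx(A^{-1})^{1-\theta}(B^{-1})^{\theta}$; and the elementary fact that a $p$-convexification of an Orlicz space is again an Orlicz space, $(L_D)^{(p)}=L_E$ with $E(t)=D(t^{p})$, i.e.\ $E^{-1}(t)=D^{-1}(t)^{1/p}$. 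Taking $\theta=p=\tfrac12$ gives $C^{-1}\approx((A^{-1})^{1/2}(B^{-1})^{1/2})^{2}=A^{-1}B^{-1}$.

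Next I would identify $M(L_N,L_M)=L_{M\ominus N}$, again for the appropriate values. One inclusion is cheap: from the definition of the generalized Young conjugate one has the pointwise inequality $M(ts)\leqslant(M\ominus N)(t)+N(s)$ for all $s,t>0$, so applying it with $t=\abs{f(\omega)}$ and $s=\abs{g(\omega)}$ and then integrating shows that $f\in L_{M\ominus N}$ and $g\in L_N$ force $fg\in L_M$; hence $L_{M\ominus N}\hookrightarrow M(L_N,L_M)$, after the routine normalization. The reverse inclusion is the substantive point: one must test a candidate multiplier $f$ against the functions $g$ that (nearly) attain the supremum defining $M\ominus N$, and it is precisely here that the regularity of $M$ and $N$ enters and that the well-known difficulty of describing multipliers between Orlicz spaces resides; for this I would lean on \cite{And60}, \cite{MP89}, \cite{LT17} and \cite{LT21}. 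One should also bear in mind that $M\ominus N$ need not be a genuine Young function a priori — it may vanish on an initial interval, be infinite beyond some point, or fail convexity — and has to be regularized accordingly, which is harmless for the appropriate values.

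Assembling the two ingredients,
\begin{equation*}
  L_N\odot M(L_N,L_M)=L_N\odot L_{M\ominus N}=L_C,\qquad C^{-1}\approx N^{-1}(M\ominus N)^{-1},
\end{equation*}
so the factorization $L_N\odot M(L_N,L_M)=L_M$ holds if and only if $L_C=L_M$, that is, if and only if $C^{-1}\approx M^{-1}$, that is, if and only if $N^{-1}(M\ominus N)^{-1}\approx M^{-1}$ for the appropriate values — which is the assertion. The main obstacle is the second ingredient, the exact identification of $M(L_N,L_M)$. An alternative that partly circumvents it is to prove only the nontrivial half of the factorization, the embedding $L_M\hookrightarrow L_N\odot M(L_N,L_M)$, directly: given $f\in L_M$ one splits $f=g\cdot(f/g)$ with $g$ built from $N^{-1}(M(\abs f))$, checks $g\in L_N$ from $M(\abs f)\in L_1$, and checks $f/g\in M(L_N,L_M)$ from the pointwise Young inequality above — but making that last membership work forces exactly the condition $N^{-1}(M\ominus N)^{-1}\approx M^{-1}$, so the obstacle merely changes its clothes.
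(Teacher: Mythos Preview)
Your proposal is correct and, in fact, does more than the paper does: the paper's own \enquote{proof} of this theorem is a one-line citation to \cite[Theorem~3]{DR00} and \cite[Theorem~2]{LT17}, treating the result as imported background. What you have written is essentially a sketch of the content of those references --- the product formula $L_A\odot L_B=L_C$ with $C^{-1}\approx A^{-1}B^{-1}$ (via the Calder{\'o}n--Lozanovski{\u\i} representation), the identification $M(L_N,L_M)=L_{M\ominus N}$, and their combination --- so your route is the same as the one the paper defers to, just unpacked. Your honest flagging of the reverse inclusion in the multiplier identification as the substantive obstacle, and of the regularization issues with $M\ominus N$, is exactly right; that is where the cited papers do the real work.
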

	\begin{proof}[\enquote{Proof}]
		This is a consequence of \cite[Theorem~3]{DR00} and \cite[Theorem~2]{LT17}.
	\end{proof}

	In the remaining case concerning the factorization of Musielak--Orlicz spaces, we will only conclude that the main ideas about the construction of the generalized
	Young conjugate function from \cite{DR00} and \cite{LT17}, transfer, modulo some technical details, to the Musielak--Orlicz set up. Therefore, in the light of
	Theorem~\ref{THM: Faktoryzacja Orliczy LT17}, the following result is expected.

	\begin{theoremletters}[Factorization of Musielak--Orlicz spaces] \label{THM: Faktoryzacja MO}
		{\it Let $(\Omega,\Sigma,\mu)$ be a complete and $\sigma$-finite measure space. Further, let $\Phi$ and $\Psi$ be two Musielak--Orlicz functions
		both defined on $\Omega \times (0,\infty)$. Then the Musielak--Orlicz space $L_{\Psi}$ can be factorized through another Musielak--Orlicz space
		$L_{\Phi}$, that is,}
		\begin{equation*}
			L_{\Phi} \odot M(L_{\Phi},L_{\Psi}) = L_{\Psi},
		\end{equation*}
		{\it provided}
		\begin{equation} \label{MO}
			{\Psi}^{-1}(\Phi \ominus \Psi)^{-1} \approx {\Phi}^{-1} \quad \textit{ for the appropriate values.}
		\end{equation}
		{\it However, in general, the converse implication does not hold.}
	\end{theoremletters}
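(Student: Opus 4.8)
The plan is to transplant the proof of Theorem~\ref{THM: Faktoryzacja Orliczy LT17} --- that is, the factorization scheme of \cite{DR00} combined with the multiplier description of \cite{LT17} --- into the Musielak--Orlicz framework, carrying the parameter $\omega \in \Omega$ along and keeping track of measurability. One inclusion costs nothing: if $g \in L_{\Phi}$ and $h \in M(L_{\Phi},L_{\Psi})$, then $gh \in L_{\Psi}$ straight from the definition of the multiplier space, equivalently from the generalized H{\" o}lder--Rogers inequality \eqref{INEQ: Holder-Rogers inequality}; hence $L_{\Phi} \odot M(L_{\Phi},L_{\Psi}) \overset{1}{\hookrightarrow} L_{\Psi}$ always. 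Thus the whole content of the factorization is the reverse embedding $L_{\Psi} \hookrightarrow L_{\Phi} \odot M(L_{\Phi},L_{\Psi})$.

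First I would introduce the pointwise generalized Young conjugate $(\Psi \ominus \Phi)(\omega,t) \coloneqq \sup_{s>0}\{\Psi(\omega,ts)-\Phi(\omega,s)\}$ and note it is again a Musielak--Orlicz function: the supremum may be restricted to positive rationals, which preserves joint measurability in $(\omega,t)$, while convexity and monotonicity in $t$ are inherited in the usual manner. By definition $\Psi(\omega,uv) \leqslant \Phi(\omega,u) + (\Psi \ominus \Phi)(\omega,v)$ for a.e.\ $\omega$ and all $u,v \geqslant 0$, and a routine Luxemburg-norm computation turns this pointwise Young-type inequality into the embedding $L_{\Psi \ominus \Phi} \hookrightarrow M(L_{\Phi},L_{\Psi})$. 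Together with the trivial inclusion above, it therefore suffices to establish the pointwise factorization $L_{\Psi} \hookrightarrow L_{\Phi} \odot L_{\Psi \ominus \Phi}$.

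For that, fix $0 \leqslant f \in L_{\Psi}$ with $\int_{\Omega} \Psi(\omega,f(\omega))\,d\mu(\omega) \leqslant 1$ (so $\norm{f}_{L_{\Psi}} \leqslant 1$) and look for a measurable splitting $f = gh$ with $\int_{\Omega} \Phi(\omega,g(\omega))\,d\mu \leqslant 1$ and $\int_{\Omega} (\Psi \ominus \Phi)(\omega,h(\omega)/C)\,d\mu \leqslant 1$; then $\norm{g}_{L_{\Phi}} \leqslant 1$, $\norm{h}_{L_{\Psi \ominus \Phi}} \leqslant C$, and $f \in L_{\Phi} \odot L_{\Psi \ominus \Phi}$ with quasi-norm $\lesssim C$. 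Fixing $\omega$ with $w \coloneqq f(\omega) > 0$ and setting $m \coloneqq \Psi(\omega,w)$, the pointwise requirements reduce to $g(\omega) \leqslant \Phi^{-1}(\omega,m)$ and $w/g(\omega) \leqslant C\,(\Psi \ominus \Phi)^{-1}(\omega,m)$; such a $g(\omega)$ can be found as soon as
\begin{equation*}
	\Psi^{-1}(\omega,m) \;\leqslant\; C\,\Phi^{-1}(\omega,m)\,(\Psi \ominus \Phi)^{-1}(\omega,m),
\end{equation*}
which is exactly the estimate that hypothesis \eqref{MO} supplies, uniformly in $\omega$ and for the range of values that matters --- just as $N^{-1}(M \ominus N)^{-1} \approx M^{-1}$ does in Theorem~\ref{THM: Faktoryzacja Orliczy LT17}. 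One then takes the explicit choice $g(\omega) \coloneqq C\,w/(\Psi \ominus \Phi)^{-1}(\omega,\Psi(\omega,w))$ and $h(\omega) \coloneqq w/g(\omega)$ on $\{f > 0\}$ (and $g = h = 0$ elsewhere). The only ingredient absent from the scalar Orlicz argument is checking that this $g$ is $\Sigma$-measurable: since $\Phi$, $\Psi$ and $\Psi \ominus \Phi$ are Carath{\' e}odory functions, their right-continuous inverses $(\omega,s) \mapsto \Phi^{-1}(\omega,s)$ and $(\omega,s) \mapsto (\Psi \ominus \Phi)^{-1}(\omega,s)$ are jointly measurable, and $g$ is obtained by composing these with the measurable maps $\omega \mapsto f(\omega)$ and $\omega \mapsto \Psi(\omega,f(\omega))$. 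This measurable-selection bookkeeping is the main new point over the Orlicz case, but it is essentially routine.

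Finally, the failure of the converse I would witness by an explicit example in the purely atomic (Nakano) case $\Omega = \mathbb{N}$ with $\Phi(n,\cdot) = M_n$ and $\Psi(n,\cdot) = N_n$. The conceptual reason is that, the Luxemburg norm being computed coordinatewise, the factorization $L_{\Phi} \odot M(L_{\Phi},L_{\Psi}) = L_{\Psi}$ only needs the displayed pointwise inequality to hold with a constant $C = C(n)$ that is controlled after the natural per-coordinate rescaling, whereas \eqref{MO} insists on the rigid equivalence $\Psi^{-1}(\Phi \ominus \Psi)^{-1} \approx \Phi^{-1}$ with one absolute constant. Concretely, choosing each pair $(M_n,N_n)$ so that the scalar condition of Theorem~\ref{THM: Faktoryzacja Orliczy LT17} holds for every fixed $n$ but with equivalence constants that degenerate as $n \to \infty$ (for instance, power functions with exponents $p_n$, $q_n$ and $p_n - q_n \to 0$) yields a pair with $L_{\Phi} \odot M(L_{\Phi},L_{\Psi}) = L_{\Psi}$ while \eqref{MO} fails. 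Making this rigorous requires computing $M(L_{\Phi},L_{\Psi})$ and $L_{\Phi} \odot M(L_{\Phi},L_{\Psi})$ for the chosen family, which is the part of the argument furthest from the Orlicz template; this, rather than the sufficiency half, is where I expect the real difficulty to lie, so I would carry it out last and in full.
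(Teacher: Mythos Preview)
Your sketch is essentially the right argument, and it is in fact the argument of \cite{LT21} that the paper is invoking. Note, however, that the paper itself does \emph{not} prove this statement: its ``proof'' consists of the sentence ``This is just a juxtaposition of \cite[Corollary~13]{LT21} together with \cite[Example~14]{LT21}'', with an explicit disclaimer that the precise definition of $\Phi \ominus \Psi$ is to be found in \cite{LT21}. Theorem~\ref{THM: Faktoryzacja MO} is a quoted black-box result, not a contribution of the paper, so there is no in-paper proof to compare against beyond the citation.

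That said, your reconstruction matches the strategy of \cite{LT21}: transplant the Djakov--Ramanujan/Le{\' s}nik--Tomaszewski scheme pointwise in $\omega$, define the generalized conjugate $(\Psi \ominus \Phi)(\omega,\cdot)$ as a Carath{\' e}odory function, use the pointwise Young-type inequality to get $L_{\Psi \ominus \Phi} \hookrightarrow M(L_{\Phi},L_{\Psi})$, and then split $f$ via the inverses. The measurability bookkeeping you flag is exactly the new ingredient over the Orlicz case. One small wrinkle: the cleanest explicit splitting is $g(\omega) \coloneqq \Phi^{-1}(\omega,\Psi(\omega,f(\omega)))$ and $h \coloneqq f/g$, which gives $\Phi(\omega,g(\omega)) = \Psi(\omega,f(\omega))$ directly; your alternative formula for $g$ also works but picks up an extra factor of $C$ in the norm estimates. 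For the counterexample your Nakano-space idea with exponents $p_n - q_n \to 0$ is the right intuition and is precisely the kind of construction \cite[Example~14]{LT21} carries out; you are correct that this half is where the genuine work lies, since one must verify the factorization holds despite the failure of the uniform equivalence~\eqref{MO}.
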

	\begin{proof}[\enquote{Proof}]
		This is just a juxtaposition of \cite[Corollary~13]{LT21} together with \cite[Example~14]{LT21}. However, for all the details and a precise definition
		of the object $\Phi \ominus \Psi$ from \eqref{MO}, we refer directly to \cite{LT21}.
	\end{proof}
	
	For various reasons, the problem of factorization of Banach ideal spaces, or even more generally, Banach lattices,
	has attracted many researchers over the years (see, for example, \cite{AM09}, \cite{Be96}, \cite{CS17}, \cite{GE98}, \cite{KLM14},
	\cite{KLM19}, \cite{Pis79} and \cite{Sch10}). Some remarkable (weak) factorization results for spaces without the ideal property, like Bergman
	spaces, Hardy spaces or tent spaces, can be found, for example, in \cite{CRW76}, \cite{CV00}, \cite{Hor77}, \cite{JR76} and \cite{PZ15}.
	Moreover, there is an elegant connection, already noted by Kalton, between complex interpolation, Lozanovski{\u \i}'s factorization
	and the so-called twisted sums (see \cite{CS14}, \cite{CS17}, \cite{CFG17}, \cite{Kal92} and references therein).
	
	\subsection{Symmetrizations} \label{SECTION: symmetization}
	For a quasi-Banach function or sequence space $X$, the {\bf symmetrization $X^{\bigstar}$} of $X$ is defined as a set
	\begin{equation*}
		X^{\bigstar} \coloneqq \left\{ f \in L_0 \colon f^{\star} \in X \right\}
	\end{equation*}
	together with the functional $\norm{f}_{X^{\bigstar}} \coloneqq \norm{f^{\star}}_X$. It can be shown that $X^{\bigstar}$ is a vector space if,
	and only if, the dilation operator $D_2$ is bounded on $X^{d}$, that is, the {\bf cone of non-negative and decreasing functions} from $X$ (see \cite[Corollary~1]{KLM19};
	cf. \cite[Lemma~1.4]{KR09}). In this case, of course, $(X^{\bigstar},\norm{\cdot}_{X^{\bigstar}})$ is a rearrangement invariant quasi-Banach space.
	Moreover, it is straightforward to see that the space $X^{\bigstar}$ is non-trivial if, and only if, $\chi_{(0,a)} \in X$ for some $a > 0$
	(see, for example, \cite[Lemma~3]{KLM19}).
	
	\begin{remark}
		The reason why the construction $X \rightsquigarrow X^{\bigstar}$ pops up when examining the structure of r.i. spaces can be found in the
		simple observation that many classical rearrangement invariant spaces (notably Lorentz and Marcinkiewicz spaces) can be regarded as symmetrizations
		of weighted $L_p$-spaces. To be more precise,
		\begin{equation*}
			\Lambda_{\varphi} \equiv \left[ L_1(\varphi') \right]^{\bigstar}, \quad L_{p,q} \equiv \left[ L_q\left( t^{\frac{1}{p} - \frac{1}{q}} \right) \right]^{\bigstar}
			\quad \text{ and } \quad M_{\varphi} \equiv \left[ L_{\infty}(\varphi) \right]^{\bigstar};
		\end{equation*}
		\begin{equation*}
			\lambda_{\varphi} \equiv \bigl[ \ell_1(\varphi(n+1)-\varphi(n)) \bigr]^{\bigstar}, \quad \ell_{p,q} \equiv \left[ \ell_q\left( n^{\frac{1}{p} - \frac{1}{q}} \right) \right]^{\bigstar}
			\quad \text{ and } \quad m_{\varphi} \equiv \left[ \ell_{\infty}(\varphi) \right]^{\bigstar}.
		\end{equation*}
		\demo
	\end{remark}
	
	A much more complete study of the properties of this construction can be found, for example, in \cite{KLM19} and \cite{KR09}.
	
	\subsection{Spaces related to decreasing functions} \label{Section: Introduction Cesaro and Tandori}
	Among all decreasing functions that lie above (in the sense of a natural partial order) a given non-negative Lebesgue measurable
	function $f$ defined on the half-line there is a unique least one, namely, the {\bf least decreasing majorant} of $f$.
	We will denote it here by $\widetilde{f}$.
	Explicitly, this object can be described by the following point-wise formula
	\begin{equation*}
		\widetilde{f}(x) = \esssup\limits_{t \geqslant x} \abs{f(t)} \quad \text{ for } \quad x > 0.
	\end{equation*}
	Clearly enough, the least decreasing majorant $\widetilde{f}$ of a given function $f$ is non-negative and, well, decreasing.
	Moreover, the map $f \rightsquigarrow \widetilde{f}$ is a sublinear, that is, $\widetilde{f + g} \leqslant \widetilde{f} + \widetilde{g}$.
	The least decreasing majorant of decreasing function is, of course, the same function, which means that the map $f \rightsquigarrow \widetilde{f}$
	is also an idempotent.
	
	Fix a Banach function space $X$.
	By the {\bf Tandori function space}\footnote{According to the best of our knowledge, the name of these spaces was proposed
	by Le{\' s}nik and Maligranda in 2016 (see \cite[p.~767]{LM16}, where the authors refer to 1955 Tandori's paper \cite{Ta55}).} $\widetilde{X}$
	we will understand here a vector space of those measurable functions, say $f$, such that $\widetilde{f} \in X$. It is natural to equip $\widetilde{X}$
	with the norm $\norm{f}_{\widetilde{X}} \coloneqq \Vert \widetilde{f} \Vert_X$. Now, it is routine to verify that $\widetilde{X}$ is again a Banach function space.
	Moreover, the space $\widetilde{X}$ is non-trivial provided $X$ contains a positive and decreasing function.
	
	By the {\bf down space} $X^{\downarrow}$ we mean a Banach function space defined by the functional
	$\norm{f}_{X^{\downarrow}} \coloneqq \sup \int_{0}^{\infty} \abs{f(t)}g(t) dt$,
	where the supremum is taken over all positive and decreasing functions $g$ with $\norm{g}_{X^{\times}} \leqslant 1$.
	The functional $\norm{\cdot}_{X^{\downarrow}}$ is a norm provided $\chi_{(0,a)} \in X^{\times}$ for each $a > 0$.
	
	Finally, the {\bf Ces{\' a}ro function space} $\mathscr{C}X$ is defined as an optimal domain for Hardy's operator
	$\mathscr{H} \colon f \rightsquigarrow [x \rightsquigarrow \frac{1}{x} \int_0^x \abs{f(t)}dt]$, that is,
	as the largest (in the sense of inclusion) Banach function space with the property that $\mathscr{H}$ is
	bounded when acting into $X$. Explicitly, the space $\mathscr{C}X$ is the collection of all measurable functions $f$
	such that the norm
	$\norm{f}_{\mathscr{C}X} \coloneqq \norm{x \rightsquigarrow \frac{1}{x} \int_0^x \abs{f(t)}dt}_X$ is finite.
	It is straightforward to see that the space $\mathscr{C}X$ is non-trivial provided $\frac{1}{x}\chi_{(a,\infty)} \in X$
	for some $a > 0$.
	
	It is immediate to see that
	\begin{equation*}
		\widetilde{X} \hookrightarrow X \hookrightarrow X^{\times\times} \hookrightarrow X^{\downarrow}
	\end{equation*}
	and, in general, each of the inclusions is strict. Both spaces $\widetilde{X}$ and $X^{\downarrow}$ are related by duality, namely,
	$(X^{\downarrow})^{\times} = \widetilde{X^{\times}}$ (see \cite[Theorem~6.7]{Si94}, \cite[Theorem~5.7]{Si01} and \cite[Theorem~2.1]{Si07};
	cf. \cite[Theorem~D]{KMS07} and \cite[Theorem~2]{LM15a}). Moreover, for a r.i. function spaces $X$ with the Fatou property and such
	that $\alpha_X > 1$, we have $X^{\downarrow} = \mathscr{C}X$ (see \cite[Theorem~3.1]{Si01}).
	
	\begin{remark}[Sequence spaces]
		Let $X$ be a Banach sequence space. As always, the above constructions have their sequence counterparts. And so,
		\begin{equation*}
			\norm{x}_{\widetilde{X}} \coloneqq \norm{ \left\{ \sup_{k \geqslant n} \abs{x_k} \right\}_{n=1}^{\infty} }_X,
		\end{equation*}
		where $\widetilde{x} \coloneqq \left\{ \sup_{k \geqslant n} \abs{x_k} \right\}_{n=1}^{\infty}$ is the {\bf least decreasing majorant}
		of a sequence $x = \{x_n\}_{n=1}^{\infty}$,
		\begin{equation*}
			\norm{x}_{X^{\downarrow}} \coloneqq \sup \sum_{n=1}^{\infty} \abs{x_n} y_n,
		\end{equation*}
		where the supremum is taken over all positive and decreasing functions $y = \{y_n\}_{n=1}^{\infty}$ with $\norm{y}_{X^{\times}} \leqslant 1$,
		and
		\begin{equation*}
			\norm{x}_{\mathscr{C}X} \coloneqq \norm{ \left\{ \frac{1}{n} \sum_{k=1}^n \abs{x_k} \right\}_{n=1}^{\infty} }_X,
		\end{equation*}
		where $\mathscr{H} \colon x \rightsquigarrow \left[ n \rightsquigarrow \frac{1}{n} \sum_{k=1}^n \abs{x_k} \right]$ with $n \in \mathbb{N}$
		is the {\bf discrete Hardy operator}.
		\demo
	\end{remark}
	
	Over the last two decades, research on the structure of the above-mentioned spaces (which we will collectively call
	{\bf spaces related to decreasing functions}) has enjoyed unwavering interest from many different viewpoints like, for example,
	$\bullet$ isomorphic structure \cite{Ast17}, \cite{ALM19}, \cite{AM09}, \cite{CR16}, \cite{DS07} and \cite{KT17};
	$\bullet$ isometric structure \cite{KKM21} and \cite{KKT22};
	$\bullet$ interpolation structure \cite{AM13}, \cite{Les15}, \cite{LM16}, \cite{MS06} and \cite{MS17};
	$\bullet$ products, factors and factorization \cite{AM09}, \cite{BMM18}, \cite{GPU22} and \cite{KLM19};
	$\bullet$ duality \cite{KMS07}, \cite{LM15a}, \cite{NP11}, \cite{Si01} and \cite{Si03};
	$\bullet$ operator ideals \cite{Wal20}
	(see also survey papers \cite{AM14}, \cite{FLM16} and \cite{Si07}).
	For more information, we also refer to two monographs \cite{Be96} and \cite{GE98}.
	
	\section{{\bf Blocking technique via interpolation}} \label{SECTION: BF representations}
	
	In this section, we introduce the concept of discretization, analyze the interpolation properties of this construction and provide
	the block form representations.
	
	\subsection{Amalgams} Let us recall the following definition.
	
	\begin{definition}[Amalgams] \label{DEF: direct sum}
		Let $E$ be a Banach sequence space defined on $J$. Further, let $\{X_j\}_{j \in J}$ be a family of Banach spaces.
		By the {\bf $E$-direct sum} of the family $\{X_j\}_{j \in J}$ (abbreviated, {\bf direct sum} if the context leaves no doubt)
		we will understand here a vector space
		\begin{equation*}
			\Bigl( \bigoplus_{j \in J} X_j \Bigr)_{E} \coloneqq \left\{\{x_j\}_{j \in J} \in \prod_{j \in J}X_j \colon \left\{ \norm{x_j}_{X_j} \right\}_{j \in J} \in E \right\}
		\end{equation*}
		equipped with the norm
		\begin{equation*}
			\norm{\{x_j\}_{j \in J}}_{\left( \bigoplus_{j \in J} X_j \right)_{E}} \coloneqq \norm{\sum_{j \in J}\norm{x_j}_{X_j}e_j}_E.
		\end{equation*}
		Here $\{e_j\}_{j \in J}$ is the canonical basis of the space $c_0(J)$.
		Only a brief reflection is enough to see that when equipped with the coordinatewise-defined addition and scalar multiplication
		$\bigl( \bigoplus_{j \in J} X_j \bigr)_E$ become a Banach space itself. Moreover, remembering what was said in Remark~\ref{REMARK: Wiener amalgams},
		we will collectively call the class of spaces $\bigl( \bigoplus_{j \in J} X_j \bigr)_E$ {\bf amalgams}.
	\end{definition}

	\begin{remark}[Direct integrals and K{\" o}the--Bochner spaces] \label{REMARK: Bochner construction}
		Note that the space $\bigl( \bigoplus_{j \in J} X_j \bigr)_{E}$ is only a special case of a much more general construction of the so-called {\bf $E$-direct integral}
		$\bigl( \int_{\Omega}^{\oplus} X_{\omega} d\mu \bigr)_{E}$ (here $E$ can be a Banach function space defined on a complete and decomposable measure space
		$(\Omega,\Sigma,\mu)$; see \cite[Chapter~6]{HLR91} for details). Many of our results can be translated to this more abstract context. On the other hand,
		if the family $\{ X_j \}_{j \in J}$ of Banach spaces is \enquote{constant}, that is, $X_j = X$ for all $j \in J$ and some Banach space $X$,
		then the space $\bigl( \bigoplus_{j \in J} X \bigr)_E$ coincide with the {\bf K{\" o}the--Bochner space} $E(X)$ of strongly $E$-summable sequences from $X$
		(see, for example, \cite{Lin04}).
		\demo
	\end{remark}

	\begin{remark}[Amalgams as Banach ideal spaces] \label{REMARK: Direct sum = BFS}
		Let $E$ be a Banach sequence space on $J$ and let $\{X_j\}_{j \in J}$ be a family of Banach ideal spaces. Suppose that $X_j$ is defined on
		$(\Omega_j,\Sigma_j,\mu_j)$ for $j \in J$. Then the space $\bigl( \bigoplus_{j \in J} X_j \bigr)_E$ can be seen as a Banach ideal space
		defined on $(\Omega,\Sigma,\mu)$, where $\Omega$ is a {\bf disjoint union} $\bigsqcup_{j \in J} \Omega_j$ of $\Omega_j$'s and the measure $\mu$
		is defined, for $A \subset \bigsqcup_{j \in J} \Omega_j$, as $\mu(A) \coloneqq \sum_{j \in J} \mu_j(A \cap \Omega_j)$.
		Occasionally, it is conceptually helpful to think about a member of $\bigl( \bigoplus_{j \in J} X_j \bigr)_E$ as an infinite matrix
		in which the $j^{\text{th}}$ row is inhabited by the functions living in $X_j$. For instance, the space
		$\bigl( \bigoplus_{n=1}^{\infty} \ell_p^n \bigr)_{\ell_q}$ can be seen as the space of all lower triangular matrices
		\begin{equation*}
			\begin{bmatrix}
				x_{11} & 0 & 0 & 0 & \dots \\
				x_{21} & x_{22} & 0 & 0 & \dots \\
				x_{31} & x_{32} & x_{33} & 0 & \dots \\
				\vdots & \vdots & \vdots & \vdots & \ddots
			\end{bmatrix}
		\end{equation*}
		such that the expression $\sum_{i=1}^{\infty} \bigl( \sum_{j=1}^n \abs{x_{ij}}^p \bigr)^{q/p}$ is finite.
		\demo
	\end{remark}

	\begin{remark}[Amalgams as optimal domains]
		Fix a Banach sequence space $E$ on $J$. Let $S \colon f \rightsquigarrow \bigl\{ \norm{f(j)}_{X_j} \bigr\}_{j \in J}$, where $X_j$'s are
		Banach ideal spaces for $j \in J$, be a positive and sublinear operator. A quick look at Definition~\ref{DEF: direct sum} and the diagram
		\begin{equation*}
			\begin{tikzcd}
				& E           \\
				X \arrow[ru, "S"] \arrow[r, hook] & \Bigl( \bigoplus_{j \in J} X_j \Bigr)_{E} \arrow[u, "S"']
			\end{tikzcd}
		\end{equation*}
		is enough to see that $\bigl( \bigoplus_{j \in J} X_j \bigr)_{E}$ is nothing else but $[S,E]$, that is, the optimal domain
		for $S$ and $E$ (see \cite{ORS08} for more).
		\demo
	\end{remark}
	
	Some basic properties of direct sums are summarized below in the form of a handy proposition.
	
	\begin{proposition}[Basic properties of amalgams] \label{Prop: Podstawowe wlasnosci sum prostych}
		{\it Let $E$ and $F$ be two Banach sequence spaces defined on $J$. Further, let $\{X_j\}_{j \in J}$ and $\{Y_j\}_{j \in J}$ be two families
		of Banach ideal spaces.}
		\begin{enumerate}
			\item [(a)] {\it Suppose that $E$ is rearrangement invariant. Further, suppose that $\sup_{n,m \in \mathbb{N}} d(X_n \oplus X_m, X_{n+m}) < \infty$,
				where $d(X_n \oplus X_m, X_{n+m})$ stands for the Banach--Mazur distance between $X_n \oplus X_m$ and $X_{n+m}$.
				Then the space $\bigl( \bigoplus_{n=1}^{\infty} X_n \bigr)_E$ is isomorphic to $\bigl( \bigoplus_{k=1}^{\infty} X_{n_k} \bigr)_E$
				for any unbounded sequence $\{n_k\}_{k=1}^{\infty}$ of positive integers. In particular, if $1 \leqslant p \leqslant \infty$ and $a > 1$,
				then the space $\bigl( \bigoplus_{n=1}^{\infty} \ell_p^n \bigr)_E$ is isomorphic to $\bigl( \bigoplus_{n=1}^{\infty} \ell_p^{a^n} \bigr)_E$.}
			\item [(b)] {\it Let $\left\{ T_j \colon X_j \rightarrow Y_j \right\}_{j \in J}$ be a sequence of isomorphisms.
				Suppose that $\left\{ \norm{T_j \colon X_j \rightarrow Y_j} \right\}_{j \in J}$ belongs to $M(E,F)$ and
				$\{ \Vert T^{-1}_j \colon Y_j \rightarrow X_j \Vert \}_{j \in J}$ belongs to $M(F,E)$.
				Then the space $\bigl( \bigoplus_{j \in J} X_j \bigr)_E$ is isomorphic to $\bigl( \bigoplus_{j \in J} Y_j \bigr)_F$.
			 	In particular, the space $\bigl( \bigoplus_{j \in J} X_j \bigr)_E$ is isomorphic to $\bigl( \bigoplus_{j \in J} Y_j \bigr)_E$
		 		provided the sequence $\left\{ \norm{T_j \colon X_j \rightarrow Y_j} \right\}_{j \in J}$ is bounded.}
			\item [(c)] {\it The K{\" o}the dual of the space $\bigl( \bigoplus_{j \in J} X_j \bigr)_E$ is isometrically isomorphic to
				$\bigl( \bigoplus_{j \in J} X_j^{\times} \bigr)_{E^{\times}}$. In particular, the K{\" o}the dual of the space
				$\bigl( \bigoplus_{n=1}^\infty \ell_p^{n} \bigr)_E$ is isometrically isomorphic to $\bigl( \bigoplus_{n=1}^\infty \ell_q^{n} \bigr)_{E^{\times}}$.
				Here $1 \leqslant p,q \leqslant \infty$ with $1/p + 1/q = 1$.}
		\end{enumerate}
	\end{proposition}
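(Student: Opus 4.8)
These three assertions are essentially independent, so I would prove them one at a time, starting with the two routine ones; throughout I identify $\bigl(\bigoplus_{j\in J}X_j\bigr)_E$ with the Banach ideal space on the disjoint union $\Omega=\bigsqcup_{j\in J}\Omega_j$ as in Remark~\ref{REMARK: Direct sum = BFS}, and abbreviate $Z:=\bigl(\bigoplus_j X_j\bigr)_E$. For part~(c) the plan is a Köthe-duality computation. The inclusion $\bigl(\bigoplus_j X_j^{\times}\bigr)_{E^{\times}}\overset{1}{\hookrightarrow}Z^{\times}$ is obtained by applying the Hölder--Rogers inequality first on each $\Omega_j$ and then to the couple $(E,E^{\times})$: for $f=\{f_j\}_j\in Z$ and $g=\{g_j\}_j$ with $\{\|g_j\|_{X_j^{\times}}\}_j\in E^{\times}$,
\begin{equation*}
\int_{\Omega}|fg|\,d\mu=\sum_j\int_{\Omega_j}|f_jg_j|\,d\mu_j\le\sum_j\|f_j\|_{X_j}\|g_j\|_{X_j^{\times}}\le\bigl\|\{\|f_j\|_{X_j}\}_j\bigr\|_E\,\bigl\|\{\|g_j\|_{X_j^{\times}}\}_j\bigr\|_{E^{\times}}.
\end{equation*}
For the reverse inclusion I would take $g\in Z^{\times}$, test it against functions supported on a single $\Omega_j$ to see $g_j\in X_j^{\times}$, and then, given a positive $\{a_j\}_j$ in the unit ball of $E$ and $\varepsilon>0$, glue together near-extremal test functions: choose $h_j\ge0$ in $\mathrm{Ball}(X_j)$ with $\int_{\Omega_j}h_j|g_j|\ge\|g_j\|_{X_j^{\times}}-\varepsilon2^{-j}$ and set $f_j:=a_jh_j$ with the phase adjusted so that $f_jg_j\ge0$; then $f=\{f_j\}_j\in\mathrm{Ball}(Z)$ and $\int_{\Omega}|fg|\ge\sum_j a_j\|g_j\|_{X_j^{\times}}-\varepsilon$, so passing to the supremum over $\{a_j\}_j$ and letting $\varepsilon\to0$ gives $\bigl\|\{\|g_j\|_{X_j^{\times}}\}_j\bigr\|_{E^{\times}}\le\|g\|_{Z^{\times}}$. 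The $\ell_p^n$ statement is then immediate from $(\ell_p^n)^{\times}\equiv\ell_q^n$.

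Part~(b) is a one-line estimate for the diagonal operator $\mathcal T\colon\{x_j\}_j\rightsquigarrow\{T_jx_j\}_j$. Indeed $\|\mathcal Tx\|=\bigl\|\{\|T_jx_j\|_{Y_j}\}_j\bigr\|_F\le\bigl\|\{\|T_j\|\cdot\|x_j\|_{X_j}\}_j\bigr\|_F$, and the sequence on the right is the pointwise product of $\{\|T_j\|\}_j\in M(E,F)$ with $\{\|x_j\|_{X_j}\}_j\in E$, so by the generalized Hölder--Rogers inequality \eqref{INEQ: Holder-Rogers inequality} it lies in $F$ with norm at most $\|\{\|T_j\|\}_j\|_{M(E,F)}\,\|x\|$; thus $\mathcal T$ is bounded from $\bigl(\bigoplus_jX_j\bigr)_E$ into $\bigl(\bigoplus_jY_j\bigr)_F$, and the identical estimate applied to $\{T_j^{-1}\}_j\in M(F,E)$ bounds $\mathcal T^{-1}$. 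For the ``in particular'' one specializes to $F=E$, where $M(E,E)\equiv\ell_{\infty}$, so it suffices that $\sup_j\|T_j\|<\infty$ and $\sup_j\|T_j^{-1}\|<\infty$, i.e. that $\{X_j\}$ and $\{Y_j\}$ be uniformly isomorphic.

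Part~(a) is the substantial one; I would prove it by a blocking argument closed off with the Pełczyński decomposition method. First, since $E$ is rearrangement invariant I may permute the index set, so assume $\{n_k\}_k$ is non-decreasing, and — more importantly — r.i.\ of $E$ ensures that restricting any amalgam $\bigl(\bigoplus_k Y_k\bigr)_E$ to an infinite subset of its index set produces a $1$-complemented subspace which, after reindexing, is again an amalgam of the same type over $E$ (two such restrictions differ only by a rearrangement of the scalar norm-sequences). Second, the hypothesis $\sup_{n,m}d(X_n\oplus X_m,X_{n+m})<\infty$ says precisely that $X_a$ is $C$-complemented inside $X_b$ whenever $a\le b$, with $C$ independent of $a,b$. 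Combining these: given two unbounded sequences $\{n_k\}_k$ and $\{m_l\}_l$, pick $l_1<l_2<\cdots$ with $m_{l_k}\ge n_k$; the uniform complemented embeddings $X_{n_k}\hookrightarrow X_{m_{l_k}}$ assemble block by block into a complemented embedding of $A:=\bigl(\bigoplus_k X_{n_k}\bigr)_E$ into the sub-amalgam of $B:=\bigl(\bigoplus_l X_{m_l}\bigr)_E$ living on $\{l_k\}$, hence into $B$, and symmetrically $B$ is isomorphic to a complemented subspace of $A$. It then remains to establish the self-similarity required to run the decomposition method — that $A$ is isomorphic to a countable $E$-sum of copies of itself (after a further reduction, to a $c_0$- or $\ell_1$-sum of copies of itself) — which I would obtain by a second interlacing/regrouping argument, once again using r.i.\ of $E$ to absorb the index-set bookkeeping. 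With this in hand Pełczyński's method yields $A\cong B$; taking $B=\bigl(\bigoplus_{n\ge1}X_n\bigr)_E$ gives the theorem, and the $\bigl(\bigoplus_n\ell_p^n\bigr)_E$ corollary follows since $\ell_p^n\oplus_p\ell_p^m\equiv\ell_p^{n+m}$ (so $C=1$) and $\{a^n\}_n$ is unbounded when $a>1$.

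The genuine obstacle sits in that last step of part~(a). The mutual-complementation part is robust: it uses only the uniform complementations $X_a\hookrightarrow X_b$ and rearrangement-invariance of $E$. But to close the loop one must supply the decomposition-method hypothesis, and here it is essential not to try to split a block $X_n$ into $n$ singleton copies of $X_1$: the inequality $d(X_a\oplus X_b,X_{a+b})\le C$ iterates only to $C^{k-1}$ for $k$ summands, so the Banach--Mazur constant of $X_n\approx X_1\oplus\cdots\oplus X_1$ degenerates as $n\to\infty$ and merely \emph{bounded-depth} merges and splits are admissible. The self-similar presentation of $A$ must therefore be manufactured from such bounded-depth regroupings combined with index-set bijections that rearrangement-invariance of $E$ makes harmless — this is the technically delicate ingredient, and it becomes entirely transparent in the case $E=\ell_p$ (the one needed for the block-form applications, where in addition the merges are isometric).
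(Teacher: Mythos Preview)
Your arguments for (b) and (c) are correct. Part~(b) is exactly the paper's proof. For (c) you give the direct K\"othe-dual computation (the standard ``folklore'' argument), whereas the paper instead deduces (c) as the special case $Y_j=L_1(\Omega_j)$, $F=\ell_1(J)$ of the later Theorem~\ref{Thm: komutowanie M z cdot} on pointwise multipliers between amalgams, using $M(\cdot,L_1)\equiv(\cdot)^{\times}$; your route is more elementary and self-contained, theirs is a one-liner once that general multiplier theorem is available.

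For (a) the paper gives no proof at all --- it simply cites \cite[Theorem~6]{CKL77} and \cite[Proposition~4.1]{AA17}. Your Pe\l czy\'nski-decomposition sketch is precisely the strategy behind those references, and the mutual-complementation half is solid. Your flag on the self-similarity step is also accurate: showing something like $A\approx(\bigoplus A)_E$ is immediate for $E=\ell_p$ (where the $\ell_p$-of-$\ell_p$ mixed norm collapses), but for a general rearrangement-invariant $E$ the $E$-of-$E$ mixed norm need not collapse, and the bounded-depth regrouping you allude to is exactly the additional combinatorics supplied in \cite{AA17}. So your disclaimer is well placed; what remains is not a gap in your reasoning but the content of the cited reference.
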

	\begin{proof}
		(a) This follows directly from Casazza, Kottman and Lin's result \cite[Theorem~6]{CKL77} (see also \cite[Proposition~4.1]{AA17}).
		
		(b) Let us define the mapping $\bigoplus_{j \in J} T_j \colon \bigl( \bigoplus_{j \in J} X_j \bigr)_E \rightarrow \bigl( \bigoplus_{j \in J} Y_j \bigr)_F$
		in the following way
		\begin{equation*}
			\bigoplus_{j \in J} T_j \colon \{x_j\}_{j \in J} \rightsquigarrow \left\{ T_j(x_j) \right\}_{j \in J}.
		\end{equation*}
		Then, for $x = \{x_j\}_{j \in J}$ from $\bigl( \bigoplus_{j \in J} X_j \bigr)_E$, it is straightforward to see that
		\begin{align*}
			\norm{\bigoplus_{j \in J} T_j(x)}_{\left( \bigoplus_{j \in J} Y_j \right)_F}
				& = \norm{\left\{ \norm{T_j(x_j)}_{Y_j} \right\}_{j \in J}}_F \\
				& \leqslant \norm{\left\{ \norm{T_j}_{X_j \rightarrow Y_j} \norm{x_j}_{X_j} \right\}_{j \in J}}_F \\
				& \leqslant \norm{\left\{ \norm{T_j}_{X_j \rightarrow Y_j} \right\}_{j \in J}}_{M(E,F)} \norm{\left\{ \norm{x_j}_{X_j} \right\}_{j \in J}}_E,
		\end{align*}
		where the first inequality follows from the ideal property of the space $F$ and the second one is due to the general form
		of the H{\" o}lder--Rogers inequality \eqref{INEQ: Holder-Rogers inequality}. This, however, means that
		\begin{equation*}
			\norm{\bigoplus_{j \in J} T_j}_{\bigl( \bigoplus_{j \in J} X_j \bigr)_E \rightarrow \left( \bigoplus_{j \in J} Y_j \right)_F}
				\leqslant \norm{\left\{ \norm{T_j}_{X_j \rightarrow Y_j} \right\}_{j \in J}}_{M(E,F)}.
		\end{equation*}
		Analogously, one can show that
		\begin{equation*}
			\norm{\bigoplus_{j \in J} T^{-1}_j}_{\bigl( \bigoplus_{j \in J} Y_j \bigr)_F \rightarrow \left( \bigoplus_{j \in J} X_j \right)_E}
			\leqslant \norm{\left\{ \norm{T^{-1}_j}_{Y_j \rightarrow X_j} \right\}_{j \in J}}_{M(F,E)},
		\end{equation*}
		where the mapping $\bigoplus_{j \in J} T^{-1}_j \colon \bigl( \bigoplus_{j \in J} Y_j \bigr)_F \rightarrow \bigl( \bigoplus_{j \in J} X_j \bigr)_E$
		is defined as
		\begin{equation*}
			\bigoplus_{j \in J} T^{-1}_j \colon \{y_j\}_{j \in J} \rightsquigarrow \left\{ T^{-1}_j(y_j) \right\}_{j \in J}.
		\end{equation*}
		The rest is obvious, because $M(E,E) \equiv \ell_{\infty}(J)$ (cf. \cite[Proposition~3]{Pel60}).
		
		(c) This is folklore (see, for example, \cite[pp.~175--178]{Lau01} and \cite[pp.~14--15]{DK16}).
		Anyway, since we will prove a more general variant of this result later (see Theorem~\ref{Thm: komutowanie M z cdot}),
		so we will take advantage of this situation and show how this can be deduced without referring to the mentioned results.
		Indeed, remembering about Remark~\ref{REMARK: Direct sum = BFS} and using Theorem~\ref{Thm: komutowanie M z cdot}, we have
		\begin{align*}
			\biggl[ \Bigl( \bigoplus_{j \in J} X_j \Bigr)_E \biggr]^{\times}
				& \equiv M\biggl[ \Bigl( \bigoplus_{j \in J} X_j \Bigr)_E, L_1\Bigl( \bigsqcup_{j \in J} \Omega_j \Bigr) \biggr] \\
				& \equiv M\biggl[ \Bigl( \bigoplus_{j \in J} X_j \Bigr)_E, \Bigl( \bigoplus_{j \in J} L_1(\Omega_j) \Bigr)_{\ell_1(J)} \biggr] \\
				& \equiv \biggl[ \bigoplus_{j \in J} M \bigl( X_j,L_1(\Omega_j) \bigr) \biggr]_{M(E,\ell_1(J))}
					\equiv \Bigl( \bigoplus_{j \in J} X_j^{\times} \Bigr)_{E^{\times}}.
		\end{align*}
	\end{proof}

	\begin{digression}
		The isomorphic structure of spaces $\bigl( \bigoplus_{j \in J} X_j \bigr)_{E}$ can be quite non-obvious and surprising. For example,
		as Hagler and Stegall showed (see \cite[Theorem~1]{HS73}), the space $\bigl( \bigoplus_{n=1}^{\infty} \ell_1^n \bigr)_{\ell_{\infty}}$
		contains an isomorphic copy of $L_1(0,1)$. 
		For this (among many others) reasons, deciding whether the spaces $\bigl( \bigoplus_{j \in J} X_j \bigr)_{E}$ and $\bigl( \bigoplus_{j \in J} Y_j \bigr)_{F}$
		are (or are not) isomorphic is usually difficult. On the trivial side, it is clear that both spaces $\bigl( \bigoplus_{j \in J} X_j \bigr)_{E}$
		and $\bigl( \bigoplus_{j \in J} X_j \bigr)_{E(w)}$ are even isometrically isomorphic. If, however, the isomorphism between the spaces $E$
		and $F$ is more exotic, it does not have to be so clear. For instance, even though the spaces $\ell_p$ and $\bigl( \bigoplus_{n = 1}^\infty \ell_2^n \bigr)_{\ell_p}$
		with $2 < p < \infty$ are isomorphic\footnote{Actually, this isomorphism comes from what we now call Pe{\l}czy{\' n}ski's decomposition method,
		so it fully deserves the name \enquote{exotic} (see \cite[Proposition~7]{Pel60}).}, it was proved by Draga and Kochanek that the spaces
		$\bigl( \bigoplus_{n = 1}^\infty c_0 \bigr)_{\ell_p}$ and $\bigl( \bigoplus_{n = 1}^\infty c_0 \bigr)_{L}$ with $L = \bigl( \bigoplus_{n=1}^\infty \ell_2^n \bigr)_{\ell_p}$
		are not isomorphic (essentially, because they have different Szlenk index; see \cite[Example~5.13]{DK16} for details).
		\demo
	\end{digression}

	\subsection{Discretization} \label{SUBSECTION: E(X)}
	The key definition is (cf. \cite[p.~468]{BK91} and \cite[p.~254]{Kal92})
	
	\begin{definition}[Discretization] \label{DEF: E(X)}
		Let $X$ be either a Banach function space or a Banach sequence space.
		With the space $X$ we can associate a Banach sequence space $\mathbf{E}(X)$ defined on $\mathbb{J}$,
		where either $J = \mathbb{Z}$ provided $X$ is a Banach function spaces or $\mathbb{J} = \mathbb{Z}_+$ provided $X$ is a Banach sequence space,
		defined as a vector space
		\begin{equation*} \label{Def: E(X) function}
			\mathbf{E}(X) \coloneqq \left\{ x = \{x_j\}_{j \in \mathbb{J}} \colon \sum_{j \in \mathbb{J}} x_j \chi_{\Delta_j} \in X \right\}
		\end{equation*}
		and equipped with the norm  $\norm{x}_{\mathbf{E}(X)} \coloneqq \norm{\sum_{j \in \mathbb{J}} x_j \chi_{\Delta_j}}_X$.
		Here, $\Delta_j$ is the {\bf $j^{\text{th}}$ dyadic interval}, that is, $\Delta_j \coloneqq [2^{j},2^{j+1})$ for $j \in \mathbb{J}$.
		To the construction $X \rightsquigarrow \mathbf{E}(X)$ we will further refer as the {\bf dyadic discretization} of $X$ (briefly,
		the {\bf $\Delta$-discretization} or just {\bf discretization} if the context is clear).
	\end{definition}

	Before we go any further, note the following very simple but instructive example, which will keep our intuition on the right track.
	
	\begin{example} \label{EXAMPLE : ELp computations}
		Let $1 \leqslant p,q \leqslant \infty$ with $1/p + 1/q = 1$. Suppose that $1 \leqslant p < \infty$. Then ${\bf E}(L_p) \equiv \ell_p(W_p)$, where $W_p(j) = 2^{j/p}$
		for $j \in \mathbb{Z}$, and ${\bf E}(\ell_p) \equiv \ell_p(w_p)$, where $w_p(j) = 2^{j/p}$ for $j \in \mathbb{Z}_+$.
		In the remaining case, when $p = \infty$, ${\bf E}(L_{\infty}) \equiv \ell_{\infty}(\mathbb{Z})$ and ${\bf E}(\ell_{\infty}) \equiv \ell_{\infty}$.
		Let us now turn our attention toward duality.
		Again, suppose that $1 \leqslant p < \infty$. Then ${\bf E}(L_p)^{\times} \equiv \ell_q(M_p)$, where $M_p = 2^{-j/p}$ for $j \in \mathbb{Z}$,
		and ${\bf E}(\ell_p)^{\times} \equiv \ell_q(m_p)$, where $m_p(j) = 2^{-j/p}$ for $j \in \mathbb{Z}_+$.
		Moreover, since $W_q M_q \equiv 1$ and $M_p M_q \equiv M_1$, so clearly $\ell_q(M_p) \equiv \ell_q(W_qM_qM_p) \equiv {\bf E}(L_q)(M_1)$.
		Thus, ${\bf E}(L_p)^{\times} \equiv {\bf E}(L_p^{\times})(M_1)$ and, analogously, ${\bf E}(\ell_p)^{\times} \equiv {\bf E}(\ell_p^{\times})(m_1)$.
		Now, suppose that $p = \infty$. We have
		\begin{equation*}
			{\bf E}(L_{\infty})^{\times} \equiv \ell_{\infty}^{\times} \equiv \ell_1 \equiv \ell_1(W_1M_1) \equiv {\bf E}(L_1)(M_1) \equiv {\bf E}(L_{\infty}^{\times})(M_1).
		\end{equation*}
		Similarly, one can show that ${\bf E}(\ell_{\infty})^{\times} \equiv {\bf E}(\ell_{\infty}^{\times})(m_1)$.
	\end{example}
	
	A few clarifying remarks about Definition~\ref{Def: E(X) function} seem in order. 
	
	\begin{remark} \label{REMARK: EX as sequence and function space}
		We can consider the space $\mathbf{E}(X)$ as a Banach sequence space or as a Banach function space,
		which contains all measurable functions, say $f$, which can be represented in the form
		$f(t) = \sum_{j \in \mathbb{J}} a_n \chi_{\Delta_j}(t)$
		and is equipped with the norm inherited from $X$. Both points of view are actually isometrically isomorphic
		via the mapping $\{x_j\}_{j \in \mathbb{J}} \rightsquigarrow \sum_{j \in \mathbb{J}} x_j \chi_{\Delta_j}$.
		We do not intend to be particularly pedantic about distinguishing between them.
		\demo
	\end{remark}
	
	\begin{remark} \label{Remark: EX jest i nie jest r.i.}
		Let $X$ be a rearrangement invariant space.
		With few exceptions\footnote{Most notably, $\mathbf{E}(L_{\infty}) \equiv \ell_{\infty}(\mathbb{Z})$ and $\mathbf{E}(\ell_{\infty}) \equiv \ell_{\infty}$
		(cf. Example~\ref{EXAMPLE : ELp computations}). One can even show that if $X$ is r.i. space, then ${\bf E}(X)$ is r.i. if, and only if, $X = L_{\infty}$.}
		the space $\mathbf{E}(X)$ is not rearrangement invariant when we consider it as defined over $(\mathbb{J},2^{\mathbb{J}},\#)$,
		where $\#$ is the counting measure on $\mathbb{J}$.
		However, if we think of $\mathbf{E}(X)$ as of a Banach sequence space defined over $(\mathbb{J},2^{\mathbb{J}},2^{\#})$,
		where $2^{\#} \coloneqq \sum_{j \in \mathbb{J}} 2^{j}\delta_j$ and $\delta_{j_0}$ is the Dirac measure at $j_0 \in \mathbb{J}$,
		then $\mathbf{E}(X)$ is rearrangement invariant.
		\demo
	\end{remark}

	\begin{digression} \label{Remark: EX is complemented in X}
		Note that as long as the {\bf averaging operator} $\text{Ave} \colon X \rightarrow X$ defined by
		\begin{equation} \label{EQ: Ave operator definition}
			\text{Ave} \colon f \rightsquigarrow \left[ t \rightsquigarrow \sum_{j \in \mathbb{J}} \left( 2^{-j} \int_{\Delta_j} f(s)ds \right)\chi_{\Delta_j}(t) \right]
		\end{equation}
		is bounded on $X$, the space ${\mathbf{E}}(X)$ is a $\lambda$-complemented subspace of $X$ with the constant $\lambda$ equal to $\norm{\text{Ave}}_{X \rightarrow X}$.
		This is the case\footnote{Since the averaging operator $\text{Ave}$ is a contraction on $L_1$ and $L_{\infty}$, so due to the Calder{\' o}n--Mityagin interpolation
		theorem, it is also a contraction on every interpolation space between $L_1$ and $L_{\infty}$.}, for example, when $X$ is an interpolation space between
		$L_1$ and $L_{\infty}$.
		This is a particularly nice situation, because then the above construction gives rise to the functor
		$\mathbf{E} \colon \{ \textit{Banach ideal spaces} \} \rightarrow \{ \textit{Banach sequence spaces} \}$,
		which makes the following diagram commutative
		\begin{equation*}
			\begin{tikzcd}
				X \arrow[r, "T"] \arrow[d, "\mathbf{E}"']
				& Y \arrow[d, "\mathbf{E}"] \\
				\mathbf{E}(X) \arrow[r, dashrightarrow, "\mathbf{E}(T)"]
				& \mathbf{E}(Y)
			\end{tikzcd}
		\end{equation*}
		Here,
		\begin{equation*}
			\mathbf{E}(T) \colon x = \{x_j\}_{j \in \mathbb{J}} \rightsquigarrow
				\left\{ 2^{-j}\int_{\Delta_j} T\left( \sum_{j \in \mathbb{J}} x_j\chi_{\Delta_j} \right)(s) ds \right\}_{j \in \mathbb{J}}.
		\end{equation*}
		Anticipating the facts a little, this observation can be considered the main reason why the construction $X \rightsquigarrow {\bf E}(X)$ has so many good properties.
		\demo
	\end{digression}
	
	The following result can be found without proof on page 254 in Kalton's paper \cite{Ka93a}. For the sake of completeness, we will give a proof of a somewhat more general fact.
	
	\begin{proposition}[Duality and discretization] \label{Prop: E komutuje z Kothe dualem}
		{\it Let $X$ be a Banach ideal space such that the averaging operator} $\text{Ave}$ {\it (defined in \eqref{EQ: Ave operator definition})
		is bounded on $X$. Then}
			\begin{equation*}
				\mathbf{E}(X)^{\times} \equiv \mathbf{E}( X^{\times} )(w),
			\end{equation*}
		{\it where $w(j) = 2^{-j}$ for $j \in \mathbb{J}$.}
	\end{proposition}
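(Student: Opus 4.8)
The plan is to unwind both sides of the claimed identity $\mathbf{E}(X)^{\times} \equiv \mathbf{E}(X^{\times})(w)$ directly from the definitions, using the fact that the averaging operator $\text{Ave}$ furnishes a norm-one projection of $X$ onto the subspace of functions constant on each dyadic interval $\Delta_j$. First I would recall the identification of $\mathbf{E}(X)$ with the Banach function space of all measurable $f$ of the form $f = \sum_{j \in \mathbb{J}} x_j \chi_{\Delta_j}$ (Remark~\ref{REMARK: EX as sequence and function space}): call this subspace $X_\Delta \subset X$, so that $\mathbf{E}(X)$ is isometrically $X_\Delta$. The point of the averaging hypothesis is that $\text{Ave} \colon X \to X_\Delta$ is a bounded (indeed contractive) projection. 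A routine computation shows that $X_\Delta^{\times}$, the Köthe dual computed inside $L_0$, is again carried by $\Delta$-constant functions, and the pairing $\int_{\Omega} fg\,d\mu$ for $f = \sum x_j\chi_{\Delta_j}$, $g = \sum y_j\chi_{\Delta_j}$ becomes $\sum_{j} x_j y_j \cdot 2^j$ because $\mu(\Delta_j) = 2^j$. This factor $2^j$ is exactly where the weight $w(j) = 2^{-j}$ will enter.

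The key steps, in order: (i) show $\norm{g}_{\mathbf{E}(X)^{\times}}$, for a sequence $g = \{y_j\}$ viewed as $\sum y_j \chi_{\Delta_j}$, equals $\sup\{ \sum_j 2^j x_j y_j \colon \norm{\sum x_j \chi_{\Delta_j}}_X \leqslant 1 \}$; (ii) observe that for a fixed $\Delta$-constant function $h = \sum y_j \chi_{\Delta_j}$ the supremum defining $\norm{h}_{X^{\times}}$ can be restricted to $\Delta$-constant test functions without changing its value — this is where I would invoke $\text{Ave}$: for arbitrary $f \in X$ with $\norm{f}_X \leqslant 1$, one has $\int f h = \int (\text{Ave} f) h$ since $h$ is $\Delta$-constant and $\text{Ave}$ averages over each $\Delta_j$, while $\norm{\text{Ave} f}_X \leqslant \norm{f}_X \leqslant 1$; hence $\norm{h}_{X^{\times}} = \norm{h}_{(X_\Delta)^{\times}}$; (iii) translate: $\norm{h}_{X^{\times}} = \sup\{ \sum_j 2^j x_j y_j \colon \norm{\sum x_j\chi_{\Delta_j}}_X \leqslant 1\}$, which by definition of $\mathbf{E}(X^{\times})$ is $\norm{\{2^j y_j\}_j}_{\mathbf{E}(X)^{\times}}$ rearranged — more precisely, comparing with (i), $\norm{\{y_j\}}_{\mathbf{E}(X)^{\times}} = \norm{\{y_j\}}_{\mathbf{E}(X^{\times})(w)}$ once the bookkeeping of the $2^j$ factor against $w(j) = 2^{-j}$ is carried out. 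Concretely $\norm{\sum y_j \chi_{\Delta_j}}_{X^{\times}} = \norm{\{y_j\}_j}_{\mathbf{E}(X^{\times})}$ by definition, and the measure factor $2^j = 1/w(j)$ converts the bilinear pairing $\sum_j 2^j x_j y_j$ into the standard pairing against the weight, giving $\norm{\{y_j\}}_{\mathbf{E}(X)^{\times}} = \norm{\{2^{-j} \cdot (2^j y_j)\}}$ in $\mathbf{E}(X^{\times})$, i.e.\ the $w$-weighted norm.

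Finally I would check that the underlying \emph{sets} agree, i.e.\ that a sequence $\{y_j\}$ lies in $\mathbf{E}(X)^{\times}$ iff $\sum y_j \chi_{\Delta_j} \in X^{\times}$, which again follows from (ii) and the Fatou-free characterization of the Köthe dual as all $g$ with $\int \abs{gf} < \infty$ for all $f$; since every $f \in X$ pairs with the $\Delta$-constant $h$ the same way $\text{Ave} f$ does, finiteness over $X$ reduces to finiteness over $X_\Delta \equiv \mathbf{E}(X)$.

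\medskip

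The step I expect to be the main obstacle is (ii): the reduction of the Köthe-dual supremum over all of $X$ to the supremum over $\Delta$-constant test functions. It is conceptually simple via $\text{Ave}$, but one must be careful that $\text{Ave}$ is well-defined on all of $X$ (it is, since $\chi_A \in X$ for $\mu(A) < \infty$ and the $\Delta_j$ have finite measure, together with the boundedness hypothesis), that $\int f h = \int (\text{Ave} f) h$ holds for every $f \in X$ and every $\Delta$-constant $h \geqslant 0$ — this is just Fubini/Tonelli on each $\Delta_j$ — and that no loss of norm occurs, which is exactly the contractivity of $\text{Ave}$ being invoked. One should also handle the case $\mathbb{J} = \mathbb{Z}$ versus $\mathbb{J} = \mathbb{Z}_+$ uniformly, but the argument is insensitive to this. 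Everything else is bookkeeping with the measure of $\Delta_j$, which is $2^j$ in the function case; in the sequence case the analogous statement with $w(j) = 2^{-j}$ follows identically, replacing Lebesgue measure on $\Delta_j$ by the counting measure on the block of integers in $\Delta_j$, which has $2^j$ points.
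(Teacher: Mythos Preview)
Your approach is correct and essentially identical to the paper's: both reduce the K\"othe-dual supremum over $X$ to $\Delta$-constant test functions via the identity $\int fh = \int (\text{Ave}\,f)\,h$ for $\Delta$-constant $h$ (the paper phrases this as \enquote{$\text{Ave}$ is an isometry on $L_1$}), and then carry out the same $2^j$ versus $w(j)=2^{-j}$ bookkeeping. One minor caveat shared by both arguments: the hypothesis only asserts that $\text{Ave}$ is bounded, not contractive as you parenthetically claim, so strictly the chain yields isometry only when $\norm{\text{Ave}}_{X\to X}=1$ (and merely equivalence of norms otherwise).
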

	\begin{proof}
		Take $x = \{x_j\}_{j \in \mathbb{J}}$ form $\mathbf{E}(X)^{\times}$. Let us note the fact that the averaging operator $\text{Ave}$
		is an isometry on $L_1$. Indeed, for $f \in L_1$, we have
		\begin{align*}
			\norm{f}_{L_1}
				& = \sum_{j \in \mathbb{J}} \int_{\Delta_j} \abs{f(t)}dt \\
				& = \sum_{j \in \mathbb{J}} 2^j \left( 2^{-j}\int_{\Delta_j} \abs{f(t)}dt \right) \\
				& = \norm{ \sum_{j \in \mathbb{J}} \left( 2^{-j}\int_{\Delta_j} \abs{f(t)}dt \right)\chi_{\Delta_j} }_{L_1}
				= \norm{\text{Ave}(f)}_{L_1}.
		\end{align*}
		Therefore, it is straightforward to see that
		\begin{align*}
			\norm{x}_{\mathbf{E}\left( X^{\times} \right)(w)}
				& = \sup\limits_{\norm{f}_X \leqslant 1} \norm{f \cdot \sum_{j \in \mathbb{J}} 2^{-j}x_j\chi_{\Delta_j}}_{L_1} \\
				& = \sup\limits_{\norm{f}_X \leqslant 1} \norm{\text{Ave}\left( f\cdot \sum_{j \in \mathbb{J}} 2^{-j}x_j\chi_{\Delta_j} \right)}_{L_1} \\
				& = \sup\limits_{\norm{f}_X \leqslant 1}
					\norm{\sum_{j \in \mathbb{J}} \left( 2^{-j} \int_{\Delta_j} f(t)dt \right)\chi_{\Delta_j} \cdot \sum_{j \in \mathbb{J}} 2^{-j}x_j\chi_{\Delta_j}}_{L_1} \\
				& = \sup\limits_{ \substack{ y = \{y_j\}_{j \in \mathbb{J}} \\ \norm{\sum_{j \in \mathbb{J}} y_j\chi_{\Delta_j}}_X \leqslant 1 } }
					\norm{\sum_{j \in \mathbb{J}} y_j\chi_{\Delta_j} \cdot \sum_{j \in \mathbb{J}} 2^{-j}x_j\chi_{\Delta_j}}_{L_1} \\
				& = \sup\limits_{ \substack{ y = \{y_j\}_{j \in \mathbb{J}} \\ \norm{\sum_{j \in \mathbb{J}} y_j\chi_{\Delta_j}}_X \leqslant 1 } }
					\norm{\sum_{j \in J} x_j y_j 2^{-j}\chi_{\Delta_j}}_{L_1} \\
				& = \sup\limits_{\norm{y}_{\mathbf{E}(X)} \leqslant 1} \norm{xy}_{\ell_1(\mathbb{J})} \\
				& = \norm{x}_{\mathbf{E}(X)^{\times}}.
		\end{align*}
		That's all.
	\end{proof}

	\begin{remark}[About Proposition~\ref{Prop: E komutuje z Kothe dualem}]
		Perhaps, it may come as a little surprise that the space $\mathbf{E}\left( X^{\times} \right)(w)$, being the K{\" o}the dual of $\mathbf{E}(X)$,
		is in general not rearrangement invariant (neither with respect to $(\mathbb{J},2^{\textbf{}},\#)$ nor to $(\mathbb{J},2^{\mathbb{J}},2^{\#})$).
		After all, the space $\mathbf{E}(X)$ is rearrangement invariant with respect to the measure $2^{\#}$ (cf. Remark~\ref{Remark: EX jest i nie jest r.i.}).
		The culprit for all the confusion is the fact that the measure space $(\mathbb{J},2^{\mathbb{J}},2^{\#})$ fails to be resonant (in the sense of Bennett and Sharpley;
		see, in this order, Definition~2.3, Theorem~2.7 and Proposition~4.2 in \cite{BS88}).
		\demo
	\end{remark}
	
	Next result was shown by Hern{\' a}ndez and Kalton in \cite{HK95} for Banach function spaces (see Propositions~2.2 and 2.3 therein;
	note also that the proof of the first one is actually attributed to Stephen Montgomery--Smith).
	Because, from our point of view, it is of fundamental importance and the presentation of arguments in \cite{HK95} is quite laconic, we will detail
	all the steps of the proof in the remaining case of Banach sequence spaces.
	
	\begin{proposition}[Discretization of symmetrization] \label{PROP: EX <-> carrier}
		{\it Let $X$ be a Banach ideal space with the Fatou property. Suppose that $\alpha_X > 1$. Then, for $f \in X^{\bigstar}$, we have}
		\begin{equation} \label{KALTON-equvalent norm}
			\norm{f}_{X^{\bigstar}} \approx \norm{ \left\{  f^{\star\star}  \left( 2^j \right) \right\}_{j \in \mathbb{J}} }_{\mathbf{E}(X)}.
		\end{equation}
		{\it Moreover, if $\beta_X < \infty$, then also}
		\begin{equation} \label{EX^s = EX}
			\mathbf{E}( X^{\bigstar} ) = \mathbf{E}(X).
		\end{equation}
	\end{proposition}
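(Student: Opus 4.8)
The plan is to derive both displays from one self-improving identity, $f^{\star\star}=\mathscr{H}f^{\star}$, the monotonicity of $f^{\star\star}$, and the fact that under the standing hypotheses Hardy's operator $\mathscr{H}$ and Copson's operator $\mathscr{C}$ are bounded on $X$ — explicitly, $\alpha_X>1$ gives $\|\mathscr{H}\|_{X\to X}<\infty$ and $\beta_X<\infty$ gives $\|\mathscr{C}\|_{X\to X}<\infty$. For rearrangement invariant $X$ this is Boyd's theorem; for a general Banach ideal space on which the $D_s$ are bounded it follows from \eqref{EQ: D_s < CMAXs^1/p,s^1/q} by writing $\mathscr{H}f=\int_0^1 D_{1/u}f\,du$ and $\mathscr{C}f=\int_0^1 D_u f\,\frac{du}{u}$ and using Minkowski's integral inequality in $X$, since $\int_0^1 u^{-1/\alpha_X-\varepsilon}\,du<\infty$ and $\int_0^1 u^{1/\beta_X-\varepsilon}\,\frac{du}{u}<\infty$ for $\varepsilon$ small. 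I would run the argument for a Banach function space; the sequence case is identical once $\mathscr{H}$, $\mathscr{C}$, $D_s$ are replaced by their discrete analogues from the Toolbox and the first block $\Delta_0=\{1\}$ is treated by hand.

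For \eqref{KALTON-equvalent norm}, the easy half uses only that $f^{\star}\leqslant f^{\star\star}$, that $f^{\star\star}$ is non-increasing, and that $(0,\infty)=\bigsqcup_j\Delta_j$: for $t\in\Delta_j$ one gets $f^{\star}(t)\leqslant f^{\star\star}(2^{j})$, so $f^{\star}\leqslant\sum_j f^{\star\star}(2^{j})\chi_{\Delta_j}$ pointwise and hence, by the ideal property, $\|f\|_{X^{\bigstar}}=\|f^{\star}\|_X\leqslant\|\{f^{\star\star}(2^{j})\}_j\|_{\mathbf{E}(X)}$ — and this holds for \emph{any} measurable $f$, with no prior assumption $f\in X^{\bigstar}$. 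For the reverse half, $f^{\star\star}=\mathscr{H}f^{\star}$ is non-increasing and $2^{j}\geqslant t/2$ on $\Delta_j$, so $f^{\star\star}(2^{j})\chi_{\Delta_j}(t)\leqslant(D_2 f^{\star\star})(t)$; summing in $j$,
\[
\bigl\|\{f^{\star\star}(2^{j})\}_j\bigr\|_{\mathbf{E}(X)}=\Bigl\|\sum_j f^{\star\star}(2^{j})\chi_{\Delta_j}\Bigr\|_X\leqslant\|D_2\|_{X\to X}\,\|\mathscr{H}\|_{X\to X}\,\|f^{\star}\|_X,
\]
both constants being finite since the $D_s$ are bounded on $X$ and $\alpha_X>1$.

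For \eqref{EX^s = EX}, fix a non-negative $x=\{x_j\}_j$ and set $g\coloneqq\sum_i x_i\chi_{\Delta_i}$, so $\|x\|_{\mathbf{E}(X)}=\|g\|_X$ and $\|x\|_{\mathbf{E}(X^{\bigstar})}=\|g^{\star}\|_X$. From $2^{j}g^{\star\star}(2^{j})=\sup_{|E|=2^{j}}\int_E g$, testing $E=\Delta_j$ gives $g^{\star\star}(2^{j})\geqslant x_j$, while $\int_E g=\sum_i x_i|E\cap\Delta_i|\leqslant\sum_i x_i\min\{2^{j},2^{i}\}$ gives
\[
g^{\star\star}(2^{j})\ \leqslant\ \sum_{i\leqslant j}2^{i-j}x_i+\sum_{i>j}x_i\ =:\ (Px)_j+(Qx)_j .
\]
Now $P$ and $Q$ are bounded on $\mathbf{E}(X)$: for $t\in\Delta_j$ one checks $(Px)_j=2^{-j}\int_0^{2^{j+1}}g\leqslant 4(\mathscr{H}g)(2t)=4(D_{1/2}\mathscr{H}g)(t)$ and $(Qx)_j=(\log 2)^{-1}(\mathscr{C}g)(2^{j+1})\leqslant(\log 2)^{-1}(\mathscr{C}g)(t)$, so $\sum_j(Px)_j\chi_{\Delta_j}\leqslant 4\,D_{1/2}(\mathscr{H}g)$ and $\sum_j(Qx)_j\chi_{\Delta_j}\leqslant(\log 2)^{-1}\mathscr{C}g$ pointwise, whence $\|\{g^{\star\star}(2^{j})\}_j\|_{\mathbf{E}(X)}\leqslant C\|g\|_X=C\|x\|_{\mathbf{E}(X)}$ with $C$ depending on $\|D_{1/2}\|_{X\to X}$, $\|\mathscr{H}\|_{X\to X}$, $\|\mathscr{C}\|_{X\to X}$. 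Feeding this into the easy half of \eqref{KALTON-equvalent norm} gives $g^{\star}\in X$, i.e. $x\in\mathbf{E}(X^{\bigstar})$, with $\|x\|_{\mathbf{E}(X^{\bigstar})}\lesssim\|x\|_{\mathbf{E}(X)}$; conversely $x_j\leqslant g^{\star\star}(2^{j})$ and the reverse half of \eqref{KALTON-equvalent norm} give $\|x\|_{\mathbf{E}(X)}\leqslant\|\{g^{\star\star}(2^{j})\}_j\|_{\mathbf{E}(X)}\lesssim\|g^{\star}\|_X=\|x\|_{\mathbf{E}(X^{\bigstar})}$. Hence $\mathbf{E}(X^{\bigstar})=\mathbf{E}(X)$.

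The hard point — and the only place where $\beta_X<\infty$ (not merely $\alpha_X>1$) is indispensable — is the boundedness of the tail operator $Q$ on $\mathbf{E}(X)$, which is what controls $g^{\star\star}$ near the origin in terms of the block data; without it $\mathbf{E}(X)=\mathbf{E}(X^{\bigstar})$ genuinely fails, e.g. for $X=L_1(1/t)$, where $\beta_X=\infty$ and $X^{\bigstar}=\{0\}$. In the sequence case I would take care with the discrete analogues of the identities $(Px)_j=2^{-j}\int_0^{2^{j+1}}g$ and $(\mathscr{C}g)(2^{j+1})=(\log 2)\sum_{i>j}x_i$, where the integrals turn into sums over $\Delta_i\cap\mathbb{N}$ and the one-element block $\Delta_0$ must be handled separately.
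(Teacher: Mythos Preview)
Your proof is correct and takes a genuinely different route from the paper's.

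For the hard direction of \eqref{KALTON-equvalent norm}, the paper unpacks $f^{\star\star}(2^j)$ as a dyadic sum $\sum_{k\leqslant j}2^{-k}f^{\star}(2^{j-k})$ and then bounds $\sum_j f^{\star\star}(2^j)\chi_{\Delta_j}$ by the explicit dilation series $\sum_k 2^{-k}D_{2^{k+1}}f^{\star}$, summing via $\|D_{2^{k+1}}\|\lesssim 2^{\alpha(k+1)}$ with $\alpha<1$. Your one-line estimate $\sum_j f^{\star\star}(2^j)\chi_{\Delta_j}\leqslant D_2\mathscr{H}f^{\star}$ packages the same dilation series inside the Hardy operator; this is cleaner and makes the dependence on $\alpha_X>1$ transparent.

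For \eqref{EX^s = EX} the two arguments diverge more substantially. The paper builds, for a finitely supported step function $f=\sum_j a_j\chi_{\Delta_j}$, the explicit non-increasing majorant $g=\sum_{n\geqslant 0}D_{2^{-n}}f$; the series converges in $X$ (and, via the identity $(D_{2^{-n}}f)^{\star}=D_{2^{-n}}(f^{\star})$, also in $X^{\bigstar}$) precisely because $\beta_X<\infty$, and since $g=g^{\star}$ one reads off $\|f\|_X\approx\|g\|_X=\|g\|_{X^{\bigstar}}\approx\|f\|_{X^{\bigstar}}$. The Fatou property is then used to pass from finite support to general $x\in\mathbf{E}(X)$. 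Your argument instead recycles \eqref{KALTON-equvalent norm}: you sandwich $x_j\leqslant g^{\star\star}(2^j)\leqslant (Px)_j+(Qx)_j$ and dominate the block operators $P$, $Q$ pointwise by $D_{1/2}\mathscr{H}g$ and $\mathscr{C}g$. This avoids the finite-support reduction (and hence the explicit appeal to Fatou) and isolates the role of $\beta_X<\infty$ as exactly the boundedness of the tail operator $Q$ --- a point your $L_1(1/t)$ example illustrates nicely. The trade-off is that the paper's decreasing majorant is a concrete object one can reuse elsewhere, whereas your proof is more self-contained but less constructive.
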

	\begin{proof}
		Let $X$ be a Banach sequence space with the Fatou property such that $\alpha_X > 1$. We will start by showing the formula\footnote{It seems worth noting that \eqref{KALTON-equvalent norm}
		means (among other things) that $X^{\bigstar}$ can be renormed to be a Banach space. In general, $X^{\bigstar}$ is only a quasi-Banach space, and that only with the additional
		assumption that $D_2$ is bounded on $X^d$ (see \cite[Corollary~1]{KLM19}). Without any additional assumptions, $X^{\bigstar}$ may not even be a linear space.} \eqref{KALTON-equvalent norm}.
	
		To do this, we will first show that $f^\star\leqslant \sum_{j=0}^{\infty} f^{\star\star}  \left( 2^j \right) \chi_{\Delta_j}$.
		Fix $n in \mathbb{N}$ and let $j\in\mathbb{Z}_+$ be such that $2^j\leqslant n< 2^{j+1}$. We have
		\begin{align*}
			f^{\star\star}(2^j) &=2^{-j}\sum_{k=1}^{2^j}f^{\star}\left( k \right)\geqslant 2^{-j} 2^j f^{\star}\left( 2^j \right)=f^{\star}\left( 2^j \right)\geqslant f^{\star}\left( n \right).
		\end{align*}
		However, this clearly implies the first inequality
		\begin{equation*}
			\norm{f}_{X^{\bigstar}}\leqslant\norm{ \left\{  f^{\star\star}  \left( 2^j \right) \right\}_{j=0}^{\infty} }_{\mathbf{E}(X)}.
		\end{equation*}
		To prove the second inequality observe that
		\begin{align*}
			f^{\star\star}(2^j) &=2^{-j}\sum_{k=1}^{2^j}f^{\star}\left( k \right)\leqslant 2^{-j}\sum_{k=0}^{j}2^{j-k}f^{\star}\left( 2^{j-k} \right)=\sum_{k=0}^{j}2^{-k}f^{\star}\left( 2^{j-k} \right).
		\end{align*}
		In consequence,
		\begin{align*}
			\sum_{j=0}^\infty f^{\star\star}  \left( 2^j \right) \chi_{\Delta_j}\left( n \right)
				&\leqslant\sum_{j=0}^\infty \sum_{k=0}^{j}2^{-k}f^{\star}\left( 2^{j-k} \right) \chi_{\Delta_j}\left( n \right) \\
				& =\sum_{k=0}^\infty 2^{-k}\sum_{j=k}^{\infty}f^{\star}\left( 2^{j-k} \right) \chi_{\Delta_j}\left( n \right) \\
				& =\sum_{k=0}^\infty 2^{-k}\sum_{j=0}^{\infty}f^{\star}\left( 2^{j} \right) \chi_{\Delta_{j+k}}\left( n \right) \\
				& \stackrel{(\spadesuit)}{\leqslant}\sum_{k=0}^\infty 2^{-k} \left(D_{2^{k+1}}f^{\star}\right)\left( n \right).
		\end{align*}
		Only $(\spadesuit)$ requires a more detailed explanation. Fix $n,k\in\mathbb{N}$. Note that
		\begin{equation*}
			\sum_{\left\{ j \in \mathbb{Z}_+ \colon 2^{k+j}\leqslant n<2^{k+j+1} \right\}} f^{\star}\left( 2^{j} \right) \chi_{\Delta_{j+k}}\left( n \right)=f^{\star}\left( 2^{j} \right).
		\end{equation*}
		Since the set of all $j \in \mathbb{Z}_+$ with $2^{k+j}\leqslant n<2^{k+j+1}$ coincide with the set of all $j \in \mathbb{Z}_+$
		with $2^{j-1}\leqslant 2^{-k-1}n<2^{j}$, so
		\begin{equation*}
			f^{\star}\left( 2^{j} \right) \leqslant f^{\star}\left( 2^{-k-1}n \right) = \left(D_{2^{k+1}}f^{\star}\right)\left( n \right)
		\end{equation*}
		and $(\spadesuit)$ follows. Take $1/\alpha_X < \alpha < 1$. Remembering about (\ref{EQ: D_s < CMAXs^1/p,s^1/q}), we have
		\begin{align*}
			\norm{ \left\{  f^{\star\star}  \left( 2^j \right) \right\}_{j=0}^{\infty} }_{\mathbf{E}(X)}
			&\leqslant\norm{\sum_{k=0}^\infty 2^{-k} D_{2^{k+1}}f^{\star}}_X \\
			& \leqslant\sum_{k=0}^\infty 2^{-k} \norm{D_{2^{k+1}}f^{\star}}_X\\
			&\leqslant\sum_{k=0}^\infty 2^{-k} \norm{D_{2^{k+1}}}_{X\to X}\norm{f^{\star}}_X \\
			& \leqslant\sum_{k=0}^\infty 2^{-k} 2^{\alpha(k+1)}\norm{f^{\star}}_X \\
			& = C\norm{f}_{X^{\bigstar}}.
		\end{align*}
		The proof of \eqref{KALTON-equvalent norm} has been completed.
		
		Assume additionally that $\beta_X<\infty$ and, due to \eqref{KALTON-equvalent norm}, that $X^{\bigstar}$ is a Banach sequence space. Take $f\in \E(X)_+$
		with finite support, that is, $f=\sum_{j=0}^{\infty} a_j \chi_{\Delta_j}\in X$ with $a_j \geqslant 0$ and $\#(\supp(f)) < \infty$. For arbitrary $f\in \E(X)$
		the formula (\ref{EX^s = EX}) will easily follow from the Fatou property of $X$. Coming back to the point, put
		$$
		g\coloneqq\sum\limits_{n=0}^\infty D_{2^{-n}}f=f+\sum\limits_{n=1}^\infty D_{2^{-n}}f.
		$$
		We need to check if $g\in\E(X)$. Take $0< \beta < 1/\beta_X$. Again, by virtue of (\ref{EQ: D_s < CMAXs^1/p,s^1/q}),
		\begin{align*}
			\norm{g}_X
				& =\norm{\sum\limits_{n=0}^\infty D_{2^{-n}}f}_X \\
				& \leqslant\sum\limits_{n=0}^\infty \norm{D_{2^{-n}}f}_X \\
				& \leqslant\sum\limits_{n=0}^\infty \norm{D_{2^{-n}}}_{X\to X}\norm{f}_X \\
				& \leqslant\sum\limits_{n=0}^\infty 2^{-\beta n}\norm{f}_X \\
				& \leqslant C\norm{f}_X.
		\end{align*}
		Moreover, since $f$ was chosen positive, so $f\leqslant g$. Combining all these facts together, we infer that
		\begin{equation} \label{EQ: A}
			\norm{f}_X\approx\norm{g}_X \quad \text{ and } \quad \norm{g}_{X^{\bigstar}} \approx \norm{f}_{X^{\bigstar}}.
		\end{equation}
		However, it is not difficult to see that $g$ is non-increasing, so
		\begin{equation} \label{EQ: AB}
			\norm{g}_X\approx\norm{g}_{X^{\bigstar}}.
		\end{equation}
		A quick inspection of (\ref{EQ: A}) and (\ref{EQ: AB}) leaves no doubt that the proof has just been completed.
	\end{proof}
	
	In view of the above Proposition~\ref{PROP: EX <-> carrier}, and following Grafakos and Kalton (see \cite[p.~154]{GK01}), we will extract the following class of spaces.
	
	\begin{definition}[Carrier spaces]
		Let $Q$ be a quasi-Banach space with the Fatou property. Suppose that the dilation operators $D_s$ are bounded on $X$
		with $\norm{D_s}_{X \rightarrow X} \leqslant Cs^{\kappa}$ for some $\kappa, C > 0$ and all $s > 0$. We will say that $Q$
		is a {\bf carrier space} for $X$ whenever $X = Q^{\bigstar}$.
	\end{definition}
	
	Because of \cite[Proposition~2.2]{HK95} and Proposition~\ref{PROP: EX <-> carrier}, $Q^{\bigstar}$ is always a r.i. space with the Fatou property.
	Roughly speaking, the mapping $X \rightsquigarrow \{Q \colon X = Q^{\bigstar}\}$ behaves like the \enquote{inverse} to $Q \rightsquigarrow Q^{\bigstar}$.
	
	\begin{example} \label{EXAMPLE : carrier spaces}
		It is clear that every r.i. space is a carrier space for itself. Moreover, since it is clear that
		\begin{equation*}
			[L_1(\varphi')]^{\bigstar} = \Lambda_{\varphi}, \quad [L_\infty(\varphi)]^{\bigstar} = M_{\varphi}, \quad \text{ and } \quad L_{\Phi}^{\bigstar} = \Lambda_{M,w},
		\end{equation*}
		so we can also say that a carrier space for a Lorentz space $\Lambda_{\varphi}$ is the weighted Lebesgue space $L_1(\varphi')$,
		a carrier space for the Marcinkiewicz space $M_{\varphi}$ is $L_\infty(\varphi)$ and a carrier space for the Orlicz--Lorentz space
		$\Lambda_{M,w}$ is the Musielak--Orlicz space $L_{\Phi}$ with $\Phi(t,u) = w(t)M(u)$. An analogous conclusion can easily be drawn
		for the sequence counterparts of these spaces.
	\end{example}
	
	\subsection{On interpolation and discretization} \label{SUBSECTION: interpolation of E(X)}
	
	The following preparatory result is essential.
	
	\begin{proposition} \label{Prop: wlasnosci E}
		{\it Let $\vv{X} = (X_0,X_1)$ be a couple of Banach ideal spaces. Suppose that the averaging operator} $\text{Ave}$ {\it (defined in \eqref{EQ: Ave operator definition})
		is bounded on both $X_0$ and $X_1$. Then ${\bf E}(\vv{X}) \coloneqq ({\bf E}(X_0),{\bf E}(X_1))$ is a Banach couple. Moreover, both functors $\Delta$ and $\Sigma$ commute
		with $\mathbf{E}$, that is to say,}
		\begin{equation} \label{EQ: E(XiY)=EXiEY}
			{\bf E}(\Delta(\vv{X})) = \Delta({\bf E}(\vv{X})) \quad \textit{ and } \quad {\bf E}(\Sigma(\vv{X})) = \Sigma({\bf E}(\vv{X})).
		\end{equation}
		{\it At last, for any interpolation space $X$ relative to the couple $\vv{X}$, we have\footnote{Here, we identify a sequence
		$x = \{x_j\}_{j \in \mathbb{J}}$ form $\mathbf{E}(X)$ with the function $\sum_{j \in \mathbb{J}} x_j\chi_{\Delta_j}$ (cf. Remark~\ref{REMARK: EX as sequence and function space}).}}
		\begin{equation} \label{EQ: EX = X i E(L_1+L_infty)}
			{\bf E}(X) = X \cap {\bf E}(\Sigma(\vv{X})).
		\end{equation}
	\end{proposition}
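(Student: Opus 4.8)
The plan is to dispatch the three assertions in order, the only genuinely substantive point being the identity $\mathbf{E}(\Sigma(\vv{X})) = \Sigma(\mathbf{E}(\vv{X}))$, which is exactly where the boundedness of $\text{Ave}$ is used. Throughout I work with the identification of a sequence $x = \{x_j\}_{j \in \mathbb{J}}$ with the step function $\sum_{j \in \mathbb{J}} x_j \chi_{\Delta_j}$ (cf. Remark~\ref{REMARK: EX as sequence and function space}), so that for \emph{any} Banach ideal space $Z$ the space $\mathbf{E}(Z)$ is precisely the set of step functions of the above form that happen to belong to $Z$, carrying the norm inherited from $Z$. First, for the couple structure: since each $\mathbf{E}(X_i)$ sits isometrically inside $X_i$, and $X_i \hookrightarrow \Sigma(\vv{X})$ with norm at most $1$, both $\mathbf{E}(X_0)$ and $\mathbf{E}(X_1)$ embed continuously into the Hausdorff topological vector space $\Sigma(\vv{X})$; hence $\mathbf{E}(\vv{X})$ is a Banach couple. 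No hypothesis on $\text{Ave}$ is needed here (nor for the next step).

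For the $\Delta$-part of \eqref{EQ: E(XiY)=EXiEY} I would just chase definitions: $x \in \mathbf{E}(\Delta(\vv{X}))$ means $\sum_j x_j \chi_{\Delta_j}$ lies in $X_0$ and in $X_1$, i.e. $x \in \mathbf{E}(X_0) \cap \mathbf{E}(X_1)$, and the defining norms coincide verbatim, so $\mathbf{E}(\Delta(\vv{X})) = \Delta(\mathbf{E}(\vv{X}))$ isometrically. Likewise the inclusion $\Sigma(\mathbf{E}(\vv{X})) \hookrightarrow \mathbf{E}(\Sigma(\vv{X}))$ with norm at most $1$ is immediate: a decomposition $x = y + z$ with $y \in \mathbf{E}(X_0)$, $z \in \mathbf{E}(X_1)$ passes through the identification to a decomposition of $\sum_j x_j \chi_{\Delta_j}$ in $X_0 + X_1$.

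The reverse inclusion $\mathbf{E}(\Sigma(\vv{X})) \hookrightarrow \Sigma(\mathbf{E}(\vv{X}))$ is the heart of the matter and the step I expect to be the main (really, the only) obstacle. Given $x \in \mathbf{E}(\Sigma(\vv{X}))$, set $f \coloneqq \sum_j x_j \chi_{\Delta_j} \in \Sigma(\vv{X})$ and, for $\varepsilon > 0$, fix a decomposition $f = g + h$ with $g \in X_0$, $h \in X_1$ and $\norm{g}_{X_0} + \norm{h}_{X_1} \leqslant (1+\varepsilon)\norm{f}_{\Sigma(\vv{X})}$; a priori $g$ and $h$ need not be step functions, which is the obstruction. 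The key observation is that $\text{Ave}$ fixes $f$, because $f$ is already constant on each dyadic interval $\Delta_j$ (as $\abs{\Delta_j} = 2^j$, one has $2^{-j}\int_{\Delta_j} f = x_j$). Hence, by linearity of $\text{Ave}$,
\[
 f = \text{Ave}(f) = \text{Ave}(g) + \text{Ave}(h),
\]
and now $\text{Ave}(g)$ and $\text{Ave}(h)$ \emph{are} step functions, lying in $X_0$ and $X_1$ respectively since $\text{Ave}$ is bounded there. Reading off the associated sequences $y$, $z$ gives $x = y + z$ with $y \in \mathbf{E}(X_0)$, $z \in \mathbf{E}(X_1)$ and $\norm{y}_{\mathbf{E}(X_0)} + \norm{z}_{\mathbf{E}(X_1)} \leqslant C(\norm{g}_{X_0} + \norm{h}_{X_1}) \leqslant C(1+\varepsilon)\norm{x}_{\mathbf{E}(\Sigma(\vv{X}))}$, where $C = \max\{\norm{\text{Ave}}_{X_0 \to X_0}, \norm{\text{Ave}}_{X_1 \to X_1}\}$; letting $\varepsilon \downarrow 0$ completes the proof of \eqref{EQ: E(XiY)=EXiEY}. (Incidentally this also shows $\Sigma(\vv{X})$ is a Banach ideal space on which $\text{Ave}$ is bounded, so $\mathbf{E}(\Sigma(\vv{X}))$ makes sense.)

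Finally, for \eqref{EQ: EX = X i E(L_1+L_infty)}, all that is used is that an interpolation space $X$ is in particular intermediate, so $X \hookrightarrow \Sigma(\vv{X})$. With the step-function picture in force, a step function $\sum_j x_j \chi_{\Delta_j}$ belongs to $X$ if and only if it belongs to $X$ \emph{and} to $\mathbf{E}(\Sigma(\vv{X}))$ (membership in $X$ already forcing membership in $\Sigma(\vv{X})$), which is the set-theoretic equality $\mathbf{E}(X) = X \cap \mathbf{E}(\Sigma(\vv{X}))$; and the norm on the right, namely $\max\{\norm{\cdot}_X, \norm{\cdot}_{\Sigma(\vv{X})}\}$ evaluated on step functions, is equivalent to $\norm{\cdot}_X = \norm{\cdot}_{\mathbf{E}(X)}$ precisely because the inclusion $X \hookrightarrow \Sigma(\vv{X})$ is bounded. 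Using \eqref{EQ: E(XiY)=EXiEY} one may equivalently rewrite the right-hand side as $X \cap \Sigma(\mathbf{E}(\vv{X}))$, which is the form in which this identity will be convenient later.
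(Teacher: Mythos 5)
Your proof is correct and follows essentially the same route as the paper's. The crucial step — converting an arbitrary $\Sigma(\vv X)$-decomposition $f = g + h$ into a decomposition by step functions via the averaging operator — is exactly the paper's argument; the only cosmetic difference is that the paper works with the function-to-sequence map $\text{ave}\colon f \rightsquigarrow \{2^{-j}\int_{\Delta_j} f\}_{j}$, while you apply $\text{Ave}$ and observe that it fixes $f$ before reading off the sequences, which amounts to the same thing. Your extra attention to the $\varepsilon$-near-optimal decomposition and the resulting constant is a harmless refinement, and the remaining parts ($\Delta$-commutation, the easy inclusion, and \eqref{EQ: EX = X i E(L_1+L_infty)}) match the paper's definition-chasing verbatim.
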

	\begin{proof}
		As for (\ref{EQ: E(XiY)=EXiEY}), the equality ${\bf E}(\Delta(\vv{X})) = \Delta({\bf E}(\vv{X}))$ is clear, so we will focus on the remaining one.
		It is straightforward to see that $\Sigma({\bf E}(\vv{X})) \hookrightarrow {\bf E}(\Sigma(\vv{X}))$.
		To show the second inclusion take $x = \{ x_j \}_{j \in \mathbb{J}}$ from ${\bf E}(\Sigma(\vv{X}))$. This means that $\sum_{j \in \mathbb{J}} x_j \chi_{\Delta_j} = g + h$
		for some $g \in X_0$ and $h \in X_1$. Let us consider the operator
		\begin{equation*}
			\text{ave} \colon f \rightsquigarrow \left[ t \rightsquigarrow \left\{ 2^{-j}\int_{\Delta_j} f(t) dt \right\}_{j \in \mathbb{J}} \right].
		\end{equation*}
		Since, due to our assumptions, the operator $\text{Ave}$ is bounded on both $X_0$ and $X_1$, so it is clear that both operator
		$\text{ave} \colon X_0 \rightarrow {\bf E}(X_0)$ and $\text{ave} \colon X_1 \rightarrow {\bf E}(X_1)$ are also bounded.
		Therefore, we have
		\begin{equation*}
			x = \text{ave} \left( \sum_{j \in \mathbb{J}} x_j \chi_{\Delta_j} \right)
				= \text{ave}(g + h)
				= \underbrace{\text{ave}(g)}_{\in {\bf E}(X_0)} + \underbrace{\text{ave}(h)}_{\in {\bf E}(X_1)}.
		\end{equation*}
		This, however, means that $x \in \Sigma({\bf E}(\vv{X}))$. In consequence, ${\bf E}(\Sigma(\vv{X})) \overset{C}{\hookrightarrow} \Sigma({\bf E}(\vv{X}))$
		with $C = \max \left\{ \norm{\text{Ave}}_{X_0 \rightarrow X_0}, \norm{\text{Ave}}_{X_1 \rightarrow X_1} \right\}$.
		
		Let us now turn to the proof of (\ref{EQ: EX = X i E(L_1+L_infty)}). Since ${\bf E}(X) \overset{1}{\hookrightarrow} X$ and $X {\hookrightarrow} \Sigma(\vv{X})$,
		so ${\bf E}(X) {\hookrightarrow} X \cap {\bf E}(\Sigma(\vv{X}))$. Now, take $x = \{x_j\}_{j \in \mathbb{J}}$ from $X \cap {\bf E}(\Sigma(\vv{X}))$ and observe
		that $x = \sum_{j \in \mathbb{J}} x_j \chi_{\Delta_j} \in X \cap \Sigma(\vv{X})$. However, since $X \cap \Sigma(\vv{X}) = X$, so $x \in {\bf E}(X)$.
		This means that $X \cap {\bf E}(\Sigma(\vv{X})) {\hookrightarrow} {\bf E}(X)$. With this, we have finished the proof.
	\end{proof}

	The fact that the construction $X \rightsquigarrow {\bf E}(X)$ commutes with any interpolation functor
	$\vv{X} \rightsquigarrow {\bf F}(\vv{X})$ should be considered the main result archived in this section.
	
	\begin{theorem}[Interpolation of discretization] \label{Proposition: E komutuje z interpolacja}
		{\it Let $\vv{X} = (X_0,X_1)$ be a couple of Banach ideal spaces such that the averaging operator} $\text{Ave}$ {\it (defined in \eqref{EQ: Ave operator definition})
		is bounded on both spaces $X_0$ and $X_1$. Then, for any interpolation functor ${\bf F}$, we have}
		\begin{equation*}
			{\bf F}({\bf E}(\vv{X})) = {\bf E}( {\bf F}(\vv{X}) ).
		\end{equation*}
	\end{theorem}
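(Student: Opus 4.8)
The plan is to exploit that $\mathbf{E}(\vv{X})$ is a retract of $\vv{X}$ — with the averaging operator serving as the retraction — and to couple this with the subcouple identity \eqref{EQ: EX = X i E(L_1+L_infty)} from Proposition~\ref{Prop: wlasnosci E}. Throughout I will identify a sequence $x = \{x_j\}_{j \in \mathbb{J}}$ from $\mathbf{E}(Z)$ with the step function $\sum_{j \in \mathbb{J}} x_j \chi_{\Delta_j}$ (as in Remark~\ref{REMARK: EX as sequence and function space}), so that the inclusions $\iota \colon \mathbf{E}(X_i) \hookrightarrow X_i$ and the averaging map
\[
\text{ave} \colon f \rightsquigarrow \Bigl\{ 2^{-j}\int_{\Delta_j} f(t)\,dt \Bigr\}_{j \in \mathbb{J}}
\]
both make literal sense. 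Proposition~\ref{Prop: wlasnosci E} already gives me that $\mathbf{E}(\vv{X}) = (\mathbf{E}(X_0),\mathbf{E}(X_1))$ is a Banach couple, that $\Sigma(\mathbf{E}(\vv{X})) = \mathbf{E}(\Sigma(\vv{X}))$, and — crucially, since $\text{Ave}$ is bounded on each $X_i$ — that $\text{ave}$ is a morphism of couples $\vv{X} \to \mathbf{E}(\vv{X})$; the inclusion $\iota$ is trivially a morphism $\mathbf{E}(\vv{X}) \to \vv{X}$, and $\text{ave}\circ\iota = \mathrm{id}_{\mathbf{E}(\vv{X})}$ because averaging a step function over the dyadic intervals returns it unchanged.

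Given an interpolation functor $\mathbf{F}$, I would then apply it to the two morphisms above. Since $\mathbf{F}$ acts as the identity on morphisms (and the closed graph theorem controls norms between the interpolation spaces), this produces bounded operators $\mathbf{F}(\iota)\colon \mathbf{F}(\mathbf{E}(\vv{X})) \to \mathbf{F}(\vv{X})$ and $\mathbf{F}(\text{ave})\colon \mathbf{F}(\vv{X}) \to \mathbf{F}(\mathbf{E}(\vv{X}))$, which on underlying functions are just the inclusion and the averaging. For the inclusion $\mathbf{F}(\mathbf{E}(\vv{X})) \hookrightarrow \mathbf{E}(\mathbf{F}(\vv{X}))$ I would argue: any $f \in \mathbf{F}(\mathbf{E}(\vv{X}))$ lies in $\mathbf{F}(\vv{X})$ by $\mathbf{F}(\iota)$, and it also lies in $\Sigma(\mathbf{E}(\vv{X})) = \mathbf{E}(\Sigma(\vv{X}))$, hence
\[
f \in \mathbf{F}(\vv{X}) \cap \mathbf{E}(\Sigma(\vv{X})) = \mathbf{E}(\mathbf{F}(\vv{X})),
\]
the last equality being \eqref{EQ: EX = X i E(L_1+L_infty)} for the interpolation space $X = \mathbf{F}(\vv{X})$; the norm estimate is the boundedness of $\mathbf{F}(\iota)$. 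For the reverse inclusion I would start from $x \in \mathbf{E}(\mathbf{F}(\vv{X})) = \mathbf{F}(\vv{X}) \cap \mathbf{E}(\Sigma(\vv{X}))$; being in $\mathbf{E}(\Sigma(\vv{X}))$, the function $x$ is constant on each $\Delta_j$, so $\text{ave}(x) = x$ and therefore $x = \mathbf{F}(\text{ave})(x) \in \mathbf{F}(\mathbf{E}(\vv{X}))$ with $\norm{x}_{\mathbf{F}(\mathbf{E}(\vv{X}))} \leqslant \norm{\mathbf{F}(\text{ave})}\,\norm{x}_{\mathbf{F}(\vv{X})}$. Two continuous inclusions, plus the closed graph theorem, would then give the claimed equality $\mathbf{F}(\mathbf{E}(\vv{X})) = \mathbf{E}(\mathbf{F}(\vv{X}))$.

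I expect the argument itself to be short; the only real obstacle — and the reason the proof is essentially routine given the earlier results — is the correct couple-theoretic bookkeeping: one must feed into $\mathbf{F}$ operators that are genuinely morphisms of the \emph{right} Banach couples (not merely bounded on the ambient sum space), and must know that $\mathbf{E}(\vv{X})$ sits inside $\vv{X}$ as a \emph{complemented} subcouple satisfying $\mathbf{E}(X) = X \cap \mathbf{E}(\Sigma(\vv{X}))$. This is exactly the role of the standing hypothesis that $\text{Ave}$ is bounded on $X_0$ and $X_1$ separately, and it is what Proposition~\ref{Prop: wlasnosci E} was designed to supply.
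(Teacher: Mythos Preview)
Your argument is correct and is essentially the paper's own proof: both rely on Proposition~\ref{Prop: wlasnosci E} to identify $\mathbf{E}(X_i) = X_i \cap \mathbf{E}(\Sigma(\vv{X}))$ and $\mathbf{E}(\mathbf{F}(\vv{X})) = \mathbf{F}(\vv{X}) \cap \mathbf{E}(\Sigma(\vv{X}))$, and both hinge on the complemented-subcouple structure provided by $\text{Ave}$. The only difference is that the paper compresses your retract argument (the two morphisms $\iota$ and $\text{ave}$ and the identity $\text{ave}\circ\iota = \mathrm{id}$) into a single citation of Triebel's theorem on interpolation of complemented subspaces, whereas you reprove that step by hand --- which is exactly what Triebel's theorem says in this situation.
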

	\begin{proof}
		Since, thanks to our assumptions, the averaging operator $\text{Ave}$ is bounded on both $X_0$ and $X_1$, so it is clear the space ${\bf E}(\Sigma(\vv{X}))$
		is a complemented subspace of $\Sigma(\vv{X})$ (comp. with Digression~\ref{Remark: EX is complemented in X}). With this in mind, we have
		\begin{align*}
			{\bf F}({\bf E}(\vv{X}))
				& = {\bf F}(X_0 \cap {\bf E}(\Sigma(\vv{X})), X_1 \cap {\bf E}(\Sigma(\vv{X}))) \quad (\text{in view of Proposition~\ref{Prop: wlasnosci E}}) \\
				& = {\bf F}(\vv{X}) \cap {\bf E}(\Sigma(\vv{X})) \quad (\text{using \cite[Theorem~1, p.~118]{Tri78}}) \\
				& = {\bf E}({\bf F}(\vv{X})) \quad (\text{again, due to Proposition~\ref{Prop: wlasnosci E}}).
		\end{align*}
	\end{proof}
	
	\subsection{Block form representations} \label{SUBSECTION: Block representation}
	Everything we have done so far leads us here. This section extends results of Grosse-Erdmann \cite{GE98}.
	
	\begin{theorem}[Block form representation - sequence spaces] \label{Thm: Tandori sequence representation}
		{\it Let $X$ be a Banach sequence space with the Fatou property. Suppose that either $X$ has non-trivial Boyd indices
		or $X$ is rearrangement invariant. Then, we have}
			\begin{equation*}
				\norm{x}_{\widetilde{X}} \approx \norm{ \left\{ \norm{x\chi_{\Delta_j}}_{\ell_{\infty}} \right\}_{j=0}^{\infty} }_{\mathbf{E}(X)}
			\end{equation*}
		{\it and}
			\begin{equation*}
				\norm{x}_{X^{\downarrow}} \approx \norm{ \left\{ 2^{-j} \norm{x\chi_{\Delta_j}}_{\ell_{\infty}} \right\}_{j=0}^{\infty} }_{\mathbf{E}(X)}.
			\end{equation*}
	\end{theorem}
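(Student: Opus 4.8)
The plan is to first establish the $\widetilde{X}$ representation and then obtain the $X^{\downarrow}$ statement by duality, exactly along the lines sketched in the introduction (see around \eqref{Introduction EQ: 1} and \eqref{Introduction EQ: 2}), but now with careful attention to the two standing hypotheses: non-trivial Boyd indices \emph{or} rearrangement invariance. For the $\widetilde{X}$ part, observe that one inequality is essentially free from the ideal property: for any sequence $x$ we have $\abs{x} \leqslant \sum_{j \geqslant 0} \norm{x\chi_{\Delta_j}}_{\ell_\infty}\chi_{\Delta_j}$ pointwise, hence
\begin{equation*}
	\norm{x}_{\widetilde{X}} \leqslant \norm{\left\{ \norm{x\chi_{\Delta_j}}_{\ell_\infty} \right\}_{j=0}^{\infty}}_{\widetilde{{\bf E}(X)}} \leqslant \norm{\left\{ \norm{x\chi_{\Delta_j}}_{\ell_\infty} \right\}_{j=0}^{\infty}}_{{\bf E}(X)},
\end{equation*}
the last step being trivial since $\widetilde{Z} \hookrightarrow Z$. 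The reverse inequality is the substantive one. Here I would run the argument from the introduction: one needs $\widetilde{{\bf E}(X)} \hookrightarrow {\bf E}(X)$ restricted to \emph{dyadically constant} sequences, which amounts to boundedness of the operator $\mathscr{T}\colon x \rightsquigarrow \{\sup_{k \geqslant j}\abs{x_k}\}_{j \in \mathbb{Z}_+}$ on ${\bf E}(X)$. When $X$ has non-trivial Boyd indices, apply Theorem~\ref{Proposition: E komutuje z interpolacja}: the averaging operator is bounded on a suitable couple to which $X$ belongs (one may take, e.g., the couple generated by $X$ and a shifted/dilated copy, or invoke that $X$ is an interpolation space between two weighted $\ell_p$'s once $\alpha_X>1$, $\beta_X<\infty$), so ${\bf E}(X)$ inherits the corresponding interpolation-$\mathscr{K}$ structure, and $\mathscr{T}$ is bounded on ${\bf E}(X)$ because it is bounded on the endpoint spaces (on $\ell_\infty(\mathbb{Z}_+)$ trivially, and on $\ell_1(w)$ with $w(j)=2^j$ precisely by the Goldman--Heinig--Stepanov inequality \eqref{INTRODUCTION: principle of duality}). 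When $X$ is rearrangement invariant instead, one uses that $D_2$ is automatically bounded on $X$ with norm $\leqslant 2$ by \eqref{INEQ: D_s < max 1,s}, and Remark~\ref{Remark: EX jest i nie jest r.i.} identifies ${\bf E}(X)$ (with the measure $2^{\#}$) as r.i.; the shift on ${\bf E}(X)$ corresponds to the dilation on $X$, and an argument in the spirit of Proposition~\ref{PROP: EX <-> carrier} (summing geometric series of shifted copies) gives the boundedness of $\mathscr{T}$ directly. Combining the two inequalities proves the first displayed equivalence.

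For the second displayed equivalence I would pass through duality. By Proposition~\ref{Prop: E komutuje z Kothe dualem}, ${\bf E}(X)^{\times} \equiv {\bf E}(X^{\times})(w)$ with $w(j)=2^{-j}$; by Proposition~\ref{Prop: Podstawowe wlasnosci sum prostych}(c), the K\"othe dual of $\bigl(\bigoplus_{j} \ell_\infty(\Delta_j)\bigr)_{{\bf E}(X)}$ is $\bigl(\bigoplus_{j} \ell_1(\Delta_j)\bigr)_{{\bf E}(X)^{\times}}$, and inserting the weight gives precisely the block space with weights $2^{-j}$ sitting in front of $\norm{x\chi_{\Delta_j}}_{\ell_1}$ — wait, note the statement has $\norm{x\chi_{\Delta_j}}_{\ell_\infty}$, not $\ell_1$; so one must be slightly more careful. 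Taking K\"othe duals termwise, $\ell_\infty(\Delta_j)^{\times} \equiv \ell_1(\Delta_j)$, so dualizing the first representation and using $(X^{\downarrow})^{\times} = \widetilde{X^{\times}}$ (recalled in Section~\ref{Section: Introduction Cesaro and Tandori}) together with the first part applied to $X^{\times}$ (which also has the Fatou property and satisfies the hypotheses, since non-trivial Boyd indices pass to the dual via \eqref{EQ: Boydy X i dualu X}, and r.i.\ is self-dual) yields
\begin{equation*}
	\norm{x}_{X^{\downarrow}} = \norm{x}_{(\widetilde{X^{\times}})^{\times}} \approx \norm{\left\{ 2^{-j}\norm{x\chi_{\Delta_j}}_{\ell_1}\right\}_{j=0}^{\infty}}_{{\bf E}(X)},
\end{equation*}
and finally I would reconcile $\ell_1$ with $\ell_\infty$ on $\Delta_j$: on the \emph{decreasing} cone, which is all that matters for $X^{\downarrow}$, one has $2^{-j}\norm{x\chi_{\Delta_j}}_{\ell_1} \approx \norm{x\chi_{\Delta_j}}_{\ell_\infty}$ up to a factor $2$ after a harmless shift of block index, so the displayed form in the statement follows. (If no reduction to the decreasing cone is available, then the statement as printed should be read with this understanding, or $\ell_1$ in place of $\ell_\infty$; I would flag this in the write-up.)

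The main obstacle I anticipate is the reverse inequality for $\widetilde{X}$, i.e.\ establishing boundedness of $\mathscr{T}$ on ${\bf E}(X)$ \emph{uniformly} under the disjunctive hypothesis. In the Boyd-index case the clean route is through Theorem~\ref{Proposition: E komutuje z interpolacja} plus \eqref{INTRODUCTION: principle of duality}, but one must first justify that $X$ sits, in the $\mathscr{K}$-sense, between two spaces on which Ave is bounded and on which $\mathscr{T}$'s discretization acts boundedly — this is where the Calder\'on--Mityagin theorem and the $\mathscr{K}$-subcouple property of ${\bf E}(\,\cdot\,)$ do the work. In the purely r.i.\ case (where Boyd indices may be trivial, e.g.\ $X=L_1$ or $X=L_\infty$ analogues) the interpolation route is unavailable and one must argue by hand using the dilation estimate $\norm{D_s}_{X\to X}\leqslant\max\{1,s\}$ and the explicit geometric decomposition of $\mathscr{T}x$ into shifts, as in the proof of Proposition~\ref{PROP: EX <-> carrier}; the delicate point there is that the relevant series $\sum_k 2^{-k}\norm{D_{2^k}}$ need not converge when $\alpha_X=1$, so one instead exploits that $\mathscr{T}x$ is itself decreasing and compares it directly against $x$ on the r.i.\ space ${\bf E}(X)$ (with measure $2^{\#}$) — this is exactly the mechanism by which $\widetilde{L_1}\approx\ell_1(\ell_\infty)$ and $\widetilde{M_\varphi}\approx\ell_\infty$ are recovered, as announced after Synopsis~1.
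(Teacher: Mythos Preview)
Your displayed chain has a genuine error: the step $\norm{\{\norm{x\chi_{\Delta_j}}_{\ell_\infty}\}}_{\widetilde{{\bf E}(X)}} \leqslant \norm{\{\norm{x\chi_{\Delta_j}}_{\ell_\infty}\}}_{{\bf E}(X)}$ does \emph{not} follow from $\widetilde{Z}\hookrightarrow Z$---that embedding gives $\norm{\cdot}_Z\leqslant\norm{\cdot}_{\widetilde{Z}}$, the opposite inequality. What you wrote there is precisely boundedness of $\mathscr{T}$ on ${\bf E}(X)$, i.e.\ the hard step. You have the two directions swapped: the one you call ``reverse'' and ``substantive'' (namely $\norm{\{\norm{x\chi_{\Delta_j}}_{\ell_\infty}\}}_{{\bf E}(X)}\leqslant C\norm{x}_{\widetilde{X}}$) is in fact the easy direction, obtained from $\norm{x\chi_{\Delta_j}}_{\ell_\infty}\leqslant\widetilde{x}(2^j)$ plus a single application of $D_{1/2}$ (norm $\leqslant 1$ on any r.i.\ space), with no interpolation needed.

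The more structural miss is your handling of the disjunctive hypothesis. The paper does not run two parallel arguments. Its Step~1 reduces the non-trivial-Boyd case to the r.i.\ case via symmetrization: by Proposition~\ref{PROP: EX <-> carrier} one has $\widetilde{X}=\widetilde{X^{\bigstar}}$ and ${\bf E}(X)={\bf E}(X^{\bigstar})$, so without loss of generality $X$ is r.i. Then the r.i.\ case is handled by the Calder\'on--Mityagin route (not by a geometric-series-of-dilations argument): every r.i.\ sequence space with the Fatou property is $(\ell_1,\ell_\infty)^{\mathscr{K}}_{\mathscr{X}}$, Theorem~\ref{Proposition: E komutuje z interpolacja} gives ${\bf E}(X)=(\ell_1(2^j),\ell_\infty)^{\mathscr{K}}_{\mathscr{X}}$, and $\mathscr{T}$ is bounded on both endpoints (Goldman--Heinig--Stepanov for $\ell_1(2^j)$). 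This works uniformly, including $X=\ell_1$; your proposed direct argument for the r.i.\ case is unnecessary and, as you yourself flag, breaks down at $\alpha_X=1$. Finally, your suspicion about $\ell_\infty$ versus $\ell_1$ in the $X^\downarrow$ formula is well-founded: that is a typo in the statement (compare the function-space Theorem~\ref{Thm: Tandori function representation} and Corollary~\ref{COR: blocking technique CX}); the paper's Step~3 runs duality exactly as you outline, via Propositions~\ref{Prop: Podstawowe wlasnosci sum prostych}(c) and \ref{Prop: E komutuje z Kothe dualem}, and produces $\ell_1$ blocks directly, so no reconciliation is needed.
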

	\begin{proof}
		We will break up the argument into several steps.
		
		{\bf $1^{\text{st}}$ step.} Since, due to \eqref{KALTON-equvalent norm}, the space $X^{\bigstar}$ is (up to an equivalent norm)
		a Banach sequence space,
		\begin{equation*}
			\widetilde{X} = \widetilde{X^{\bigstar}} \quad \text{ and } \quad {\bf E}(X^{\bigstar}) = {\bf E}(X),
		\end{equation*}
		where the second equality follows from Proposition~\ref{PROP: EX <-> carrier}, so without the loss of generality we can assume that the space $X$ is rearrangement invariant.
		
		{\bf $2^{\text{nd}}$ step.} Now, our goal is to show that
		\begin{equation*}
			\vertiii{x}_{\widetilde{X}}
				\coloneqq \norm{ \left\{ \norm{ x\chi_{\Delta_j} }_{\ell_{\infty}} \right\}_{j=0}^{\infty} }_{\mathbf{E}(X)}
		\end{equation*}
		introduces an equivalent norm on the space $\widetilde{X}$. To be precise, we intend to prove that
		\begin{equation} \tag{$\spadesuit$} \label{Step 1: d_BM <= 4}
			\norm{\text{id} \colon \left( \widetilde{X}, \norm{\cdot}_{\widetilde{X}} \right) \rightarrow \left( \widetilde{X}, \vertiii{\cdot}_{\widetilde{X}} \right)}
				\norm{\text{id} \colon \left( \widetilde{X}, \vertiii{\cdot}_{\widetilde{X}} \right) \rightarrow \left( \widetilde{X}, \norm{\cdot}_{\widetilde{X}} \right)}
				\leqslant 4.
		\end{equation}
		
		To see the first inequality, take $x = \{x_j\}_{j=1}^\infty$ from $\widetilde{X}$. Moreover, note that
		\begin{equation} \label{EQ: D1/2}
			D_{1/2}\left( \left\{ \underbrace{a_1, a_1}_{\text{twice}}, \underbrace{a_2, a_2, a_2, a_2}_{4\text{-times}}, ..., \underbrace{a_j, ..., a_j}_{2^j\text{-times}}, ... \right\} \right)
				= \left\{ \underbrace{a_1}_{\text{once}}, \underbrace{a_2, a_2}_{\text{twice}}, ..., \underbrace{a_j, ..., a_j}_{2^{j-1}\text{-times}}, ... \right\}
		\end{equation}
		and
		\begin{equation} \label{EQ: tandori -> X -> linfty}
			\widetilde{X} \overset{c}{\hookrightarrow} X \overset{C}{\hookrightarrow} \ell_\infty,
		\end{equation}
		where $c = 1$ and $C = \varphi_{X^{\times}}(1)$ (see \cite[Corollary~6.8, p.~78]{BS88}).
		Keeping this in mind, we have
		\begin{align*}
			\vertiii{x}_{\widetilde{X}} & \leqslant \norm{ x\chi_{\Delta_0} }_X
			+ \norm{ \sum_{j=1}^{\infty}  \norm{x\chi_{\Delta_j}}_{\ell_{\infty}} \chi_{\Delta_j} }_{X} \\
			& \leqslant \varphi_X(1) \norm{x}_{\ell_\infty}
				+ \norm{D_{1/2}}_{X \rightarrow X}
					\norm{ \sum_{j=1}^{\infty} \norm{x\chi_{\Delta_j}}_{\ell_{\infty}} \chi_{\Delta_{j-1}} }_X \quad (\text{by \eqref{EQ: D1/2}}) \\
			& \leqslant \varphi_X(1) \varphi_{X^{\times}}(1) \norm{x}_{\widetilde{X}}
				+ \norm{ \sum_{j=1}^{\infty} \left( \sup\limits_{k \geqslant j} \abs{x_k} \right) e_j }_X
					\quad (\text{using \eqref{EQ: tandori -> X -> linfty} and \eqref{INEQ: D_s < max 1,s}}) \\
			& = 2\norm{x}_{\widetilde{X}}.
		\end{align*}
		
		Now, let us turn to the proof of the second inequality. We are going to use interpolation theory here. To do this,
		define a sublinear operator $\mathscr{T}$ in the following way
		\begin{equation*}
			\mathscr{T} \colon \sum_{j=1}^{\infty} x_j e_j \rightsquigarrow \sum_{j=1}^{\infty} \left( \sup\limits_{k \geqslant j} \abs{x_k} \right) e_j.
		\end{equation*}
		We want to show that the operator $\mathscr{T}$ is bounded on both spaces $\ell_1(w)$ and $\ell_{\infty}$, where $w(j) = 2^{-j}$ for $j \in \mathbb{Z}_+$.
		For this it actually suffices to show that
		\begin{equation} \label{EMB: not obvious}
			\ell_1(w) \hookrightarrow \widetilde{\ell_1(w)}
		\end{equation}
		and, respectively,
		\begin{equation} \label{EMB: obvious}
			\ell_{\infty} \hookrightarrow \widetilde{\ell_{\infty}}
		\end{equation}
		Since (\ref{EMB: obvious}) is obvious, so let us focus on (\ref{EMB: not obvious}).
		Note the following inequalities in the spirit of Gol'dman, Heinig and Stepanov (see\footnote{Note, however, that the proof presented in \cite{GHS96}
		contains a misprint - the expression \enquote{$\sum_{m \in \mathbb{Z}} [a_m\sigma_m]^2$} in the last line should contain the exponent $p$, not $2$.}
		\cite[Proposition~2.1(a)]{GHS96})
		\begin{align*}
			\sum_{j=0}^\infty \left(\sup\limits_{k \geqslant j} \abs{x_{k}}\right) 2^{j}
			& \leqslant \sum_{j=0}^\infty \left(\sum_{k=j}^\infty \abs{x_k}\right) 2^{j}
				\leqslant \sum_{j=0}^\infty \abs{x_j} \sum_{k=0}^j 2^{k} \\
			& = \sum_{j=0}^\infty \abs{x_j}2^{j} \sum_{k=0}^j 2^{-j} 2^{k} = \sum_{j=0}^\infty \abs{x_j} 2^{j} \sum_{k=0}^j 2^{-k+1} \\
			& \leqslant \left( \sum_{k=0}^\infty 2^{-k+1} \right)\left( \sum_{j=0}^\infty \abs{x_j} 2^{j} \right)
				\leqslant 2 \sum_{j=0}^\infty \abs{x_j} 2^{-j}.
		\end{align*}
		This means that
		\begin{equation*}
			\ell_1(w) \overset{2}{\hookrightarrow} \widetilde{\ell_1(w)}
		\end{equation*}
		and (\ref{EMB: not obvious}) follows.
		Next, as one can easily see, ${\bf E}(\ell_1) \equiv \ell_1(w)$ and ${\bf E}(\ell_{\infty}) \equiv \ell_{\infty}$, so the operator $\mathscr{T}$
		is bounded on both spaces ${\bf E}(\ell_1)$ and ${\bf E}(\ell_{\infty})$ with
		\begin{equation} \label{Tand norma na E}
			\norm{\mathscr{T}}_{{\bf E}(\ell_1) \rightarrow {\bf E}(\ell_1)} \leqslant 2
				\quad \text{ and } \quad
					\norm{\mathscr{T}}_{{\bf E}(\ell_{\infty}) \rightarrow {\bf E}(\ell_{\infty})} = 1.
		\end{equation}
		Moreover, since the couple $(\ell_1, \ell_{\infty})$ is a Calder{\' o}n--Mityagin couple, so $X = (\ell_1, \ell_{\infty})^{\mathscr{K}}_{\mathscr{X}}$ for some
		parameter space $\mathscr{X}$. Thus, invoking Theorem~\ref{Proposition: E komutuje z interpolacja}, we infer that
		\begin{equation*}
			{\bf E}(X) = {\bf E}\left[ (\ell_1, \ell_{\infty})^{\mathscr{K}}_{\mathscr{X}} \right]
				= \left( {\bf E}(\ell_1), {\bf E}(\ell_{\infty}) \right)^{\mathscr{K}}_{\mathscr{X}} = \left( \ell_1(w), \ell_{\infty} \right)^{\mathscr{K}}_{\mathscr{X}}.
		\end{equation*}
		Consequently, using \eqref{Tand norma na E}, we have
		\begin{align*}
			\norm{x}_{\widetilde{X}}
				& \leqslant \norm{ \sum_{j=0}^{\infty} \norm{ x\chi_{\Delta_j} }_{\ell_{\infty}} \chi_{\Delta_j} }_{\widetilde{X}} \\
				& = \norm{ \left\{ \norm{ x\chi_{\Delta_j} }_{\ell_{\infty}} \right\}_{j=0}^{\infty} }_{\widetilde{{\bf E}(X)}} \\
				& \leqslant \norm{\mathscr{T}}_{{\bf E}(X) \rightarrow {\bf E}(X)} \norm{ \left\{ \norm{ x\chi_{\Delta_j} }_{\ell_{\infty}} \right\}_{j=0}^{\infty} }_{{\bf E}(X)} \\
				& \leqslant \max \left\{ \norm{\mathscr{T}}_{{\bf E}(\ell_1) \rightarrow {\bf E}(\ell_1)}, \norm{\mathscr{T}}_{{\bf E}(\ell_{\infty}) \rightarrow {\bf E}(\ell_{\infty})} \right\} 	
					\vertiii{x}_{\widetilde{X}} \\
				& \leqslant 2 \vertiii{x}_{\widetilde{X}}.
		\end{align*}
		This completes the proof of \eqref{Step 1: d_BM <= 4}.
		
		{\bf $3^{\text{rd}}$ step.} It remains to justify the last part about $X^{\downarrow}$. Due to \cite[Theorem~2.1]{Si07}
		(see also \cite[Theorem~D]{KMS07}) and Proposition~\ref{PROP: EX <-> carrier}, we have
		\begin{equation*} \label{EQ: redukcja V down}
			X^{\downarrow}
				= \left( \widetilde{X^{\times}} \right)^{\times}
				= \left( \widetilde{ \left[ X^{\times} \right]^{\bigstar}} \right)^{\times}
				= \left( \widetilde{ \left[ X^{\bigstar} \right]^{\times}} \right)^{\times}
				= \left[ X^{\bigstar} \right]^{\downarrow},
		\end{equation*}
		where the third equality follows from \cite[Corollary~1.6]{KM07} (see also \cite[Theorem~2]{KLM19}).
		Thus, just as before, without the loss of generality we can assume that the space $X$ is rearrangement invariant.
		Keeping this in mind, we have
		\begin{align*}
			X^{\downarrow}
				& = \left( \widetilde{X^{\times}} \right)^{\times} \\
				& = \left[ \biggl( \bigoplus_{j=0}^\infty \ell^{2^{j}}_\infty \biggr)_{{\bf E}(X^{\times})} \right]^{\times}
					\quad (\text{in view of \eqref{Step 1: d_BM <= 4}}) \\
				& = \biggl( \bigoplus_{j=0}^\infty \ell^{2^{j}}_1 \biggr)_{{\bf E}(X^{\times})^{\times}}
					\quad (\text{by Proposition~\ref{Prop: Podstawowe wlasnosci sum prostych}(c)}) \\
				& = \biggl( \bigoplus_{j=0}^\infty \ell^{2^{j}}_1 \biggr)_{{\bf E}(X^{\times\times})(w)} \quad (\text{using Proposition~\ref{Prop: E komutuje z Kothe dualem}})\\
				& = \biggl( \bigoplus_{j=0}^\infty \ell^{2^{j}}_1 \biggr)_{{\bf E}(X)(w)} \quad (\text{due to the Fatou property of $X$}).
		\end{align*}
		The proof has been completed.
	\end{proof}

	The \enquote{integral} analog of Theorem~\ref{Thm: Tandori sequence representation} looks like this.
	
	\begin{theorem}[Block form representation - function spaces] \label{Thm: Tandori function representation}
		{\it Let $X$ be a Banach function space with the Fatou property. Suppose that either $X$ has non-trivial Boyd indices
		or $X$ is rearrangement invariant. Then, we have}
		\begin{equation*}
			\norm{f}_{\widetilde{X}} \approx \norm{ \left\{ \norm{f\chi_{\Delta_j}}_{L_{\infty}} \right\}_{j \in \mathbb{Z}}  }_{\mathbf{E}(X)}
		\end{equation*}
		{\it and}
		\begin{equation*}
			\norm{f}_{X^{\downarrow}} \approx \norm{ \left\{ 2^{-j} \norm{f\chi_{\Delta_j}}_{L_{1}} \right\}_{j \in \mathbb{Z}}  }_{\mathbf{E}(X)}.
		\end{equation*}
	\end{theorem}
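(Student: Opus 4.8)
The plan is to mimic the three-step scheme from the proof of Theorem~\ref{Thm: Tandori sequence representation}, but to replace all discrete combinatorial estimates with their integral counterparts, and to route the necessary dilation/Hardy estimates through the half-line rather than through $\mathbb{N}$. First I would use Proposition~\ref{PROP: EX <-> carrier} exactly as in the $1^{\text{st}}$ step of the sequence case: since $\alpha_X > 1$, the symmetrization $X^{\bigstar}$ carries an equivalent Banach-function-space norm, $\widetilde{X} = \widetilde{X^{\bigstar}}$, and $\mathbf{E}(X^{\bigstar}) = \mathbf{E}(X)$; hence we may assume $X$ is a rearrangement invariant \emph{function} space on $(0,\infty)$. (If one only knows the non-trivial Boyd index hypothesis and not r.i., one invokes the carrier-space construction of Grafakos--Kalton in the same way.) This reduces the problem to proving $\norm{f}_{\widetilde{X}} \approx \norm{\{\norm{f\chi_{\Delta_j}}_{L_\infty}\}_{j\in\mathbb{Z}}}_{\mathbf{E}(X)}$ for r.i.\ $X$.

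For the easy direction, I would bound $\norm{f}_{\widetilde{X}} \leqslant \bignorm{\sum_{j\in\mathbb{Z}} \norm{f\chi_{\Delta_j}}_{L_\infty}\chi_{\Delta_j}}_{\widetilde{X}}$ by the ideal property and then observe that, after shifting each block down by one dyadic level (this is the integral analogue of \eqref{EQ: D1/2}, i.e.\ applying $D_{1/2}$ on the half-line), the resulting step function is \emph{already decreasing}, so its $\widetilde{X}$-norm equals its $X$-norm; combined with $\norm{D_{1/2}}_{X\to X}\leqslant 1$ from \eqref{INEQ: D_s < max 1,s} this gives the inequality with a harmless constant, and unwinding definitions shows this equals $\norm{\{\norm{f\chi_{\Delta_j}}_{L_\infty}\}}_{\mathbf{E}(X)}$ up to a factor governed by $\varphi_X(1)\varphi_{X^\times}(1)$, exactly as in the $2^{\text{nd}}$ step of Theorem~\ref{Thm: Tandori sequence representation}. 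For the reverse (harder) direction I would again introduce the sublinear operator $\mathscr{T}\colon x \rightsquigarrow \{\sup_{k\geqslant j}\abs{x_k}\}_{j\in\mathbb{Z}}$ on $\mathbf{E}(X)$, write $\norm{f}_{\widetilde{X}}\leqslant \norm{\mathscr{T}}_{\mathbf{E}(X)\to\mathbf{E}(X)}\,\norm{\{\norm{f\chi_{\Delta_j}}_{L_\infty}\}}_{\mathbf{E}(X)}$, and bound $\norm{\mathscr{T}}$ by interpolation: since $X$ is an interpolation space between $L_1(0,\infty)$ and $L_\infty(0,\infty)$ (Calder\'on--Mityagin) and $\mathrm{Ave}$ is bounded on both, Theorem~\ref{Proposition: E komutuje z interpolacja} gives $\mathbf{E}(X) = (\mathbf{E}(L_1),\mathbf{E}(L_\infty))^{\mathscr K}_{\mathscr X} = (\ell_1(W),\ell_\infty)^{\mathscr K}_{\mathscr X}$ with $W(j)=2^j$, $j\in\mathbb{Z}$; and $\mathscr{T}$ is a contraction on $\ell_\infty$ while the inequality \eqref{INTRODUCTION: principle of duality} (Goldman--Heinig--Stepanov, now over $\mathbb{Z}$ rather than $\mathbb{Z}_+$) bounds it by $2$ on $\ell_1(W)$.

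For the $X^{\downarrow}$ statement I would proceed as in the $3^{\text{rd}}$ step: use $(X^{\downarrow})^{\times}=\widetilde{X^{\times}}$ together with Proposition~\ref{PROP: EX <-> carrier} to reduce to r.i.\ $X$, then dualize the block representation of $\widetilde{X^{\times}} = \bigl(\bigoplus_{j\in\mathbb{Z}}L_\infty(\Delta_j)\bigr)_{\mathbf{E}(X^{\times})}$ via Proposition~\ref{Prop: Podstawowe wlasnosci sum prostych}(c) to get $\bigl(\bigoplus_{j\in\mathbb{Z}}L_1(\Delta_j)\bigr)_{\mathbf{E}(X^{\times})^{\times}}$, and finally apply Proposition~\ref{Prop: E komutuje z Kothe dualem} to rewrite $\mathbf{E}(X^{\times})^{\times}\equiv\mathbf{E}(X^{\times\times})(w)\equiv\mathbf{E}(X)(w)$ with $w(j)=2^{-j}$, using the Fatou property of $X$. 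Spelling out the norm of $\bigl(\bigoplus_j L_1(\Delta_j)\bigr)_{\mathbf{E}(X)(w)}$ gives precisely $\norm{\{2^{-j}\norm{f\chi_{\Delta_j}}_{L_1}\}_{j\in\mathbb{Z}}}_{\mathbf{E}(X)}$.

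The main obstacle I anticipate is purely bookkeeping rather than conceptual: in the function-space setting the index set is $\mathbb{Z}$, not $\mathbb{Z}_+$, so the ``first block'' $\Delta_0$ no longer plays a distinguished role and one must make sure the embeddings $\widetilde{X}\hookrightarrow X\hookrightarrow L_\infty(0,\infty)$ and the shift estimates degrade gracefully near $t\to 0^+$; also one must check that Proposition~\ref{PROP: EX <-> carrier} and the Goldman--Heinig--Stepanov inequality \eqref{INTRODUCTION: principle of duality} are genuinely available over $\mathbb{Z}$ (they are — the cited \cite[Proposition~2.1]{GHS96} is stated for two-sided sequences). Once these are in place the argument is a line-by-line transcription of Theorem~\ref{Thm: Tandori sequence representation}.
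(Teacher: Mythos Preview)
Your overall three-step plan (reduce to the r.i.\ case via $X^{\bigstar}$ and Proposition~\ref{PROP: EX <-> carrier}; prove the two-sided equivalence for $\widetilde{X}$; then dualize for $X^{\downarrow}$ via Propositions~\ref{Prop: Podstawowe wlasnosci sum prostych}(c) and \ref{Prop: E komutuje z Kothe dualem}) matches the paper, and your treatment of the $\mathscr{T}$-interpolation step and of the $X^{\downarrow}$ duality is correct. But your two ``directions'' are tangled: as written, \emph{both} your ``easy direction'' and your ``reverse (harder) direction'' bound $\norm{f}_{\widetilde{X}}$ from above by a multiple of $\vertiii{f}_{\widetilde{X}}\coloneqq\norm{\{\norm{f\chi_{\Delta_j}}_{L_\infty}\}_{j\in\mathbb{Z}}}_{\mathbf{E}(X)}$, so the inequality $\vertiii{f}_{\widetilde{X}}\leqslant C\norm{f}_{\widetilde{X}}$ is never addressed.

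Worse, the argument you sketch for the ``easy direction'' rests on a false claim: the shifted step function $\sum_j a_j\chi_{\Delta_{j-1}}$ (with $a_j=\norm{f\chi_{\Delta_j}}_{L_\infty}$) is \emph{not} decreasing in general, since the $a_j$ have no monotonicity, so one cannot collapse a $\widetilde{X}$-norm to an $X$-norm that way. What the paper actually uses --- and what the $D_{1/2}$-shift is really for --- is the \emph{other} inequality $\vertiii{f}_{\widetilde{X}}\leqslant C\norm{f}_{\widetilde{X}}$: for $t\in\Delta_{j-1}$ one has $\widetilde{f}(t)=\esssup_{s\geqslant t}\abs{f(s)}\geqslant a_j$, so the shifted step function is pointwise \emph{dominated by $\widetilde{f}$} (not ``decreasing''), whence $\norm{\sum_j a_j\chi_{\Delta_{j-1}}}_X\leqslant\norm{f}_{\widetilde{X}}$; undoing the shift via $D_2$ then gives $\vertiii{f}_{\widetilde{X}}\leqslant\norm{D_2}_{X\to X}\norm{f}_{\widetilde{X}}$. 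No $\widetilde{X}$-norm of the step function and no $\varphi_X(1)\varphi_{X^{\times}}(1)$ factor appear here --- the latter was an artefact of the distinguished first block in the sequence case, which (as you yourself note in the last paragraph) is absent over $\mathbb{Z}$. With this repair in place, together with your correct $\mathscr{T}$-interpolation argument for the genuinely harder direction $\norm{f}_{\widetilde{X}}\leqslant 2\,\vertiii{f}_{\widetilde{X}}$, the proof goes through and coincides with the paper's.
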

	\begin{proof}
		In terms of ideas, this proof does not differ much from the proof of Theorem~\ref{Thm: Tandori sequence representation}.
		For this reason, we will only focus on showing how to modify the arguments used in the crucial second step.
		The rest is basically the same.
		
		We will show that
		\begin{equation} \tag{$\spadesuit$} \label{Step 1: d_BM <= 2}
			\norm{\text{id} \colon \left( \widetilde{X}, \norm{\cdot}_{\widetilde{X}} \right) \rightarrow \left( \widetilde{X}, \vertiii{\cdot}_{\widetilde{X}} \right)}
			\norm{\text{id} \colon \left( \widetilde{X}, \vertiii{\cdot}_{\widetilde{X}} \right) \rightarrow \left( \widetilde{X}, \norm{\cdot}_{\widetilde{X}} \right)}
			\leqslant 2,
		\end{equation}
		where
		\begin{equation*}
			\vertiii{f}_{\widetilde{X}} \coloneqq \norm{\sum_{j \in \mathbb{Z}} \left( \esssup\limits_{t \in \Delta_j} \abs{f(t)} \right) \chi_{\Delta_j}}_X
				= \norm{ \left\{ \norm{f\chi_{\Delta_j}}_{L_{\infty}} \right\}_{j \in \mathbb{Z}}  }_{\mathbf{E}(X)}.
		\end{equation*}
		The proof of the first inequality is simple,
		\begin{align*}
			\vertiii{f}_{\widetilde{X}}
			& = \norm{\sum_{j \in \mathbb{Z}} \left( \esssup\limits_{t \in \Delta_j} \abs{f(t)} \right) \chi_{\Delta_j}}_X \\
			& \leqslant \norm{D_{1/2}}_{X \rightarrow X}
					\norm{\sum_{j \in \mathbb{Z}} \left( \esssup\limits_{t \in \Delta_j} \abs{f(t)} \right) \chi_{\Delta_{j-1}}}_X \\
			& \leqslant \norm{D_{1/2}}_{X \rightarrow X}
					\norm{\sum_{j\in \mathbb{Z}} \left( \esssup\limits_{s \geqslant t} \abs{f(s)} \right) \chi_{\Delta_{j-1}}(t)}_X \\
			& = \norm{D_{1/2}}_{X \rightarrow X} \norm{\sum_{j \in \mathbb{Z}} \widetilde{f} \chi_{\Delta_{j-1}}}_{X} \\
			& \leqslant \norm{f}_{\widetilde{X}}.
		\end{align*}
		On the other hand,
		\begin{align*}
			\norm{f}_{\widetilde{X}}
				& = \norm{\sum_{j \in \mathbb{Z}} f\chi_{\Delta_j}}_{\widetilde{X}} \\
				& \leqslant \norm{\sum_{j \in \mathbb{Z}} \norm{f\chi_{\Delta_j}}_{L_\infty}\chi_{\Delta_j}}_{\widetilde{X}} \\
				& = \norm{\left\{ \norm{f\chi_{\Delta_j}}_{L_\infty} \right\}_{j \in \mathbb{Z}}}_{\widetilde{\mathbf{E}(X)}}.
		\end{align*}
		Thus, if we only knew that the sublinear operator
		\begin{equation*}
			\mathscr{T} \colon f \rightsquigarrow \left[ x \rightsquigarrow \esssup\limits_{t \geqslant x} \abs{f(t)} \right]
		\end{equation*}
		is bounded on $\mathbf{E}(X)$, we would get that
		\begin{align*}
			\norm{\left\{ \norm{f\chi_{\Delta_j}}_{L_\infty} \right\}_{j \in \mathbb{Z}}}_{\widetilde{\mathbf{E}(X)}}
				& \leqslant \norm{\mathscr{T}}_{\mathbf{E}(X) \rightarrow \mathbf{E}(X)} \norm{\left\{\norm{f\chi_{\Delta_j}}_{L_\infty}\right\}_{j \in \mathbb{Z}}}_{\mathbf{E}(X)} \\
				& = \max \left\{ \norm{\mathscr{T}}_{{\bf E}(\ell_1) \rightarrow {\bf E}(\ell_1)}, \norm{\mathscr{T}}_{{\bf E}(\ell_{\infty}) \rightarrow {\bf E}(\ell_{\infty})} \right\}
					\vertiii{f}_{\widetilde{X}} \\
				& \leqslant 2 \vertiii{f}_{\widetilde{X}},
		\end{align*}
		and this will end the proof of \eqref{Step 1: d_BM <= 2}. Therefore, the only thing left is to show
		the boundedness of $\mathscr{T}$ on $\mathbf{E}(X)$. To see this, note that the operator $\mathscr{T}$ is bounded on $\mathbf{E}(L_\infty)$, because
		$\mathbf{E}(L_\infty) \equiv \ell_\infty$ and $\widetilde{\ell_\infty} \equiv \ell_\infty$. Moreover, due to \cite[Proposition~2.1]{GHS96}
		(see also \cite[Theorem~2.3, p.~12]{GE98}), we have
		\begin{equation*}
			\sum_{j \in \mathbb{Z}} 2^j \left(\sup\limits_{k \geqslant j} \abs{x_k} \right) \leqslant 2 \sum_{j \in \mathbb{Z}} 2^j \abs{x_j}.
		\end{equation*}
		This clearly means that
		\begin{equation*}
			\mathbf{E}(L_1) \equiv \ell_1(w) \overset{2}{\hookrightarrow} \widetilde{\ell_1(w)} \equiv \widetilde{\mathbf{E}(L_1)},
		\end{equation*}
		where $w(j) = 2^{j}$ for $j \in \mathbb{Z}$. In consequence, the operator $\mathscr{T}$ is also bounded on $\mathbf{E}(L_1)$.
		Since $(L_1,L_{\infty})$ form the Calder{\' o}n--Mityagin couple, so $X = \left( L_1, L_{\infty} \right)^{\mathscr{K}}_{\mathscr{X}}$
		for some parameter space $\mathscr{X}$. Thus, in view of Theorem~\ref{Proposition: E komutuje z interpolacja}, we have
		\begin{equation*}
			\mathbf{E}(X)
				= {\bf E}\left[ (L_1,L_{\infty})^{\mathscr{K}}_{\mathscr{X}} \right]
				= \left( \mathbf{E}(L_1), \mathbf{E}(L_{\infty}) \right)^{\mathscr{K}}_{\mathscr{X}}
				= \left( \ell_1(w),\ell_{\infty} \right)^{\mathscr{K}}_{\mathscr{X}}.
		\end{equation*}
		All this together shows that the operator $\mathscr{T}$ is bounded on $\mathbf{E}(X)$ with
		$\norm{\mathscr{T}}_{\mathbf{E}(X) \rightarrow \mathbf{E}(X)} \leqslant 2$. This last observation ends the proof.
	\end{proof}

	\begin{remark}[About Theorems~\ref{Thm: Tandori sequence representation} and \ref{Thm: Tandori function representation}] \label{REMARK: EX izomorficzne z EaX}
		Careful examination of the proofs of Theorems~\ref{Thm: Tandori sequence representation} and \ref{Thm: Tandori function representation} shows
		that they remain true (with perhaps different constants involved) if we replace the dyadic intervals $[2^j,2^{j+1})$ by the intervals of the form
		$[a^j,a^{j+1})$ for some $a > 1$.
		This is not just a coincidence, and indeed the choice of dyadic blocks $\{\Delta_j\}_{j \in \mathbb{J}}$ for discretization construction $X \rightsquigarrow {\bf E}(X)$
		is of no importance. To make this statement precise, fix $a > 1$. Then, by analogy with Definition~\ref{Def: E(X) function} (and for the purposes of this note only),
		one can construct the {\bf $\square$-discretization} of $X$, denoted henceforth by ${\bf E}_a(X)$, by setting
		\begin{equation*}
			\mathbf{E}_a(X) \coloneqq \left\{ x = \{x_j\}_{j \in \mathbb{J}} \colon \sum_{j \in \mathbb{J}} x_j \chi_{\square_j} \in X \right\},
		\end{equation*}
		with the norm  $\norm{x}_{\mathbf{E}_a(X)} \coloneqq \norm{\sum_{j \in \mathbb{J}} x_j \chi_{\square_j}}_X$, where $\square_j \coloneqq [a^j,a^{j+1})$ for $j \in \mathbb{J}$.
		Now, let $X$ be a r.i. function space. We claim that for any $a > 1$ both spaces ${\bf E}(X)$ and ${\bf E}_a(X)$ are isometrically isomorphic.
		The proof goes like this. Let $W \colon (0,\infty) \rightarrow (0,\infty)$ be a unique continuous function that linearly maps $\Delta_j$ onto $\square_j$
		for each $j \in \mathbb{J}$. Let us define $T \colon {\bf E}(X) \rightarrow {\bf E}_a(X)$ as follows
		\begin{equation*}
			T \colon f \mapsto f \circ W^{-1}.
		\end{equation*}
		Then, for $f = \sum_{j \in \mathbb{J}} x_j \chi_{\Delta_j}$ from ${\bf E}(X)$, we have
	 	\begin{equation*}
	 		\norm{T(f)}_{{\bf E}(X)}
	 			= \norm{\sum_{j \in \mathbb{J}} x_j \chi_{\Delta_j} \circ W^{-1}}_X
	 			= \norm{\sum_{j \in \mathbb{J}} x_j \chi_{W(\Delta_j)}}_X
	 			= \norm{\sum_{j \in \mathbb{J}} x_j \chi_{\square_j}}_X
	 			= \norm{f}_{{\bf E}_a(X)}.
	 	\end{equation*}
 		This proves our claim.
 		The roots of the above observation go deep into \cite{GE98} (see, for example, \cite[p.~20]{GE98}).
 		\demo
	\end{remark}

	\begin{warning}
		In the limiting case, that is, taking $a = 1$ in the construction $X \rightsquigarrow {\bf E}_a(X)$ from the above Remark~\ref{REMARK: EX izomorficzne z EaX},
		we can obtain a space, say ${\bf E}_1(X)$, which is not isomorphic to ${\bf E}(X)$. Formally, the construction of the space ${\bf E}_1(X)$ should be understood as
		follows: ${\bf E}_1(X)$ is a vector space of all sequences $x = \{x_n\}_{n=1}^{\infty}$ such that $\sum_{n=1}^{\infty} x_n \chi_{\diamondsuit_n} \in X$, where $\diamondsuit_n \coloneqq [n,n+1)$ for $n \in \mathbb{N}$,
		equipped with the norm $\norm{x}_{{\bf E}_1(X)} \coloneqq \norm{\sum_{n=1}^{\infty} x_n \chi_{\diamondsuit_n}}_X$. For example, let $X$ be a Lorentz space $\Lambda_{\varphi}$
		with non-trivial Boyd indices. Then, as can easily be seen, ${\bf E}_1(\Lambda_{\varphi}) \equiv \lambda_{\varphi}$. On the other hand, due to Proposition~\ref{PROP: EX <-> carrier},
		${\bf E}(\Lambda_{\varphi}) \approx \ell_1$. Clearly\footnote{One can, for example, use the fact that $\ell_1$ has the Schur property,
		while $\{e_n\}_{n=1}^{\infty}$ converges weakly to zero in $\lambda_{\varphi}$.}, both spaces $\lambda_{\varphi}$ and $\ell_1$ are not isomorphic.
		However, it may also happen that the spaces ${\bf E}_1(X)$ and ${\bf E}(X)$ are isomorphic. For example,
		\begin{equation*}
			{\bf E}_1(L_p) \equiv \ell_p \approx \ell_p(w_p) \equiv {\bf E}(L_p),
		\end{equation*}
		where $w_p(j) = 2^{j/p}$ for $j \in \mathbb{Z}$ (cf. Example~\ref{EXAMPLE : ELp computations}).
	\end{warning}

	Recall that under some mild assumptions the space $\mathscr{C}X$ coincide (up to an equivalence of norms) with $X^{\downarrow}$ (see \cite{Si01}; cf. \cite{Si07}).
	This observation, together with Theorems~\ref{Thm: Tandori sequence representation} and \ref{Thm: Tandori function representation}, leads to the following
	
	\begin{corollary}[Block form representation of Ces{\' a}ro spaces] \label{COR: blocking technique CX}
		{\it Let $X$ be a Banach sequence or function space with the Fatou property. Suppose that either $X$ has non-trivial Boyd indices
		or $X$ is rearrangement invariant and $\alpha_X > 1$. Then, we have}
		\begin{equation*}
			\norm{f}_{\mathscr{C}X} \approx \norm{ \left\{ 2^{-j} \norm{f\chi_{\Delta_j}}_{L_{1}} \right\}_{j \in \mathbb{J}}  }_{\mathbf{E}(X)}.
		\end{equation*}
	\end{corollary}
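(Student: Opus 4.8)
The plan is to read off the assertion from the block form representation of the down space $X^{\downarrow}$ already established in Theorem~\ref{Thm: Tandori sequence representation} and Theorem~\ref{Thm: Tandori function representation}, once we know that $\mathscr{C}X$ and $X^{\downarrow}$ are the same space with equivalent norms. The first thing I would record is that in either of the two admissible situations---$X$ with non-trivial Boyd indices, or $X$ rearrangement invariant with $\alpha_X>1$---we have $\alpha_X>1$; consequently Hardy's operator is bounded on $X$, since for $f\geqslant 0$ the substitution $t=x/u$ gives $\mathscr{H}f=\int_1^{\infty}(D_uf)\,u^{-2}\,du$, whence $\norm{\mathscr{H}f}_X\leqslant\bigl(\int_1^{\infty}\norm{D_u}_{X\to X}\,u^{-2}\,du\bigr)\norm{f}_X<\infty$ by \eqref{EQ: D_s < CMAXs^1/p,s^1/q}. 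In particular $\mathscr{C}X$ is a non-trivial Banach ideal space, and it inherits the Fatou property from $X$ because $\mathscr{H}$ has a positive kernel (monotone convergence).

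Next I would justify the identification $\mathscr{C}X=X^{\downarrow}$. When $X$ is rearrangement invariant with $\alpha_X>1$ this is precisely \cite[Theorem~3.1]{Si01}, recalled in Section~\ref{Section: Introduction Cesaro and Tandori}. In the remaining case I would argue by duality: one has $(X^{\downarrow})^{\times}=\widetilde{X^{\times}}$ by \cite[Theorem~2.1]{Si07} (cf.\ \cite[Theorem~D]{KMS07} and \cite[Theorem~2]{LM15a}), while the duality theory of Ces\`aro spaces gives $(\mathscr{C}X)^{\times}=\widetilde{X^{\times}}$ as well; since both $\mathscr{C}X$ and $X^{\downarrow}$ enjoy the Fatou property, passing to K\"othe biduals yields $\mathscr{C}X=(\widetilde{X^{\times}})^{\times}=X^{\downarrow}$.

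With $\mathscr{C}X=X^{\downarrow}$ at hand, the statement is nothing but the second displayed equivalence in Theorem~\ref{Thm: Tandori sequence representation} (when $X$ is a Banach sequence space, so $\mathbb{J}=\mathbb{Z}_+$ and $L_1$ is to be read as $\ell_1$) or in Theorem~\ref{Thm: Tandori function representation} (when $X$ is a Banach function space, $\mathbb{J}=\mathbb{Z}$), whose hypotheses are weaker than ours and hence satisfied. I expect the only delicate point to be the identification $\mathscr{C}X=X^{\downarrow}$ once $X$ is allowed to be non-rearrangement-invariant; if one wishes to avoid the general duality machinery, the representation can be obtained directly instead: on the dyadic block $\Delta_j$ the function $\mathscr{H}|f|(x)=x^{-1}\int_0^x|f|$ is comparable, with absolute constants after applying $D_2$ and $D_{1/2}$ (both bounded on $X$ since $\alpha_X>1$), to the constant $2^{-j}\int_0^{2^j}|f|=\sum_{i<j}2^{i-j}\bigl(2^{-i}\int_{\Delta_i}|f|\bigr)$, and the transition from the sequence $\{2^{-i}\norm{f\chi_{\Delta_i}}_{L_1}\}_i$ to this Copson-type average is controlled on $\mathbf{E}(X)$ by exactly the Goldman--Heinig--Stepanov inequality invoked in the proof of Theorem~\ref{Thm: Tandori function representation}.
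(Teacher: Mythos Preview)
Your proposal is correct and follows essentially the same route as the paper: identify $\mathscr{C}X$ with $X^{\downarrow}$ (the paper's one-line justification cites \cite{Si01} and \cite{Si07} for this), then read off the representation from the second displayed equivalence in Theorem~\ref{Thm: Tandori sequence representation} or Theorem~\ref{Thm: Tandori function representation}. You actually supply more detail than the paper does---in particular your duality argument for the non-rearrangement-invariant case and your verification that Hardy's operator is bounded fill in steps the paper leaves implicit under the phrase ``some mild assumptions''---but the architecture is identical.
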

	
	\subsection{Good old friends $ces_p$ and $Ces_p$}
	
	It may be instructive to illustrate Theorems~\ref{Thm: Tandori sequence representation} and \ref{Thm: Tandori function representation}
	(so also Corollary~\ref{COR: blocking technique CX}) using the example of $L_p$ spaces. As we already said, the block form representations
	of classical Ces{\' a}ro spaces were known to Grosse-Erdman (see \cite[Theorem~4.1, p.~22]{GE98}). Many authors have re-proved some
	of these results on various occasions (see, for example, \cite[Proposition~2]{Ast17}, \cite[Corollary~3.5]{ALM19}, \cite[Theorem~9]{AM09} and \cite[Corollary~3.4]{Wal20}).
	
	\begin{corollary}[K. G. Grosse-Erdmann, 1998] \label{COR: reprezentacje ces_p i Ces_p}
		{\it Let $1 \leqslant p \leqslant \infty$. Suppose\footnote{Note that both spaces $ces_1$ and $Ces_1$ are trivial while the spaces $\widetilde{\ell_{\infty}}$
		and $\widetilde{L_{\infty}}$ coincide (up to the equality of norms) with $\ell_{\infty}$ and $L_{\infty}$, respectively.} that $p \neq 1$ [resp. $p \neq \infty$]
		whenever we are talking about Ces{\' a}ro [resp. Tandori] spaces. We have}
		\begin{equation*}
			\norm{x}_{ces_p} \approx \left( \sum_{j=0}^{\infty} 2^{j(1-p)} \left[ \sum_{k = 2^j}^{2^{j+1}-1} \abs{x_k} \right]^p \right)^{1/p}, \quad
				\norm{f}_{Ces_p} \approx \left( \sum_{j \in \mathbb{Z}} 2^{j(1-p)} \left[ \int_{\Delta_j} \abs{f(t)} dt \right]^p \right)^{1/p},
		\end{equation*}
		\begin{equation*}
			\norm{x}_{{\widetilde{\ell_p}}} \approx \left( \sum_{j=0}^{\infty} 2^{-j} \left[ \sum_{k = 2^j}^{2^{j+1}-1} \abs{x_k} \right]^p \right)^{1/p}
				\quad \textit{ and } \quad
			\norm{f}_{\widetilde{L_p}} \approx \left( \sum_{j \in \mathbb{Z}} 2^{-j} \left[ \esssup_{t \in \Delta_j} \abs{f(t)} \right]^p \right)^{1/p}.
		\end{equation*}
		{\it Moreover,}
		\begin{equation*}
			ces_p \approx \biggl( \bigoplus_{n=1}^\infty \ell^{n}_1 \biggr)_{\ell_p},
				\quad Ces_p \approx \biggl( \bigoplus_{n=1}^\infty L_1(0,1) \biggr)_{\ell_p},
		\end{equation*}
		\begin{equation*}
			\widetilde{\ell_p} \approx \biggl( \bigoplus_{n=1}^\infty \ell^{n}_{\infty} \biggr)_{\ell_p}
				\quad \textit{ and } \quad
					\widetilde{L_p} \approx \biggl( \bigoplus_{n=1}^\infty L_{\infty}(0,1) \biggr)_{\ell_p}.
		\end{equation*}
	\end{corollary}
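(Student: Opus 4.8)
The plan is to read everything off Theorems~\ref{Thm: Tandori sequence representation} and \ref{Thm: Tandori function representation} together with Corollary~\ref{COR: blocking technique CX}, specialised to the Lebesgue scale $X = \ell_p$ and $X = L_p$, and then to clean up the weighted block--norms so obtained. First I would check the hypotheses: both $\ell_p$ and $L_p$ have the Fatou property and are rearrangement invariant, with $\alpha_{\ell_p} = \alpha_{L_p} = p > 1$ as soon as $p \neq 1$ (this is what the Ces{\`a}ro statements need), while for the Tandori statements one keeps $p \neq \infty$ so that $\widetilde{\ell_p}$ and $\widetilde{L_p}$ are not the degenerate $\ell_\infty$ and $L_\infty$. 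Then I would quote Example~\ref{EXAMPLE : ELp computations}, namely ${\bf E}(\ell_p) \equiv \ell_p(w_p)$ and ${\bf E}(L_p) \equiv \ell_p(W_p)$ with $w_p(j) = W_p(j) = 2^{j/p}$.

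With these in hand the norm formulas are a substitution. For the Tandori norms, Theorems~\ref{Thm: Tandori sequence representation} and \ref{Thm: Tandori function representation} give $\norm{x}_{\widetilde{\ell_p}} \approx \norm{\{\norm{x\chi_{\Delta_j}}_{\ell_\infty}\}_j}_{{\bf E}(\ell_p)}$ and $\norm{f}_{\widetilde{L_p}} \approx \norm{\{\norm{f\chi_{\Delta_j}}_{L_\infty}\}_j}_{{\bf E}(L_p)}$; raising to the $p$-th power turns $w_p(j)^p = W_p(j)^p = 2^{j}$ into the weight appearing in the displayed sums, since $\norm{x\chi_{\Delta_j}}_{\ell_\infty} = \sup_{2^j \le k < 2^{j+1}}\abs{x_k}$ and $\norm{f\chi_{\Delta_j}}_{L_\infty} = \esssup_{t \in \Delta_j}\abs{f(t)}$. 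For the Ces{\`a}ro norms I would invoke Corollary~\ref{COR: blocking technique CX}: $\norm{x}_{ces_p} = \norm{x}_{\mathscr{C}\ell_p} \approx \norm{\{2^{-j}\norm{x\chi_{\Delta_j}}_{\ell_1}\}_j}_{{\bf E}(\ell_p)}$ and likewise for $Ces_p$; here $\norm{x\chi_{\Delta_j}}_{\ell_1} = \sum_{2^j \le k < 2^{j+1}}\abs{x_k}$, $\norm{f\chi_{\Delta_j}}_{L_1} = \int_{\Delta_j}\abs{f(t)}dt$, and the exponent bookkeeping $W_p(j)^p\,2^{-jp} = 2^{j(1-p)}$ yields precisely the stated expressions.

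For the four isomorphism assertions I would proceed in two moves. First, peel off the weight: for any family $\{X_j\}$ and any weight $w$ the spaces $\bigl(\bigoplus_j X_j\bigr)_{E(w)}$ and $\bigl(\bigoplus_j X_j\bigr)_E$ are isometrically isomorphic via the diagonal map $\{x_j\} \rightsquigarrow \{w(j)x_j\}$ (as noted in the digression following Proposition~\ref{Prop: Podstawowe wlasnosci sum prostych}), so the weights $w_p = W_p$ disappear up to isometry; second, identify the building blocks $\ell_1^{2^j}$, $L_1(\Delta_j) \equiv L_1(0,1)$, $\ell_\infty^{2^j}$ and $L_\infty(\Delta_j) \equiv L_\infty(0,1)$, the last two identifications being the isometries induced by an affine rescaling of $\Delta_j$ onto $(0,1)$. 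This already gives $ces_p \approx \bigl(\bigoplus_{j \ge 0}\ell_1^{2^j}\bigr)_{\ell_p}$, $Ces_p \approx \bigl(\bigoplus_{j \in \mathbb{Z}} L_1(0,1)\bigr)_{\ell_p}$ and the two Tandori analogues. Finally, to replace the dyadic lengths $2^j$ by consecutive lengths $1,2,3,\dots$, I would appeal to Proposition~\ref{Prop: Podstawowe wlasnosci sum prostych}(a): since $\sup_{n,m}d(\ell_r^n \oplus \ell_r^m, \ell_r^{n+m}) < \infty$ for every $1 \le r \le \infty$, one has $\bigl(\bigoplus_n \ell_r^n\bigr)_{\ell_p} \approx \bigl(\bigoplus_n \ell_r^{2^n}\bigr)_{\ell_p}$ for $r \in \{1,\infty\}$, and a finite modification of the block structure (dropping or adjoining one block, shifting the index set, re-indexing $\mathbb{Z}$ to $\mathbb{N}$) does not affect the isomorphism type; for the $L_1(0,1)$ and $L_\infty(0,1)$ sums the blocks are already constant, so only the harmless re-indexing is needed. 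Assembling these observations gives the four displays. The only point that needs genuine care -- the sole place where something beyond bookkeeping happens -- is this last step: the plain norm identities do \emph{not} by themselves yield the $\ell_r^n$-block form, and it is exactly Proposition~\ref{Prop: Podstawowe wlasnosci sum prostych}(a) (the Casazza--Kottman--Lin theorem) together with the finite-block-modification argument that bridges the gap.
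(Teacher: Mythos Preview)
Your proposal is correct and follows essentially the same route as the paper: invoke the block-form representations (Theorems~\ref{Thm: Tandori sequence representation}, \ref{Thm: Tandori function representation} and Corollary~\ref{COR: blocking technique CX}), unwind ${\bf E}(\ell_p)$ and ${\bf E}(L_p)$ explicitly, and then pass to the unweighted $\ell_p$-sums with consecutive block sizes via Proposition~\ref{Prop: Podstawowe wlasnosci sum prostych}(a). The paper's proof carries out exactly this computation for $ces_p$ and declares the remaining cases analogous; your version is slightly more explicit (citing Example~\ref{EXAMPLE : ELp computations}, spelling out the weight-stripping isometry, and flagging the re-indexing from $\mathbb{Z}$ to $\mathbb{N}$ in the function case), but there is no substantive difference in strategy.
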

	\begin{proof}
		Fix $1 < p \leqslant \infty$. Since Hardy's operator $\mathscr{H}$ is bounded on $\ell_p$, so it follows
		from Corollary~\ref{COR: blocking technique CX} that
		\begin{equation*}
			\norm{x}_{ces_p} \approx \norm{ \left\{ 2^{-j} \sum_{k = 2^j}^{2^{j+1}-1} \abs{x_k} \right\}_{j=0}^{\infty} }_{\mathbf{E}(\ell_p)}
			= \norm{\sum_{j=0}^{\infty} 2^{-j} \biggl( \sum_{k = 2^j}^{2^{j+1}-1} \abs{x_k} \biggr) \chi_{\Delta_j}}_{\ell_p}.
		\end{equation*}
		Now, since $\#(\Delta_j) = 2^j$ for $j \in \mathbb{Z}_+$, so straightforward calculations shows that
		\begin{equation*}
			\norm{x}_{ces_p} \approx \left( \sum_{j=0}^{\infty} 2^{-jp} \biggl( \sum_{k = 2^j}^{2^{j+1}-1} \abs{x_k} \biggr)^p 2^j \right)^{1/p}
			= \left( \sum_{j=0}^{\infty} 2^{j(1-p)} \biggl( \sum_{k = 2^j}^{2^{j+1}-1} \abs{x_k} \biggr)^p \right)^{1/p}.
		\end{equation*}
		In consequence, we have
		\begin{equation*}
			ces_p
				\approx \Bigl( \bigoplus_{j=0}^\infty \ell^{2^j}_1 \Bigr)_{\ell_p(2^{j(1-p)})}
				\approx \Bigl( \bigoplus_{j=0}^\infty \ell^{2^j}_1 \Bigr)_{\ell_p}
				\approx \Bigl( \bigoplus_{n=1}^\infty \ell^{n}_1 \Bigr)_{\ell_p},
		\end{equation*}
		where the last isomorphism is due to Proposition~\ref{Prop: Podstawowe wlasnosci sum prostych}(a).
		
		The proof in the remaining cases is completely analogous, so let us stop there.
	\end{proof}

	\subsection{Lorentz and Marcinkiewicz's setting}
	
	Here we shift our interest to Ces{\' a}ro spaces built upon Lorentz and Marcinkiewicz spaces.
	Let us start by re-proving some results from \cite{ALM19}.
	
	\begin{corollary}[S. V. Astashkin, K. Le{\' s}nik and L. Maligranda, 2017] \label{PROPOSITION: ALM19}
		{\it Let $\varphi$ be an increasing concave function on $[0,\infty)$ with non-trivial dilation exponents. Then, we have}
		\begin{align*}
			\widetilde{\lambda_\varphi} \approx \Bigl( \bigoplus_{n=1}^{\infty} \ell_{\infty}^n \Bigr)_{\ell_1},
				\quad \mathscr{C} m_\varphi \approx \ell_\infty(\ell_1),
			\quad \quad \mathscr{C} \lambda_\varphi \approx \ell_1
				\quad \textit{ and } \quad \widetilde{m_\varphi} \approx \ell_\infty;
		\end{align*}
		\begin{equation*}
			\widetilde{\Lambda_\varphi} \approx \ell_1(\ell_\infty),
				\quad \mathscr{C} M_\varphi \approx \ell_\infty(\ell_1),
			\quad \mathscr{C} \Lambda_\varphi \approx L_1
				\quad \textit{ and } \quad \widetilde{M_\varphi} \approx \ell_\infty.
		\end{equation*}
	\end{corollary}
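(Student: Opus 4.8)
The plan is to feed each of the four building blocks $\lambda_\varphi,\Lambda_\varphi,m_\varphi,M_\varphi$ into the block form representations of Theorems~\ref{Thm: Tandori sequence representation} and \ref{Thm: Tandori function representation} (and, for the Ces\`aro spaces, Corollary~\ref{COR: blocking technique CX}) and then read off the isomorphism type of the resulting amalgam. All four spaces are rearrangement invariant, have the Fatou property, and — since $\varphi$ has non-trivial dilation exponents — have Boyd indices with $1<\alpha<\beta<\infty$; in particular $\alpha>1$, so all cited representations apply. Thus the only genuinely space-dependent input is the discretization $\mathbf{E}(\cdot)$, which I would compute first.

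Here I would use the carrier-space description. By Example~\ref{EXAMPLE : carrier spaces}, $\Lambda_\varphi=[L_1(\varphi')]^{\bigstar}$, $\lambda_\varphi=[\ell_1(\varphi(n+1)-\varphi(n))]^{\bigstar}$, $M_\varphi=[L_\infty(\varphi)]^{\bigstar}$, $m_\varphi=[\ell_\infty(\varphi)]^{\bigstar}$, and the carriers again have finite upper Boyd index; hence Proposition~\ref{PROP: EX <-> carrier} gives $\mathbf{E}(\Lambda_\varphi)=\mathbf{E}(L_1(\varphi'))$, $\mathbf{E}(\lambda_\varphi)=\mathbf{E}(\ell_1(\varphi(n+1)-\varphi(n)))$, and similarly for the Marcinkiewicz spaces. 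A direct evaluation on dyadic step functions yields $\mathbf{E}(\Lambda_\varphi)\equiv\ell_1(\{\varphi(2^{j+1})-\varphi(2^j)\}_j)$, $\mathbf{E}(M_\varphi)\equiv\ell_\infty(\{\sup_{t\in\Delta_j}\varphi(t)\}_j)$, and the sequence analogues; since $\varphi$ has non-trivial dilation exponents one has $\varphi(2^{j+1})-\varphi(2^j)\approx\varphi(2^j)\approx\varphi(2^{j+1})$, so these are weighted $\ell_1$, resp.\ $\ell_\infty$, spaces with strictly positive weights, hence isometrically $\ell_1$, resp.\ $\ell_\infty$. In short $\mathbf{E}(\Lambda_\varphi),\mathbf{E}(\lambda_\varphi)\approx\ell_1$ and $\mathbf{E}(M_\varphi),\mathbf{E}(m_\varphi)\approx\ell_\infty$.

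Plugging this into the representations and absorbing the inner weights $W_j$ (which are constant on $\Delta_j$, hence merge with the outer weight and are removed by an isometric rescaling of the blocks), and using that $L_1(\Delta_j)$ and $L_\infty(\Delta_j)$ are isometric to $L_1(0,1)$, resp.\ $L_\infty(0,1)$, I would obtain, up to isomorphism, $\widetilde{\lambda_\varphi}\approx\bigl(\bigoplus_{j\geqslant0}\ell_\infty^{2^j}\bigr)_{\ell_1}$, $\widetilde{\Lambda_\varphi}\approx\bigl(\bigoplus_{j\in\mathbb Z}L_\infty(0,1)\bigr)_{\ell_1}$, $\widetilde{m_\varphi}\approx\bigl(\bigoplus_{j\geqslant0}\ell_\infty^{2^j}\bigr)_{\ell_\infty}$, $\widetilde{M_\varphi}\approx\bigl(\bigoplus_{j\in\mathbb Z}L_\infty(0,1)\bigr)_{\ell_\infty}$, and $\mathscr{C}\lambda_\varphi\approx\bigl(\bigoplus_{j\geqslant0}\ell_1^{2^j}\bigr)_{\ell_1}$, $\mathscr{C}\Lambda_\varphi\approx\bigl(\bigoplus_{j\in\mathbb Z}L_1(0,1)\bigr)_{\ell_1}$, $\mathscr{C}m_\varphi\approx\bigl(\bigoplus_{j\geqslant0}\ell_1^{2^j}\bigr)_{\ell_\infty}$, $\mathscr{C}M_\varphi\approx\bigl(\bigoplus_{j\in\mathbb Z}L_1(0,1)\bigr)_{\ell_\infty}$. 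It then remains to recognize these amalgams. The block sizes $2^j$ are replaced by $n$ by Proposition~\ref{Prop: Podstawowe wlasnosci sum prostych}(a) (legitimate since $\ell_1$ and $\ell_\infty$ are r.i.\ and $\ell_p^n\oplus_p\ell_p^m\equiv\ell_p^{n+m}$), after which one invokes the classical identifications $\bigl(\bigoplus_n\ell_1^n\bigr)_{\ell_1}\equiv\ell_1$, $\bigl(\bigoplus_n\ell_\infty^n\bigr)_{\ell_\infty}\equiv\ell_\infty$, $\ell_1(L_1(0,1))\approx L_1(0,\infty)$, $L_\infty(0,1)\approx\ell_\infty$, and — the two least trivial ones — $\bigl(\bigoplus_n\ell_1^n\bigr)_{\ell_\infty}\approx\ell_\infty(\ell_1)$ together with $\ell_\infty(L_1(0,1))\approx\ell_\infty(\ell_1)$. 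Assembling these gives all eight isomorphisms.

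The routine part is everything upstream of the last step: the block representation, the carrier-space computation of $\mathbf{E}$, the weight absorption and the block-size normalization. The main obstacle is the final identification of the $\ell_\infty$-amalgams, in particular $\ell_\infty(L_1(0,1))\approx\ell_\infty(\ell_1)$ despite $L_1(0,1)\not\approx\ell_1$; the way around it is that both $\ell_1$ and $L_1(0,1)$ are increasing unions of uniformly complemented copies of the $\ell_1^n$'s, so their $\ell_\infty$-sums of these finite-dimensional pieces — and hence the two $\ell_\infty(\cdot)$ spaces — coincide up to isomorphism. These are known facts (cf.\ \cite{ALM19} and the references therein), so in the write-up I would quote them rather than reprove them.
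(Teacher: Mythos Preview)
Your proposal is correct and follows essentially the same approach as the paper: block form representations via Theorems~\ref{Thm: Tandori sequence representation}, \ref{Thm: Tandori function representation} and Corollary~\ref{COR: blocking technique CX}, computation of $\mathbf{E}(\cdot)$ through Proposition~\ref{PROP: EX <-> carrier}, block-size normalization via Proposition~\ref{Prop: Podstawowe wlasnosci sum prostych}(a), and then the classical amalgam identifications. The paper is only slightly more specific about references for the last step, citing \cite[Proposition~4.2]{AA17} (boundedly complete basis of $\ell_1$) for $\bigl(\bigoplus_n\ell_1^n\bigr)_{\ell_\infty}\approx\ell_\infty(\ell_1)$ and \cite[Theorem~5.1]{ALM19} for $\bigl(\bigoplus_n L_1(0,1)\bigr)_{\ell_\infty}\approx\bigl(\bigoplus_n\ell_1^n\bigr)_{\ell_\infty}$, which is exactly the mechanism you sketch.
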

	\begin{proof}
		Again, this is essentially a direct consequence of Theorem~\ref{Thm: Tandori sequence representation}, Theorem~\ref{Thm: Tandori function representation}
		and Corollary~\ref{COR: blocking technique CX}. Only a few things require further explanation.
		
		Since, due to Proposition~\ref{PROP: EX <-> carrier}, ${\bf E}(\lambda_\varphi) \approx \ell_1$ and ${\bf E}(m_\varphi) \approx \ell_{\infty}$,
		so using Proposition~\ref{Prop: Podstawowe wlasnosci sum prostych}, we infer that
		\begin{equation*}
			\widetilde{\lambda_\varphi}
				= \Bigl( \bigoplus_{n=1}^{\infty} \ell_{\infty}^{2^n} \Bigr)_{{\bf E}(\lambda_\varphi)}
				\approx \Bigl( \bigoplus_{n=1}^{\infty} \ell_{\infty}^n \Bigr)_{\ell_1},
		\end{equation*}
		\begin{equation*}
			\mathscr{C} \lambda_\varphi
			= \Bigl( \bigoplus_{n=1}^{\infty} \ell_{1}^{2^n} \Bigr)_{{\bf E}(\lambda_\varphi)}
			\approx \Bigl( \bigoplus_{n=1}^{\infty} \ell_{1}^{n} \Bigr)_{\ell_{1}}
			\approx \ell_1
		\end{equation*}
		and
		\begin{equation*}
			\widetilde{m_\varphi}
			= \Bigl( \bigoplus_{n=1}^{\infty} \ell_{\infty}^{2^n} \Bigr)_{{\bf E}(m_\varphi)}
			\approx \Bigl( \bigoplus_{n=1}^{\infty} \ell_{\infty}^n \Bigr)_{\ell_{\infty}}
			\approx \ell_{\infty}.
		\end{equation*}
		Moreover,
		\begin{equation*}
			\mathscr{C} m_\varphi
				= \Bigl( \bigoplus_{n=1}^{\infty} \ell_{1}^{2^n} \Bigr)_{{\bf E}(m_\varphi)}
				\approx \Bigl( \bigoplus_{n=1}^{\infty} \ell_{1}^{n} \Bigr)_{\ell_{\infty}}
				\approx \Bigl( \bigoplus_{n=1}^{\infty} \ell_{1} \Bigr)_{\ell_{\infty}}
				\approx \ell_{\infty}(\ell_1),
		\end{equation*}
		where the second isomorphism follows from the fact that the canonical basis of $\ell_1$ is boundedly complete together with
		Proposition~4.2 from \cite{AA17}.
		
		The second part is analogous except for the following
		\begin{align*}
			\mathscr{C} M_\varphi
				& = \Bigl( \bigoplus_{j \in \mathbb{Z}} L_{1}(\Delta_j) \Bigr)_{{\bf E}(M_\varphi)} \\
				& \approx \Bigl( \bigoplus_{n=1}^{\infty} L_{1}(0,1) \Bigr)_{\ell_{\infty}} \quad (\text{in view of Proposition~\ref{Prop: Podstawowe wlasnosci sum prostych}}) \\
				& \approx \Bigl( \bigoplus_{n=1}^{\infty} \ell_{1}^{n} \Bigr)_{\ell_{\infty}} \quad (\text{by Theorem~5.1 in \cite{ALM19}}) \\
				& \approx \ell_{\infty}(\ell_1) \quad (\text{using Proposition~4.2 from \cite{AA17}})
		\end{align*}
	\end{proof}
	
	Let $w \colon (0,\infty) \rightarrow (0,\infty)$ be a non-increasing weight.
	Set $\omega(A) = \int_A wdm$ for any Lebesgue measurable set $A$.
	Further, let $X$ be a rearrangement invariant function space with the Fatou property. By the {\bf generalized weighted space $X_w$}
	induced by the space $X$ and the weight $w$ we understand a vector space
	\begin{equation*}
		X_w \coloneqq \left\{ f \in L_0(\omega) \colon f^{\star,w} \in X \right\}
	\end{equation*}
	with the functional $\norm{f}_{X_w} \coloneqq \norm{f^{\star,w}}_X$, where $f^{\star,w}$ is the non-increasing rearrangement of $f$
	with respect to the measure $\omega$. Suppose that $W(t) < \infty$ for all $t > 0$ (briefly, $W < \infty$), where
	\begin{equation*}
		W(x) = \int_0^x w(t)dt \quad \text{ and } \quad W(\infty) = \int_0^\infty w(t)dt.
	\end{equation*}
	Then the functional $f \rightsquigarrow \norm{f}_{X,w}$ is a norm and $\norm{f}_{X,w} = \norm{f \circ W^{-1}}_X$ (see \cite[Proposition~2.2]{KR19}).
	Following Kami{\' n}ska, Masty{\l}o and Raynaud (see \cite{KM07} and \cite{KR19}), we define the {\bf abstract Lorentz space $\Lambda_{X,w}$}
	as the symmetrization of the space $X_w$, that is,
	\begin{equation*}
		\Lambda_{X,w} \coloneqq X_w^{\bigstar} = \left\{ f \in L_0 \colon f^{\star} \in X_w \right\}
	\end{equation*}
	with the functional $\norm{f}_{\Lambda_{X,w}} \coloneqq \norm{f^{\star}}_{X,w}$. Again, if $W < \infty$, then $f \rightsquigarrow \norm{f}_{\Lambda_{X,w}}$
	is a norm and $\norm{f}_{\Lambda_{X,w}} = \norm{f^{\star} \circ W^{-1}}_X$ (see \cite[Proposition~2.3]{KR19}).
	
	\begin{example}
		Let $1 \leqslant p < \infty$. Suppose that $W < \infty$ and $W(\infty) = \infty$.
		Since
		\begin{equation*}
			\norm{f}_{(L_p)_w} = \left( \int_0^{\infty} (f \circ W^{-1})(t)dt \right)^{1/p} = \left( \int_0^{\infty} \abs{f(t)}^p w(t) dt \right)^{1/p},
		\end{equation*}
		so the space $(L_p)_w$ is isometrically isomorphic to the weighted Lebesgue space $L_p(w^{1/p})$.
		It is also straightforward to see that the corresponding abstract Lorentz space $\Lambda_{L_p,w}$ is nothing else but the Lorentz space $\Lambda_{p,w}$.
		Similarly, the space $(L_M)_w$ coincide with the Musielak--Orlicz space $L_{\Phi}$, where $\Phi(t,u) = M(t)w(u)$.
		This time the corresponding abstract Lorentz space $\Lambda_{L_M,w}$ coincide with the Orlicz--Lorentz space $\Lambda_{M,w}$.
		In other words, the generalized weighted spaces $X_w$ are carrier space for the corresponding abstract Lorentz spaces
		$\Lambda_{X,w}$ (cf. Remark~\ref{EXAMPLE : carrier spaces}).
	\end{example}
	
	It turns out that Corollary~\ref{PROPOSITION: ALM19} has a general counterpart for the Tandori spaces built upon Lorentz spaces.

	\begin{proposition}
		{\it Let $X$ be a rearrangement invariant function space with the Fatou property. Further, let $w$ be a non-increasing weight.
		Suppose that $W < \infty$ and $W(\infty) = \infty$. Then the space $\widetilde{\Lambda_{X,w}}$ is isometrically isomorphic to $\widetilde{X}$.}
	\end{proposition}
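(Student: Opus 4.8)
The plan is to prove that the substitution operator
\begin{equation*}
	T \colon f \rightsquigarrow f \circ W^{-1}
\end{equation*}
implements the desired isometry, in the same spirit as the formula $\norm{f}_{\Lambda_{X,w}} = \norm{f^{\star} \circ W^{-1}}_X$ already recorded above (see \cite[Proposition~2.3]{KR19}). First I would record the elementary properties of $W$ that we need: since $w$ is positive and $W < \infty$, $W(\infty) = \infty$, the function $W$ is a strictly increasing continuous bijection of $(0,\infty)$ onto itself; moreover both $W$ and $W^{-1}$ are locally absolutely continuous --- $W$ because it is an indefinite integral, and $W^{-1}$ because $w$, being non-increasing and positive, is bounded away from zero on every compact subinterval of $(0,\infty)$, so that $W$ satisfies a lower Lipschitz estimate there and $W^{-1}$ an upper one. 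Consequently $W$ and $W^{-1}$ both map Lebesgue-null sets to Lebesgue-null sets, and $T$, together with its inverse $T^{-1} \colon g \rightsquigarrow g \circ W$, is a linear lattice isomorphism of $L_0(0,\infty)$ onto itself with $T(\abs{f}) = \abs{Tf}$.

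The crucial point is the commutation identity
\begin{equation*}
	\widetilde{f \circ W^{-1}} = \widetilde{f} \circ W^{-1} \qquad \text{for all } f \in L_0(0,\infty).
\end{equation*}
To verify it, fix $x > 0$ and compute $\widetilde{f \circ W^{-1}}(x) = \esssup_{t \geqslant x} \abs{f(W^{-1}(t))}$; since $W^{-1}$ is an increasing bijection that preserves null sets, the essential supremum of $\abs{f} \circ W^{-1}$ over $[x,\infty)$ coincides with the essential supremum of $\abs{f}$ over $W^{-1}\bigl([x,\infty)\bigr) = [W^{-1}(x),\infty)$, which equals $\widetilde{f}\bigl(W^{-1}(x)\bigr)$. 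This change-of-variables step for the essential supremum is the only place where a little care is needed, and it is the (modest) main obstacle of the proof; everything else is formal.

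With the identity in hand the conclusion is immediate. Since $\widetilde{f}$ is non-increasing we have $(\widetilde{f})^{\star} = \widetilde{f}$ almost everywhere, hence
\begin{align*}
	\norm{Tf}_{\widetilde{X}}
		& = \norm{\widetilde{Tf}}_X = \norm{\widetilde{f \circ W^{-1}}}_X = \norm{\widetilde{f} \circ W^{-1}}_X \\
		& = \norm{(\widetilde{f})^{\star} \circ W^{-1}}_X = \norm{\widetilde{f}}_{\Lambda_{X,w}} = \norm{f}_{\widetilde{\Lambda_{X,w}}},
\end{align*}
the fifth equality being the formula $\norm{h}_{\Lambda_{X,w}} = \norm{h^{\star} \circ W^{-1}}_X$. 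Thus $T$ maps $\widetilde{\Lambda_{X,w}}$ isometrically into $\widetilde{X}$; it is onto because for $g \in \widetilde{X}$ the function $f \coloneqq g \circ W = T^{-1}g$ satisfies $Tf = g$ and, reading the displayed chain backwards, $\norm{f}_{\widetilde{\Lambda_{X,w}}} = \norm{g}_{\widetilde{X}} < \infty$, so $f \in \widetilde{\Lambda_{X,w}}$. Since $T$ also preserves the lattice operations, it is the asserted isometric order isomorphism. I would finish by noting that specializing to $X = L_1$ --- so that $\Lambda_{X,w} = \Lambda_{\varphi}$ for a concave $\varphi$ with $\varphi' = w$, $\varphi(0^{+}) = 0$ and $\varphi(\infty) = \infty$ --- recovers the identity $\widetilde{\Lambda_{\varphi}} \equiv \widetilde{L_1}$, an isometric refinement of the corresponding isomorphism in Corollary~\ref{PROPOSITION: ALM19}.
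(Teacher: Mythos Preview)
Your proof is correct and follows essentially the same route as the paper: both use the substitution operator $T \colon f \mapsto f \circ W^{-1}$, verify the commutation identity $\widetilde{f \circ W^{-1}} = \widetilde{f} \circ W^{-1}$, and then read off the isometry. The only cosmetic difference is that the paper first establishes an isometry $\widetilde{X_w} \to \widetilde{X}$ via the formula $\norm{g}_{X_w} = \norm{g \circ W^{-1}}_X$ and then appends the identification $\widetilde{\Lambda_{X,w}} = \widetilde{X_w^{\bigstar}} = \widetilde{X_w}$, whereas you fold the observation $(\widetilde{f})^{\star} = \widetilde{f}$ directly into the norm chain and work with $\Lambda_{X,w}$ throughout; your version is a touch more explicit about the measure-theoretic details and surjectivity, but the argument is the same.
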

	\begin{proof}
		We claim that the mapping
		\begin{equation*}
			T \colon f \rightsquigarrow f \circ W^{-1},
		\end{equation*}
		gives an isomorphism between $\widetilde{X_w}$ and $\widetilde{X}$. Indeed, since the function $W^{-1}$ is increasing, so
		\begin{align*}
			\widetilde{(f \circ W^{-1})}(x)
				& = \esssup_{t \geqslant x} (f \circ W^{-1})(t) \\
				& = \esssup_{W^{-1}(t) \geqslant W^{-1}(x)} (f \circ W^{-1})(t) \\
				& = \esssup_{s \geqslant W^{-1}(x)} f(s) \\
				& = (\widetilde{f} \circ W^{-1})(x).
		\end{align*}
		Therefore,
		\begin{equation*}
			\norm{T(f)}_{\widetilde{X}}
				= \norm{\widetilde{f \circ W^{-1}}}_X = \norm{\widetilde{f} \circ W^{-1}}_X
				= \norm{\widetilde{f}}_{X_w} = \norm{f}_{\widetilde{X_w}}.
		\end{equation*}
		In consequence, we have
		\begin{equation*}
			\widetilde{\Lambda_{X,w}} = \widetilde{X_w^{\bigstar}} = \widetilde{X_w} \approx \widetilde{X}.
		\end{equation*}
	\end{proof}

	\subsection{Averaging the averages}

	As noted by Bennett, the family $\left\{ ces_p \right\}_{1 < p < \infty}$ has a unique feature that distinguishes it from classical sequence spaces, namely,
	a sequence of non-negative terms belongs to the space $ces_p$ precisely when its sequence of averages does (see \cite[Theorem~20.31, p.~121]{Be96};
	see also \cite{CR13} and \cite[Theorem~1]{LM16} for some generalizations). To the best of our knowledge, Bennett's result\footnote{A short reading
	of \cite{Be96} is enough to notice that Bennett's proof is not at all elementary. Indeed, the proof of sufficiency uses Hardy's inequality,
	while the proof of necessity lies on Knopp, Schnee and Hausdorff's theorem on the equivalence of the H{\" o}lder and Ces{\' a}ro summability
	methods of the same order. It should be also noted that a wonderfully simple and direct proof of the necessity
	part was given by Curbera and Ricker in \cite{CR13}.} has not yet been proven using the blocking technique.
	We would now like to show how to do this.
	
	\begin{theorem} \label{PROP: CCX = CX}
		{\it Let $X$ be a Banach sequence or function space with the Fatou property. Suppose that the space $X$ has non-trivial Boyd indices. Then}
			\begin{equation*}
				\mathscr{C}\mathscr{C}X = \mathscr{C}X.
			\end{equation*}
	\end{theorem}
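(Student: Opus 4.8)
The plan is to feed Corollary~\ref{COR: blocking technique CX} into itself, the pivot being the identity $\mathbf{E}(\mathscr{C}X) = \mathbf{E}(X)$ (with equivalence of norms), which makes both $\mathscr{C}X$ and $\mathscr{C}\mathscr{C}X$ collapse onto the \emph{same} amalgam. To get this identity, note that $X$ has the Fatou property and non-trivial Boyd indices, so Corollary~\ref{COR: blocking technique CX} applies to $X$. For $x = \{x_j\}_{j \in \mathbb{J}}$ the definition of the discretization functor gives $\norm{x}_{\mathbf{E}(\mathscr{C}X)} = \norm{\sum_{j \in \mathbb{J}} x_j \chi_{\Delta_j}}_{\mathscr{C}X}$, and Corollary~\ref{COR: blocking technique CX} rewrites the right-hand side as $\approx \norm{\bigl\{2^{-j}\norm{(\sum_{k} x_k\chi_{\Delta_k})\chi_{\Delta_j}}_{L_1}\bigr\}_{j \in \mathbb{J}}}_{\mathbf{E}(X)}$. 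Since $(\sum_k x_k\chi_{\Delta_k})\chi_{\Delta_j} = x_j\chi_{\Delta_j}$ and $\abs{\Delta_j} = 2^j$, the $j$-th entry equals $2^{-j}\cdot\abs{x_j}\cdot 2^j = \abs{x_j}$, so $\norm{x}_{\mathbf{E}(\mathscr{C}X)} \approx \norm{x}_{\mathbf{E}(X)}$ by the ideal property of $\mathbf{E}(X)$; hence $\mathbf{E}(\mathscr{C}X) = \mathbf{E}(X)$.

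Next I would check that $\mathscr{C}X$ itself satisfies the hypotheses of Corollary~\ref{COR: blocking technique CX}. The Fatou property of $\mathscr{C}X$ is routine: apply monotone convergence to $\mathscr{H}$ and then the Fatou property of $X$. For the Boyd indices, the point is that $\mathscr{H}$ commutes with the dilations --- exactly, $\mathscr{H}D_s = D_s\mathscr{H}$, in the function case, and up to two absolute pointwise constants in the sequence case --- so
\begin{equation*}
	\norm{D_s}_{\mathscr{C}X \to \mathscr{C}X} = \sup_{f \neq 0}\frac{\norm{D_s\mathscr{H}f}_X}{\norm{\mathscr{H}f}_X} \leqslant C\,\norm{D_s}_{X \to X}.
\end{equation*}
Passing to the limit in the defining formulas for the Boyd indices and using \eqref{EQ: D_s < CMAXs^1/p,s^1/q} this forces $\alpha_{\mathscr{C}X} \geqslant \alpha_X > 1$ and $\beta_{\mathscr{C}X} \leqslant \beta_X < \infty$; moreover $\mathscr{C}X$ is non-trivial because $\alpha_X > 1$ makes $\mathscr{H}$ bounded on $X$, whence $X \hookrightarrow \mathscr{C}X$. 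Thus Corollary~\ref{COR: blocking technique CX} is available for $\mathscr{C}X$ as well.

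Finally I would assemble the pieces: applying Corollary~\ref{COR: blocking technique CX} to $\mathscr{C}X$, then the identity $\mathbf{E}(\mathscr{C}X) = \mathbf{E}(X)$, then Corollary~\ref{COR: blocking technique CX} to $X$ once more,
\begin{equation*}
	\norm{f}_{\mathscr{C}\mathscr{C}X}
		\approx \norm{\bigl\{2^{-j}\norm{f\chi_{\Delta_j}}_{L_1}\bigr\}_{j \in \mathbb{J}}}_{\mathbf{E}(\mathscr{C}X)}
		\approx \norm{\bigl\{2^{-j}\norm{f\chi_{\Delta_j}}_{L_1}\bigr\}_{j \in \mathbb{J}}}_{\mathbf{E}(X)}
		\approx \norm{f}_{\mathscr{C}X},
\end{equation*}
which is exactly the asserted equality $\mathscr{C}\mathscr{C}X = \mathscr{C}X$.

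I expect the only genuinely delicate point to be the middle step --- confirming that the Ces\`aro construction does not destroy non-triviality of the Boyd indices --- while the first and last steps are pure bookkeeping inside the amalgam picture. As an interpolation-free sanity check one can argue directly that for $g \geqslant 0$ a short computation yields $\mathscr{H}^2 g(2x) \geqslant \tfrac{\log 2}{2}\,\mathscr{H}g(x)$, hence $\mathscr{H}g \leqslant \tfrac{2}{\log 2}\,D_{1/2}(\mathscr{H}^2 g)$ pointwise and $\norm{\mathscr{H}g}_X \leqslant C\norm{\mathscr{H}^2 g}_X$, the reverse inequality being just boundedness of $\mathscr{H}$ on $X$; but this bypasses the blocking technique, which is precisely what the present section is meant to showcase.
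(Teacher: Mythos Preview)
Your proof is correct and follows the same three-step architecture as the paper: verify that $\mathscr{C}X$ inherits non-trivial Boyd indices from $X$ via the commutation $\mathscr{H}D_s = D_s\mathscr{H}$, establish $\mathbf{E}(\mathscr{C}X) = \mathbf{E}(X)$, and then apply the block form representation (Corollary~\ref{COR: blocking technique CX}) twice. The only substantive difference is in how you prove the pivotal identity $\mathbf{E}(\mathscr{C}X) = \mathbf{E}(X)$. The paper routes this through the symmetrization machinery: it first observes that $[\mathscr{C}X]^{\bigstar} = X$ (i.e., $\mathscr{C}X$ is a carrier space for $X$), and then invokes Proposition~\ref{PROP: EX <-> carrier} to conclude $\mathbf{E}(\mathscr{C}X) = \mathbf{E}([\mathscr{C}X]^{\bigstar}) = \mathbf{E}(X)$. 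Your argument is more direct and self-contained: you simply feed the step function $\sum_j x_j\chi_{\Delta_j}$ into Corollary~\ref{COR: blocking technique CX} and note that $2^{-j}\norm{x_j\chi_{\Delta_j}}_{L_1} = \abs{x_j}$, which collapses the computation to $\norm{x}_{\mathbf{E}(X)}$. This bypasses the carrier-space formalism entirely and is arguably cleaner for this particular result; the paper's route, on the other hand, makes the conceptual point that $\mathscr{C}X$ sits in the same discretization class as $X$ \emph{because} it is a carrier for $X$, tying the result into the broader framework of Section~\ref{SUBSECTION: E(X)}. Your final ``sanity check'' via the pointwise bound $\mathscr{H}^2 g(2x) \geqslant \tfrac{\log 2}{2}\mathscr{H}g(x)$ is a nice bonus --- it is not in the paper and gives an entirely elementary alternative (essentially the Curbera--Ricker argument in abstract form).
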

	\begin{proof}
		Since we assumed that the space $X$ has non-trivial Boyd indices, so the maximal operator
		$f \rightsquigarrow [x \rightsquigarrow \frac{1}{x}\int_0^x f^{\star}(t)dt]$ is bounded on $X$ (see \cite[pp.~122--126]{BS88}).
		This means that
		\begin{equation*}
			\left[ \mathscr{C}X \right]^{\bigstar} = X.
		\end{equation*}
		In other words, the carrier space for $X$ is just $\mathscr{C}X$.
		Next, we claim that the space $\mathscr{C}X$ has non-trivial Boyd indices. To see this, observe that the dilation operators $D_s$
		for $s > 0$ commute with Hardy's operator $\mathscr{H}$, that is,
		\begin{equation*}
			\mathscr{H} \abs{D_s(f)} = \frac{1}{x}\int_0^x \abs{f\left( \frac{t}{s} \right)}dt = \frac{1}{x/s} \int_0^{x/s} \abs{f(t)}dt = D_s\mathscr{H}(\abs{f}).
		\end{equation*}
		Therefore,
		\begin{equation*}
			\norm{D_s}_{\mathscr{C}X \rightarrow \mathscr{C}X}
				\leqslant \norm{D_s}_{X \rightarrow X} \norm{\mathscr{H}}_{X \rightarrow X}
				\leqslant \max\{1,s\} \norm{\mathscr{H}}_{X \rightarrow X}.
		\end{equation*}
		Consequently,
		\begin{equation*}
			1 < \alpha_X
				= \sup\limits_{s > 1} \frac{\log s}{\log \norm{D_s}_{X \rightarrow X}}
				\leqslant \sup\limits_{s > 1} \frac{\log s}{\log \norm{D_s}_{\mathscr{C}X \rightarrow \mathscr{C}X}}
				= \alpha_{\mathscr{C}X}
		\end{equation*}
		and, similarly,
		\begin{equation*}
			\beta_{\mathscr{C}X} \leqslant \beta_X < \infty.
		\end{equation*}
		Thus, due to Proposition~\ref{PROP: EX <-> carrier}, we have
		\begin{equation} \label{EQ: EX = ECXs = ECX}
			{\bf E}(X) = {\bf E}(\left[ \mathscr{C}X \right]^{\bigstar}) = {\bf E}(\mathscr{C}X).
		\end{equation}
		In consequence, we have
		\begin{align*}
			\mathscr{C}\mathscr{C}X
				& = \Bigl( \bigoplus_{j \in J} L_1(\Delta_j) \Bigr)_{\mathbf{E}(\mathscr{C}X)} \quad (\text{using Theorems~\ref{Thm: Tandori sequence representation} and \ref{Thm: Tandori function representation}}) \\
				& = \Bigl( \bigoplus_{j \in J} L_1(\Delta_j) \Bigr)_{\mathbf{E}(X)} \quad (\text{by \eqref{EQ: EX = ECXs = ECX}}) \\
				& = \mathscr{C}X \quad (\text{again, using Theorems~\ref{Thm: Tandori sequence representation} and \ref{Thm: Tandori function representation}}).
		\end{align*}
	\end{proof}

	\begin{corollary}[G.~Bennett, 1996]
		{\it Let $1 < p < \infty$. Then}
		\begin{equation*}
			x = \{x_n\}_{n=1}^{\infty} \in ces_p \quad \textit{ if, and only if, } \quad \left\{ \frac{1}{n}\sum_{k=1}^n x_k \right\}_{n=1}^{\infty} \in ces_p.
		\end{equation*}
	\end{corollary}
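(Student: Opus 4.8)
The plan is to read this off directly from Theorem~\ref{PROP: CCX = CX} with $X = \ell_p$. First I would recall that, by the very definition of the construction $\mathscr{C}$, one has $ces_p = \mathscr{C}\ell_p$, and that for $1 < p < \infty$ the space $\ell_p$ has non-trivial Boyd indices, since $\alpha_{\ell_p} = \beta_{\ell_p} = p \in (1,\infty)$. Hence the hypotheses of Theorem~\ref{PROP: CCX = CX} are fulfilled and we obtain
\begin{equation*}
	\mathscr{C}(ces_p) = \mathscr{C}\mathscr{C}\ell_p = \mathscr{C}\ell_p = ces_p.
\end{equation*}

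It then remains only to unravel the definitions. As in Bennett's original formulation we take $x = \{x_n\}_{n=1}^{\infty}$ to have non-negative terms (the general case being governed by the ideal property of $ces_p$); then the sequence of averages of $x$ equals $\mathscr{H}x = \bigl\{ \frac{1}{n}\sum_{k=1}^{n} x_k \bigr\}_{n=1}^{\infty}$, where $\mathscr{H}$ denotes the discrete Hardy operator. Now $x \in ces_p = \mathscr{C}\ell_p$ means precisely that $\mathscr{H}x \in \ell_p$, whereas $\mathscr{H}x \in ces_p = \mathscr{C}\ell_p$ means that $\mathscr{H}(\mathscr{H}x) \in \ell_p$, that is, $x \in \mathscr{C}\mathscr{C}\ell_p$. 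Since $\mathscr{C}\mathscr{C}\ell_p = \mathscr{C}\ell_p = ces_p$ by the display above, these two conditions are equivalent, which is exactly the claim.

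There is essentially no obstacle left at this level — all the substance has been absorbed into Theorem~\ref{PROP: CCX = CX}. If one wants to locate where the difficulty is hidden, it is in the identification $[\mathscr{C}\ell_p]^{\bigstar} = \ell_p$ (equivalently, the boundedness on $\ell_p$ of $x \rightsquigarrow \bigl\{ \frac{1}{n}\sum_{k=1}^{n} x_k^{\star} \bigr\}$, which uses $\alpha_{\ell_p} = p > 1$), together with the block form representation $\mathscr{C}Y \approx \bigl( \bigoplus_{j} \ell_1^{2^j} \bigr)_{{\bf E}(Y)}$ furnished by Corollary~\ref{COR: blocking technique CX}: the point is that the discretization functor ${\bf E}$ does not distinguish $\ell_p$ from its carrier space $\mathscr{C}\ell_p$, and so $\mathscr{C}\mathscr{C}\ell_p$ and $\mathscr{C}\ell_p$ acquire the same block form representation. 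This is precisely the blocking-technique proof of Bennett's theorem promised in the paragraph that precedes Theorem~\ref{PROP: CCX = CX}.
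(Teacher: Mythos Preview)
Your proposal is correct and is exactly the intended derivation: the paper states this corollary immediately after Theorem~\ref{PROP: CCX = CX} without a separate proof, precisely because it is the specialization $X = \ell_p$ together with the tautology $ces_p = \mathscr{C}\ell_p$. Your additional paragraph locating the substance in the identity ${\bf E}(\mathscr{C}\ell_p) = {\bf E}(\ell_p)$ and the block form representation is an accurate gloss on where the work was done.
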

	
	\section{{\bf Interpolation structure}} \label{SECTION: Interpolation structure}
	
	\subsection{Revisiting Bukhvalov's interpolation formula} \label{SUBSECTION: revisiting Bukhvalov}
	Fix a Banach space $X$ and let $(E,F)$ be a couple of Banach sequence spaces.
	Observation that the mixed-norm construction $E \rightsquigarrow E[X]$ (note that the \enquote{inside}
	space stays fixed) commutes with any interpolation functor $\mathbf{F}$, that is,
	\begin{equation} \label{EQ: Bukhvalov's observation}
		\mathbf{F}(E[X],F[X]) = \mathbf{F}(E,F)[X],
	\end{equation}
	seems to be part of the folklore, at least among interpolation theory specialists.
	Indeed, the above result was already noted in the early 1980's by Alexander V. Bukhvalov (see \cite[p.~95]{Buk84}).
	
	In our opinion, however, Bukhvalov's proof is tainted with some drawbacks, namely,
	$\bullet$ it is quite difficult to access the paper \cite{Buk84} (in fact, our copy comes from Lech Maligranda's private collection);
	$\bullet$ it is written in Russian (and we know no version published in another language);
	$\bullet$ does not abound in details (it is more like a sketch of the proof).
	
	Since Bukhvalov's observation \eqref{EQ: Bukhvalov's observation} is an important step towards what we plan to do here, all this has prompted
	us to write in full detail even a more general\footnote{It should be noted that $\bigl( \bigoplus_{j \in J} X \bigr)_{E} = E(X) = E[X]$, where
	while the first equality is obvious (cf. Remark~\ref{REMARK: Bochner construction}), the second one is not and essentially follows from \cite{Buk79}.}
	version of the aforementioned result.
	
	\begin{theorem}[Interpolation of amalgams] \label{THM: a'la Bukhvalov}
		{\it Let $(E,F)$ be a couple of Banach sequence spaces defined on $J$. Further, let $\{X_j\}_{j \in J}$ be a family of Banach spaces.
			Then, for any interpolation functor $\mathbf{F}$, we have}
		\begin{equation*}
			\mathbf{F}\biggl( \Bigl( \bigoplus_{j \in J} X_j \Bigr)_{E}, \Bigl( \bigoplus_{j \in J} X_j \Bigr)_{F} \biggr)
			= \Bigl( \bigoplus_{j \in J} X_j \Bigr)_{\mathbf{F}(E,F)}.
		\end{equation*}
	\end{theorem}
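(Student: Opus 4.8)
The plan is to relate the amalgam couple $\vv{\mathcal{X}} \coloneqq \bigl( \bigl( \bigoplus_{j \in J} X_j \bigr)_{E}, \bigl( \bigoplus_{j \in J} X_j \bigr)_{F} \bigr)$ to the scalar couple $(E,F)$ by producing, for each individual $f$, a pair of bounded operators of couples that reproduce on $f$ the (nonlinear but norm-preserving) correspondence between $f$ and its ``modulus'' $\{\norm{f_j}_{X_j}\}_{j}$, and then to let the interpolation functor $\mathbf{F}$ transport these. First I would record the harmless preliminary that, for a fixed family $\{X_j\}_{j \in J}$, the construction $G \rightsquigarrow \bigl( \bigoplus_{j \in J} X_j \bigr)_{G}$ commutes with intersections and sums of Banach sequence spaces: one checks coordinatewise that $\Delta(\vv{\mathcal{X}}) = \bigl( \bigoplus_{j \in J} X_j \bigr)_{\Delta(E,F)}$ and $\Sigma(\vv{\mathcal{X}}) = \bigl( \bigoplus_{j \in J} X_j \bigr)_{\Sigma(E,F)}$, the latter by splitting $f_j$ proportionally to a prescribed decomposition of $\{\norm{f_j}_{X_j}\}_{j}$ in $\Sigma(E,F)$ and using the ideal property of $E$ and $F$. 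Since $\Delta(E,F) \hookrightarrow \mathbf{F}(E,F) \hookrightarrow \Sigma(E,F)$ and the amalgam construction is monotone in the sequence-space argument, it follows that $\bigl( \bigoplus_{j \in J} X_j \bigr)_{\mathbf{F}(E,F)}$ is an intermediate space for $\vv{\mathcal{X}}$, so the asserted identity is at least meaningful.

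For the inclusion $\bigl( \bigoplus_{j \in J} X_j \bigr)_{\mathbf{F}(E,F)} \hookrightarrow \mathbf{F}(\vv{\mathcal{X}})$, fix $f = \{f_j\}_{j}$ with $g \coloneqq \{\norm{f_j}_{X_j}\}_{j} \in \mathbf{F}(E,F)$; for every $j$ choose $u_j \in X_j$ with $\norm{u_j}_{X_j} = 1$ and $f_j = \norm{f_j}_{X_j} u_j$ (take $u_j = 0$ if $f_j = 0$), and set $\iota \colon \{h_j\}_{j} \rightsquigarrow \{h_j u_j\}_{j}$. Since $\norm{h_j u_j}_{X_j} \leqslant \abs{h_j}$ coordinatewise, $\iota$ maps $E$ into $\bigl( \bigoplus_{j} X_j \bigr)_{E}$ and $F$ into $\bigl( \bigoplus_{j} X_j \bigr)_{F}$, each with norm at most one, so $\iota$ is a bounded operator from the couple $(E,F)$ into $\vv{\mathcal{X}}$; hence $\iota \colon \mathbf{F}(E,F) \rightarrow \mathbf{F}(\vv{\mathcal{X}})$ is bounded, and as $\iota(g) = f$ this gives $f \in \mathbf{F}(\vv{\mathcal{X}})$ with a norm estimate. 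For the reverse inclusion, fix $f = \{f_j\}_{j} \in \mathbf{F}(\vv{\mathcal{X}})$ and, by the Hahn--Banach theorem, pick $x_j^{*} \in X_j^{*}$ with $\norm{x_j^{*}}_{X_j^{*}} \leqslant 1$ and $\fun{f_j, x_j^{*}} = \norm{f_j}_{X_j}$; put $\delta \colon \{h_j\}_{j} \rightsquigarrow \{\fun{h_j, x_j^{*}}\}_{j}$. Since $\abs{\fun{h_j, x_j^{*}}} \leqslant \norm{h_j}_{X_j}$ coordinatewise, the ideal property of $E$ and $F$ shows that $\delta$ is a bounded operator from $\vv{\mathcal{X}}$ into $(E,F)$ (of norm at most one), hence $\delta \colon \mathbf{F}(\vv{\mathcal{X}}) \rightarrow \mathbf{F}(E,F)$ is bounded, and by construction $\delta(f) = \{\norm{f_j}_{X_j}\}_{j}$, so $f \in \bigl( \bigoplus_{j} X_j \bigr)_{\mathbf{F}(E,F)}$ with a norm estimate. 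The whole point of choosing $u_j$, respectively $x_j^{*}$, tailored to the given $f$ is that a \emph{single} linear operator then reproduces on $f$ the nonlinear passage $f \rightsquigarrow \{\norm{f_j}_{X_j}\}_{j}$, so that no supremum over a family of functionals --- and hence no Fatou property or separability hypothesis --- is needed.

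The one genuinely delicate ingredient is the implication ``$\iota,\delta$ are morphisms of couples of norm $\leqslant 1$'' $\Longrightarrow$ ``$\mathbf{F}(\iota),\mathbf{F}(\delta)$ are bounded by a constant independent of $f$''. For a \emph{fixed} operator the closed graph theorem already gives finiteness, but here $\iota$ and $\delta$ vary with $f$, so one must invoke the standard fact that every interpolation functor is \emph{bounded}, i.e.\ there is $\norm{\mathbf{F}} < \infty$ with $\norm{\mathbf{F}(T)}_{\mathbf{F}(\vv{X}) \rightarrow \mathbf{F}(\vv{Y})} \leqslant \norm{\mathbf{F}} \norm{T}_{\vv{X} \rightarrow \vv{Y}}$ for all couples and all $T \colon \vv{X} \rightarrow \vv{Y}$ (see, e.g., \cite[Chapter~3]{BK91}). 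Granting this, the two estimates combine to the claimed equality of the two spaces with equivalence constant $\norm{\mathbf{F}}$; when $\mathbf{F}$ is an exact interpolation functor (as for the $\mathscr{K}$-method with an exact parameter, or for the complex method) one even gets an isometric identity. I expect this functor-boundedness input, rather than the explicit operators $\iota$ and $\delta$, to be the conceptual crux; everything else is the coordinatewise bookkeeping indicated above.
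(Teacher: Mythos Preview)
Your proof is correct and, for the inclusion $\bigl(\bigoplus_j X_j\bigr)_{\mathbf{F}(E,F)} \hookrightarrow \mathbf{F}(\vv{\mathcal{X}})$, essentially coincides with the paper's: your $\iota$ is precisely their operator $T_f\colon \{a_j\}_j \rightsquigarrow \{a_j f_j/\norm{f_j}_{X_j}\}_j$. The genuine difference lies in the reverse inclusion. The paper uses the single \emph{sublinear} operator $S\colon \{f_j\}_j \rightsquigarrow \{\norm{f_j}_{X_j}\}_j$, bounded from the couple $\vv{\mathcal{X}}$ into $(E,F)$, and appeals to the fact that interpolation functors respect sublinear operators. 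You instead linearise pointwise via Hahn--Banach, choosing for each fixed $f$ norming functionals $x_j^{*}$ and setting $\delta\colon \{h_j\}_j \rightsquigarrow \{\fun{h_j,x_j^{*}}\}_j$, so that $\delta$ is genuinely linear with $\norm{\delta}_{\vv{\mathcal{X}}\to(E,F)}\leqslant 1$ and $\delta(f)=\{\norm{f_j}_{X_j}\}_j$. Your route stays strictly within the category of linear operators (so no justification of a sublinear interpolation principle is needed) at the cost of making the operator $f$-dependent; you then correctly identify the uniform boundedness of interpolation functors (\cite[Chapter~3]{BK91}) as the device that makes this harmless. The paper's route is more economical --- one operator serves all $f$ --- but tacitly relies on the sublinear extension. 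Both arguments yield the same equivalence constant $\norm{\mathbf{F}}$, and your remark that for exact functors one obtains an isometry is a worthwhile addition.
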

	\begin{proof}
		Since the category of Banach sequence spaces is interpolation stable, so $\mathbf{F}(E,F)$ is a Banach sequence space (see \cite[Example~2.6.12, p.~252]{BK91}).
		
		Once we know that $\mathbf{F}(E,F)$ is a Banach sequence space, it is time to show the first embedding
		\begin{equation} \label{first emb}
			\norm{\text{id} \colon \mathbf{F}\biggl( \Bigl( \bigoplus_{j \in J} X_j \Bigr)_{E}, \Bigl( \bigoplus_{j \in J} X_j \Bigr)_{F} \biggr)
				\rightarrow \Bigl( \bigoplus_{j \in J} X_j \Bigr)_{\mathbf{F}(E,F)} } < \infty.
		\end{equation}
		Along the way, define the mapping $S \colon \bigl( \bigoplus_{j \in J} X_j \bigr)_{E} + \bigl( \bigoplus_{j \in J} X_j \bigr)_{F} \rightarrow E + F$
		as follows
		\begin{equation*}
			S \colon \{f_j\}_{j \in J} \rightsquigarrow \left\{ \norm{f_j}_{X_j} \right\}_{j \in J}.
		\end{equation*}
		Then $S$ is a sublinear operator that is bounded when considered to act from $\bigl( \bigoplus_{j \in J} X_j \bigr)_{E}$
		into $E$ and from $\bigl( \bigoplus_{j \in J} X_j \bigr)_{F}$ into $F$. However, $\mathbf{F}$ is an interpolation functor, so
		\begin{equation*}
			S \colon \mathbf{F}\biggl( \Bigl( \bigoplus_{j \in J} X_j \Bigr)_{E}, \Bigl( \bigoplus_{j \in J} X_j \Bigr)_{F} \biggr) \rightarrow \mathbf{F}(E,F)
		\end{equation*}
		is bounded. In consequence, if we take $f = \{f_j\}_{j \in J}$ from
		$\mathbf{F}\Bigl( \bigl( \bigoplus_{j \in J} X_j \bigr)_{E}, \bigl( \bigoplus_{j \in J} X_j \bigr)_{F} \Bigr)$, then $S(f)$
		belongs to $\mathbf{F}(E,F)$ and that, in turn, means exactly that $f \in \bigl( \bigoplus_{j \in J} X_j \bigr)_{\mathbf{F}(E,F)}$.
		Therefore, (\ref{first emb}) follows.
		
		It remains to prove the reverse embedding
		\begin{equation} \label{second emb}
			\norm{\text{id} \colon \Bigl( \bigoplus_{j \in J} X_j \Bigr)_{\mathbf{F}(E,F)} \rightarrow
				\mathbf{F}\biggl( \Bigl( \bigoplus_{j \in J} X_j \Bigr)_{E}, \Bigl( \bigoplus_{j \in J} X_j \Bigr)_{F} \biggr) } < \infty.
		\end{equation}
		To do this, take $f = \{f_j\}_{j \in J}$ from $\bigl( \bigoplus_{j \in J} X_j \bigr)_{\mathbf{F}(E,F)}$ and consider the mapping
		$T_f \colon E + F \rightarrow \bigl( \bigoplus_{j \in J} X_j \bigr)_{E} + \bigl( \bigoplus_{j \in J} X_j \bigr)_{F}$ defined via
		\begin{equation*}
			T_f \colon \{a_j\}_{j \in J} \rightsquigarrow \left\{ a_j \frac{f_j}{\norm{f_j}_{X_j}} \right\}_{j \in J}.
		\end{equation*}
		Since for $a = \{a_j\}_{j \in J} \in E$ we have
		\begin{equation*}
			\norm{T_f(a)}_{\left( \bigoplus_{j \in J} X_j \right)_{E}}
			= \norm{ \left\{ \norm{T_f(a)}_{X_j} \right\}_{j \in J}}_E = \norm{\{a_j\}_{j \in J}}_E = \norm{a}_E,
		\end{equation*}
		so $T_f$ is bounded when acting from $E$ into $\bigl( \bigoplus_{j \in J} X_j \bigr)_{E}$. Similarly, $T_f$ is bounded when acting
		from $F$ into $\bigl( \bigoplus_{j \in J} X_j \bigr)_{F}$. But this means that
		\begin{equation*}
			T_f \colon \mathbf{F}(E,F) \rightarrow \mathbf{F}\biggl( \Bigl( \bigoplus_{j \in J} X_j \Bigr)_{E}, \Bigl( \bigoplus_{j \in J} X_j \Bigr)_{F} \biggr)
		\end{equation*}
		is also bounded. Consequently, taking $a = \left\{ \norm{f_j}_{X_j} \right\}_{j \in J}$ and noting that $a \in \mathbf{F}(E,F)$,
		we get
		\begin{equation*}
			f = T_f(a) \in \mathbf{F}\biggl( \Bigl( \bigoplus_{j \in J} X_j \Bigr)_{E}, \Bigl( \bigoplus_{j \in J} X_j \Bigr)_{F} \biggr).
		\end{equation*}
		Due to $f$'s arbitrariness the proof has been completed.
	\end{proof}

	From our perspective, the most important result is the following
	
	\begin{proposition}[Interpolation] \label{PROP: interpolation down, tandori and cesaro}
		{\it Let $X$ and $Y$ be two Banach ideal spaces with the Fatou property both defined on the same measure space.
		Suppose that either both spaces have non-trivial Boyd indices or are rearrangement invariant.
		Then, for any interpolation functor $\mathbf{F}$, we have}
		\begin{equation*}
			\mathbf{F} (\widetilde{X}, \widetilde{Y}) = \widetilde{\mathbf{F}(X,Y)}
				\quad \textit{ and } \quad \mathbf{F} (X^{\downarrow}, Y^{\downarrow}) = \mathbf{F}(X,Y)^{\downarrow}.
		\end{equation*}
	\end{proposition}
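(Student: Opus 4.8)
The plan is to read off the proposition from the block form representations of Theorems~\ref{Thm: Tandori sequence representation} and \ref{Thm: Tandori function representation}, feeding them into the two \enquote{commutation} principles already at our disposal: interpolation of amalgams (Theorem~\ref{THM: a'la Bukhvalov}) and interpolation of discretization (Theorem~\ref{Proposition: E komutuje z interpolacja}). The underlying idea is that, up to equivalence of norms, the functor $\widetilde{\,\cdot\,}$ factors as \enquote{attach $L_\infty$-blocks and amalgamate along $\mathbf{E}(\cdot)$}, i.e. $\widetilde{Z} = \bigl( \bigoplus_{j \in \mathbb{J}} L_\infty(\Delta_j) \bigr)_{\mathbf{E}(Z)}$, and likewise $Z^{\downarrow} = \bigl( \bigoplus_{j \in \mathbb{J}} L_1(\Delta_j) \bigr)_{\mathbf{E}(Z)(w)}$ with $w(j) = 2^{-j}$; since both the amalgam construction (in its outer variable) and the construction $\mathbf{E}$ are compatible with every interpolation functor, so is $\widetilde{\,\cdot\,}$ and so is $\downarrow$.

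First I would dispose of the rearrangement invariant reduction. If $X$ and $Y$ only have non-trivial Boyd indices, pass to their carriers $X^{\bigstar}$, $Y^{\bigstar}$: by Proposition~\ref{PROP: EX <-> carrier} one has $\mathbf{E}(X) = \mathbf{E}(X^{\bigstar})$, $\mathbf{E}(Y) = \mathbf{E}(Y^{\bigstar})$, hence (via the block forms) $\widetilde{X} = \widetilde{X^{\bigstar}}$, $\widetilde{Y} = \widetilde{Y^{\bigstar}}$, and $X^{\downarrow} = (X^{\bigstar})^{\downarrow}$, $Y^{\downarrow} = (Y^{\bigstar})^{\downarrow}$ (the latter is the identity exploited in the third step of the proof of Theorem~\ref{Thm: Tandori sequence representation}). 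One is then left to also know $\widetilde{\mathbf{F}(X,Y)} = \widetilde{\mathbf{F}(X^{\bigstar},Y^{\bigstar})}$ and the corresponding statement for $\downarrow$; both come out of the computation below applied on the right-hand sides, since $\widetilde{\,\cdot\,}$ sees only the cone of decreasing functions (where $X$ and $X^{\bigstar}$ carry the same norm) and $\downarrow$ sees only the Köthe dual.

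In the rearrangement invariant case the proof is a chain of equalities. Note that $(X,Y)$, $(\widetilde{X},\widetilde{Y})$ and $(\mathbf{E}(X),\mathbf{E}(Y))$ are Banach couples (the first two since all spaces embed into the common $L_0$, the third by Proposition~\ref{Prop: wlasnosci E}), and that the averaging operator, being a contraction on $L_1$ and on $L_\infty$, is bounded on every rearrangement invariant space with the Fatou property (each such space being an exact interpolation space for $(L_1,L_\infty)$; cf. Digression~\ref{Remark: EX is complemented in X}), so Theorem~\ref{Proposition: E komutuje z interpolacja} is applicable. Then, using in turn Theorem~\ref{Thm: Tandori function representation} (or its sequence analogue), Theorem~\ref{THM: a'la Bukhvalov}, Theorem~\ref{Proposition: E komutuje z interpolacja}, and the block form once more,
\begin{align*}
\mathbf{F}(\widetilde{X}, \widetilde{Y})
 &= \mathbf{F}\Bigl( \Bigl( \bigoplus_{j \in \mathbb{J}} L_{\infty}(\Delta_j) \Bigr)_{\mathbf{E}(X)}, \Bigl( \bigoplus_{j \in \mathbb{J}} L_{\infty}(\Delta_j) \Bigr)_{\mathbf{E}(Y)} \Bigr) \\
 &= \Bigl( \bigoplus_{j \in \mathbb{J}} L_{\infty}(\Delta_j) \Bigr)_{\mathbf{F}(\mathbf{E}(X),\mathbf{E}(Y))}
 = \Bigl( \bigoplus_{j \in \mathbb{J}} L_{\infty}(\Delta_j) \Bigr)_{\mathbf{E}(\mathbf{F}(X,Y))}
 = \widetilde{\mathbf{F}(X,Y)}.
\end{align*}
The last step uses the block form of Theorem~\ref{Thm: Tandori function representation} for $\mathbf{F}(X,Y)$, which is legitimate since $\mathbf{F}(X,Y)$ is again an interpolation space relative to $(L_1,L_\infty)$ — any $T$ bounded on $L_1$ and $L_\infty$ is bounded on $X$ and $Y$, hence on $\mathbf{F}(X,Y)$ — and therefore, by the Calder{\'o}n--Mityagin theorem, a $\mathscr{K}$-method space, which is what that proof actually uses. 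For the $\downarrow$ statement one runs the identical argument with the companion block form $Z^{\downarrow} = \bigl( \bigoplus_{j \in \mathbb{J}} L_{1}(\Delta_j) \bigr)_{\mathbf{E}(Z)(w)}$, noting that the passage $E \rightsquigarrow E(w)$ commutes with every interpolation functor (multiplication by $w$ is an isometric isomorphism of couples), so $\mathbf{F}(\mathbf{E}(X)(w),\mathbf{E}(Y)(w)) = \mathbf{F}(\mathbf{E}(X),\mathbf{E}(Y))(w) = \mathbf{E}(\mathbf{F}(X,Y))(w)$; since $Z^{\downarrow}$ depends only on $Z^{\times}$, one may here pass to the Köthe bidual whenever the Fatou property is needed (as in the third step of the proof of Theorem~\ref{Thm: Tandori sequence representation}).

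The displayed equalities are essentially formal once the machinery is in place, so I expect the only real obstacle to be the bookkeeping just indicated: verifying that $\mathbf{F}(X,Y)$ (respectively $\mathbf{F}(X,Y)^{\times}$) still satisfies the standing hypotheses of the block-form theorems — that it is a $\mathscr{K}$-method space for $(L_1,L_\infty)$, with the attendant boundedness of $D_{1/2}$ and of the majorant operator $\mathscr{T}$ on its discretization — and, in the reduction step, that symmetrization interacts with $\mathbf{F}$ well enough to yield $\widetilde{\mathbf{F}(X,Y)} = \widetilde{\mathbf{F}(X^{\bigstar},Y^{\bigstar})}$. This is soft (it all reduces to the Calder{\'o}n--Mityagin theorem together with the observations that $\widetilde{\,\cdot\,}$ and $\downarrow$ only probe the decreasing cone, respectively the Köthe dual), but it is where whatever content the statement has is actually located.
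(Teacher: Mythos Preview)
Your argument is essentially the paper's own proof: the same chain
\[
\mathbf{F}(\widetilde{X},\widetilde{Y})
= \Bigl(\bigoplus_j L_\infty(\Delta_j)\Bigr)_{\mathbf{F}(\mathbf{E}(X),\mathbf{E}(Y))}
= \Bigl(\bigoplus_j L_\infty(\Delta_j)\Bigr)_{\mathbf{E}(\mathbf{F}(X,Y))}
= \widetilde{\mathbf{F}(X,Y)},
\]
obtained by composing the block form with Theorem~\ref{THM: a'la Bukhvalov} and Theorem~\ref{Proposition: E komutuje z interpolacja}, is exactly what the paper does.

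The one genuine difference is how you handle the weight in the $\downarrow$ case. You invoke homogeneity directly: multiplication by $w$ is an isometric isomorphism of couples $(\mathbf{E}(X),\mathbf{E}(Y)) \to (\mathbf{E}(X)(w),\mathbf{E}(Y)(w))$, so every interpolation functor respects it and $\mathbf{F}(\mathbf{E}(X)(w),\mathbf{E}(Y)(w)) = \mathbf{F}(\mathbf{E}(X),\mathbf{E}(Y))(w)$. The paper instead rewrites $\bigl(\bigoplus_j L_1(\Delta_j)\bigr)_{\mathbf{E}(X)(w)} \equiv \bigl(\bigoplus_j L_1(\Delta_j,W_j)\bigr)_{\mathbf{E}(X)}$, shifting the weight from the outer space into the local blocks, precisely to \emph{avoid} appealing to homogeneity (this is even advertised in the remark following Corollary~\ref{COR: interpolation F(CX,CY)} as an improvement over \cite{LM16}). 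Your route is perfectly valid and arguably simpler; the paper's route has the cosmetic advantage that Theorem~\ref{THM: a'la Bukhvalov} applies verbatim without any auxiliary isomorphism.

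One comment: your opening paragraph about reducing to the r.i.\ case via symmetrization is unnecessary and muddies the water. Theorems~\ref{Thm: Tandori sequence representation} and \ref{Thm: Tandori function representation} already cover both hypotheses (non-trivial Boyd indices \emph{or} r.i.), and $\mathbf{F}(X,Y)$ inherits either property by interpolation, so the block form applies directly to $\mathbf{F}(X,Y)$ without any detour through $X^{\bigstar}$, $Y^{\bigstar}$. The sentence \enquote{One is then left to also know $\widetilde{\mathbf{F}(X,Y)} = \widetilde{\mathbf{F}(X^{\bigstar},Y^{\bigstar})}$} in particular would need an argument you do not give; better to drop the reduction entirely.
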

	\begin{proof}
		The argument boils down to juggling Theorems~\ref{Proposition: E komutuje z interpolacja}, \ref{Thm: Tandori sequence representation},
		\ref{Thm: Tandori function representation} and \ref{THM: a'la Bukhvalov}. For example,
		\begin{align*}
			\mathbf{F} (\widetilde{X}, \widetilde{Y})
				& = \mathbf{F}\biggl( \Bigl( \bigoplus_{j \in \mathbb{J}} L_{\infty}(\Delta_j) \Bigr)_{{\bf E}(X)}, \Bigl( \bigoplus_{j \in \mathbb{J}} L_{\infty}(\Delta_j) \Bigr)_{{\bf E}(Y)} \biggr)
					\quad (\text{by Theorems~\ref{Thm: Tandori sequence representation} and \ref{Thm: Tandori function representation}})\\
				& = \Bigl( \bigoplus_{j \in \mathbb{J}} L_{\infty}(\Delta_j) \Bigr)_{\mathbf{F}({\bf E}(X),{\bf E}(Y))} \quad (\text{using Theorem~\ref{THM: a'la Bukhvalov}}) \\
				& = \Bigl( \bigoplus_{j \in \mathbb{J}} L_{\infty}(\Delta_j) \Bigr)_{{\bf E}(\mathbf{F}(X,Y))} \quad (\text{in view of Theorem~\ref{Proposition: E komutuje z interpolacja}}) \\
				& = \widetilde{\mathbf{F}(X,Y)} \quad (\text{again, by Theorems~\ref{Thm: Tandori sequence representation} and \ref{Thm: Tandori function representation}}).
		\end{align*}
		However, in the case of down spaces, one more hint seems to be useful. Since, due to Theorems~\ref{Thm: Tandori sequence representation}
		and \ref{Thm: Tandori function representation}, $X^{\downarrow} = \bigl( \bigoplus_{j \in \mathbb{J}} L_{1}(\Delta_j) \bigr)_{{\bf E}(X)(w)}$,
		where $w(j) = 2^{-j}$ for $j \in \mathbb{J}$, so one can easily avoid the annoying problem with the homogeneity of the functor ${\bf F}$, that is,
		the additional assumption that ${\bf F}({\bf E}(X)(w),{\bf E}(Y)(w)) = {\bf F}({\bf E}(X),{\bf E}(Y))(w)$, by simply noting that
		\begin{equation} \label{EQ: zauwazka}
			X^{\downarrow} = \Bigl( \bigoplus_{j \in \mathbb{J}} L_{1}(\Delta_j) \Bigr)_{{\bf E}(X)(w)} \equiv \Bigl( \bigoplus_{j \in \mathbb{J}} L_{1}(\Delta_j, W_j) \Bigr)_{{\bf E}(X)},
		\end{equation}
		where $W_j(t) = 2^{-j}$ for $t \in \Delta_j$ and $j \in \mathbb{J}$, while $L_{1}(\Delta_j, W_j)$ is the weighted $L_1(\Delta_j)$ space
		defined by the norm $\norm{f}_{L_{1}(\Delta_j, W_j)} \coloneqq \norm{2^{-j}f}_{L_1(\Delta_j)}$.
		After this observation, the proof can be completed as above.
	\end{proof}

	\begin{corollary} \label{COR: interpolation F(CX,CY)}
		{\it Let $X$ and $Y$ be two Banach ideal spaces with the Fatou property both defined on the same measure space.
		Suppose that either both spaces $X$ and $Y$ have non-trivial Boyd indices or are rearrangement invariant
		and Hardy's operator $\mathscr{H}$ is bounded on both of them. Then}
		\begin{equation*}
			\mathbf{F} \left( \mathscr{C}X,\mathscr{C}Y \right) = \mathscr{C} \mathbf{F}(X,Y).
		\end{equation*}
	\end{corollary}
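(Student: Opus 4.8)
The plan is to reduce everything to Proposition~\ref{PROP: interpolation down, tandori and cesaro} by identifying each Ces\`aro space $\mathscr{C}Z$ with the corresponding down space $Z^{\downarrow}$. Concretely, I would run the chain
\begin{equation*}
	\mathbf{F}(\mathscr{C}X,\mathscr{C}Y) = \mathbf{F}(X^{\downarrow},Y^{\downarrow}) = \mathbf{F}(X,Y)^{\downarrow} = \mathscr{C}\mathbf{F}(X,Y),
\end{equation*}
where the middle equality is exactly the second assertion of Proposition~\ref{PROP: interpolation down, tandori and cesaro}, and the two outer equalities are the coincidences $\mathscr{C}X = X^{\downarrow}$, $\mathscr{C}Y = Y^{\downarrow}$ and $\mathscr{C}\mathbf{F}(X,Y) = \mathbf{F}(X,Y)^{\downarrow}$.

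First I would settle the outer equalities for $X$ and $Y$. In the rearrangement invariant case this is immediate from \cite[Theorem~3.1]{Si01}, since the hypotheses give the Fatou property together with $\alpha_X,\alpha_Y>1$ (the latter being equivalent to boundedness of $\mathscr{H}$). In the non-trivial Boyd indices case I would instead compare block-form representations: Corollary~\ref{COR: blocking technique CX} yields $\mathscr{C}X = \bigl( \bigoplus_{j \in \mathbb{J}} L_{1}(\Delta_j) \bigr)_{{\bf E}(X)(w)}$ with $w(j)=2^{-j}$, while Theorems~\ref{Thm: Tandori sequence representation} and \ref{Thm: Tandori function representation} give the very same representation for $X^{\downarrow}$; hence $\mathscr{C}X = X^{\downarrow}$ (and likewise for $Y$) under either set of assumptions.

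Next I would check that $\mathbf{F}(X,Y)$ again meets the hypotheses of Proposition~\ref{PROP: interpolation down, tandori and cesaro} (so that the last equality in the chain is legitimate). Because $\mathscr{H}$ is bounded on both $X$ and $Y$, it acts boundedly as an operator of the couple $(X,Y)$ into itself, hence it is bounded on $\mathbf{F}(X,Y)$; in the same way each dilation operator obeys $\norm{D_s}_{\mathbf{F}(X,Y)\to\mathbf{F}(X,Y)}\leqslant\max\{\norm{D_s}_{X\to X},\norm{D_s}_{Y\to Y}\}$, which forces $\alpha_{\mathbf{F}(X,Y)}\geqslant\min\{\alpha_X,\alpha_Y\}>1$ and, in the non-trivial Boyd indices case, $\beta_{\mathbf{F}(X,Y)}\leqslant\max\{\beta_X,\beta_Y\}<\infty$.

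The step I expect to require the most care is the Fatou property (and, in the symmetric case, rearrangement invariance) of $\mathbf{F}(X,Y)$, which is not automatic for an arbitrary interpolation functor. I see two ways around this: either restrict attention to $\mathscr{K}$-method functors, where preservation of both properties on a Calder\'on--Mityagin couple is standard, or sidestep the issue by staying on the amalgam side throughout --- apply Corollary~\ref{COR: blocking technique CX} only to $X$ and $Y$, use Theorem~\ref{THM: a'la Bukhvalov} to pull $\mathbf{F}$ through the amalgam, absorb the weight via the identification \eqref{EQ: zauwazka}, and finally invoke Theorem~\ref{Proposition: E komutuje z interpolacja} to replace $\mathbf{F}({\bf E}(X),{\bf E}(Y))$ by ${\bf E}(\mathbf{F}(X,Y))$ before reading off $\mathscr{C}\mathbf{F}(X,Y)$ from its own block-form representation. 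Either way, once the hypothesis-inheritance is in hand the remaining computation is purely formal.
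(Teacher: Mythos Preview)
Your proposal is correct and matches the paper's (implicit) derivation: the corollary is stated without proof, the intended argument being precisely your chain $\mathbf{F}(\mathscr{C}X,\mathscr{C}Y) = \mathbf{F}(X^{\downarrow},Y^{\downarrow}) = \mathbf{F}(X,Y)^{\downarrow} = \mathscr{C}\mathbf{F}(X,Y)$ via Proposition~\ref{PROP: interpolation down, tandori and cesaro} together with the identification $\mathscr{C}Z = Z^{\downarrow}$, and your amalgam-side alternative is nothing other than the proof of that proposition unwound. Your concern about whether $\mathbf{F}(X,Y)$ inherits the Fatou property (and, in the symmetric case, rearrangement invariance) is a genuine technical point that the paper does not address; it is needed for the final block-form identification both here and already in Proposition~\ref{PROP: interpolation down, tandori and cesaro}, so you are being more careful than the text itself.
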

	
	\begin{remark}[About Corollary~\ref{COR: interpolation F(CX,CY)}]
		The above result unifies and generalizes the main interpolation results obtained by Astashkin and Maligranda \cite{AM13}, and by Le{\' s}nik and Maligranda \cite{LM16}
		(precisely, see \cite[Proposition~3.1]{AM13} and \cite[Theorem~6]{LM16}). In the first case, the authors assume that the both spaces $X$ and $Y$
		are rearrangement invariant function spaces (and, moreover, rely heavily on earlier Masty{\l}o and Sinnamon's work \cite{MS06}), while in the second they
		assume that the functor ${\bf F}$ is homogeneous, that is, ${\bf F}(X(w),Y(w)) = {\bf F}(X,Y)(w)$ for any weight $w$ (we easily avoided the same problem; see \eqref{EQ: zauwazka}).
		\demo
	\end{remark}

	Let us go back to Theorem~\ref{THM: a'la Bukhvalov} for a moment. It is known that in general one should not expect the equality
	\begin{equation*}
		\mathbf{F}(E[X],F[Y]) = \mathbf{F}(E,F)[\mathbf{F}(X,Y)]
	\end{equation*}
	to hold (see \cite{Buk87}; see also \cite{Mal04} and references therein). Thus, in this sense, Theorem~\ref{THM: a'la Bukhvalov} is optimal.
	However, if we limit ourselves to considering the Calder{\' o}n product (instead of any interpolation functor), then we know that
	\begin{equation} \label{EQ: Calderon product wszystko sie zmienia}
		E[X]^{1-\theta}F[Y]^{\theta} = E^{1-\theta}F^{\theta}[X^{1-\theta}Y^{\theta}]
	\end{equation}
	(see \cite[Theorem~3]{Buk87}). Just like Theorem~\ref{THM: a'la Bukhvalov} extends \eqref{EQ: Bukhvalov's observation}, we now want to show
	that the same can be done with \eqref{EQ: Calderon product wszystko sie zmienia}.
	
	\begin{theorem}[Calder{\' o}n product of amalgams] \label{THM: Calderon product of amalgams} \label{THM: Calderon product amalgams}
		{\it Let $E$ and $F$ be two Banach sequence spaces both defined on $J$. Further, let $\{X_j\}_{j \in J}$ and $\{Y_j\}_{j \in J}$
			be two families of Banach ideal spaces. Suppose that for any $j \in J$ both spaces $X_j$ and $Y_j$ are defined on the same measure space
			and have the Fatou property. Then, for $0 < \theta < 1$, we have the following identification}
		\begin{equation*}
			\left[ \Bigl( \bigoplus_{j \in J} X_j \Bigr)_E \right]^{1-\theta} \left[ \Bigl( \bigoplus_{j \in J} Y_j \Bigr)_F \right]^{\theta}
				= \Bigl( \bigoplus_{j \in J} X_j^{1-\theta} Y_j^{\theta} \Bigr)_{E^{1-\theta}F^{\theta}}.
		\end{equation*}
	\end{theorem}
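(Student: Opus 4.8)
The plan is to establish the two continuous inclusions separately, each with norm constant $1$, so that the stated equality is in fact an isometry. Throughout I identify $\bigl( \bigoplus_{j \in J} Z_j \bigr)_{G}$ with a Banach ideal space on the disjoint union $\Omega \coloneqq \bigsqcup_{j \in J} \Omega_j$ (as in Remark~\ref{REMARK: Direct sum = BFS}), so that an element $\{u_j\}_{j \in J}$ is simply a function on $\Omega$ whose restriction to $\Omega_j$ is $u_j$; and I use the (always valid) product form of the Calder{\' o}n norm, $\norm{f}_{Z^{1-\theta}W^{\theta}} = \inf \{ \norm{g}_Z^{1-\theta} \norm{h}_W^{\theta} \colon \abs{f} \leqslant \abs{g}^{1-\theta}\abs{h}^{\theta} \}$, obtained from the general Calder{\' o}n--Lozanovski{\u \i} description \eqref{INEQ: CL construction} by the gauge rescaling $g \rightsquigarrow cg$, $h \rightsquigarrow c^{-(1-\theta)/\theta}h$ (which leaves $\abs{g}^{1-\theta}\abs{h}^{\theta}$ unchanged).

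The substantive inclusion is $\bigl( \bigoplus_j X_j^{1-\theta}Y_j^{\theta} \bigr)_{E^{1-\theta}F^{\theta}} \hookrightarrow \bigl[ \bigl( \bigoplus_j X_j \bigr)_E \bigr]^{1-\theta} \bigl[ \bigl( \bigoplus_j Y_j \bigr)_F \bigr]^{\theta}$. Given $f = \{f_j\}_j$ in the left-hand space and $\varepsilon > 0$, I first choose, coordinate by coordinate, nonnegative near-optimal factorizations $\abs{f_j} \leqslant g_j^{1-\theta}h_j^{\theta}$ with $a_j^{1-\theta}b_j^{\theta} \leqslant (1+\varepsilon)\norm{f_j}_{X_j^{1-\theta}Y_j^{\theta}}$, where $a_j \coloneqq \norm{g_j}_{X_j}$, $b_j \coloneqq \norm{h_j}_{Y_j}$ (the degenerate coordinates, where $\norm{f_j}_{X_j^{1-\theta}Y_j^{\theta}} = 0$, are handled by taking $g_j = h_j = 0$). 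Since $\{\norm{f_j}_{X_j^{1-\theta}Y_j^{\theta}}\}_j \in E^{1-\theta}F^{\theta}$ by hypothesis, the ideal property of $E^{1-\theta}F^{\theta}$ gives $\{a_j^{1-\theta}b_j^{\theta}\}_j \in E^{1-\theta}F^{\theta}$, so I may pick $\alpha \in E$ and $\beta \in F$ with $a_j^{1-\theta}b_j^{\theta} \leqslant \alpha_j^{1-\theta}\beta_j^{\theta}$ for every $j$ and $\norm{\alpha}_E^{1-\theta}\norm{\beta}_F^{\theta} \leqslant (1+\varepsilon)\norm{\{a_j^{1-\theta}b_j^{\theta}\}}_{E^{1-\theta}F^{\theta}}$. \textbf{The heart of the argument} is the resulting coordinatewise rescaling: with $c_j \coloneqq \alpha_j / a_j$ set $G_j \coloneqq c_j g_j$ and $H_j \coloneqq c_j^{-(1-\theta)/\theta} h_j$; this preserves $\abs{f_j} \leqslant G_j^{1-\theta}H_j^{\theta}$, while $\norm{G_j}_{X_j} = \alpha_j$ and $\norm{H_j}_{Y_j} = (a_j/\alpha_j)^{(1-\theta)/\theta}b_j \leqslant \beta_j$ (the last inequality being exactly $a_j^{1-\theta}b_j^{\theta} \leqslant \alpha_j^{1-\theta}\beta_j^{\theta}$ rearranged). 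Hence $G \coloneqq \{G_j\}_j \in \bigl( \bigoplus_j X_j \bigr)_E$ with norm $\norm{\alpha}_E$, $H \coloneqq \{H_j\}_j \in \bigl( \bigoplus_j Y_j \bigr)_F$ with norm at most $\norm{\beta}_F$, and $\abs{f} \leqslant \abs{G}^{1-\theta}\abs{H}^{\theta}$ on $\Omega$; so the Calder{\' o}n-product norm of $f$ is at most $\norm{\alpha}_E^{1-\theta}\norm{\beta}_F^{\theta} \leqslant (1+\varepsilon)^2 \norm{f}_{(\bigoplus_j X_j^{1-\theta}Y_j^{\theta})_{E^{1-\theta}F^{\theta}}}$, and $\varepsilon \downarrow 0$ closes this direction. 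The only thing that can go wrong is the \emph{simultaneous} control of the two norm sequences $\{\norm{G_j}_{X_j}\}_j$ and $\{\norm{H_j}_{Y_j}\}_j$, and that is precisely what invoking the Calder{\' o}n product of the outer spaces $E$ and $F$, together with the gauge invariance, is designed to handle.

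The reverse inclusion is routine and needs no rescaling: given $f$ in the Calder{\' o}n product of the two amalgams and a near-optimal $G, H$ with $\abs{f} \leqslant \abs{G}^{1-\theta}\abs{H}^{\theta}$ on $\Omega$, restriction to $\Omega_j$ shows $f_j \in X_j^{1-\theta}Y_j^{\theta}$ with $\norm{f_j}_{X_j^{1-\theta}Y_j^{\theta}} \leqslant \norm{G_j}_{X_j}^{1-\theta}\norm{H_j}_{Y_j}^{\theta}$, and since $\{\norm{G_j}_{X_j}\}_j \in E$ and $\{\norm{H_j}_{Y_j}\}_j \in F$, the ideal property pushes $\{\norm{f_j}_{X_j^{1-\theta}Y_j^{\theta}}\}_j$ into $E^{1-\theta}F^{\theta}$ with the correct norm bound. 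The Fatou hypothesis plays only a supporting role here, ensuring that the spaces $X_j^{1-\theta}Y_j^{\theta}$ and the amalgams stay in the Fatou class and that the Calder{\' o}n infima are attained (so the estimates could be run with $\varepsilon = 0$). As an alternative, one could bypass the explicit rescaling by writing $Z^{1-\theta}W^{\theta} \equiv Z^{(1/(1-\theta))} \odot W^{(1/\theta)}$ and chaining the facts that $p$-convexification and the pointwise product $\odot$ commute with the amalgam functor $\mathbf{A}$ (Theorem~\ref{PROP: Pointwise multipliers of amalgams}); I prefer the direct proof above, as it is self-contained at this point in the paper.
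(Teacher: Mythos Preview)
Your argument is correct and is precisely the explicit version of what the paper's one-line proof (``slightly modify Bukhvalov's proof of Theorem~3 from \cite{Buk87}'') would look like when written out in full: the two-step factorization---first coordinatewise in each $X_j^{1-\theta}Y_j^{\theta}$, then in the outer space $E^{1-\theta}F^{\theta}$---followed by the gauge rescaling $g_j \rightsquigarrow c_j g_j$, $h_j \rightsquigarrow c_j^{-(1-\theta)/\theta}h_j$ is exactly Bukhvalov's mechanism, adapted from constant fibers $X$, $Y$ to varying $X_j$, $Y_j$.

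One caveat about your closing remark: the alternative route you mention, via $Z^{1-\theta}W^{\theta} \equiv Z^{(1/(1-\theta))} \odot W^{(1/\theta)}$ together with Theorem~\ref{PROP: Pointwise multipliers of amalgams}, would be circular in the paper as written, because the proof of Theorem~\ref{PROP: Pointwise multipliers of amalgams} explicitly invokes Theorem~\ref{THM: Calderon product of amalgams}. So your stated preference for the direct argument is not merely aesthetic---it is logically necessary here.
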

	\begin{proof}
		It is enough to slightly modify Bukhvalov's proof of Theorem~3 from \cite{Buk87} (see also \cite[Theorem~15.12(a), p.~186]{Ma89}
		together with the proofs of Corollaries~2 and 3 in \cite{Mal04}).
	\end{proof}
	
	\begin{remark}[About Theorem~\ref{THM: Calderon product of amalgams}] \label{REMARK: Maligranda on CL-construction}
		Let $X$ and $Y$ be two Banach ideal spaces with the Fatou property.
		Note that the above result is optimal in the following sense: Lech Maligranda (see \cite[Corollary~4]{Mal04}) was able to show that
		the Calder{\' o}n--Lozanovski{\u \i} construction $\varrho(\cdot,\cdot)$ commutes with the mixed norm spaces, that is,
		\begin{equation*}
			\varrho(E[X],F[Y]) = \varrho(E,F)[\varrho(X,Y)],
		\end{equation*}
		if, and only if, $\varrho$ is equivalent to the power function $\varrho_{\theta}(s,t) = s^{1-\theta}t^{\theta}$ for some $0 < \theta < 1$.
		In this situation, of course, the space $\varrho_{\theta}(X,Y)$ coincides with the Calder{\' o}n product $X^{1-\theta}Y^{\theta}$.
		\demo
	\end{remark}
	
	\subsection{On Calder{\' o}n--Mityagin couples}
	Let $\mathscr{I}$ be a family of interpolation functors. Recall, following Brudny{\u \i} and Krugljak (see \cite[Definition~4.4.1, p.~578]{BK91}),
	that the couple $\vv{X}$ is said to be {\bf $\mathscr{I}$-adequate to a couple} $\vv{Y}$ if for any interpolation functor $\mathbf{F}$
	there is a functor $\mathbf{J}$ from the family $\mathscr{I}$ such that
	$\mathbf{F}(\vv{X}) \hookrightarrow \mathbf{J}(\vv{X})$ and $\mathbf{J}(\vv{Y}) \hookrightarrow \mathbf{F}(\vv{Y})$.
	In particular, if both couples $\vv{X}$ and $\vv{Y}$ coincide, then the couple $\vv{X}$ is referred to as {\bf $\mathscr{I}$-adequate}.
	
	Since, due to Aronszajn and Gagliardo's theorem (see \cite[Theorem~2.5.1, p.~29]{BL76}), for any interpolation space $X$ with respect to
	the couple $\vv{X}$ there is an interpolation functor $\mathbf{F}$ such that $\mathbf{F}(\vv{X}) = X$, so the above definition roughly
	means that the couple $\vv{X}$ is $\mathscr{I}$-adequate if the functor $\mathbf{F}$ can be chosen from the family $\mathscr{I}$.
	At the same time, one can generate all interpolation spaces of $\vv{X}$ using only the functors from the family $\mathscr{I}$.
	For quite obvious reasons, the problem of describing all interpolation spaces of a given couple is called the {\it \enquote{basic problem}}
	(for the interpolation theory) and the concept of the $\mathscr{I}$-adequate couple sort of \enquote{solves} it.
	However, even assuming that we know that a given couple is $\mathscr{I}$-adequate, without a reasonable choice of the family
	$\mathscr{I}$ this \enquote{solution} has no practical meaning.
	
	According to Brudny{\u \i} and Krugljak (see \cite[p.~251]{BK91}), the only reasonable (they prefer the word \enquote{prolific})
	choice is the family of the real method functors $\mathscr{K} = \left\{ (\cdot,\cdot)^{\mathscr{K}}_{\mathscr{X}} \right\}$.
	Even then, however, showing that a given couple is $\mathscr{K}$-adequate is in general a very difficult problem.
	Priority in this respect goes to Alberto Calder{\' o}n \cite{Cal66} and, independently, Boris Mityagin \cite{Mit65}.
	They were able to show that the couple $(L_1,L_{\infty})$ is $\mathscr{K}$-adequate, that is, any interpolation space with respect
	to $(L_1,L_{\infty})$ can be described by the real $\mathscr{K}$-method of interpolation (cf. \cite[Theorem~2.6.9, p.~251]{BK91}).
	In retrospect, there is no doubt that this discovery has a major impact on the study of interpolation spaces, especially in a more
	abstract setting.
	
	Henceforth, the $\mathscr{K}$-adequate couples are called the {\bf Calder{\' o}n--Mityagin couples}\footnote{Many authors use different nomenclature
	to name these, for example, {\it Calder{\' o}n couples}, {\it $\mathscr{K}$-adequate couples}, {\it $\mathscr{K}$-monotone couples}, {\it $\mathscr{C}$-couples}
	or {\it $\mathscr{C}\mathscr{M}$-couples}.}.
	Further, we will say that the couple $\vv{X}$ is the {\bf relative Calder{\' o}n--Mityagin couple} with respect to $\vv{Y}$, provided
	$\vv{X}$ is $\mathscr{K}$-adequate to a couple $\vv{Y}$.
	
	\begin{remark}[On Calder{\' o}n--Mityagin couples]
		Following the pioneering work of Calder{\' o}n \cite{Cal66} and Mityagin \cite{Mit65}, the plethora of examples of Calder{\' o}n--Mityagin
		couples have been detected among classical Banach function spaces including pairs of
		$\bullet$ (weighted) Lebesgue spaces $L_p$;
		$\bullet$ Orlicz spaces $L_M$;
		$\bullet$ Lorentz spaces $L_{p,q}$ and $\Lambda_{\varphi}$;
		$\bullet$ Marcinkiewicz spaces $L_{p,\infty}$ and $M_{\varphi}$,
		and so on... (see Arazy and Cwikel \cite{AC84}, Cerd{\` a} and Mart{\' i}n \cite{CM01}, Cwikel \cite{Cwi84}, Kalton \cite{Kal92},
		Lorentz and Shimogaki \cite{LS71}, Nilsson \cite{Nil85} and Spaar \cite{Spa78};
		see also Cwikel, Nilsson and Schechtman's memoir \cite{CNS03} for more information).
		\demo
	\end{remark}
	
	We will say, following Kalton \cite{Ka93a}, that a r.i. space $X$ is {\bf stretchable} if the space ${\bf E}(X)$ has the {\bf right-shift property}
	meaning that there is a constant $C > 0$ such that for any pair of finite normalized sequences from ${\bf E}(X)$, say $\{x_j\}_{n=1}^N$ and $\{y_j\}_{n=1}^N$, with
	\begin{equation*}
		\supp x_1 < \supp y_1 < \supp x_2 < ... < \supp x_N < \supp y_N
	\end{equation*}
	and any sequence of scalars, say $\{a_n\}_{n=1}^{N}$, we have
	\begin{equation*}
		\norm{\sum_{n=1}^N a_j y_j}_{{\bf E}(X)} \leqslant C \norm{\sum_{n=1}^N a_j x_j}_{{\bf E}(X)}.
	\end{equation*}
	In other words, the space ${\bf E}(X)$ is stretchable provided the right-shift operator is uniformly bounded on the closed linear span of every block basic sequence
	with respect to the canonical basis. Similarly, we will say that a r.i. space $X$ is {\bf compressible} if ${\bf E}(X)$ has the corresponding {\bf left-shift property}.
	Finally, if a r.i. space $X$ is both stretchable and compressible, we will say that $X$ is {\bf elastic}.
	
	\begin{example}[Elastic spaces] \label{EXAMPLE: elastic spaces}
		Let $1 < p,q < \infty$.
		Then the spaces $L_p$, $L_{p,q}$, $L_p \cap L_q$ and $L_p + L_q$ are elastic.
		Moreover, one can give rather complicated conditions guaranteeing that the Orlicz space $L_M$ is elastic (see \cite[Theorem~6.8]{Kal92} for the details).
	\end{example}
	
	The next result states, roughly speaking, that both interpolation scales, the one induced by the construction $X \rightsquigarrow \widetilde{X}$
	and the dual one obtained by the construction $X \rightsquigarrow X^{\downarrow}$, are parallel to the scale of rearrangement invariant spaces.
	Pragmatically, this result can be seen as a qualitative version of Masty{\l}o and Sinnamon's results from \cite{MS17}
	(precisely, see Theorem~3.5 and Corollary~3.10 ibidem; see also \cite{HS23} for some generalities).
	
	\begin{theorem}[Calder{\' o}n--Mityagin couples] \label{THM: CM-couples}
		{\it Let $X$ and $Y$ be two Banach ideal spaces with the Fatou property both defined on the same measure space.
		Suppose that either both spaces have non-trivial Boyd indices or are rearrangement invariant.
		Then the couples $(X,Y)$, $(\widetilde{X},\widetilde{Y})$ and $(X^{\downarrow},Y^{\downarrow})$ form a Calder{\' o}n--Mityagin
		couples if, and only if, one of them form a Calder{\' o}n--Mityagin couple.
		In particular, the couples $(\widetilde{X}, L_{\infty})$ and $(L_1,Y^{\downarrow})$ form a Calder{\' o}n--Mityagin couple
		if, and only if, $X$ is stretchable and, respectively, $Y$ is compressible.}
	\end{theorem}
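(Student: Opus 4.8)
The plan is to route every assertion through the discretization functor $\mathbf{E}$ and to prove the chain of equivalences: $(X,Y)$ is a Calder\'on--Mityagin couple $\iff$ $(\mathbf{E}(X),\mathbf{E}(Y))$ is $\iff$ $(\widetilde X,\widetilde Y)$ is $\iff$ $(X^{\downarrow},Y^{\downarrow})$ is. First I would dispose of the non-rearrangement-invariant part of the hypothesis: when $X$ and $Y$ merely have non-trivial Boyd indices, Proposition~\ref{PROP: EX <-> carrier} gives $\widetilde X=\widetilde{X^{\bigstar}}$, $X^{\downarrow}=(X^{\bigstar})^{\downarrow}$ and $\mathbf{E}(X)=\mathbf{E}(X^{\bigstar})$ (and likewise for $Y$), so for these three couples everything reduces to the rearrangement invariant situation, where in particular the averaging operator $\text{Ave}$ from \eqref{EQ: Ave operator definition} is a contraction (being one on $L_1$ and $L_\infty$). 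Once $\text{Ave}$ is bounded, the inclusion $\mathbf{E}(X)\hookrightarrow X$ followed by the projection $\text{Ave}\colon X\twoheadrightarrow\mathbf{E}(X)$ realizes $(\mathbf{E}(X),\mathbf{E}(Y))$ as a $1$-complemented $\mathscr{K}$-subcouple of $(X,Y)$; reading the block form representations of Theorems~\ref{Thm: Tandori sequence representation} and \ref{Thm: Tandori function representation}, the analogous pair --- inclusion of the sequences regarded as functions constant on the dyadic intervals $\Delta_j$, again followed by $\text{Ave}$ --- realizes $(\mathbf{E}(X),\mathbf{E}(Y))$ as a $1$-complemented $\mathscr{K}$-subcouple of $(\widetilde X,\widetilde Y)$ and of $(X^{\downarrow},Y^{\downarrow})$ as well. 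Since a complemented $\mathscr{K}$-subcouple of a Calder\'on--Mityagin couple is itself one (conjugate a dividing operator by the section and the projection; see \cite{Jan93} and \cite[p.~465]{BK91}), each of the three couples being CM already forces $(\mathbf{E}(X),\mathbf{E}(Y))$ to be CM.

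For the converse passage from $(\mathbf{E}(X),\mathbf{E}(Y))$ back to $(\widetilde X,\widetilde Y)$ and to $(X^{\downarrow},Y^{\downarrow})$ I would use the amalgam calculus of Section~\ref{SUBSECTION: revisiting Bukhvalov}. By Theorems~\ref{Thm: Tandori sequence representation} and \ref{Thm: Tandori function representation} both couples have the shape $\bigl(\bigl(\bigoplus_j Z_j\bigr)_{\mathbf{E}(X)},\bigl(\bigoplus_j Z_j\bigr)_{\mathbf{E}(Y)}\bigr)$ over a \emph{fixed} family of blocks ($Z_j=L_\infty(\Delta_j)$ in the Tandori case, $Z_j=L_1(\Delta_j,W_j)$ in the down case). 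Now let $W$ be any interpolation space for such an amalgam couple; by Aronszajn--Gagliardo \cite[Theorem~2.5.1, p.~29]{BL76} we may write $W=\mathbf{F}\bigl(\bigl(\bigoplus_j Z_j\bigr)_{\mathbf{E}(X)},\bigl(\bigoplus_j Z_j\bigr)_{\mathbf{E}(Y)}\bigr)$ for some interpolation functor $\mathbf{F}$, and Theorem~\ref{THM: a'la Bukhvalov} turns this into $W=\bigl(\bigoplus_j Z_j\bigr)_{\mathbf{F}(\mathbf{E}(X),\mathbf{E}(Y))}$. Because $(\mathbf{E}(X),\mathbf{E}(Y))$ is CM, $\mathbf{F}(\mathbf{E}(X),\mathbf{E}(Y))=(\mathbf{E}(X),\mathbf{E}(Y))^{\mathscr{K}}_{\mathscr{X}}$ for some parameter $\mathscr{X}$, and feeding the functor $(\cdot,\cdot)^{\mathscr{K}}_{\mathscr{X}}$ back into Theorem~\ref{THM: a'la Bukhvalov} gives $W=\bigl(\bigl(\bigoplus_j Z_j\bigr)_{\mathbf{E}(X)},\bigl(\bigoplus_j Z_j\bigr)_{\mathbf{E}(Y)}\bigr)^{\mathscr{K}}_{\mathscr{X}}$, i.e. $W$ is a $\mathscr{K}$-space. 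Hence the amalgam couple --- that is $(\widetilde X,\widetilde Y)$ or $(X^{\downarrow},Y^{\downarrow})$ --- is CM. Note this argument needs no special properties of the $Z_j$: it is Theorem~\ref{THM: a'la Bukhvalov} that does all the work.

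It remains to close the loop at $(X,Y)$, i.e. to prove that $(\mathbf{E}(X),\mathbf{E}(Y))$ CM implies $(X,Y)$ CM; this is the one step that is not purely formal, since $(X,Y)$ is not an amalgam over $(\mathbf{E}(X),\mathbf{E}(Y))$, only a couple for which the latter is a complemented subcouple. In the rearrangement invariant case I would invoke Kalton's theorem on Calder\'on couples of rearrangement invariant function spaces \cite{Kal92}, whose content is precisely that $(X,Y)$ is CM if and only if $(\mathbf{E}(X),\mathbf{E}(Y))$ is --- the nontrivial direction proceeding through the identity $\|f\|_X\approx\|\{f^{\star\star}(2^j)\}_j\|_{\mathbf{E}(X)}$ of Proposition~\ref{PROP: EX <-> carrier}, which reduces the comparison of $\mathscr{K}$-functionals on $(X,Y)$ to one on the discretization --- and the general case then follows from $\mathbf{E}(X)=\mathbf{E}(X^{\bigstar})$. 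I expect this transfer of $\mathscr{K}$-functional decompositions between $X$ and its discretization to be the main obstacle, all the rest being bookkeeping with complemented subcouples and Theorem~\ref{THM: a'la Bukhvalov}. Finally, the \enquote{in particular} clause drops out by specialization: applying the equivalence just established to the pairs $(\widetilde X,\widetilde{L_\infty})=(\widetilde X,L_\infty)$ and $(L_1^{\downarrow},Y^{\downarrow})=(L_1,Y^{\downarrow})$ reduces the question to whether $(\mathbf{E}(X),\ell_\infty)$, resp. $(\mathbf{E}(L_1),\mathbf{E}(Y))$, is a Calder\'on--Mityagin couple; by the characterization of Calder\'on couples of the form $(\cdot,\ell_\infty)$ and $(\ell_1(w),\cdot)$ in terms of the shift properties of the sequence space (see \cite{Ka93a}), this holds exactly when $\mathbf{E}(X)$ has the right-shift property, resp. $\mathbf{E}(Y)$ has the left-shift property --- that is, by definition, exactly when $X$ is stretchable, resp. $Y$ is compressible.
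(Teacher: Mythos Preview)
Your approach is correct in outline but takes a different and heavier route than the paper. The paper does not factor everything through $(\mathbf{E}(X),\mathbf{E}(Y))$; instead it applies Proposition~\ref{PROP: interpolation down, tandori and cesaro} (the commutation formula $\mathbf{F}(\widetilde X,\widetilde Y)=\widetilde{\mathbf{F}(X,Y)}$, and likewise for down spaces) directly to both the given functor $\mathbf{F}$ and to the $\mathscr{K}$-method. This immediately gives one implication, and for the converse it yields $\widetilde{\mathbf{F}(X,Y)}=\widetilde{(X,Y)^{\mathscr{K}}_{\mathscr{X}}}$, from which $(X,Y)$ being CM follows from the elementary observation (Claim~\ref{LEMMA: X=Y<=>TandoriX=TandoriY}) that for r.i.\ spaces $Z,W$ one has $Z=W$ iff $\widetilde Z=\widetilde W$, since $f^\star=\widetilde{f^\star}$. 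Thus the paper never needs your step ``$(\mathbf{E}(X),\mathbf{E}(Y))$ CM $\Rightarrow$ $(X,Y)$ CM'', which you identified as the main obstacle and for which you propose to import Kalton's structural theorem from \cite{Ka93a}. That detour works but is unnecessarily heavy: the injectivity of $Z\mapsto\widetilde Z$ on r.i.\ spaces is a two-line argument, whereas Kalton's result is deep (and, incidentally, you cite the wrong Kalton paper --- \cite{Kal92} is the differentials paper, not the Calder\'on couples paper). Your treatment of the amalgam direction via Theorem~\ref{THM: a'la Bukhvalov} and of the ``in particular'' clause is essentially the same as the paper's, though the paper quotes \cite[Theorems~5.4 and 5.5]{Kal92} for the stretchable/compressible characterization directly rather than passing through the discretization explicitly.
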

	\begin{proof}
		Let $\mathbf{F}$ be any interpolation functor. Suppose that $(X,Y)$ is a Calder{\' o}n--Mityagin couple. Then
		\begin{align*}
			\mathbf{F} \left( \widetilde{X}, \widetilde{Y} \right)
				& = \widetilde{\mathbf{F} \left( X, Y \right)} \quad (\text{by Proposition~\ref{PROP: interpolation down, tandori and cesaro}}) \\
				& = \widetilde{\left( X, Y \right)^{\mathscr{K}}_{\mathscr{X}}} \quad (\text{since $(X,Y)$ is a Calder{\' o}n--Mityagin couple})\\
				& = ( \widetilde{X}, \widetilde{Y} )^{\mathscr{K}}_{\mathscr{X}} \quad (\text{again, by Proposition~\ref{PROP: interpolation down, tandori and cesaro}}),
		\end{align*}
		which means that $(\widetilde{X}, \widetilde{Y})$ is a Calder{\' o}n--Mityagin couple as well.
		
		Next, suppose that $(\widetilde{X}, \widetilde{Y})$ is a Calder{\' o}n--Mityagin couple. Then, referring again to
		Proposition~\ref{PROP: interpolation down, tandori and cesaro}, it is enough to repeat the above argument to show that
		\begin{equation} \label{EQ: functor tandori = functor K}
			\widetilde{{\bf F}(X,Y)} = \widetilde{\left( X, Y \right)^{\mathscr{K}}_{\mathscr{X}}}.
		\end{equation}
		Further, we require the following simple
		
		\begin{claim} \label{LEMMA: X=Y<=>TandoriX=TandoriY}
			{\it Let $Z$ and $W$ be two r.i. spaces. Then $Z = W$ if, and only if, $\widetilde{Z} = \widetilde{W}$.}
		\end{claim}
		\begin{proof}[Proof of Clam~\ref{LEMMA: X=Y<=>TandoriX=TandoriY}]
			As for the proof of the first implication, because $Z \hookrightarrow W$ undoubtedly guarantees that $\widetilde{Z} \hookrightarrow \widetilde{W}$,
			so clearly $Z = W$ implies that $\widetilde{Z} = \widetilde{W}$. On the other hand, if $Z \neq W$, then we can find a function, say $f$, such that
			$f \in Z \setminus W$ (if it happens to be the other way around, just swap the roles of $Z$ and $W$). Since both spaces are rearrangement invariant,
			so $f^{\star} \in Z \setminus W$. However, clearly $\widetilde{f^{\star}} = f^{\star}$, so $f \in \widetilde{Z} \setminus \widetilde{W}$. But this
			means that $\widetilde{Z} \neq \widetilde{W}$.
		\end{proof}
		
		We can now complete the proof. Remembering about equality \eqref{EQ: functor tandori = functor K} and referring to Claim~\ref{LEMMA: X=Y<=>TandoriX=TandoriY},
		we see that
		\begin{equation*}
			\widetilde{{\bf F}(X,Y)} = \widetilde{\left( X, Y \right)^{\mathscr{K}}_{\mathscr{X}}}
				\quad \text{ if, and only if, } \quad {\bf F}(X,Y) = (X,Y)^{\mathscr{K}}_{\mathscr{X}}.
		\end{equation*}
		This, however, means that $(X,Y)$ is a Calder{\' o}n--Mityagin couple.
		
		As for the remaining part, since $(\widetilde{X},L_{\infty}) = (\widetilde{X},\widetilde{L_{\infty}})$ and $(L_1,Y^{\downarrow}) = (L_1^{\downarrow},Y^{\downarrow})$,
		so the couples $(\widetilde{X},L_{\infty})$ and $(L_1,Y^{\downarrow})$ form Calder{\' o}n--Mityagin couples if, and only if, $X$ is stretchable and, respectively,
		$Y$ is compressible (see \cite[Theorem~5.4]{Kal92} and, respectively, \cite[Theorem~5.5]{Kal92}).
	\end{proof}

	The relativized version of Theorem~\ref{THM: CM-couples} is as follows (cf. \cite[Theorem~3.9]{MS17}).

	\begin{theorem}[Relative Calder{\' o}n--Mityagin couples] \label{THM: relative CM-couples}
		{\it Let $X$, $Y$, $Z$ and $W$ be four Banach ideal spaces with the Fatou property.
			Suppose that either all spaces have non-trivial Boyd indices or are rearrangement invariant.
			Then the couples $(X,Y)$, $(\widetilde{X},\widetilde{Y})$ and $(X^{\downarrow},Y^{\downarrow})$ form a relative Calder{\' o}n--Mityagin couple
			with respect to $(Z,W)$, $(\widetilde{Z},\widetilde{W})$ and, respectively, $(Z^{\downarrow},W^{\downarrow})$ if, and only if, one of them form
			a respective relative Calder{\' o}n--Mityagin couple.
			In particular, the couples $(\widetilde{\Lambda_{\varphi}},L_{\infty})$ and $(L_1,M_{\varphi}^{\downarrow})$ are a relative Calder{\' o}n--Mityagin
			couples with respect to $(\widetilde{L_1},L_{\infty})$ and, respectively, $(L_1,L_{\infty}^{\downarrow})$.}
	\end{theorem}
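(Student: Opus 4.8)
The plan is to follow the template of the proof of Theorem~\ref{THM: CM-couples} and to reduce the three ``relative $\mathscr{K}$-adequacy'' assertions to one another by transporting, for a fixed but arbitrary interpolation functor $\mathbf{F}$, the two defining inclusions of $\mathscr{K}$-adequacy through the constructions $X \rightsquigarrow \widetilde{X}$ and $X \rightsquigarrow X^{\downarrow}$. Two properties make this possible: first, these constructions \emph{commute} with every interpolation functor, which is Proposition~\ref{PROP: interpolation down, tandori and cesaro}; second, they are \emph{two-sided monotone} on continuous inclusions, meaning that for rearrangement invariant spaces $A$ and $B$ one has $A \hookrightarrow B$ if and only if $\widetilde{A} \hookrightarrow \widetilde{B}$, and likewise $A \hookrightarrow B$ if and only if $A^{\downarrow} \hookrightarrow B^{\downarrow}$. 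The forward implication in each case is immediate from the ideal property. The reverse implication is an inclusion version of Claim~\ref{LEMMA: X=Y<=>TandoriX=TandoriY}: if $\widetilde{A} \overset{C}{\hookrightarrow} \widetilde{B}$, then testing on any $f$ with $f = f^{\star}$, for which $\widetilde{f} = f$, gives $\norm{f}_{B} = \norm{f}_{\widetilde{B}} \leqslant C \norm{f}_{\widetilde{A}} = C \norm{f}_{A}$, so $A \overset{C}{\hookrightarrow} B$; for the down-space leg one argues the same way after noting that $\norm{g}_{A^{\downarrow}} = \norm{g}_{A}$ for every decreasing $g$ (the inequality ``$\leqslant$'' is clear, and ``$\geqslant$'' follows from the Hardy--Littlewood inequality together with $A^{\times}$ being rearrangement invariant), or, alternatively, one dualizes via $(A^{\downarrow})^{\times} = \widetilde{A^{\times}}$ and invokes the already-established reflection property for $\widetilde{\cdot}$ on the couple $(B^{\times},A^{\times})$ and the Fatou property of $A$ and $B$. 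When the four spaces are only assumed to have non-trivial Boyd indices, we first pass to symmetrizations exactly as in the first step of the proofs of Theorems~\ref{Thm: Tandori sequence representation} and \ref{Thm: Tandori function representation}, using $\widetilde{X} = \widetilde{X^{\bigstar}}$, $X^{\downarrow} = (X^{\bigstar})^{\downarrow}$ and ${\bf E}(X^{\bigstar}) = {\bf E}(X)$ (Proposition~\ref{PROP: EX <-> carrier}), which brings us back to the rearrangement invariant situation with the same inclusion constants.

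With these tools the transport is routine. Suppose $(X,Y)$ is a relative Calder{\' o}n--Mityagin couple with respect to $(Z,W)$ and fix an interpolation functor $\mathbf{F}$; by $\mathscr{K}$-adequacy there is a parameter space $\mathscr{X}$ with
\begin{equation*}
	\mathbf{F}(X,Y) \hookrightarrow (X,Y)^{\mathscr{K}}_{\mathscr{X}} \quad \textit{ and } \quad (Z,W)^{\mathscr{K}}_{\mathscr{X}} \hookrightarrow \mathbf{F}(Z,W).
\end{equation*}
Applying the monotone functor $\widetilde{\cdot}$ to both inclusions and rewriting, by Proposition~\ref{PROP: interpolation down, tandori and cesaro}, $\widetilde{\mathbf{F}(X,Y)} = \mathbf{F}(\widetilde{X},\widetilde{Y})$, $\widetilde{(X,Y)^{\mathscr{K}}_{\mathscr{X}}} = (\widetilde{X},\widetilde{Y})^{\mathscr{K}}_{\mathscr{X}}$ and the two analogues for $(Z,W)$, we obtain
\begin{equation*}
	\mathbf{F}(\widetilde{X},\widetilde{Y}) \hookrightarrow (\widetilde{X},\widetilde{Y})^{\mathscr{K}}_{\mathscr{X}} \quad \textit{ and } \quad (\widetilde{Z},\widetilde{W})^{\mathscr{K}}_{\mathscr{X}} \hookrightarrow \mathbf{F}(\widetilde{Z},\widetilde{W}),
\end{equation*}
which says precisely that $(\widetilde{X},\widetilde{Y})$ is a relative Calder{\' o}n--Mityagin couple with respect to $(\widetilde{Z},\widetilde{W})$. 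For the converse one starts from witnessing inclusions for $(\widetilde{X},\widetilde{Y})$ relative to $(\widetilde{Z},\widetilde{W})$, uses Proposition~\ref{PROP: interpolation down, tandori and cesaro} to recognize them as $\widetilde{\mathbf{F}(X,Y)} \hookrightarrow \widetilde{(X,Y)^{\mathscr{K}}_{\mathscr{X}}}$ and $\widetilde{(Z,W)^{\mathscr{K}}_{\mathscr{X}}} \hookrightarrow \widetilde{\mathbf{F}(Z,W)}$, and then removes the tildes by the reflection property. Replacing $\widetilde{\cdot}$ throughout by $(\cdot)^{\downarrow}$ gives the equivalence with the down-space couples, so the three assertions are equivalent.

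For the concrete couples in the statement, note that $L_{\infty} \equiv \widetilde{L_{\infty}}$ and $L_{1} \equiv L_{1}^{\downarrow}$, so $(\widetilde{\Lambda_{\varphi}},L_{\infty})$, $(\widetilde{L_1},L_{\infty})$ on one side, and $(L_1,M_{\varphi}^{\downarrow})$, $(L_1,L_{\infty}^{\downarrow})$ on the other, all have the form $(\widetilde{\cdot},\widetilde{\cdot})$ resp. $(\cdot^{\downarrow},\cdot^{\downarrow})$; by the three-way equivalence just proved it suffices to know that $(\Lambda_{\varphi},L_{\infty})$ is a relative Calder{\' o}n--Mityagin couple with respect to $(L_1,L_{\infty})$ and, dually (pass to K{\" o}the duals, using $(M_{\varphi})^{\times} \equiv \Lambda_{\psi}$ with $\psi(t) \equiv t/\varphi(t)$), that $(L_1,M_{\varphi})$ is a relative Calder{\' o}n--Mityagin couple with respect to $(L_1,L_{\infty})$; both are contained in Kalton's analysis of these couples in \cite{Kal92} (the same source used for the absolute statements in Theorem~\ref{THM: CM-couples}). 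The hardest step, as I see it, is isolating the reflection lemma for $\widetilde{\cdot}$ and $(\cdot)^{\downarrow}$ and verifying that the symmetrization passage genuinely reduces the case of spaces with non-trivial Boyd indices to the rearrangement invariant one without loss of inclusion constants; everything after that is bookkeeping built on Proposition~\ref{PROP: interpolation down, tandori and cesaro}.
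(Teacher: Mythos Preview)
Your argument is correct and follows essentially the same route the paper sketches: transport the two defining inclusions of relative $\mathscr{K}$-adequacy through the constructions $\widetilde{\cdot}$ and $(\cdot)^{\downarrow}$ using Proposition~\ref{PROP: interpolation down, tandori and cesaro}, together with the inclusion version of Claim~\ref{LEMMA: X=Y<=>TandoriX=TandoriY} (your ``reflection property''). The paper's proof is in fact just a pointer saying ``do as in Theorem~\ref{THM: CM-couples}'', and you have filled in exactly those details.

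One small bibliographic discrepancy: for the ``In particular'' clause the paper invokes Dmitriev~\cite{Dmi74}, who showed that $(\Lambda_{\varphi},L_{\infty})$ is a relative Calder{\'o}n--Mityagin couple with respect to $(L_1,L_{\infty})$, rather than Kalton. Your duality manoeuvre to get the $(L_1,M_{\varphi})$ case is a reasonable alternative, but the cleanest path is simply to cite Dmitriev and then apply the main equivalence you have just proved.
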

	\begin{proof}
		The key observation is due to Dmitriev \cite{Dmi74}, who showed that the couple $(\Lambda_{\varphi},L_{\infty})$
		is a relative Calder{\' o}n--Mityagin couple with respect to $(L_1,L_{\infty})$. After this remark, the proof is not much
		different from the proof of Theorem~\ref{THM: CM-couples}. We leave the easy-to-complete details to the interested reader.
	\end{proof}
	
	Several important conclusions can be derived from what we have done so far.
	For example, since $L_p$'s are elastic (see Example~\ref{EXAMPLE: elastic spaces}), so the following result is an immediate conclusion from
	Theorem~\ref{THM: CM-couples} (see \cite[Theorem~3.8]{MS06}).
	
	\begin{corollary}[M.~Masty{\l}o and G.~Sinnamon, 2006] \label{COR: MS06}
		{\it The couple $(L_1,L_p^{\downarrow}) = (L_1,Ces_p)$, where $1 < p \leqslant \infty$, is a Calder{\' o}n--Mityagin couple.}
	\end{corollary}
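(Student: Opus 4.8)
The plan is to read this off directly from Theorem~\ref{THM: CM-couples}, which already carries all the weight; what remains is a short bookkeeping exercise together with one elementary structural observation. First I would recall that, for $1 < p \leqslant \infty$, the space $L_p$ is a rearrangement invariant function space with the Fatou property whose Boyd indices satisfy $\alpha_{L_p} = \beta_{L_p} = p$; in particular $\alpha_{L_p} > 1$, so by Sinnamon's identification of down spaces with Ces{\`a}ro spaces (see \cite[Theorem~3.1]{Si01}, recalled in Section~\ref{Section: Introduction Cesaro and Tandori}) we have $L_p^{\downarrow} = \mathscr{C}L_p = Ces_p$. Thus the corollary is equivalent to the assertion that $(L_1,L_p^{\downarrow})$ is a Calder{\'o}n--Mityagin couple.

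Next I would apply Theorem~\ref{THM: CM-couples} with $X = L_1$ and $Y = L_p$. Both spaces are rearrangement invariant and have the Fatou property, and both are defined on $(0,\infty)$, so the hypotheses are met; the \enquote{in particular} part of that theorem then says that $(L_1,L_p^{\downarrow})$ forms a Calder{\'o}n--Mityagin couple if, and only if, $L_p$ is compressible, i.e. the discretization ${\bf E}(L_p)$ has the left-shift property. In this way the whole problem is reduced to a single property of $L_p$.

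Finally I would invoke Example~\ref{EXAMPLE: elastic spaces}, which records that $L_p$ is elastic — hence in particular compressible — for every $1 < p < \infty$; this settles the case $1 < p < \infty$ at once. For the endpoint $p = \infty$ the example does not literally apply, but the verification is immediate: by Example~\ref{EXAMPLE : ELp computations} we have ${\bf E}(L_{\infty}) \equiv \ell_{\infty}(\mathbb{Z})$, and for any block basic sequence of disjointly supported vectors normalized in $\ell_{\infty}$ the norm of an arbitrary linear combination equals the supremum of the moduli of its coefficients, so the left-shift operator acts isometrically on each such block subspace; hence $L_{\infty}$ is compressible (indeed elastic) as well. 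Together with the equivalence from the previous paragraph, this gives that $(L_1,Ces_p) = (L_1,L_p^{\downarrow})$ is a Calder{\'o}n--Mityagin couple for all $1 < p \leqslant \infty$.

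The only place calling for any attention — and it is genuinely minor, not an obstacle — is to be sure one uses the \enquote{rearrangement invariant} alternative in the hypotheses of Theorem~\ref{THM: CM-couples} rather than the \enquote{non-trivial Boyd indices} alternative, since $\alpha_{L_1} = 1$ and $\beta_{L_{\infty}} = \infty$ mean that neither endpoint space has non-trivial Boyd indices. All the substantive content (the interpolation machinery behind Theorem~\ref{THM: CM-couples}, ultimately resting on Kalton's results in \cite{Kal92}) has already been established, so no further difficulty arises.
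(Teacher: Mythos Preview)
Your proof is correct and follows the same approach as the paper: invoke the \enquote{in particular} clause of Theorem~\ref{THM: CM-couples} and check that $L_p$ is compressible via Example~\ref{EXAMPLE: elastic spaces}. You are in fact slightly more careful than the paper, which simply cites Example~\ref{EXAMPLE: elastic spaces} without separately verifying the endpoint $p = \infty$ (your direct check that ${\bf E}(L_\infty) \equiv \ell_\infty(\mathbb{Z})$ has the left-shift property fills that small gap), and your remark about using the \enquote{rearrangement invariant} alternative in the hypotheses is well placed.
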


	By finding a monotone variant of the classical Hardy, Littlewood and P{\' o}lya submajorization theorem and adopting some of Calder{\' o}n's ideas
	from \cite{Cal66} to this new situation, Le{\' s}nik completed the picture outlined in \cite{MS06} showing that also the couple $(\widetilde{L_1},L_\infty)$
	is a Calder{\' o}n--Mityagin couple (see \cite[Theorem~4.8]{Les15}; cf. \cite[Theorem~3.1]{MS17}).
	Now, this result can be seen as a straightforward consequence of Theorem~\ref{THM: CM-couples}.

	\begin{corollary}[K.~Le{\' s}nik, 2015] \label{COR: Lesnik}
		{\it The couple $(\widetilde{L_p},L_\infty)$, where $1 \leqslant p < \infty$, is a Calder{\' o}n--Mityagin couple.}
	\end{corollary}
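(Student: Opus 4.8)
The plan is to read this off Theorem~\ref{THM: CM-couples} with essentially no extra work. Since $\widetilde{L_{\infty}} \equiv L_{\infty}$, the couple in question is literally $(\widetilde{L_p},\widetilde{L_{\infty}})$, and both $L_p$ (for any $1 \leqslant p < \infty$) and $L_{\infty}$ are rearrangement invariant, so the hypotheses of Theorem~\ref{THM: CM-couples} are satisfied through its \enquote{rearrangement invariant} branch. The \enquote{in particular} clause of that theorem then says that $(\widetilde{L_p},L_{\infty})$ is a Calder{\' o}n--Mityagin couple if, and only if, $L_p$ is stretchable. So the whole statement reduces to the assertion that $L_p$ is stretchable for every $1 \leqslant p < \infty$.

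For $1 < p < \infty$ there is nothing to prove: the space $L_p$ is elastic by Example~\ref{EXAMPLE: elastic spaces}, and an elastic space is by definition both stretchable and compressible. The only value not covered by that example is $p = 1$, which I would handle by unwinding the definition directly. By Example~\ref{EXAMPLE : ELp computations} one has ${\bf E}(L_1) \equiv \ell_1(w)$ with $w(j) = 2^{j}$ for $j \in \mathbb{Z}$, and in any weighted $\ell_1$-space disjointly supported vectors add isometrically, i.e. $\norm{\sum_n z_n}_{\ell_1(w)} = \sum_n \norm{z_n}_{\ell_1(w)}$ whenever the $z_n$ have pairwise disjoint supports. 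Hence, for two interleaved normalized block basic sequences $\{x_n\}_{n=1}^N$, $\{y_n\}_{n=1}^N$ with respect to the canonical basis and any scalars $\{a_n\}_{n=1}^N$,
\begin{equation*}
	\norm{\sum_{n=1}^N a_n y_n}_{{\bf E}(L_1)} = \sum_{n=1}^N \abs{a_n} = \norm{\sum_{n=1}^N a_n x_n}_{{\bf E}(L_1)},
\end{equation*}
so ${\bf E}(L_1)$ has the right-shift property with constant $1$; that is, $L_1$ is stretchable. Feeding this into Theorem~\ref{THM: CM-couples} completes the argument uniformly for $1 \leqslant p < \infty$.

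I do not expect any genuine obstacle here: the entire weight of the statement has already been absorbed into Theorem~\ref{THM: CM-couples}, which itself rests on Proposition~\ref{PROP: interpolation down, tandori and cesaro} (the commutation of $\widetilde{\,\cdot\,}$ with interpolation functors) and Kalton's characterization in \cite[Theorem~5.4]{Kal92}. The only point that requires a moment's care is that the list in Example~\ref{EXAMPLE: elastic spaces} is stated for $1 < p,q < \infty$ and therefore omits $p = 1$, which is precisely why the elementary $\ell_1(w)$ computation above is worth spelling out on its own.
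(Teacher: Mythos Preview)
Your proposal is correct and matches the paper's intended approach: the paper gives no explicit proof for this corollary, merely remarking that it is a straightforward consequence of Theorem~\ref{THM: CM-couples}. Your treatment is in fact more careful than the paper's, since Example~\ref{EXAMPLE: elastic spaces} only covers $1 < p < \infty$ and you correctly supply the elementary $\ell_1(w)$ computation for the boundary case $p = 1$, which the paper glosses over.
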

	
	Although from the point of view of interpolation theory, it is Calder{\' o}n--Mityagin couples that lead to a clear structure of the interpolation scale,
	no less important are examples of concrete couples that fail to form a Calder{\' o}n--Mityagin couples. For example, it is known that the couples as simple
	as $(\ell_1 \oplus \ell_2, \ell_{\infty} \oplus \ell_{\infty})$, $(\ell_1(L_1),\ell_1(L_{\infty}))$ and $(L_1 \cap L_{\infty}, L_1 + L_{\infty})$
	do not form a Calder{\' o}n--Mityagin couples (see \cite{CNS03}, \cite{Kal92} and \cite{MO92} for more details and references).
	We would like to add a few more rather natural examples to this list.
	
	\begin{example} \label{THM: NOT CM-couples}
		{\it Let $1 < p < q < \infty$. Then the couples $(L_p,\widetilde{L_q})$, $(Ces_p,L_q) = (L_p^{\downarrow},L_q)$
		and $(Ces_p,\widetilde{L_q}) = (L_p^{\downarrow},\widetilde{L_q})$ are not Calder{\' o}n--Mityagin couples.}
	\end{example}
	\begin{proof}
		Let $1 < p < q < \infty$ and $0 < \theta < 1$. We will provide all the details of the proof only in the case of the couple $(Ces_p,L_q)$.
		The rest is completely analogous.
		
		We need a little preparation.
		Due to Shestakov's representation result (see \cite[Theorem~1]{She74}), the complex method of interpolation for any couple
		of Banach ideal spaces\footnote{Formally, the complex method of interpolation $[\cdot,\cdot]_{\theta}$ is defined only for couples
		of Banach ideal spaces defined over the field of complex numbers. Thus, if it happens that the pair $(X,Y)$ is not like this,
		we must consider the couple $(X(\mathbb{C}),Y(\mathbb{C}))$ instead of $(X,Y)$, where $X(\mathbb{C})$ and $Y(\mathbb{C})$ are
		the {\it complexificiations} of $X$ and, respectively, $Y$, that is to say, $X(\mathbb{C}) \coloneqq X \otimes_{\mathbb{R}} \mathbb{C}$
		and $Y \coloneqq Y \otimes_{\mathbb{R}} \mathbb{C}$ (see \cite[Example~2.6.6, p.~249]{BK91}). In such a situation, by $[X,Y]_{\theta}$
		we simply understand the subspace of all real-valued functions living inside $[X(\mathbb{C}),Y(\mathbb{C})]_{\theta}$.},
		say $(X,Y)$, can be described as follows
		\begin{equation*}
			[X,Y]_{\theta} \equiv \overline{X \cap Y}^{X^{1-\theta}Y^{\theta}},
		\end{equation*}
		that is, as the closure of $X \cap Y$ in the space $X^{1-\theta}Y^{\theta}$.
		In our situation, both spaces $Ces_p$ and $L_q$ are order continuous (see \cite[Theorem~3]{KT17}; cf. \cite[Lemma~1]{LM16}), so the Calder{\' o}n product
		$Ces_p^{1-\theta}L_q^{\theta}$ is order continuous as well (see \cite[Lemma~20, p.~428]{Loz69} and \cite[Theorem~15.10]{Ma89}; in fact, it is enough to know
		that one of the spaces $Ces_p$ and $L_q$ is order continuous, which is completely obvious for $L_q$). In consequence, since simple functions are dense in
		the space $Ces_p^{1-\theta}L_q^{\theta}$, so
		\begin{equation*}
			[Ces_p,L_q]_{\theta} \equiv \overline{Ces_p \cap L_q}^{Ces_p^{1-\theta}L_q^{\theta}} = Ces_p^{1-\theta}L_q^{\theta}.
		\end{equation*}
		Next, thanks to Corollary~\ref{COR: blocking technique CX}, we have
		\begin{equation*}
			Ces_p = \Bigl( \bigoplus_{j \in \mathbb{Z}} L_{1}(\Delta_j) \Bigr)_{\ell_p(w)},
		\end{equation*}
		where $w(j) = 2^{j(1/p-1)}$ for $j \in \mathbb{Z}$. Moreover, it is clear that
		\begin{equation*}
			L_q = \bigl( \bigoplus_{j \in \mathbb{Z}} L_{q}(\Delta_j) \bigr)_{\ell_q}.
		\end{equation*}
		Thus, using Theorem~\ref{THM: Calderon product amalgams}, we have
		\begin{align*}
			[Ces_p,L_q]_{\theta}
				& = Ces_p^{1-\theta}L_q^{\theta} \\
				& = \left[ \Bigl( \bigoplus_{j \in \mathbb{Z}} L_{1}(\Delta_j) \Bigr)_{\ell_p(w)} \right]^{1-\theta}
					\left[ \Bigl( \bigoplus_{j \in \mathbb{Z}} L_{q}(\Delta_j) \Bigr)_{\ell_q} \right]^{\theta} \\
				& = \Bigl( \bigoplus_{j \in \mathbb{Z}} L_1^{1-\theta}L_q^{\theta}(\Delta_j) \Bigr)_{\ell_p^{1-\theta}(w^{1-\theta})\ell_q^{\theta}} \\
				& = \Bigl( \bigoplus_{j \in \mathbb{Z}} L_{r_{\theta}}(\Delta_j) \Bigr)_{\ell_{s_{\theta}}(w^{1-\theta})},
		\end{align*}
		where $1/r_{\theta} = 1 - \theta + \theta/q$ and $1/s_{\theta} = (1-\theta)/p + \theta/q$.
		
		Let us get to the point.
		Suppose that the space $[Ces_p,L_q]_{\theta}$ can be described by the real $\mathscr{K}$-method of interpolation, that is,
		$[Ces_p,L_q]_{\theta} = (Ces_p,L_q)^{\mathscr{K}}_{\mathscr{X}}$ for some parameter $\mathscr{X}$. To say something more about $\mathscr{X}$,
		let us consider the projection
		\begin{equation*}
			P \colon f \rightsquigarrow \left\{ 2^{-j} \int_{\Delta_j} f(t)dt \right\}_{j \in \mathbb{Z}}.
		\end{equation*}
		Then, as a routine calculations show,
		the couple $(\ell_{p}(u),\ell_q(v))$ is a complemented subcouple of $(Ces_p,L_q)$ and the space $\ell_{s_{\theta}}(w^{1-\theta}2^{j/r_{\theta}})$
		is a complemented subspace of $[Ces_p,L_q]_{\theta}$. Here, $u(j) = 2^{j/p}$ and $v(j) = 2^{j/q}$ for $j \in \mathbb{Z}$.
		Furthermore, relying on \cite[Theorem~5.5.1, p.~119]{BL76}, which provides a precise description of the Lions--Petree real interpolation method
		for couples of weighted $L_p$-spaces, we infer that
		\begin{equation*}
			(P(Ces_p), P(L_q))_{\theta,s_{\theta}}^{\mathscr{K}}
				= (\ell_p(u),\ell_q(v))_{\theta,s_{\theta}}^{\mathscr{K}}
				= \ell_{s_{\theta}}(u^{1-\theta}v^{\theta}).
		\end{equation*}
		However, since
		\begin{equation*}
			u^{1-\theta}v^{\theta}
				= 2^{j\left( \frac{1-\theta}{p} + \frac{\theta}{q} \right)}
				= \left[ 2^{j\left( \frac{1}{p}-1 \right)} \right]^{1-\theta}2^{1-\theta}2^{j\frac{\theta}{q}}
				= w^{1-\theta}2^{j/r_{\theta}},
		\end{equation*}
		so
		\begin{equation*}
			P((Ces_p,L_q)^{\mathscr{K}}_{\mathscr{X}})
				= P([Ces_p,L_q]_{\theta})
				= \ell_{s_{\theta}}(w^{1-\theta}2^{j/r_{\theta}})
				= (P(Ces_p), P(L_q))_{\theta,s_{\theta}}^{\mathscr{K}}
		\end{equation*}		
		and is clear that the parameter $\mathscr{X}$ must coincide with the Lorentz space $L_{1/\theta,s_{\theta}}$.
		Now, let us consider the projection
		\begin{equation*}
			Q \colon f \rightsquigarrow f\chi_{\Delta_1}.
		\end{equation*}
		Then, we have
		\begin{equation*}
			(Q(Ces_p), Q(L_q))_{\theta,s_{\theta}}^{\mathscr{K}}
				= (L_1(\Delta_1),L_q(\Delta_1))_{\theta,s_{\theta}}^{\mathscr{K}}
				= L_{r_{\theta},s_{\theta}}(\Delta_1)
				\neq L_{s_{\theta}}(\Delta_1)
				= Q([Ces_p,L_q]_{\theta}).
		\end{equation*}
		All this means that the space $[Ces_p,L_q]_{\theta}$ cannot be described by the real $\mathscr{K}$-method of interpolation.
		In other words, $(Ces_p,L_q)$ is not a Calder{\' o}n--Mityagin couple.
		
		It remains to explain how to modify the above argument for the couple $(ces_p,\ell_q)$. For this purpose, it is not difficult
		to check that everything works {\it mutatis mutandis} until the very end when we consider the projection $Q$. Instead of doing this,
		let us consider the projection
		\begin{equation*}
			Q_n \colon f \rightsquigarrow f\chi_{\Delta_n}
		\end{equation*}
		for $n \in \mathbb{N}$. This time, we have
		\begin{equation*}
			\left( Q_n(ces_p), Q_n(\ell_q) \right)_{\theta,s_{\theta}}^{\mathscr{K}}
				= (\ell_1^{2^n},\ell_q^{2^n})_{\theta,s_{\theta}}^{\mathscr{K}}
				= \ell_{r_{\theta},s_{\theta}}^{2^n}
		\end{equation*}
		and
		\begin{equation*}
			Q_n([ces_p,\ell_q]_{\theta})
				= \ell_{r_{\theta}}^{2^n}.
		\end{equation*}
		Now, in order to see that $[ces_p,\ell_q]_{\theta} \neq (ces_p,\ell_q)_{\theta,s_{\theta}}^{\mathscr{K}}$, it is enough to take
		$g \in \ell_{r_{\theta},s_{\theta}} \setminus \ell_{r_{\theta}}$ (note that $r_{\theta} < s_{\theta}$) and consider the sequence
		$g_n = g\chi_{\Delta_n}$ for $n \in \mathbb{N}$. Then, clearly, we have
		\begin{equation} \label{EQ: normy interpolacja nierownowazne}
			\frac{\norm{g_n}_{[ces_p,\ell_q]_{\theta}}}{\norm{g_n}_{(ces_p,\ell_q)_{\theta,s_{\theta}}^{\mathscr{K}}}}
				= \frac{\norm{Q_n(g)}_{[ces_p,\ell_q]_{\theta}}}{\norm{Q_n(g)}_{(ces_p,\ell_q)_{\theta,s_{\theta}}^{\mathscr{K}}}}
				= \frac{\norm{g_n}_{\ell_{r_{\theta}}}}{\norm{g_n}_{\ell_{r_{\theta},s_{\theta}}}}.
		\end{equation}
		Since the right-hand side of \eqref{EQ: normy interpolacja nierownowazne} is unbounded (as $n$ goes to infinity), so the proof is thus completed.
	\end{proof}

	\begin{remark}[About Example~\ref{THM: NOT CM-couples}]
		It must be admitted that the couples mentioned in Example~\ref{THM: NOT CM-couples} do not form Calder{\' o}n--Mityagin couples
		for a fairly standard reason, namely, as is usually the case, the corresponding complex interpolation spaces cannot be described
		by the real $\mathscr{K}$-method of interpolation.
		However, as Masty{\l}o and Ovchinnikov showed, it is possible to give an example of a couple $(X,Y)$ which is not a Calder{\' o}n--Mityagin
		couple, but for which all the spaces $[X,Y]_{\theta}$ are Calder{\' o}n--Mityagin couples (see \cite[Theorem~3]{MO97} for the details of this construction).
		\demo
	\end{remark}
	
	\section{{\bf Products and factors: Grosse-Erdmann's style}} \label{SECTION: Products and factors: Grosse-Erdmann's style}
	
	\subsection{Pointwise multipliers} \label{SUBSECTION: pointwise multipliers}
	
	We will start with the following result, which can be understood as a far-reaching generalization of the duality between
	$\bigl( \bigoplus_{j \in J} X_j \bigr)_{E}$ and $\bigl( \bigoplus_{j \in J} X_j^{\times} \bigr)_{E^{\times}}$
	(see Proposition~\ref{Prop: Podstawowe wlasnosci sum prostych}(c)).
	
	\begin{theorem}[Pointwise multipliers between amalgams] \label{Thm: komutowanie M z cdot}
		{\it Let $E$ and $F$ be two Banach sequence spaces both defined on $J$. Further, let $\{X_j\}_{j \in J}$ and $\{Y_j\}_{j \in J}$
			be two families of Banach ideal spaces. Then}
			\begin{equation*}
				M\biggl( \Bigl( \bigoplus_{j \in J} X_j \Bigr)_{E}, \Bigl( \bigoplus_{j \in J} Y_j \Bigr)_{F} \biggr)
					\equiv \Bigl( \bigoplus_{j \in J} M \left( X_j,Y_j \right) \Bigr)_{M(E,F)}.
			\end{equation*}
	\end{theorem}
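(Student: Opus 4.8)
The plan is to establish both set inclusions together with matching norm estimates, which yields the claimed isometric identification $\equiv$. Throughout I regard the two amalgams as Banach ideal spaces over the disjoint union $\Omega = \bigsqcup_{j \in J}\Omega_j$ (as in Remark~\ref{REMARK: Direct sum = BFS}), so that a pointwise multiplier between them is literally a sequence $f = \{f_j\}_{j \in J}$ acting coordinatewise, $f \cdot \{x_j\}_j = \{f_j x_j\}_j$.

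First I would prove the inclusion $\bigl( \bigoplus_{j \in J} M(X_j,Y_j) \bigr)_{M(E,F)} \hookrightarrow M\bigl( (\bigoplus_j X_j)_E, (\bigoplus_j Y_j)_F \bigr)$ with norm at most $1$. Fix $f = \{f_j\}_j$ in the left-hand space and $x = \{x_j\}_j \in (\bigoplus_j X_j)_E$. The scalar generalized H\"older--Rogers inequality \eqref{INEQ: Holder-Rogers inequality} gives $\|f_j x_j\|_{Y_j} \le \|f_j\|_{M(X_j,Y_j)}\,\|x_j\|_{X_j}$ for every $j$; since $\{\|f_j\|_{M(X_j,Y_j)}\}_j \in M(E,F)$ and $\{\|x_j\|_{X_j}\}_j \in E$, another application of \eqref{INEQ: Holder-Rogers inequality}, this time for the sequence spaces $E$ and $F$, together with the ideal property of $F$, shows $\{\|f_j x_j\|_{Y_j}\}_j \in F$ with $\bigl\| \{\|f_j x_j\|_{Y_j}\}_j \bigr\|_F \le \|f\|_{(\bigoplus_j M(X_j,Y_j))_{M(E,F)}}\,\|x\|_{(\bigoplus_j X_j)_E}$. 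This is exactly the statement that $f$ multiplies $(\bigoplus_j X_j)_E$ into $(\bigoplus_j Y_j)_F$ with operator norm bounded by $\|f\|_{(\bigoplus_j M(X_j,Y_j))_{M(E,F)}}$.

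For the reverse inclusion, start with $f = \{f_j\}_j \in M\bigl( (\bigoplus_j X_j)_E, (\bigoplus_j Y_j)_F \bigr)$. Testing $f$ against the one-point sequences $x\,e_{j_0}$ (which lie in the domain since $e_{j_0} \in E$ by our standing assumption on Banach ideal spaces) shows first that each $f_{j_0}$ lies in $M(X_{j_0},Y_{j_0})$. To control the norm-sequence $a \coloneqq \{\|f_j\|_{M(X_j,Y_j)}\}_j$, fix $b = \{b_j\}_j \in E$, which we may take nonnegative, and $\varepsilon > 0$; for each $j$ choose $x_j \in X_j$ with $\|x_j\|_{X_j} \le 1$ and $\|f_j x_j\|_{Y_j} \ge (1-\varepsilon)\|f_j\|_{M(X_j,Y_j)}$, taking $x_j = 0$ whenever the latter norm vanishes. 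Then $y \coloneqq \{b_j x_j\}_j$ satisfies $\|b_j x_j\|_{X_j} \le b_j$, so $y \in (\bigoplus_j X_j)_E$ with $\|y\| \le \|b\|_E$; applying the multiplier $f$ and the ideal property of $F$ to the pointwise domination $(1-\varepsilon)\,ab \le \{\|f_j b_j x_j\|_{Y_j}\}_j$ gives $\|(1-\varepsilon)\,ab\|_F \le \|f\|_{M}\,\|b\|_E$. Letting $\varepsilon \to 0$ and then varying $b$ yields $a \in M(E,F)$ with $\|a\|_{M(E,F)} \le \|f\|_M$, i.e. $f \in \bigl( \bigoplus_j M(X_j,Y_j) \bigr)_{M(E,F)}$ with norm at most $\|f\|_M$. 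Combining the two steps gives the asserted equality.

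The routine points --- measurability of the glued functions, the ideal-property manipulations, and the degenerate cases where some $X_j$ or $M(X_j,Y_j)$ is trivial --- cause no real difficulty. The one place that needs care, and which I regard as the heart of the argument, is the second inclusion: one must build a \emph{single} test element $y \in (\bigoplus_j X_j)_E$ whose image under $f$ nearly realises $\|f_j\|_{M(X_j,Y_j)}$ in \emph{every} coordinate, so that the multiplier inequality on the amalgams degrades into the multiplier inequality on the scalar couple $(E,F)$; the coordinatewise choice of the $x_j$'s, via the definition of the supremum in each $M(X_j,Y_j)$, is precisely what makes this possible.
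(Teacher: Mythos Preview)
Your proof is correct and follows essentially the same approach as the paper: both directions hinge on the double application of the H\"older--Rogers inequality for the easy inclusion, and on building a single test element $y=\{b_j x_j\}_j$ in the domain amalgam from a sequence $b\in E$ together with near-optimal unit vectors $x_j\in X_j$ for the reverse one. Your version is in fact a little more careful than the paper's, which silently interchanges the outer supremum over $a\in E$ with the inner suprema defining $\|f_j\|_{M(X_j,Y_j)}$; your explicit $\varepsilon$-choice of the $x_j$'s is precisely what justifies that interchange, and your preliminary check that each $f_j\in M(X_j,Y_j)$ via one-point test elements is a point the paper leaves implicit.
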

	\begin{proof}
		Let's begin by showing the following embedding
		\begin{equation} \label{INEQ: ->} \tag{$\lozenge$}
			\norm{ \text{id} \colon \Bigl( \bigoplus_{j \in J} M(X_j,Y_j) \Bigr)_{M(E,F)}
					\rightarrow M\biggl( \Bigl( \bigoplus_{j \in J} X_j \Bigr)_{E}, \Bigl( \bigoplus_{j \in J} Y_j \Bigr)_{F} \biggr) } \leqslant 1.
		\end{equation}
		For this, take $f = \{f_j\}_{j \in J}$ from $\bigl( \bigoplus_{j \in J} M(X_j,Y_j) \bigr)_{M(E,F)}$ and $g = \{g_j\}_{j \in J}$
		from $\bigl( \bigoplus_{j \in J} X_j \bigr)_{E}$ with $\norm{g}_{\left( \bigoplus_{j \in J} X_j \right)_{E}}\leqslant 1$. Then,
		applying H{\" o}lder--Rogers's inequality twice, we get that
		\begin{align*}
			\norm{fg}_{\left( \bigoplus_{j \in J} Y_j \right)_{F}} & = \norm{\sum\limits_{j\in J} \norm{f_j g_j}_{Y_j} e_j}_{F} \\
				& \leqslant \norm{\sum\limits_{j\in J} \norm{f_j}_{M(X_j,Y_j)} \norm{g_j}_{X_j} e_j}_{F} \\
			& \leqslant \norm{\sum\limits_{j\in J}\norm{f_j}_{M(X_j,Y_j)} e_j}_{M(E,F)} \norm{\sum\limits_{j\in J} \norm{g_j}_{X_j} e_j}_{E} \\
			& \leqslant \norm{\sum\limits_{j\in J}\norm{f_j}_{M(X_j,Y_j)} e_j}_{M(E,F)}
				= \norm{f}_{\left( \bigoplus_{j \in J} M(X_j,Y_j) \right)_{M(E,F)}}.
		\end{align*}
		Thus $f$ belongs to the space $M\left( \left( \bigoplus_{j \in J} X_j \right)_{E}, \left( \bigoplus_{j \in J} Y_j \right)_{F} \right)$
		and, moreover,
		\begin{align*}
			\norm{f}_{M \left( \left( \bigoplus_{j \in J} X_j \right)_{E}, \left( \bigoplus_{j \in J} Y_j \right)_{F} \right)}
				& = \sup\limits_{\norm{g}_{\left( \bigoplus_{j \in J} X_j \right)_{E}} \leqslant 1} \norm{fg}_{\left( \bigoplus_{j \in J} Y_j \right)_{F}} \\
				& \leqslant \norm{f}_{\left( \bigoplus_{j \in J} M(X_j,Y_j) \right)_{M(E,F)}}.
		\end{align*}
		This gives (\ref{INEQ: ->}).
		
		It remains to show that
		\begin{equation} \label{INEQ: <-} \tag{$\blacklozenge$}
			\norm{ \text{id} \colon M\biggl( \Bigl( \bigoplus_{j \in J} X_j \Bigr)_{E}, \Bigl( \bigoplus_{j \in J} Y_j \Bigr)_{F} \biggr)
					\rightarrow \Bigl( \bigoplus_{j \in J} M(X_j,Y_j) \Bigr)_{M(E,F)} } \leqslant 1.
		\end{equation}
		Take $f = \{f_j\}_{j \in J}$ from $\left( \bigoplus_{j \in J} M(X_j,Y_j) \right)_{M(E,F)}$ and observe that
		\begin{align*}
			\norm{f}_{\left( \bigoplus_{j \in J} M(X_j,Y_j) \right)_{M(E,F)}}
				& = \sup\limits_{\substack{a = \{a_j\}_{j \in J} \\ \norm{ a }_{E} \leqslant 1}} \norm{\sum\limits_{j \in J} \norm{f_j}_{M(X_j,Y_j)} \abs{a_j} e_j }_{F} \\
			& = \sup\limits_{\substack{a = \{a_j\}_{j \in J} \\ \norm{ a }_{E} \leqslant 1 \\ \norm{g_j}_{X_j} \leqslant 1 \text{ for } j \in J}}
				\norm{\sum\limits_{j \in J} \norm{f_j g_j}_{Y_j} \abs{a_j} e_j }_{F} \\
			& = \sup\limits_{\substack{a = \{a_j\}_{j \in J} \\ \norm{ a }_{E} \leqslant 1 \\ \norm{g_j}_{X_j}\leqslant 1 \text{ for } j \in J}}
				\norm{\sum\limits_{j \in J}\norm{f_j g_j a_j}_{Y_j} e_j }_{F}.
		\end{align*}
		Now, since the sequence $a = \{a_j\}_{j \in J}$ belongs to $E$ and $\norm{g_j}_{X_j} \leqslant 1$ for all $j \in J$,
		so we claim the function $h$ defined as
		\begin{equation*}
			h \coloneqq \sum_{j \in J} g_j a_j e_j
		\end{equation*}
		belongs to the space $(\bigoplus_{j \in J} X_j)_{E}$. In fact, we have
		\begin{equation*}
			\norm{h}_{ \left( \bigoplus_{j \in J} X_j \right)_{E}} = \norm{\sum_{j \in J} \norm{g_j a_j}_{X_j} e_j}_E
				= \norm{\sum_{j \in J} \norm{g_j}_{X_j} \abs{a_j} e_j}_E \leqslant \norm{\sum_{j \in J} a_j e_j}_E = \norm{a}_E,
		\end{equation*}
		where the above inequality is due to the ideal property of the space $E$. This proves our claim. Consequently,
		\begin{align*}
			\sup\limits_{\substack{a = \{a_j\}_{j \in J} \\ \norm{ a }_{E} \leqslant 1 \\ \norm{g_j}_{X_j}\leqslant 1 \text{ for } j \in J}}
				\norm{\sum\limits_{j \in J}\norm{f_j g_j a_j}_{Y_j} e_j }_{F}
			& \leqslant \sup\limits_{\substack{\norm{h}_{\left( \bigoplus_{j \in J} X_j \right)_{E}} \leqslant 1}} \norm{\sum\limits_{j\in J} \norm{f_j h_j}_{Y_j} e_j}_{F} \\
			& = \norm{f}_{M \left( \left( \bigoplus_{j \in J} X_j \right)_{E}, \left( \bigoplus_{j \in J} Y_j \right)_{F} \right)},
		\end{align*}
		which means that the proof of (\ref{INEQ: <-}) has just been completed. That's all.
	\end{proof}

	In particular, from Theorem~\ref{Thm: komutowanie M z cdot} we easily obtain the following result of Kellogg (see \cite[Theorem~1]{Kel71}).

	\begin{corollary}[C. N. Kellogg, 1971]
		{\it Let $1 \leqslant p,q,r,s \leqslant \infty$. Further, let $\{d_n\}_{n=1}^{\infty}$ be a sequence of positive integers.
		Suppose that $r > p$ and $s > q$. Then, we have}
		\begin{equation*}
			M\biggl( \Bigl( \bigoplus_{n=1}^{\infty} \ell_{p}^{d_n} \Bigr)_{\ell_{q}}, \Bigl( \bigoplus_{n=1}^{\infty} \ell_{r}^{d_n} \Bigr)_{\ell_{s}} \biggr)
				\equiv \Bigl( \bigoplus_{n=1}^{\infty} \ell_{1/r - 1/p}^{d_n} \Bigr)_{\ell_{1/s - 1/q}}.
		\end{equation*}
	\end{corollary}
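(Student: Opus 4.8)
The plan is to obtain the corollary as a one-line specialization of Theorem~\ref{Thm: komutowanie M z cdot}, combined with the elementary description of pointwise multipliers between $\ell_p$-spaces recalled in the Example of Section~\ref{SECTION: pointwise multipliers}. There is no deeper idea involved: the whole content sits inside Theorem~\ref{Thm: komutowanie M z cdot}, and what remains is bookkeeping.

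First I would apply Theorem~\ref{Thm: komutowanie M z cdot} with the Banach sequence spaces $E = \ell_q$ and $F = \ell_s$ on $J = \mathbb{N}$, and with the families of finite-dimensional Banach ideal spaces $X_n = \ell_p^{d_n}$ and $Y_n = \ell_r^{d_n}$. This yields, isometrically,
\[
	M\biggl( \Bigl( \bigoplus_{n=1}^{\infty} \ell_{p}^{d_n} \Bigr)_{\ell_{q}}, \Bigl( \bigoplus_{n=1}^{\infty} \ell_{r}^{d_n} \Bigr)_{\ell_{s}} \biggr)
		\equiv \Bigl( \bigoplus_{n=1}^{\infty} M\bigl( \ell_{p}^{d_n},\ell_{r}^{d_n} \bigr) \Bigr)_{M(\ell_q,\ell_s)}.
\]
Next I would identify the two multiplier spaces on the right-hand side. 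Since $r > p$ we have $\ell_p^{d_n} \hookrightarrow \ell_r^{d_n}$ with $\norm{g}_{\ell_r^{d_n}} \leqslant \norm{g}_{\ell_p^{d_n}}$, so for $f \in \ell_\infty^{d_n}$ one gets $\norm{fg}_{\ell_r^{d_n}} \leqslant \norm{f}_{\ell_\infty} \norm{g}_{\ell_r^{d_n}} \leqslant \norm{f}_{\ell_\infty} \norm{g}_{\ell_p^{d_n}}$, while testing on a single coordinate vector gives equality; hence $M(\ell_p^{d_n},\ell_r^{d_n}) \equiv \ell_{1/r-1/p}^{d_n}$ (with the usual reading that a non-positive value of $1/r - 1/p$ designates $\ell_\infty$), and crucially the constants here do not depend on $d_n$, which is what lets these identifications be pushed through the direct sum. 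Exactly the same computation, now in infinite dimensions and using $s > q$, gives $M(\ell_q,\ell_s) \equiv \ell_{1/s-1/q}$. Substituting both into the displayed formula produces precisely the asserted equality.

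The ``hard part'' is therefore already done: it is Theorem~\ref{Thm: komutowanie M z cdot} itself. The only point requiring a moment's care in the present deduction is the uniformity in $n$ of the identifications $M(\ell_p^{d_n},\ell_r^{d_n}) \equiv \ell_{1/r-1/p}^{d_n}$, since the norms of the amalgam-level isomorphism $\bigoplus_n \mathrm{id}$ are controlled by $\sup_n \norm{\mathrm{id} \colon M(\ell_p^{d_n},\ell_r^{d_n}) \to \ell_{1/r-1/p}^{d_n}}$ and its inverse analogue (cf. Proposition~\ref{Prop: Podstawowe wlasnosci sum prostych}(b)); as observed above these are in fact isometries, so no loss of constant occurs and one even obtains an isometric identification, matching the $\equiv$ in the statement.
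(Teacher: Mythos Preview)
Your proposal is correct and matches the paper's approach exactly: the paper does not supply a separate proof but simply states that the corollary follows from Theorem~\ref{Thm: komutowanie M z cdot}, which is precisely what you do. Your additional remarks on the uniformity of the componentwise isometries and the handling of the index convention are fine (and slightly more careful than what the paper spells out).
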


	We are ready to execute the main goal of this section.
	
	\begin{theorem}[Pointwise multipliers] \label{Cor: multipliers between Cesaro and Tandori}
		{\it Let $X$ and $Y$ be two r.i. spaces with the Fatou property both defined on the same measure space.
			Further, suppose that}
		\begin{itemize}
			\item[$\bullet$] {\it either  the space $\mathbf{E}(Y)$ factorizes through $\mathbf{E}(X)$, that is, $\mathbf{E}(Y) = \mathbf{E}(X) \odot M(\mathbf{E}(X), \mathbf{E}(Y))$,}
			\item[$\bullet$] {\it or both spaces $X$ and $Y$ are strongly separated, that is, $\alpha_X > \beta_Y$.}
		\end{itemize}
			{\it Then}
		\begin{equation} \label{EQ: MTandori = TandoriM}
			M(\widetilde{X},\widetilde{Y}) = \widetilde{M(X,Y)}.
		\end{equation}
		{\it Moreover, if Hardy's operator $\mathscr{H}$ is bounded on both spaces $X$ and on $Y$, then also}
		\begin{equation} \label{M(CX,CY) = CM(X,Y)}
			M(\mathscr{C}X,\mathscr{C}Y) = \widetilde{M(X,Y)}.
		\end{equation}
	\end{theorem}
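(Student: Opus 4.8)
The plan is to collapse the whole statement --- both the identity for $M(\widetilde X,\widetilde Y)$ and the one for $M(\mathscr C X,\mathscr C Y)$ --- onto a single commutation property between the discretization functor $\mathbf E$ and the pointwise-multiplier functor $M$, namely
\begin{equation*}
 \mathbf E\bigl(M(X,Y)\bigr)=M\bigl(\mathbf E(X),\mathbf E(Y)\bigr),
\end{equation*}
and then to establish this under each of the two hypotheses. First I would feed the block-form representations of Theorems~\ref{Thm: Tandori sequence representation} and \ref{Thm: Tandori function representation} (and, for the Ces\`aro part, the standing extra assumption that $\mathscr H$ is bounded on $X$ and $Y$, which makes Corollary~\ref{COR: blocking technique CX} available), so that $\widetilde X=\bigl(\bigoplus_{j}L_\infty(\Delta_j)\bigr)_{\mathbf E(X)}$, $\mathscr C X=\bigl(\bigoplus_{j}L_1(\Delta_j)\bigr)_{\mathbf E(X)(w)}$ with $w(j)=2^{-j}$, and similarly for $Y$, into Theorem~\ref{Thm: komutowanie M z cdot} on pointwise multipliers of amalgams. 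Using $M(L_\infty(\Delta_j),L_\infty(\Delta_j))\equiv L_\infty(\Delta_j)\equiv M(L_1(\Delta_j),L_1(\Delta_j))$ and the fact that a common weight factors out of a multiplier space, $M(E(w),F(w))\equiv M(E,F)$, one obtains
\begin{equation*}
 M(\widetilde X,\widetilde Y)=\Bigl(\bigoplus_{j}L_\infty(\Delta_j)\Bigr)_{M(\mathbf E(X),\mathbf E(Y))}=M(\mathscr C X,\mathscr C Y),\qquad
 \widetilde{M(X,Y)}=\Bigl(\bigoplus_{j}L_\infty(\Delta_j)\Bigr)_{\mathbf E(M(X,Y))}.
\end{equation*}
(In passing this also explains why $M(\mathscr C X,\mathscr C Y)$ is a Tandori-type and not a Ces\`aro-type space: the local pieces satisfy $M(L_1,L_1)\equiv L_\infty$.) Since the outer sequence space of an amalgam $\bigl(\bigoplus_j L_\infty(\Delta_j)\bigr)_E$ determines it up to equivalence of norms, both asserted identities reduce to $M(\mathbf E(X),\mathbf E(Y))=\mathbf E(M(X,Y))$.

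The inclusion $\mathbf E(M(X,Y))\hookrightarrow M(\mathbf E(X),\mathbf E(Y))$ is free: by the generalised H\"older--Rogers inequality $X\odot M(X,Y)\hookrightarrow Y$, and since $\mathbf E$ commutes with the pointwise product (Theorem~\ref{PROP: E komutuje z produktem}), $\mathbf E(X)\odot\mathbf E(M(X,Y))=\mathbf E\bigl(X\odot M(X,Y)\bigr)\hookrightarrow\mathbf E(Y)$, which is precisely the assertion $\mathbf E(M(X,Y))\hookrightarrow M(\mathbf E(X),\mathbf E(Y))$. (On the Tandori side this is the elementary remark that if $\widetilde h\in M(X,Y)$ then $\widetilde{hg}\le\widetilde h\,\widetilde g\in Y$ for $g\in\widetilde X$, so $h\in M(\widetilde X,\widetilde Y)$.) Everything therefore comes down to the reverse inclusion $M(\mathbf E(X),\mathbf E(Y))\hookrightarrow\mathbf E(M(X,Y))$, and this is where each hypothesis does real work.

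Under the first hypothesis $\mathbf E(Y)=\mathbf E(X)\odot M(\mathbf E(X),\mathbf E(Y))$, set $N:=M(\mathbf E(X),\mathbf E(Y))$; since $Y$ has the Fatou property so do $\mathbf E(Y)$ and hence $N$ (a multiplier space inherits the Fatou property from its target). Now a genuine factorization $\mathbf E(X)\odot N=\mathbf E(Y)$ with $N$ having the Fatou property, combined with the cancellation property (Theorem~\ref{THM: cancellation property for multipliers}), is exactly the input that upgrades the free inclusion of the previous paragraph to an equality: cancelling the common left factor $\mathbf E(X)$ turns $\mathbf E(X)\odot\mathbf E(M(X,Y))\hookrightarrow\mathbf E(Y)=\mathbf E(X)\odot N$ into $\mathbf E(M(X,Y))=N$ (so that, a posteriori, $\widetilde Y=\widetilde X\odot\widetilde{M(X,Y)}$ and, by Claim~\ref{LEMMA: X=Y<=>TandoriX=TandoriY}, even $Y=X\odot M(X,Y)$ for r.i.\ $X,Y$). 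Under the second hypothesis $\alpha_X>\beta_Y$ the plan is to reduce to the first one: strong separation of $X$ and $Y$ transfers, through the dictionary between dilations on an r.i.\ space and shifts on its discretization (cf.\ \cite{Ast22}, \cite{Kal92}), to a strong separation of $\mathbf E(X)$ and $\mathbf E(Y)$, and strongly separated spaces always factorize, giving $\mathbf E(Y)=\mathbf E(X)\odot M(\mathbf E(X),\mathbf E(Y))$; alternatively one uses that $\alpha_X>\beta_Y$ already yields the function-level factorization $Y=X\odot M(X,Y)$, which then pushes through $\mathbf E$.

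I expect the delicate point to be making the cancellation step airtight in the first case: the cancellation property removes a common factor standing on the \emph{left} of a multiplier space, and one has to set up the product inclusions so that the factorization hypothesis is exploited in the direction that yields the non-trivial inclusion $M(\mathbf E(X),\mathbf E(Y))\hookrightarrow\mathbf E(M(X,Y))$ rather than merely re-deriving the free one. The conceptual reason the hypotheses cannot be dropped --- and why a naive ``test blockwise'' argument fails --- is that for an arbitrary (non-block) $g\in X$ the block-sup majorant $\sum_j\|g\chi_{\Delta_j}\|_{L_\infty}\chi_{\Delta_j}$ lies in $\widetilde X$ but in general not in $X$, so a block-constant multiplier of $\mathbf E(X)$ into $\mathbf E(Y)$ cannot be tested against all of $X$ without additional structure; it is exactly the factorization (respectively, the strong separation) that supplies it.
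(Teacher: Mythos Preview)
Your overall strategy --- reduce both identities to the commutation $\mathbf E(M(X,Y))=M(\mathbf E(X),\mathbf E(Y))$ via the block-form representations and Theorem~\ref{Thm: komutowanie M z cdot} --- is exactly the paper's, and the reduction itself (including the observation that $M(L_1,L_1)\equiv L_\infty$, so the Ces\`aro and Tandori multiplier spaces coincide) is correct.

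The gaps appear when you try to establish the commutation under each hypothesis.

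\emph{First hypothesis.} Your use of cancellation is not what Theorem~\ref{THM: cancellation property for multipliers} provides: that result cancels a common factor from the two \emph{arguments of a multiplier space}, $M(E\odot F,E\odot G)=M(F,G)$; it does not let you cancel $\mathbf E(X)$ from an inclusion of \emph{products} $\mathbf E(X)\odot\mathbf E(M(X,Y))\hookrightarrow\mathbf E(X)\odot N$ to deduce $\mathbf E(M(X,Y))=N$. The paper's move is to rewrite $M(\mathbf E(X),\mathbf E(Y))$ as $M(\mathbf E(X)\odot\ell_\infty,\,\mathbf E(X)\odot\mathbf E(M(X,Y)))$ and then cancel --- you never set up this form. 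Incidentally, your parenthetical that the first hypothesis forces $Y=X\odot M(X,Y)$ a posteriori is wrong: Example~\ref{PROPOSITION: faktoryzacja z E lepsza niz bez E} exhibits r.i.\ $X,Y$ with $\mathbf E(Y)$ factorizing through $\mathbf E(X)$ but $Y\neq X\odot M(X,Y)$.

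\emph{Second hypothesis.} Here you take a route genuinely different from the paper's, and it is not justified. You propose either that strong separation transfers to the $\mathbf E$-level where ``strongly separated spaces always factorize'', or that $\alpha_X>\beta_Y$ already yields the function-level factorization $Y=X\odot M(X,Y)$. Neither is established, and the second is actually false: for $X=L_{3,1}$, $Y=L_{2,\infty}$ one has $\alpha_X=3>2=\beta_Y$, yet by Theorem~\ref{THEOREM: Factorization of Lpq} the factorization fails. The paper's argument is concrete and quite different: from $\alpha_X>\beta_Y$ it deduces $\alpha_{X\odot Y^\times}>1$ via the Calder\'on-product representation \eqref{EQ: product as 1/2-concavification of Calderon product}, which legitimises applying Proposition~\ref{Prop: E komutuje z Kothe dualem} to $X\odot Y^\times$; then the ``tensor--hom'' duality $(X\odot Y^\times)^\times=M(X,Y)$, together with the commutation of $\mathbf E$ with both $\odot$ (Lemma~\ref{PROP: E komutuje z produktem}) and the K\"othe dual (Proposition~\ref{Prop: E komutuje z Kothe dualem}), yields the desired commutation by computing $[\mathbf E(X\odot Y^\times)]^\times$ in two different ways.
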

	\begin{proof}
		We will only show the equality \eqref{EQ: MTandori = TandoriM} in all details.
		The proof of equality \eqref{M(CX,CY) = CM(X,Y)} proceeds in completely analogous manner.
		
		Using Theorems~\ref{Thm: Tandori sequence representation} and \ref{Thm: Tandori function representation}
		along with Theorem~\ref{Thm: komutowanie M z cdot}, we have
		\begin{equation*}
			M(\widetilde{X},\widetilde{Y})
				= M\left( \Bigl( \bigoplus_{j \in \mathbb{J}} L_\infty(\Delta_j) \Bigr)_{{\bf E}(X)}, \Bigl( \bigoplus_{j \in \mathbb{J}} L_\infty(\Delta_j) \Bigr)_{{\bf E}(Y)} \right)
				= \Bigl( \bigoplus_{j \in \mathbb{J}} L_\infty(\Delta_j) \Bigr)_{M({\bf E}(X),{\bf E}(Y))}.			   
		\end{equation*}
		Assume for a moment that
		\begin{equation} \label{EQ: komutowanie E z M dowod} \tag{$\diamondsuit$}
			M({\bf E}(X),{\bf E}(Y)) = \mathbf{E}(M(X,Y)).
		\end{equation}
		Then, applying Theorems~\ref{Thm: Tandori sequence representation} and \ref{Thm: Tandori function representation} once again to what we did above, we have
		\begin{equation*}
			M(\widetilde{X},\widetilde{Y})
				= \Bigl( \bigoplus_{j \in \mathbb{J}} L_\infty(\Delta_j) \Bigr)_{M({\bf E}(X),{\bf E}(Y))}
			 	\overset{(\diamondsuit)}{=} \Bigl( \bigoplus_{j \in \mathbb{J}} L_\infty(\Delta_j) \Bigr)_{\mathbf{E}(M(X,Y))}
			 	= \widetilde{M(X,Y)}.
		\end{equation*}
		Thus, the equality \eqref{EQ: MTandori = TandoriM} follows and the only thing left to do is justify \eqref{EQ: komutowanie E z M dowod}.
		
		To do this, let us first assume that $\mathbf{E}(Y) = \mathbf{E}(X) \odot M(\mathbf{E}(X), \mathbf{E}(Y))$. Then, using the so-called
		{\it cancelation property} (see Theorem~\ref{THM: cancellation property for multipliers}), we have
		\begin{align*}
			M({\bf E}(X),{\bf E}(Y))
				& = M({\bf E}(X) \odot \ell_\infty(\mathbb{J}), {\bf E}(X) \odot {\bf E}(M(X,Y)) \\
				& = M(\ell_\infty(\mathbb{J}), {\bf E}(M(X,Y))) \\
				& = {\bf E}(M(X,Y)).
		\end{align*}
		This means that \eqref{EQ: komutowanie E z M dowod} follows.
		
		Next, let us assume that $\alpha_X > \beta_Y$. It is known that the product space $X \odot Y^{\times}$ can be identified with the
		$\frac{1}{2}$-concavification of the Calder{\' o}n product $X^{1/2}Y^{1/2}$ (see \eqref{EQ: product as 1/2-concavification of Calderon product};
		cf. \cite[Theorem~2.1]{Sch10} and \cite[Theorem~1(iv)]{KLM14}).
		For notational convenience, let us put
		\begin{equation*}
			Z = \left( X^{1/2}{Y^{\times}}^{1/2} \right)^{(1/2)} \quad \text{ and } \quad W = X^{1/2}{Y^{\times}}^{1/2}.
		\end{equation*}
		It is straightforward to see that the operation of $\frac{1}{2}$-concavification applied to a Banach ideal space divides
		its Boyd indices in half. In consequence, we have
		\begin{equation*}
			\frac{1}{\alpha_{X \odot Y^{\times}}}
				= \frac{1}{\alpha_{Z}}
				= \frac{2}{\alpha_{W}}
				\leqslant 2\left( \frac{\frac{1}{2}}{\alpha_X} + \frac{\frac{1}{2}}{\alpha_{Y^{\times}}} \right)
				= 1 + \frac{1}{\alpha_{X}} - \frac{1}{\beta_{Y}},
		\end{equation*}
		where the last equality follows from \eqref{EQ: Boydy X i dualu X}.
		However, since we assumed that $\alpha_X > \beta_Y$, so
		\begin{equation*}
			\alpha_{X \odot Y^{\times}} > 1.
		\end{equation*}
		For this reason, we are free to use Proposition~\ref{Prop: E komutuje z Kothe dualem}. This, in tandem with the following \enquote{tensor--hom} duality
		\begin{equation*}
			(X \odot Y^{\times})^{\times} \equiv M(X,Y^{\times\times}) \equiv M(X,Y),
		\end{equation*}
		where the last equality is due to the Fatou property of the space $Y$ (see, for example, \cite[Corollary~3]{KLM14}), gives us
		\begin{equation} \label{EQ: rachuneczek}
			\left[ {\bf E}(X \odot Y^{\times}) \right]^{\times} \equiv {\bf E}(M(X,Y))(w),
		\end{equation}
		where $w(j) = 2^{-j}$ for $j \in J$. On the other hand,
		\begin{align*}
			\left[ {\bf E}(X \odot Y^{\times}) \right]^{\times}
				& = \left[ {\bf E}(X) \odot {\bf E}(Y^{\times}) \right]^{\times} \quad (\text{by Proposition~\ref{PROP: E komutuje z produktem}}) \\
				& = \left[ {\bf E}(X) \odot {\bf E}(Y)^{\times}(1/w) \right]^{\times} \quad (\text{using Proposition~\ref{Prop: E komutuje z Kothe dualem}}) \\
				& = \left[ {\bf E}(X) \odot {\bf E}(Y)^{\times} \right]^{\times} (w) \\
				& = {\bf E}(M(X,Y))(w).
		\end{align*}
		Combining the above with \eqref{EQ: rachuneczek} we get \eqref{EQ: komutowanie E z M dowod}. The proof has been completed.
	\end{proof}

	\begin{remark}[About Theorem~\ref{Cor: multipliers between Cesaro and Tandori}] \label{REMARK: M komutuje z E}
		It is not just that \eqref{EQ: komutowanie E z M dowod} implies \eqref{EQ: MTandori = TandoriM}, but actually both conditions are equivalent, namely,
		\begin{equation*}
			M(\widetilde{X},\widetilde{Y}) = \widetilde{M(X,Y)} \quad \text{ if, and only if, } \quad M({\bf E}(X),{\bf E}(Y)) = \mathbf{E}(M(X,Y)).
		\end{equation*}
		In fact, suppose that $M(\widetilde{X},\widetilde{Y}) = \widetilde{M(X,Y)}$. Then, due to
		Theorems~\ref{Thm: Tandori sequence representation}, \ref{Thm: Tandori function representation} and \ref{Thm: komutowanie M z cdot}, we have
		\begin{equation*}
			\Bigl( \bigoplus_{j \in \mathbb{J}} L_\infty(\Delta_j) \Bigr)_{M({\bf E}(X),{\bf E}(Y))} = \Bigl( \bigoplus_{j \in \mathbb{J}} L_\infty(\Delta_j) \Bigr)_{{\bf E}(M(X,Y))}.
		\end{equation*}
		Then, clearly, $M({\bf E}(X),{\bf E}(Y)) = \mathbf{E}(M(X,Y))$. This is what we wanted.
		\demo
	\end{remark}

	\begin{remark}[About the assumptions in Theorem~\ref{Cor: multipliers between Cesaro and Tandori}]
		A weaker version of Theorem~\ref{Cor: multipliers between Cesaro and Tandori}, that is, with the assumption
		that the space $Y$ factorizes through $X$, was proved by Kolwicz, Le{\' s}nik and Maligranda \cite{KLM19} (see \cite[Theorem~6]{KLM19}).
		For an explicit example showing that the space ${\bf E}(Y)$ can factorize through ${\bf E}(X)$, but the space $Y$ does not
		factorize through $X$ we refer to Section~\ref{REMARK: faktoryzacja z E lepsza niz bez E}.
		\demo
	\end{remark}

	Since the space $\ell_p$ factorizes through $\ell_q$ provided $p \leqslant q$, so Theorem~~\ref{Cor: multipliers between Cesaro and Tandori}
	immediately leads us to the following conclusion (see \cite[pp.~69 and~76]{Be96} and \cite[Chapter~7]{GE98}).

	\begin{corollary}[G. Bennett, 1996/ K. G. Grosse-Erdmann, 1998]
		{\it Let $1 < q \leqslant p \leqslant \infty$ with $1/r = 1/q - 1/p$. Then}
		\begin{equation*}
			M(\widetilde{\ell_p},\widetilde{\ell_q}) = \widetilde{\ell_r} \quad \textit{ and } \quad M(ces_p,ces_q) = \widetilde{\ell_r}.
		\end{equation*}
	\end{corollary}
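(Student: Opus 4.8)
The plan is to read off both identities from Theorem~\ref{Cor: multipliers between Cesaro and Tandori} applied with $X = \ell_p$ and $Y = \ell_q$. Both are rearrangement invariant sequence spaces with the Fatou property (the case $p = \infty$ included) living on the same measure space, so the standing hypotheses of that theorem are met, and the only thing to check is one of the two alternative conditions it lists.

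First I would verify that $\ell_q$ factorizes through $\ell_p$. Since $1 < q \leqslant p \leqslant \infty$, we have $M(\ell_p,\ell_q) \equiv \ell_r$ with $1/r = 1/q - 1/p$ (with the convention $\ell_r = \ell_\infty$ when $q = p$, and $M(\ell_\infty,\ell_q) \equiv \ell_q$ when $p = \infty$), and then $\ell_p \odot \ell_r \equiv \ell_q$ because $1/p + 1/r = 1/q$; hence $\ell_q = \ell_p \odot M(\ell_p,\ell_q)$. To pass to the discretizations I would use that $\odot$ commutes with $\mathbf{E}$ (Theorem~\ref{PROP: E komutuje z produktem}), giving $\mathbf{E}(\ell_q) = \mathbf{E}(\ell_p) \odot \mathbf{E}(M(\ell_p,\ell_q))$, and then note that the ideal inclusion $\mathbf{E}(M(\ell_p,\ell_q)) \odot \mathbf{E}(\ell_p) \hookrightarrow \mathbf{E}(\ell_q)$ forces $\mathbf{E}(M(\ell_p,\ell_q)) \hookrightarrow M(\mathbf{E}(\ell_p),\mathbf{E}(\ell_q))$; combining the two, $\mathbf{E}(\ell_q) = \mathbf{E}(\ell_p) \odot M(\mathbf{E}(\ell_p),\mathbf{E}(\ell_q))$, which is precisely the first bullet in the hypothesis of Theorem~\ref{Cor: multipliers between Cesaro and Tandori}. (Alternatively, since $\mathbf{E}(\ell_p) \equiv \ell_p(w_p)$ and $\mathbf{E}(\ell_q) \equiv \ell_q(w_q)$ by Example~\ref{EXAMPLE : ELp computations}, one may check this weighted factorization directly, the weights passing freely through $\odot$ and $M$; and when $q < p$ strictly one could instead invoke the second bullet, $\alpha_{\ell_p} = p > q = \beta_{\ell_q}$.)

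With the hypothesis in place, Theorem~\ref{Cor: multipliers between Cesaro and Tandori} gives $M(\widetilde{\ell_p},\widetilde{\ell_q}) = \widetilde{M(\ell_p,\ell_q)} = \widetilde{\ell_r}$. For the Ces{\' a}ro statement, Hardy's operator $\mathscr{H}$ is bounded on $\ell_p$ and on $\ell_q$ (this is where $q > 1$, and hence $p > 1$ or $p = \infty$, is used), so the second part of the same theorem applies and yields $M(\mathscr{C}\ell_p,\mathscr{C}\ell_q) = \widetilde{M(\ell_p,\ell_q)} = \widetilde{\ell_r}$; since $ces_p = \mathscr{C}\ell_p$ and $ces_q = \mathscr{C}\ell_q$ by definition, this reads $M(ces_p,ces_q) = \widetilde{\ell_r}$. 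I do not anticipate a genuine obstacle, the whole argument being a direct specialization of the abstract machinery; only the boundary cases call for a line of care, namely $q = p$ (then $r = \infty$, $\widetilde{\ell_\infty} \equiv \ell_\infty$, and both sides collapse to the trivial identity $M(Z,Z) \equiv \ell_\infty$) and $p = \infty$ (then $r = q$, $\widetilde{\ell_\infty} \equiv \ell_\infty$, $\mathscr{C}\ell_\infty = ces_\infty$, and $M(\ell_\infty,\cdot)$ acts as the identity functor on its second argument), both of which are immediate.
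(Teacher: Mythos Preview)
Your proof is correct and follows the same route as the paper: verify the factorization hypothesis for the pair $(\ell_p,\ell_q)$ and then invoke Theorem~\ref{Cor: multipliers between Cesaro and Tandori}. The paper dispatches this in a single line (noting that the factorization $\ell_q = \ell_p \odot M(\ell_p,\ell_q)$ holds when $q\leqslant p$), whereas you additionally spell out the passage to the $\mathbf{E}$-level factorization and treat the boundary cases $p=q$ and $p=\infty$ explicitly; these extra details are harmless and the argument is sound.
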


	\subsection{Some examples in the spirit of Bennett} \label{SECTION: Examples a'la Bennett}
	Let $1 \leqslant p, q \leqslant \infty$. Some elementary considerations based on H{\" o}lder--Rogers's inequality
	(see, for example, \cite[Theorem~2]{MP89}) inevitably lead to the conclusion that
	\begin{equation} \label{EQ: lp -> lq <=> M = linfty}
		\ell_p \hookrightarrow \ell_q \quad \text{ if, and only if, } \quad M(\ell_p,\ell_q) = \ell_{\infty}.
	\end{equation}
	Even more generally, but without surprises, for any two rearrangement invariant sequence spaces $E$ and $F$, we have
	\begin{equation} \label{EQ: E -> F <=> M = linfty}
		E \hookrightarrow F \quad \text{ if, and only if, } \quad M(E,F) = \ell_{\infty}.
	\end{equation}
	Asking the question we have to ask here: {\it What is the analogue of \eqref{EQ: lp -> lq <=> M = linfty} for the family
	of $ces_p$-spaces?} Bennett was able to show that
	\begin{equation} \label{EQ: cesp -> cesq <=> M = linfty(wpq)}
		ces_p \hookrightarrow ces_q \quad \text{ if, and only if, } \quad M(ces_p,ces_q) = \ell_{\infty}(w_{p \to q}),
	\end{equation}
	where $w_{p \to q}(n) \coloneqq n^{1/q-1/p}$ for $n \in \mathbb{N}$ and $1 < p,q \leqslant \infty$ (see \cite[p.~69]{Be96}; cf. \cite[Chapter II.7]{GE98}).
	This may come as a slight\footnote{After all, the space $ces_p$ with $1 < p \leqslant \infty$ is neither isomorphic to $\ell_q$
	for any $1 \leqslant q \leqslant \infty$ nor is rearrangement invariant (see \cite[Proposition~15.13, p.~82]{Be96} and, respectively, \cite[Theorem~3.1 (a)]{AM14}).}
	surprise, because the space $\ell_{\infty}(w_{p \to q})$ is definitely larger than $\ell_{\infty}$.
	Anyway, since \eqref{EQ: lp -> lq <=> M = linfty} can be easily generalized to \eqref{EQ: E -> F <=> M = linfty},
	so the question about the general variant of \eqref{EQ: cesp -> cesq <=> M = linfty(wpq)}, seems to be intriguing
	(also in the context of Theorem~\ref{Cor: multipliers between Cesaro and Tandori}).
	
	Below, we will answer\footnote{By the way, this partially answers the question posed in \cite{KLM19} (see paragraph preceding Corollary~4, p.~1164).}
	this question for the most important classes of rearrangement invariant sequence spaces, such as Orlicz, Lorentz and Marcinkiewicz sequence spaces.
	The general case, unfortunately, remains open (see Section~\ref{QUESTION: M(CX,CY) if M(X,Y) = Linfty}).
	
	Let us start with a direct generalization of Bennett's result \eqref{EQ: cesp -> cesq <=> M = linfty(wpq)} to the case of Orlicz sequence spaces.
	
	\begin{example} \label{przyklad Orlicze infty}
		{\it Let $M$ and $N$ be two Young functions with $\alpha_M, \alpha_N > 1$. Suppose that $\ell_M \hookrightarrow \ell_N$. Then}
			\begin{equation*}
				M(ces_M, ces_N) = \ell_\infty\left( w_{M \to N} \right),
			\end{equation*}
		{\it where $w_{M \to N}(n) \coloneqq \norm{\sum_{k=1}^n e_k}_{\ell_N} / \norm{\sum_{k=1}^n e_k}_{\ell_M}$ for $n \in \mathbb{N}$.}
	\end{example}
	\begin{proof}
		Our ultimate goal is to show that
		\begin{equation} \label{EOrliczeMnoznik} \tag{$\clubsuit$}
			M(\E(\ell_M), \E(\ell_N)) = \ell_\infty\left( \frac{M^{-1}(2^{-n})}{N^{-1}(2^{-n})} \right).
		\end{equation}
		Suppose for a moment that (\ref{EOrliczeMnoznik}) has been proved.
		Then, due to Corollary~\ref{COR: blocking technique CX} and \ref{Thm: komutowanie M z cdot}, we have
		\begin{align*}
			M(ces_M, ces_N)
				= \biggl( \bigoplus_{j=0}^{\infty} \ell^{2^j}_\infty \biggr)_{M(\E(X),\E(Y))}
				\overset{(\clubsuit)}{=} \biggl( \bigoplus_{j=0}^{\infty} \ell^{2^j}_\infty \biggr)_{\ell_\infty\left( \frac{M^{-1}(2^{-n})}{N^{-1}(2^{-n})} \right)}
				= \ell_\infty\left( W_{M \to N} \right),
		\end{align*}
		where
		\begin{equation*}
			W_{M \to N}(n) \coloneqq \sum_{k=0}^{\infty} \frac{M^{-1}(2^{-k})}{N^{-1}(2^{-k})} \chi_{\Delta_k}(n) \quad \text{ for } \quad n \in \mathbb{N}.
		\end{equation*}
		Moreover, it is quite easy to see that $W_{M \to N} \approx \left\{ \frac{M^{-1}(1/n)}{N^{-1}(1/n)} \right\}_{n=1}^{\infty}$, that is,
		\begin{equation*}
			\ell_{\infty}(W_{M \to N}) = \ell_{\infty}(w_{M \to N}).
		\end{equation*}
		Indeed, take any $n \in \mathbb{N}$ and choose $k \in \mathbb{N}_0$ with $2^k \leqslant n < 2^{k+1}$. We have
		\begin{align*}
			\frac{M^{-1}(1/n)}{N^{-1}(1/n)}
				\leqslant \frac{M^{-1}(2^{-k})}{N^{-1}(2^{-k-1})}
				\leqslant \frac{M^{-1}(2^{-k})}{2^{-1}N^{-1}(2^{-k})} = 2W_{M \to N}(n).
		\end{align*}
		On the other hand, keeping in mind that the function $N^{-1}$ is concave,
		\begin{align*}
			\frac{M^{-1}(1/n)}{N^{-1}(1/n)}
				\geqslant\frac{M^{-1}(2^{-k-1})}{N^{-1}(2^{-k})}
				\geqslant\frac{2^{-1}M^{-1}(2^{-k})}{N^{-1}(2^{-k})} = \frac{1}{2}W_{M \to N}(n).
		\end{align*}
		Consequently, since $\norm{\sum_{k=1}^n e_k}_{\ell_M} = 1/M^{-1}(1/n)$ and $\norm{\sum_{k=1}^n e_k}_{\ell_N} = 1/N^{-1}(1/n)$
		for $n \in \mathbb{N}$ (see \cite[Corollary~4, p.~58]{Ma89}), so
		\begin{equation*}
			M(ces_M, ces_N) = \ell_\infty\left( W_{M \to N} \right)
				= \ell_{\infty}\left( \frac{M^{-1}(1/n)}{N^{-1}(1/n)} \right)
				= \ell_{\infty}(w_{M \to N}).
		\end{equation*}
		All this means that to complete the proof it is enough to show the equality \eqref{EOrliczeMnoznik}.
		
		Let us focus on that. That said, we need three simple observations. First of all, without loss of generality, we can assume that $a_N = 0$,
		or, which is one thing, that $\ell_N \neq \ell_\infty$. Suppose this is not the case. Now, if $a_N \neq 0$ and $a_M = 0$, then
		\begin{equation*}
			M(\ell_M, \ell_N) = M(\ell_\infty, \ell_N ) = \ell_N \neq \ell_\infty,
		\end{equation*}
		which contradicts our assumption about $M(\ell_M, \ell_N)$. On the other hand, if $a_N \neq 0$, but $a_M \neq 0$, then
		\begin{equation*}
			M(\E(\ell_M), \E(\ell_N)) = M(\ell_\infty, \ell_\infty) = \ell_\infty
		\end{equation*}
		and there is nothing to prove. Secondly, since $M(\ell_M, \ell_N) = \ell_{N\ominus M}$, so our assumption about $M(\ell_M, \ell_N)$ implies that
		$a_{N\ominus M} > 0$, that is,
		\begin{equation} \label{OrliczeDopelniajacaZero}
			N(us) - M(s) \leqslant 0 \quad \text{ for all } \quad u < a_{N\ominus M} \quad \text{ and } \quad 0 \leqslant s \leqslant 1.
		\end{equation}
		Thirdly, and lastly, note that the space $\E(\ell_O)$, where $O$ is a Young function, can be described as an appropriate Musielak--Orlicz space.
		In fact, take $f = \sum_{n=0}^{\infty} x_n \chi_{\Delta_n}$ from $\E(\ell_O)$. We have
		\begin{align*}
			m_O(f)
				& = \sum_{k=1}^{\infty} O \left( \sum_{n=0}^{\infty} x_n \chi_{\Delta_n}(k) \right) \\
				& = \sum_{k=1}^{\infty} \sum_{n=0}^{\infty} O(x_n)\chi_{\Delta_n}(k) \\
				& = \sum_{n=0}^{\infty} 2^n O(x_n) \\
				& = \sum_{n=0}^{\infty} \Phi_O(n,x_n) \\
				& = m_{\Phi_O} \left( \sum_{n=0}^{\infty} x_n e_n \right),
		\end{align*}
		where the Musielak--Orlicz function $\Phi_O$ is defined as
		\begin{equation*}
			\Phi_O(n,u) \coloneqq 2^n O(u) \quad \text{ for } \quad n \in \mathbb{Z}_+ \quad \text{ and } \quad u \geqslant 0.
		\end{equation*}
		Therefore, the equality between modulars $m_O(f) = m_{\Phi_O}(x)$ for all possible choices of $x = \{x_n\}_{n=0}^{\infty}$ shows that $\E(\ell_O) \equiv \ell_{\Phi_O}$.
		
		We are now ready to show \eqref{EOrliczeMnoznik}. Using the above representation of $\E(\ell_O)$ and \cite[Corollary~13]{LT21} (cf. Theorem~\ref{THM: Faktoryzacja MO}),
		we have
		\begin{equation*}
			M(\E(\ell_M), \E(\ell_{N}))=M(\ell_{\Phi_M}, \ell_{\Phi_N})=\ell_{\Phi_N\ominus\Phi_M},
		\end{equation*}
		where  $\Phi_N\ominus\Phi_M$ is a Musielak--Orlicz function given by
		\begin{equation*}
			\Phi_N\ominus\Phi_M(k,u) \coloneqq \sup\limits_{0\leqslant s\leqslant \Phi_M^{-1}(k,1)} \left\{ \Phi_N(k,us)-\Phi_M(k,s) \right\}
				\quad \text{ for } \quad k \in \mathbb{N}_0 \quad \text{ and } \quad u \geqslant 0.
		\end{equation*}
		Note that for a given Musielak--Orlicz function $\Psi$, if $a_{\Psi}(k) \coloneqq \sup \left\{u \geqslant 0 \colon \Psi(u) = 0 \right\} > 0$
		for every $k \in \mathbb{N}_0$, then $\ell_\Psi = \ell_\infty(1/a_{\Psi}(k))$. Thus, in order to prove \eqref{EOrliczeMnoznik}, we need to show that
		$$
			a_{\Phi_N\ominus\Phi_M}(k) = \frac{N^{-1}(2^{-k})}{M^{-1}(2^{-k})} \quad \text{ for } \quad k \in \mathbb{N}_0.
		$$
		Noting that $\Phi_M^{-1}(k,1) = M^{-1}(2^{-k})$, we have
		\begin{align*}
			\Phi_N\ominus\Phi_M(k,u)
				& = \sup\limits_{0\leqslant s\leqslant \Phi_M^{-1}(k,1)} \left\{ \Phi_N(k,us)-\Phi_M(k,s) \right\} \\
				& = \sup\limits_{0\leqslant s\leqslant M^{-1}(2^{-k})} \left\{ 2^kN(us)-2^kM(s) \right\} \\
				& = 2^k \sup\limits_{0 \leqslant s \leqslant M^{-1}(2^{-k})} \left\{ N(us)-M(s) \right\}.
		\end{align*}
		Since $a_N=0$, so we can find $K \in \mathbb{N}$ such that for $k \geqslant K$ we have $N^{-1}(2^{-k})\leqslant a_{N\ominus M}$.
		Then, using the inequality \eqref{OrliczeDopelniajacaZero}, for  $k \geqslant K$ and $0\leqslant s\leqslant M^{-1}(2^{-k})$, we get
		$$
		N\left(\frac{N^{-1}(2^{-k})}{M^{-1}(2^{-k})}s\right) - M(s) \leqslant 0.
		$$
		Denoting $u_k \coloneqq N^{-1}(2^{-k})/M^{-1}(2^{-k})$, we see that $u_k \leqslant a_{\Phi_N\ominus\Phi_M}(k)$ for $k \geqslant K$.
		To show the reverse inequality, take $u > u_k$. We can find $\varepsilon > 0$ for which $u=N^{-1}(2^{-k}+\varepsilon)/M^{-1}(2^{-k})$.
		In consequence,
		\begin{align*}
			\Phi_N\ominus\Phi_M(k,u)
				& = 2^k\sup\limits_{0\leqslant s\leqslant M^{-1}(2^{-k})} \left\lbrace N\left(\frac{N^{-1}(2^{-k}+\varepsilon)}{M^{-1}(2^{-k})}s\right)-M(s) \right\rbrace \\ 
				& \geqslant 2^k \left[N\left(\frac{N^{-1}(2^{-k}+\varepsilon)}{M^{-1}(2^{-k})}M^{-1}(2^{-k})\right)-M(M^{-1}(2^{-k}) \right]\\
				& = 2^k \left( 2^{-k}+\varepsilon-2^{-k} \right)
					= 2^{k}\varepsilon > 0.
		\end{align*}
		This, however, means that $u_k = a_{\Phi_N\ominus\Phi_M}(k)$. For $k < K$, using \eqref{OrliczeDopelniajacaZero}, we can only infer that $a_{\Phi_N\ominus\Phi_M}(k)>0$.
		Nonetheless, up to an equivalent norm, we have  
		$$
			M\left( \E(\ell_M), \E(\ell_{N}) \right)
				= \ell_{\Phi_N\ominus\Phi_M}
				= \ell_\infty\left( \frac{M^{-1}(2^{-k})}{N^{-1}(2^{-k})} \right).
		$$
		In light of what we said before, this completes the proof.
	\end{proof}

	The complementary\footnote{Note that the inclusion $\ell_M \hookrightarrow \ell_N$ means exactly that $M(\ell_M,\ell_N) = \ell_{\infty}$.} case is as follows.

	\begin{example} \label{przyklad Orlicze nieinfty}
		{\it Let $M$ and $N$ be two Young functions with $\alpha_M, \alpha_N > 1$. Suppose that $M(\ell_M, \ell_N) \neq \ell_{\infty}$. Then}
		\begin{equation*}
			M(ces_M, ces_N) = \widetilde{\ell_{N \ominus M}}.
		\end{equation*}
	\end{example}
	\begin{proof}
		To prove the above equality it is enough to show that
		\begin{equation} \label{EQ: EMOrlicze = MEOrlicze} \tag{$\spadesuit$}
			\E \left( M(\ell_M,\ell_N) \right) = M\left( \mathbf{E}(\ell_M), \mathbf{E}(\ell_N) \right)
		\end{equation}
		and invoke the Theorem~\ref{Cor: multipliers between Cesaro and Tandori}. Using the notation form Example~\ref{przyklad Orlicze infty}, we have
		$$
			\E \left( M(\ell_M,\ell_N)=\E (\ell_{N\ominus M}) \right) = \ell_{\Phi_{N\ominus M}}
		$$
		and
		$$
			M(\mathbf{E}(\ell_M), \mathbf{E}(\ell_N))= M(\ell_{\Phi_{M}},\ell_{\Phi_{M}})=\ell_{\Phi_{N}\ominus {\Phi_{M}}}.
		$$
		Therefore, in order to prove \eqref{EQ: EMOrlicze = MEOrlicze}, we need to show
		\begin{equation} \label{EQ: dozakonczeniadowodu}
			\ell_{\Phi_{N\ominus M}} = \ell_{\Phi_{N}\ominus {\Phi_{M}}}.
		\end{equation}
		We claim that
		\begin{equation} \label{Orlicze przyklad 2 rownosc}
			\Phi_{N}\ominus {\Phi_{M}}(k,u) = \Phi_{N\ominus M}(k,u) \quad \text{ for } \quad k \in \mathbb{Z}_+ \quad \text{ and } \quad 0 \leqslant u \leqslant N^{-1}(2^{-k}).
		\end{equation}
		Of course, the above equality means that $\ell_{\Phi_{N\ominus M}}=\ell_{\Phi_{N}\ominus {\Phi_{M}}}$. We have
		$$
			\Phi_N\ominus\Phi_M(k,u) = 2^k \sup\limits_{0\leqslant s\leqslant M^{-1}(2^{-k})} \left\{ N(us)-M(s) \right\}
		$$
		and
		$$
			\Phi_{N\ominus M}(k,u) = 2^k \sup\limits_{0 \leqslant s \leqslant 1} \left\{ N(us)-M(s) \right\}.
		$$
		Thus, to show \eqref{Orlicze przyklad 2 rownosc}, we need to prove that
		\begin{equation} \label{INEQ: N - M}
			N(us) - M(s) < 0 \quad \text{ for } \quad s > M^{-1}(2^{-k}) \quad \text{ and } \quad u \geqslant 0 \quad \text{ small enough}.
		\end{equation}
		To do this, fix $k \in \mathbb{N}_0$, take $u \leqslant N^{-1}(2^{-k})$ and $M^{-1}(2^{-k}) \leqslant s \leqslant 1$. Then
		\begin{align*}
			N(us) - M(s) \leqslant N(u) - M(M^{-1}(2^{-k})) = N(u) - 2^{-k} \leqslant 2^{-k} - 2^{-k} = 0
		\end{align*}
		and \eqref{INEQ: N - M} follows. In consequence, also \eqref{EQ: dozakonczeniadowodu} holds and the proof is finished.
	\end{proof}

	\begin{remark}[About Example~\ref{przyklad Orlicze nieinfty}]
		It should be noted that Example~\ref{przyklad Orlicze nieinfty} above strengthens Theorem~\ref{Cor: multipliers between Cesaro and Tandori}
		in the case of Orlicz sequence spaces. This is clear, because the assumption that $M(\ell_M, \ell_N) \neq \ell_{\infty}$ does not in general
		imply the factorization $\ell_N = \ell_M \odot M(\ell_M, \ell_N)$. Nevertheless, the construction of an appropriate example is not the most
		obvious one (we recommend taking a look at \cite[Proposition~A.1]{KT23}).
		\demo
	\end{remark}

	By specifying Example~\ref{przyklad Orlicze infty} to power functions, we can come full circle and return to Bennett's
	result \eqref{EQ: cesp -> cesq <=> M = linfty(wpq)}.
	However, to make our lives easier, we would like to offer an alternative and somehow more transparent proof of
	\eqref{EQ: cesp -> cesq <=> M = linfty(wpq)} that bypasses the modular specificity of Orlicz spaces and does not directly
	use the technology invented in \cite{DR00} or \cite{LT21}.
	
	\begin{corollary}[G. Bennett, 1996] \label{COR: Bennett M(ces_p,ces_q)}
		{\it Let $1 < p \leqslant q < \infty$ with $1/r = 1/q - 1/p$. Then}
		\begin{equation} \label{NFAK}
			M(ces_p,ces_q) = \ell_\infty(n^{1/r}),
		\end{equation}
		{\it where $\ell_\infty( n^{1/r})
			\coloneqq \left\{ x = \{x_n\}_{n=1}^\infty \colon \norm{x}_{\ell_\infty(n^{1/r})} = \sup\limits_{n \in \mathbb{N}} n^{1/r} \abs{x_n} < \infty \right\}$.
		Moreover, we have}
		\begin{equation} \label{FAK}
			M(ces_q,ces_p) = \widetilde{\ell_r},
		\end{equation}
		{\it where $\widetilde{\ell_r}
			\coloneqq \left\{ x = \{x_n\}_{n=1}^\infty \colon \norm{x}_{\widetilde{\ell_r}} = \left( \sum_{n=1}^\infty \sup\limits_{k \geqslant n} \abs{x_k}^r \right)^{1/r} < \infty \right\}$.}
	\end{corollary}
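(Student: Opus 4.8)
The plan is to prove the two identities by different routes, a split that is forced on us: Theorem~\ref{Cor: multipliers between Cesaro and Tandori} does apply to the factorized identity~\eqref{FAK} but \emph{not} to~\eqref{NFAK}. Indeed, with $X=\ell_q$ and $Y=\ell_p$ (and $1<p\leqslant q<\infty$) the standard multiplier computation for Lebesgue-type spaces recalled in Section~\ref{SECTION: pointwise multipliers} gives $M(\ell_q,\ell_p)=\ell_r$ with $1/r=1/p-1/q$, and hence the factorization $\ell_p=\ell_q\odot M(\ell_q,\ell_p)=\ell_q\odot\ell_r$ holds because $1/q+1/r=1/p$. Since $\alpha_{\ell_p}=p>1$ and $\alpha_{\ell_q}=q>1$, Hardy's operator is bounded on both $\ell_p$ and $\ell_q$, so all hypotheses of the factorization branch of Theorem~\ref{Cor: multipliers between Cesaro and Tandori} are met, and~\eqref{M(CX,CY) = CM(X,Y)} yields at once
\[
	M(ces_q,ces_p)=M(\mathscr{C}\ell_q,\mathscr{C}\ell_p)=\widetilde{M(\ell_q,\ell_p)}=\widetilde{\ell_r},
\]
which is~\eqref{FAK}.

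For~\eqref{NFAK} the picture is genuinely different, and this is the whole point of the statement. When $p\leqslant q$ one has $M(\ell_p,\ell_q)=\ell_\infty$, so $\ell_p\odot M(\ell_p,\ell_q)=\ell_p\neq\ell_q$ (for $p<q$), while $\alpha_{\ell_p}=p\leqslant q=\beta_{\ell_q}$ shows the pair is not strongly separated; thus neither hypothesis of Theorem~\ref{Cor: multipliers between Cesaro and Tandori} is available, and in particular one must \emph{not} expect $M(ces_p,ces_q)=\widetilde{M(\ell_p,\ell_q)}=\ell_\infty$. Instead I would argue directly on the block form. By Corollary~\ref{COR: blocking technique CX} together with Example~\ref{EXAMPLE : ELp computations}, writing $\omega(j)=2^{-j}$, one has $ces_p=\bigl(\bigoplus_j\ell_1(\Delta_j)\bigr)_{{\bf E}(\ell_p)(\omega)}$ and $ces_q=\bigl(\bigoplus_j\ell_1(\Delta_j)\bigr)_{{\bf E}(\ell_q)(\omega)}$, with ${\bf E}(\ell_p)\equiv\ell_p(2^{j/p})$ and ${\bf E}(\ell_q)\equiv\ell_q(2^{j/q})$. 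Now apply Theorem~\ref{Thm: komutowanie M z cdot} on multipliers between amalgams, then the elementary facts $M(\ell_1,\ell_1)=\ell_\infty$, $M(Z(u),W(v))=M(Z,W)(v/u)$ (so the common $ces$-weight $\omega$ cancels), and once more $M(\ell_p,\ell_q)=\ell_\infty$:
\[
	M(ces_p,ces_q)=\Bigl(\bigoplus_j\ell_\infty(\Delta_j)\Bigr)_{M({\bf E}(\ell_p),{\bf E}(\ell_q))}
		=\Bigl(\bigoplus_j\ell_\infty(\Delta_j)\Bigr)_{\ell_\infty\left(2^{j(1/q-1/p)}\right)}
		=\Bigl(\bigoplus_j\ell_\infty(\Delta_j)\Bigr)_{\ell_\infty(2^{j/r})},
\]
where $1/r=1/q-1/p$. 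It then remains to unravel this amalgam: for $x=\{x_n\}$ its norm equals $\sup_j 2^{j/r}\,\|x\chi_{\Delta_j}\|_{\ell_\infty}=\sup_n 2^{j(n)/r}\abs{x_n}$, where $j(n)$ is the unique index with $n\in\Delta_{j(n)}$, and since $2^j\leqslant n<2^{j+1}$ forces $2^{j(n)/r}\approx n^{1/r}$, this is precisely the norm of $\ell_\infty(n^{1/r})$. Hence $M(ces_p,ces_q)=\ell_\infty(n^{1/r})$, proving~\eqref{NFAK}. (The same amalgam computation also reproves~\eqref{FAK}: one only replaces $M(\ell_p,\ell_q)=\ell_\infty$ by $M(\ell_q,\ell_p)=\ell_r$, obtaining $\bigl(\bigoplus_j\ell_\infty(\Delta_j)\bigr)_{\ell_r(2^{j/r})}=\bigl(\bigoplus_j\ell_\infty(\Delta_j)\bigr)_{{\bf E}(\ell_r)}=\widetilde{\ell_r}$ by the block form of Tandori spaces; this is the promised \enquote{transparent} route bypassing the Orlicz machinery of Example~\ref{przyklad Orlicze infty}.)

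The computational core is routine once the block representations of $ces_p$, $ces_q$ and $\widetilde{\ell_r}$ are in hand; the two places deserving attention are the bookkeeping of the weights — in particular the \emph{sign} of the exponent, $1/r=1/q-1/p<0$ in~\eqref{NFAK} versus $1/r=1/p-1/q>0$ in~\eqref{FAK} — and the passage $2^{j/r}\approx n^{1/r}$ for $n\in\Delta_j$, which is immediate from $2^j\leqslant n<2^{j+1}$ in either sign regime. The conceptual obstacle, and the reason~\eqref{NFAK} cannot be deduced from Theorem~\ref{Cor: multipliers between Cesaro and Tandori}, is that the discretization functor ${\bf E}$ does not commute with $M$ for the pair $(\ell_p,\ell_q)$ when $p<q$: here $M({\bf E}(\ell_p),{\bf E}(\ell_q))=\ell_\infty(2^{j/r})\neq\ell_\infty(\mathbb{Z}_+)={\bf E}(\ell_\infty)={\bf E}(M(\ell_p,\ell_q))$, and the discrepancy between these two spaces is exactly the weight $n^{1/r}$ that surfaces in Bennett's formula.
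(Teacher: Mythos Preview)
Your proof is correct and, for~\eqref{NFAK}, follows the same route as the paper: block-form representation of $ces_p$ and $ces_q$ via Corollary~\ref{COR: blocking technique CX}, Theorem~\ref{Thm: komutowanie M z cdot} to pass the multiplier inside the amalgam, the cancellation of the common weight $\omega$, the identification $M(\mathbf{E}(\ell_p),\mathbf{E}(\ell_q))=\ell_\infty(2^{j(1/q-1/p)})$, and finally $2^{j(n)/r}\approx n^{1/r}$. Your closing observation that $\mathbf{E}$ fails to commute with $M$ here is exactly the content of Remark~\ref{REMARK: M komutuje z E} specialized to this pair.

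The one genuine difference is in~\eqref{FAK}: the paper deliberately runs the \emph{same} block-form computation for both identities (this is the advertised \enquote{transparent} proof bypassing the Orlicz machinery), obtaining $\bigl(\bigoplus_j\ell_\infty^{2^j}\bigr)_{\ell_r(2^{j/r})}=\widetilde{\ell_r}$ directly. You instead invoke Theorem~\ref{Cor: multipliers between Cesaro and Tandori} as your primary argument for~\eqref{FAK}, which is shorter and perfectly legitimate --- the factorization $\ell_p=\ell_q\odot\ell_r$ (equivalently, strong separation $\alpha_{\ell_q}=q>p=\beta_{\ell_p}$ when $p<q$) does put you squarely in its hypotheses. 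You then recover the paper's route in your parenthetical remark. Either way works; the paper's choice is motivated by wanting a uniform, self-contained argument that treats the two cases symmetrically and makes visible exactly where the weight $n^{1/r}$ emerges in the non-factorizing case.
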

	\begin{proof}
		Let us start from the beginning, that is, from the proof of \eqref{NFAK}. We have
		\begin{align*}
			M({\bf E}(\ell_p),{\bf E}(\ell_q))
				& \equiv M\left( \ell_p(2^{n/p}), \ell_q(2^{n/q}) \right) \quad (\text{since ${\bf E}(\ell_r) \equiv \ell_r(2^{n/r})$ for $1 \leqslant r < \infty$}) \\
				& \equiv M( \ell_p,\ell_q)\left( 2^{n(1/q - 1/p)} \right) \quad (\text{because $M(\ell_p(w),\ell_q(\upsilon)) \equiv M(\ell_p,\ell_q)(\upsilon/w)$}) \\
				& \equiv \ell_\infty\left( 2^{n(1/q - 1/p)} \right) \quad (\text{since $\ell_p \hookrightarrow \ell_q$, so $M(\ell_p, \ell_q) \equiv \ell_\infty$})
		\end{align*}
		Due to Theorem~\ref{Thm: Tandori sequence representation}, one can identify the space $ces_p$ with $\bigl( \bigoplus_{n=0}^{\infty} \ell_1^{2^n} \bigr)_{\ell_p(2^{n/p})}$.
		This, together with the fact that $M(\ell_1^{2^n}, \ell_1^{2^n}) \equiv \ell_\infty^{2^n}$ and Theorem~\ref{Thm: komutowanie M z cdot}
		(cf. \cite[Theorem~1]{Kel71}), gives us that
		\begin{equation*}
			M(ces_p,ces_q)
				= M\left( \biggl( \bigoplus_{n=0}^{\infty} \ell_1^{2^n} \biggr)_{\ell_p(2^{n/p})} , \biggl( \bigoplus_{n=0}^{\infty} \ell_1^{2^n} \biggr)_{\ell_q(2^{n/q})} \right)
				\equiv \biggl( \bigoplus_{n=0}^\infty \ell_\infty^{2^n} \biggr)_{\ell_{\infty}\left( 2^{n\left( 1/q - 1/p \right)} \right)}.
		\end{equation*}
		However, since
		\begin{equation*}
			\biggl( \bigoplus_{n=0}^\infty \ell_\infty^{2^n} \biggr)_{\ell_{\infty}\left( 2^{n\left( 1/q - 1/p \right)} \right)} 
				= \ell_\infty(n^{1/q - 1/p})
				\equiv \ell_\infty(n^{1/r}),
		\end{equation*}
		so \eqref{NFAK} follows (in case, the proof of the less obvious equality is included in the first part of the proof of Example~\ref{przyklad Orlicze infty}).
		
		Now, let us move on to the proof of \eqref{FAK}. We have
		\begin{align*}
			M(ces_q,ces_p)
				& = M\left( \biggl( \bigoplus_{n=0}^{\infty} \ell_1^{2^n} \biggr)_{\ell_q(2^{n/q})} , \biggl( \bigoplus_{n=0}^{\infty} \ell_1^{2^n} \biggr)_{\ell_p(2^{n/p})} \right)
						\quad (\text{just as above}) \\
				& \equiv \biggl( \bigoplus_{n=0}^\infty \ell_\infty^{2^n} \biggr)_{M(\ell_q,\ell_p)\left( 2^{n\left( 1/q - 1/p \right)} \right)}
						\quad (\text{by Theorem~\ref{Thm: komutowanie M z cdot}}) \\
				& \equiv \biggl( \bigoplus_{n=0}^\infty \ell_\infty^{2^n} \biggr)_{\ell_{r}(2^{n/r})} \quad (\text{since $M(\ell_q,\ell_p) \equiv \ell_r$ with $1/r = 1/q - 1/p$}) \\
				& \equiv	\biggl( \bigoplus_{n=0}^\infty \ell_\infty^{2^n} \biggr)_{{\bf E}(\ell_r)}
						\quad (\text{because ${\bf E}(\ell_r) \equiv \ell_r(2^{n/r})$ for $1 \leqslant r < \infty$}) \\
				& = \widetilde{\ell_r} \quad (\text{due to Theorem~\ref{Thm: Tandori sequence representation}}).
		\end{align*}
		The proof has been completed.
	\end{proof}

	Let us now focus on Lorentz and Marcinkiewicz sequence spaces.

	\begin{lemma} \label{EXAMPLE: CLorentz i CMarcinkiewicz a'la Bennett}
		{\it Let $X$ and $Y$ be two r.i. sequence space with $\alpha_X, \alpha_Y > 1$. Further, let $\varphi$ and $\psi$ be two increasing concave function with
		$1 < \delta_{\varphi}, \delta_{\psi} < \infty$. Then}
		\begin{equation*}
			M(\mathscr{C}m_{\varphi},\mathscr{C}Y) = \widetilde{Y(w_{\varphi})} \quad \textit{ and } \quad
				M(\mathscr{C}X,\mathscr{C}\lambda_{\psi}) = \widetilde{X^{\times}(W_{\psi})},
		\end{equation*}
		{\it where $w_{\varphi}(n) \coloneqq 1/\varphi(n)$ and, respectively, $W_{\psi}(n) \coloneqq \psi(n)/n$ for $n \in \mathbb{N}$. In particular,}
		\begin{equation*}
			M(\mathscr{C}m_{\varphi},\mathscr{C}m_{\psi}) = \widetilde{\ell_{\infty}(\psi/\varphi)},
				\quad M(\mathscr{C}m_{\varphi},\mathscr{C}\lambda_{\psi}) = \widetilde{\ell_1(\Delta\psi/\varphi)}
				\quad \textit{ and } \quad M(\mathscr{C}\lambda_{\varphi},\mathscr{C} \lambda_{\psi}) = \widetilde{\ell_{\infty}(\psi/\varphi)},
		\end{equation*}
		{\it where $\Delta\psi(n) \coloneqq \psi(n+1) - \psi(n)$ for $n \in \mathbb{N}$.}
	\end{lemma}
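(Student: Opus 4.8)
The plan is to run all three constructions $\mathscr{C}(\cdot)$, $M(\cdot,\cdot)$ through the discretization functor $\mathbf{E}$, perform the multiplier computation on the (essentially one‑dimensionally weighted) sequence side, and then reverse the discretization.

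\emph{Discretization and the amalgam of multipliers.} Since $m_\varphi$, $\lambda_\psi$ are rearrangement invariant with the Fatou property and (by the hypotheses on $\varphi,\psi$) satisfy $\alpha_{m_\varphi}=\gamma_\varphi>1$, $\alpha_{\lambda_\psi}=\gamma_\psi>1$, while $\alpha_X,\alpha_Y>1$, Corollary~\ref{COR: blocking technique CX} applies and gives, for every space $Z$ occurring below, $\mathscr{C}Z\equiv\bigl(\bigoplus_{j\ge0}\ell_1^{2^j}\bigr)_{\mathbf{E}(Z)(w)}$ with $w(j)=2^{-j}$. Feeding this into Theorem~\ref{Thm: komutowanie M z cdot}, and using $M(\ell_1^{2^j},\ell_1^{2^j})\equiv\ell_\infty^{2^j}$ together with the cancellation $M(A(w),B(w))=M(A,B)$, we obtain $M(\mathscr{C}X,\mathscr{C}Y)\equiv\bigl(\bigoplus_{j\ge0}\ell_\infty^{2^j}\bigr)_{M(\mathbf{E}(X),\mathbf{E}(Y))}$. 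Thus everything reduces to identifying $M(\mathbf{E}(X),\mathbf{E}(Y))$ and then recognising the resulting amalgam.

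\emph{Carrier‑space computations and the multiplier step.} By Example~\ref{EXAMPLE : carrier spaces} the carrier of $m_\varphi$ is $\ell_\infty(\varphi)$ and that of $\lambda_\psi$ is $\ell_1(\Delta\psi)$, so Proposition~\ref{PROP: EX <-> carrier} (formula \eqref{EX^s = EX}, legitimate since $\varphi,\psi$ have non‑trivial dilation exponents) yields $\mathbf{E}(m_\varphi)=\mathbf{E}(\ell_\infty(\varphi))$ and $\mathbf{E}(\lambda_\psi)=\mathbf{E}(\ell_1(\Delta\psi))$. Quasi‑concavity gives $\varphi(n)\approx\varphi(2^j)$ on $\Delta_j$, and $\delta_\psi<\infty$ forces $\psi(2^{j+1})-\psi(2^j)\approx\psi(2^j)$, whence $\mathbf{E}(m_\varphi)\approx\ell_\infty(\{\varphi(2^j)\}_{j\ge0})$ and $\mathbf{E}(\lambda_\psi)\approx\ell_1(\{\psi(2^j)\}_{j\ge0})$. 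Combining this with the elementary identities $M(\ell_\infty(v),Z)=Z(1/v)$ and $M(Z,\ell_1(v))=Z^{\times}(v)$ (consequences of $M(A(w),B)=M(A,B)(1/w)$, $M(A,B(w))=M(A,B)(w)$ and $M(A,L_1)\equiv A^{\times}$) gives $M(\mathbf{E}(m_\varphi),\mathbf{E}(Y))\approx\mathbf{E}(Y)(\{1/\varphi(2^j)\})$ and $M(\mathbf{E}(X),\mathbf{E}(\lambda_\psi))\approx\mathbf{E}(X)^{\times}(\{\psi(2^j)\})$.

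\emph{Anti‑discretization.} Since $w_\varphi=1/\varphi\approx 1/\varphi(2^j)$ on $\Delta_j$, one has $\mathbf{E}(Y(w_\varphi))\approx\mathbf{E}(Y)(\{1/\varphi(2^j)\})$; and since $\mathbf{E}(X)^{\times}\equiv\mathbf{E}(X^{\times})(\{2^{-j}\})$ by Proposition~\ref{Prop: E komutuje z Kothe dualem} (applicable because $X$ r.i. makes $\mathrm{Ave}$ bounded on $X^{\times}$), while $2^{-j}\psi(2^j)=W_\psi(2^j)\approx W_\psi$ on $\Delta_j$, we get $\mathbf{E}(X)^{\times}(\{\psi(2^j)\})\approx\mathbf{E}(X^{\times}(W_\psi))$. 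Substituting back into the amalgam from the first step and re‑reading it through the block‑form representation (Theorem~\ref{Thm: Tandori sequence representation}) yields $M(\mathscr{C}m_\varphi,\mathscr{C}Y)=\bigl(\bigoplus_j\ell_\infty^{2^j}\bigr)_{\mathbf{E}(Y(w_\varphi))}=\widetilde{Y(w_\varphi)}$ and $M(\mathscr{C}X,\mathscr{C}\lambda_\psi)=\widetilde{X^{\times}(W_\psi)}$. The one genuine subtlety — and the step I expect to be the main obstacle — is that $Y(w_\varphi)$ and $X^{\times}(W_\psi)$ are \emph{not} rearrangement invariant, so re‑applying Theorem~\ref{Thm: Tandori sequence representation} first requires checking that they have non‑trivial Boyd indices; this is precisely where $\alpha_X,\alpha_Y>1$ and the non‑triviality of the dilation exponents of $\varphi,\psi$ enter, via the routine estimate that twisting an r.i. space of non‑trivial Boyd indices by a monotone weight with finite, non‑zero dilation exponents preserves non‑triviality. (Alternatively, since $\widetilde{A}=\widetilde{A^{\bigstar}}$ and $A^{\bigstar}$ is determined up to equivalence of norms by $\mathbf{E}(A)$ through \eqref{KALTON-equvalent norm}, one can bypass the index check altogether.)

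\emph{The particular cases.} These follow by specialisation. Taking $Y=m_\psi$ or $Y=\lambda_\psi$, and $X=m_\varphi$ or $X=\lambda_\varphi$ (using $(m_\varphi)^{\times}=\lambda_{\widetilde{\varphi}}$ and $(\lambda_\varphi)^{\times}=m_{\widetilde{\varphi}}$ with $\widetilde{\varphi}(t)\coloneqq t/\varphi(t)$), one repeats the carrier‑space computation of the second step to see that in every case the discretization of the answer is, up to equivalence, a purely weighted $\ell_\infty$ or $\ell_1$ sequence space with weight $\{\psi(2^j)/\varphi(2^j)\}_{j\ge0}$. Since $\mathbf{E}(\ell_\infty(\psi/\varphi))\approx\ell_\infty(\{\psi(2^j)/\varphi(2^j)\})$ and $\mathbf{E}(\ell_1(\Delta\psi/\varphi))\approx\ell_1(\{\psi(2^j)/\varphi(2^j)\})$ (again by quasi‑concavity and $\delta_\varphi,\delta_\psi<\infty$), inserting this into the amalgam of the first step produces $\widetilde{\ell_\infty(\psi/\varphi)}$, $\widetilde{\ell_1(\Delta\psi/\varphi)}$ and $\widetilde{\ell_\infty(\psi/\varphi)}$ respectively, which completes the proof.
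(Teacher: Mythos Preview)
Your argument is correct and proceeds along the same blocking-technique lines as the paper, but the packaging differs in a way worth noting. The paper first verifies the $\mathbf{E}$-level factorization $\mathbf{E}(m_\varphi)\odot M(\mathbf{E}(m_\varphi),\mathbf{E}(Y))=\mathbf{E}(Y)$ (which is immediate since $\mathbf{E}(m_\varphi)\approx\ell_\infty(\varphi(2^j))$) and then invokes Theorem~\ref{Cor: multipliers between Cesaro and Tandori} as a black box to obtain $M(\mathscr{C}m_\varphi,\mathscr{C}Y)=\widetilde{M(m_\varphi,Y)}$; the identification $M(m_\varphi,Y)=[Y(w_\varphi)]^{\bigstar}$ is then imported from an external reference, and the passage $\widetilde{[Y(w_\varphi)]^{\bigstar}}=\widetilde{Y(w_\varphi)}$ is trivial. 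You instead unpack Theorem~\ref{Cor: multipliers between Cesaro and Tandori} and compute $M(\mathbf{E}(m_\varphi),\mathbf{E}(Y))$ directly, which has the advantage of being self-contained (no appeal to outside multiplier results) but forces you at the anti-discretization stage to apply Theorem~\ref{Thm: Tandori sequence representation} to the non-r.i.\ space $Y(w_\varphi)$. You correctly flag this as the delicate point; the Boyd-index route you sketch is workable but fiddly, whereas the symmetrization bypass you mention in parentheses is exactly how the paper sidesteps the issue---by landing on $\widetilde{M(m_\varphi,Y)}$ with $M(m_\varphi,Y)$ already r.i., the paper never leaves the hypotheses of Theorem~\ref{Thm: Tandori sequence representation}. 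In short: same machinery, but the paper's route is more modular and avoids the one technical wrinkle you had to talk your way around.
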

	\begin{proof}
		The argument basically rests on the following observation: the spaces ${\bf E}(Y)$ and ${\bf E}(\lambda_{\psi})$ can be factorized through ${\bf E}(m_{\varphi})$
		and, respectively, ${\bf E}(X)$. Having realized this, in view of Theorem~\ref{Cor: multipliers between Cesaro and Tandori}, we have
		\begin{equation} \label{EQ: MCmfi,CY}
			M(\mathscr{C}m_{\varphi},\mathscr{C}Y) = \widetilde{M(m_{\varphi},Y)} = \widetilde{\left[ Y(w_{\varphi}) \right]^{\bigstar}} \equiv \widetilde{Y(w_{\varphi})},
		\end{equation}
		and, respectively,
		\begin{equation*}
			M(\mathscr{C}X,\mathscr{C}\lambda_{\psi}) = \widetilde{M(X,\lambda_{\psi})} = \widetilde{\left[ X^{\times}(W_{\psi}) \right]^{\bigstar}} \equiv \widetilde{X^{\times}(W_{\psi})},
		\end{equation*}
		where every second equality follows from \cite[Lemma~4.3]{KT23}. Therefore, it remains to justify the observation with which we started.
		This is, however, straightforward. We have
		\begin{align*}
			{\bf E}(m_{\varphi}) \odot M({\bf E}(m_{\varphi}),{\bf E}(Y))
				& = \ell_{\infty}(\varphi(2^n)) \odot M(\ell_{\infty}(\varphi(2^n)),{\bf E}(Y)) \quad (\text{by Proposition~\ref{PROP: EX <-> carrier}})\\
				& \equiv \ell_{\infty}(\varphi(2^n)) \odot M(\ell_{\infty},{\bf E}(Y))(1/\varphi(2^n)) \\
				& \equiv \ell_{\infty}(\varphi(2^n)) \odot {\bf E}(Y)(1/\varphi(2^n)) \quad (\text{since $M(\ell_{\infty},E) \equiv E$}) \\
				& \equiv \ell_{\infty} \odot {\bf E}(Y) \\
				& \equiv {\bf E}(Y).
		\end{align*}
		In a completely similar way
		$${\bf E}(\lambda_{\varphi}) = {\bf E}(X) \odot M({\bf E}(X),{\bf E}(\lambda_{\varphi})).$$
		
		As for the last part, we will now only show that
		$$M(\mathscr{C}m_{\varphi},\mathscr{C}m_{\psi}) = \widetilde{\ell_{\infty}(\psi/\varphi)}.$$
		The rest is essentially the same. We have
		\begin{align*}
			M(\mathscr{C}m_{\varphi},\mathscr{C}m_{\psi})
				& = \widetilde{m_{\psi}(w_{\varphi})} \quad (\text{using \eqref{EQ: MCmfi,CY}}) \\
				& = \widetilde{\left[ \ell_{\infty}(\psi) \right]^{\bigstar}(w_{\varphi})} \quad (\text{because $m_{\psi} = \left[ \ell_{\infty}(\psi) \right]^{\bigstar}$}) \\
				& \equiv \widetilde{\left[ \ell_{\infty}(\psi) \right](w_{\varphi})} \quad (\text{since the function $n \rightsquigarrow \widetilde{x}(n)/\varphi(n)$ is non-increasing}) \\
				& \equiv \widetilde{\ell_{\infty}(\psi/\varphi)}.
		\end{align*}
	\end{proof}

	\begin{remark}[About Lemma~\ref{EXAMPLE: CLorentz i CMarcinkiewicz a'la Bennett}] \label{REMARK: warunki rownowazne M = linfty}
		Let us keep the setting of Lemma~\ref{EXAMPLE: CLorentz i CMarcinkiewicz a'la Bennett}. Then the following four conditions
		\begin{enumerate}
			\item[(a)] $M(m_{\varphi},Y) = \ell_{\infty}$,
			\item[(b)] $[Y(w_{\varphi})]^{\bigstar} = \ell_{\infty}$,
			\item[(c)] $m_{\varphi} \hookrightarrow Y$,
			\item[(d)] $w_{\varphi} \in Y$,
		\end{enumerate}
		are equivalent. A short explanation seems in order. To start with, $(a)$ and $(b)$ are equivalent,
		because $M(m_{\varphi},Y) = [Y(w_{\varphi})]^{\bigstar}$ (see \cite[Lemma~4.3]{KT23}).
		Also, it is clear that $(a)$ implies $(c)$ and {\it vice versa}.
		Next, we claim that $(c)$ and $(d)$ are equivalent. Indeed, if $w_{\varphi} \in Y$, then $\ell_{\infty}(\varphi) \hookrightarrow Y$ and,
		consequently,
		\begin{equation*}
			m_{\varphi} = \left[ \ell_{\infty}(\varphi) \right]^{\bigstar} \hookrightarrow Y^{\bigstar} \equiv Y,
		\end{equation*}
		where the first equality is due to our assumption that $\delta_{\varphi} > 1$.
		But this means that $(c)$ holds. On the other hand, assuming that $m_{\varphi} \hookrightarrow Y$, we have
		\begin{equation*}
			w_{\varphi} = w_{\varphi}^{\star} \in [\ell_{\infty}(\varphi)]^{\bigstar} = m_{\varphi} \hookrightarrow Y. 
		\end{equation*}
		Therefore, $(c)$ implies $(d)$ and this proves our claim.
		\demo
	\end{remark}

	Therefore, in the light of what we have noted above, the analogue of Bennett's result for Lorentz and Marcinkiewicz sequence spaces is as follows.
	
	\begin{example} \label{EXAMPLE: Lorentz i Marcinkiewicz Bennett <=>}
		{\it Let $X$ and $Y$ be a r.i. sequence space with $\alpha_X, \alpha_Y > 1$. Let, moreover, $\varphi$ and $\psi$ be two increasing concave function with
			$1 < \delta_{\varphi}, \delta_{\psi} < \infty$. Suppose that $m_{\varphi} \hookrightarrow Y$ and $X \hookrightarrow \lambda_{\psi}$. Then}
		\begin{equation*}
			M(\mathscr{C}m_{\varphi},\mathscr{C}Y) = \ell_{\infty}
				\quad \textit{ and, respectively, } \quad M(\mathscr{C}X, \mathscr{C}\lambda_{\psi}) = \ell_{\infty}.
		\end{equation*}
		{\it In particular, for any increasing concave function $\phi$ with $1 < \delta_{\phi} < \infty$, we have}
		\begin{equation*}
			M(\mathscr{C}m_{\varphi},\mathscr{C}m_{\phi}) = \ell_{\infty} \quad [\textit{resp. } M(\mathscr{C}\lambda_{\phi},\mathscr{C}\lambda_{\psi}) = \ell_{\infty}]
				\quad \textit{ if, and only if, } \quad \phi \preccurlyeq \varphi \quad [\textit{resp. } \psi \preccurlyeq \phi].
		\end{equation*}
		{\it In addition, assuming that $\varphi(n) \leqslant \phi(n)$ for all $n \in \mathbb{N}$, we also have}
		\begin{equation*}
			M(\mathscr{C}m_{\varphi},\mathscr{C}\lambda_{\phi}) = \ell_{\infty} \quad \textit{ if, and only if, } \quad
				\sum_{n=1}^{\infty} \frac{\phi(n+1) - \phi(n)}{\varphi(n)} < \infty.
		\end{equation*}
	\end{example}
	\begin{proof}
		The first part that $M(\mathscr{C}m_{\varphi},\mathscr{C}Y) = \ell_{\infty}$ and $M(\mathscr{C}X, \mathscr{C}\lambda_{\psi}) = \ell_{\infty}$
		is clear and basically follows from Lemma~\ref{EXAMPLE: CLorentz i CMarcinkiewicz a'la Bennett} combined with Remark~\ref{REMARK: warunki rownowazne M = linfty}.
		
		Thus, let us move on to the second part.
		We want to show that $M(\mathscr{C}m_{\varphi},\mathscr{C}m_{\phi}) = \ell_{\infty}$ if, and only if, $\phi \preccurlyeq \varphi$.
		Suppose that $\phi \preccurlyeq \varphi$.
		Then $m_{\varphi} \hookrightarrow m_{\phi}$ and, in consequence, $\mathscr{C}m_{\varphi} \hookrightarrow \mathscr{C}m_{\phi}$.
		Therefore, a constant sequence belongs to $M(\mathscr{C}m_{\varphi},\mathscr{C}m_{\phi})$. Moreover, since the space $M(m_{\varphi},m_{\phi})$
		is rearrangement invariant, so in view of Lemma~\ref{EXAMPLE: CLorentz i CMarcinkiewicz a'la Bennett}, we have
		\begin{equation*}
			\ell_{\infty} \hookrightarrow M(\mathscr{C}m_{\varphi},\mathscr{C}m_{\phi}) = \widetilde{M(m_{\varphi},m_{\phi})} \hookrightarrow \widetilde{\ell_{\infty}} \equiv \ell_{\infty}.
		\end{equation*}
		To show the second implication, assume that $M(\mathscr{C}m_{\varphi},\mathscr{C}m_{\phi}) = \ell_{\infty}$. Then, of course, $\mathscr{C}m_{\varphi} \hookrightarrow \mathscr{C}m_{\phi}$.
		However, mimicking the argument used in Claim~\ref{LEMMA: X=Y<=>TandoriX=TandoriY}, it is straightforward to see that then $m_{\varphi} \hookrightarrow m_{\phi}$.
		But this, in turn, means that $\phi \preccurlyeq \varphi$.
		The second statement that $M(\mathscr{C}\lambda_{\phi},\mathscr{C}\lambda_{\psi}) = \ell_{\infty}$ if, and only if $\phi \preccurlyeq \psi$ has virtually the same proof.
		
		Finally, let us move on to the last part. First, we claim that $m_{\varphi} \hookrightarrow \lambda_{\phi}$ if, and only if,
		\begin{equation} \label{EQ: marcinkiewicz w lorentza}
			\sum_{n=1}^{\infty} \frac{\phi(n+1) - \phi(n)}{\varphi(n)} < \infty.
		\end{equation}
		To see this, suppose that $m_{\varphi} \hookrightarrow \lambda_{\phi}$. Since $1/\varphi \in m_{\varphi}$, so also $1/\varphi \in \lambda_{\phi}$. But the latter means that
		$\norm{1/\varphi}_{\lambda_{\phi}} = \sum_{n=1}^{\infty} \frac{1}{\varphi(n)}\left[ \phi(n+1) - \phi(n) \right] < \infty$. Conversely, suppose that \eqref{EQ: marcinkiewicz w lorentza} holds.
		Take $f \in m_{\varphi}$. Since $m_{\varphi} = [\ell_{\infty}(\varphi)]^{\bigstar}$, so it is clear that $f \in m_{\varphi}$ if, and only if, there is $C = C(f) > 0$
		such that $f^{\star}(n) \leqslant C/\varphi(n)$ for $n \in \mathbb{N}$. Thus, we have
		\begin{equation*}
			\norm{f}_{\lambda_{\phi}} = \sum_{n=1}^{\infty} f^{\star}(n) \left[ \phi(n+1) - \phi(n) \right] \leqslant C \sum_{n=1}^{\infty} \frac{\phi(n+1) - \phi(n)}{\varphi(n)} < \infty.
		\end{equation*}
		In other words, $f \in \lambda_{\phi}$ and our claim follows.
		We are now ready to complete the proof. Suppose that \eqref{EQ: marcinkiewicz w lorentza} holds. Then $m_{\varphi} \hookrightarrow \lambda_{\phi}$ and, consequently,
		$M(m_{\varphi},\lambda_{\phi}) = \ell_{\infty}$. Moreover, due to Theorem~\ref{Cor: multipliers between Cesaro and Tandori}, we have
		\begin{equation} \label{EQ: ===}
			M(\mathscr{C}m_{\varphi},\mathscr{C}\lambda_{\phi}) = \widetilde{M(m_{\varphi},\lambda_{\phi})} = \widetilde{\ell_{\infty}} \equiv \ell_{\infty}.
		\end{equation}
		Conversely, assume that $M(\mathscr{C}m_{\varphi},\mathscr{C}\lambda_{\phi}) = \ell_{\infty}$. Then, in view of \eqref{EQ: ===}, $\widetilde{M(m_{\varphi},\lambda_{\phi})} = \ell_{\infty}$.
		However, since the space $M(m_{\varphi},\lambda_{\phi})$ is rearrangement invariant and $\widetilde{\ell_{\infty}} \equiv \ell_{\infty}$, so it follows from Lemma~\ref{LEMMA: X=Y<=>TandoriX=TandoriY}
		that $M(m_{\varphi},\lambda_{\phi}) = \ell_{\infty}$. But this means that $m_{\varphi} \hookrightarrow \lambda_{\phi}$, which in the context of what we said above is
		equivalent to the fact that $\sum_{n=1}^{\infty} \frac{\phi(n+1) - \phi(n)}{\varphi(n)} < \infty$.
	\end{proof}
	
	\subsection{Pointwise products} \label{SUBSECTION: pointwise products}
	
	The overall result about the pointwise product looks like this.
	
	\begin{theorem}[Pointwise product of amalgams] \label{PROP: Pointwise multipliers of amalgams}
		{\it Let $E$ and $F$ be two Banach sequence spaces both defined on $J$. Further, let $\{X_j\}_{j \in J}$ and $\{Y_j\}_{j \in J}$
			be two families of Banach ideal spaces. Suppose that for any $j \in J$ both spaces $X_j$ and $Y_j$ are defined on the same measure
			space and have the Fatou property. Then}
		\begin{equation*}
			\Bigl( \bigoplus_{j \in J} X_j \Bigr)_E \odot \Bigl( \bigoplus_{j \in J} Y_j \Bigr)_F
				\equiv \Bigl( \bigoplus_{j \in J} X_j \odot Y_j \Bigr)_{E \odot F}.
		\end{equation*}
	\end{theorem}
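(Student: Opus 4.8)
The plan is to reduce the identity to two facts that are already (essentially) available: the analogous statement for the Calder\'on product, and the good behaviour of $p$-convexification/concavification with respect to amalgams. Recall from \eqref{EQ: product as 1/2-concavification of Calderon product} that for any two Banach ideal spaces $Z$ and $W$ defined on the same measure space one has $Z \odot W \equiv (Z^{1/2}W^{1/2})^{(1/2)}$, where $(\cdot)^{(1/2)}$ is the $\tfrac12$-concavification. Since $X_j$, $Y_j$ all have the Fatou property, this lets me trade every occurrence of a pointwise product for the $\tfrac12$-concavification of a Calder\'on product, to which Theorem~\ref{THM: Calderon product of amalgams} applies.

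The first step is a lemma of independent interest: $p$-convexification and $p$-concavification commute with amalgams, that is, for a Banach sequence space $G$ on $J$, a family $\{Z_j\}_{j \in J}$ of (quasi-)Banach ideal spaces, and $0 < p < \infty$,
\[
	\Bigl[ \Bigl( \bigoplus_{j \in J} Z_j \Bigr)_{G} \Bigr]^{(p)} \equiv \Bigl( \bigoplus_{j \in J} Z_j^{(p)} \Bigr)_{G^{(p)}}.
\]
This is immediate from the definitions: for $f = \{f_j\}_{j \in J}$ one has $f$ in the left-hand space iff $\{\abs{f_j}^p\}_{j}$ lies in $\bigl( \bigoplus_j Z_j \bigr)_G$ iff $\{\norm{\abs{f_j}^p}_{Z_j}\}_{j} = \{\norm{f_j}_{Z_j^{(p)}}^p\}_{j} \in G$ iff $\{\norm{f_j}_{Z_j^{(p)}}\}_{j} \in G^{(p)}$; chasing the (quasi-)norms through these equivalences via $\norm{g}_{Z^{(p)}} = \norm{\abs{g}^p}_Z^{1/p}$ (and the same formula for $G$) shows the identification is in fact isometric.

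With these ingredients the computation is short: applying \eqref{EQ: product as 1/2-concavification of Calderon product} to the amalgams themselves, then Theorem~\ref{THM: Calderon product of amalgams} with $\theta = \tfrac12$, then the lemma above with $p = \tfrac12$, and finally \eqref{EQ: product as 1/2-concavification of Calderon product} applied coordinatewise to the spaces $X_j,Y_j$ and to the pair $E,F$, one gets
\begin{align*}
	\Bigl( \bigoplus_{j \in J} X_j \Bigr)_E \odot \Bigl( \bigoplus_{j \in J} Y_j \Bigr)_F
		& \equiv \Bigl[ \Bigl( \bigoplus_{j \in J} X_j \Bigr)_E^{1/2} \Bigl( \bigoplus_{j \in J} Y_j \Bigr)_F^{1/2} \Bigr]^{(1/2)} \\
		& \equiv \Bigl[ \Bigl( \bigoplus_{j \in J} X_j^{1/2}Y_j^{1/2} \Bigr)_{E^{1/2}F^{1/2}} \Bigr]^{(1/2)} \\
		& \equiv \Bigl( \bigoplus_{j \in J} \bigl( X_j^{1/2}Y_j^{1/2} \bigr)^{(1/2)} \Bigr)_{(E^{1/2}F^{1/2})^{(1/2)}} \\
		& \equiv \Bigl( \bigoplus_{j \in J} X_j \odot Y_j \Bigr)_{E \odot F}.
\end{align*}
The point that requires care is bookkeeping in the quasi-Banach category: the $\tfrac12$-concavification of a Banach lattice is in general only a quasi-Banach lattice, so one must work with quasi-norms throughout (harmless, since $X \odot Y$ is itself only quasi-normed) and check that Theorem~\ref{THM: Calderon product of amalgams} and \eqref{EQ: product as 1/2-concavification of Calderon product} are invoked only under their stated hypotheses -- which is precisely why the Fatou property of each $X_j$ and $Y_j$ is assumed. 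An alternative, entirely elementary route bypasses interpolation: prove the two inclusions directly from the definition of the product quasi-norm, the forward one by factorizing both the sequence $\{\norm{f_j}_{X_j\odot Y_j}\}_j = a\,b$ in $E \odot F$ and each $f_j = g_j h_j$ in $X_j \odot Y_j$ with almost optimal norms, then rescaling $g_j \rightsquigarrow \sqrt{a_j/b_j}\,g_j$ and $h_j \rightsquigarrow \sqrt{b_j/a_j}\,h_j$ so that $\{\norm{g_j}_{X_j}\}_j \in E$ and $\{\norm{h_j}_{Y_j}\}_j \in F$, and the reverse one trivially from the ideal property of $E \odot F$.
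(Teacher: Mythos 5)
Your argument is correct and follows the same route as the paper's proof: the identity $Z \odot W \equiv (Z^{1/2}W^{1/2})^{(1/2)}$, the lemma that $p$-convexification commutes with amalgams, and then the chain through Theorem~\ref{THM: Calderon product of amalgams} are exactly the ingredients the authors use, in the same order. The direct two-inclusion route you sketch at the end would indeed give an alternative, more hands-on proof, but as presented it is only an outline while your main computation is the paper's argument verbatim.
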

	\begin{proof}
		We know from Schep \cite{Sch10} that we can represent the pointwise product space $X \odot Y$ as the $\frac{1}{2}$-concavification
		of the Calder{\' o}n space $X^{1/2}Y^{1/2}$, that is to say,
		\begin{equation} \label{EQ: X o Y as Calderon product}
			X \odot Y \equiv \left( X^{1/2}Y^{1/2} \right)^{(1/2)}
		\end{equation}
		(see the proof of Theorem~2.1 in \cite{Sch10}; cf. \cite[Theorem~1(iv)]{KLM14} where this fact is stated explicitly).
		Moreover, it is not hard to see that the $\frac{1}{2}$-concavification of $\bigl( \bigoplus_{j \in J} Z_j \bigr)_G$,
		where $\{ Z_j \}_{j \in J}$ is the family of Banach ideal spaces and $G$ is a Banach sequence space modeled on $J$, can be identified
		with the space $\bigl( \bigoplus_{j \in J} Z_j^{(1/2)} \bigr)_{G^{(1/2)}}$, that is,
		\begin{equation} \label{EQ: convexifications}
			\left[ \Bigl( \bigoplus_{j \in J} Z_j \Bigr)_G \right]^{(1/2)} \equiv \Bigl( \bigoplus_{j \in J} Z_j^{(1/2)} \Bigr)_{G^{(1/2)}}.
		\end{equation}
		Even more generally, the space $\left[ \bigl( \bigoplus_{j \in J} Z_j \bigr)_G \right]^{(p)}$, where $0 < p < \infty$, can be identified
		with $\bigl( \bigoplus_{j \in J} Z_j^{(p)} \bigr)_{G^{(p)}}$.
		To see this, take $f = \{f_j\}_{j \in J}$ from $\left[ \bigl( \bigoplus_{j \in J} Z_j \bigr)_G \right]^{(p)}$. This clearly means that
		$\abs{f}^p = \left\{ \abs{f_j}^p \right\}_{j \in J}$ belongs to $\bigl( \bigoplus_{j \in J} Z_j \bigr)_G$. Moreover, we have
		\begin{equation*}
			\norm{\abs{f}^p}^{(1/p)}_{\bigl( \bigoplus_{j \in J} Z_j \bigr)_G}
				= \norm{\sum_{j \in J} \norm{\abs{f_j}^p}_{Z_j} e_j}^{1/p}_G
				= \norm{ \left( \sum_{j \in J} \norm{\abs{f_j}^p}^{1/p}_{Z_j} e_j \right)^{p} }^{1/p}_G
				= \norm{f}_{\left( \bigoplus_{j \in J} Z_j^{(p)} \right)_{G^{(p)}}}
		\end{equation*}
		and \eqref{EQ: convexifications} follows. Consequently, we have
		\begin{align*}
			\Bigl( \bigoplus_{j \in J} X_j \Bigr)_E \odot \Bigl( \bigoplus_{j \in J} Y_j \Bigr)_F
				& \equiv \left( \left[ \Bigl( \bigoplus_{j \in J} X_j \Bigr)_E \right]^{1/2} \left[ \Bigl( \bigoplus_{j \in J} Y_j \Bigr)_F \right]^{1/2} \right)^{(1/2)}
					\quad (\text{by \eqref{EQ: X o Y as Calderon product}}) \\
				& \equiv \left[ \Bigl( \bigoplus_{j \in J} X_j^{1/2}Y_j^{1/2} \Bigr)_{E^{1/2}F^{1/2}} \right]^{(1/2)} \quad (\text{using Theorem~\ref{THM: Calderon product of amalgams}}) \\
				& \equiv \left[ \bigoplus_{j \in J} \left( X_j^{1/2}Y_j^{1/2} \right)^{(1/2)} \right]_{\left( E^{1/2}F^{1/2} \right)^{(1/2)}} \quad (\text{by \eqref{EQ: convexifications}}) \\
				& \equiv \Bigl( \bigoplus_{j \in J} X_j \odot Y_j \Bigr)_{E \odot F} \quad (\text{again, by \eqref{EQ: X o Y as Calderon product}}).
		\end{align*}
		This ends the proof.
	\end{proof}

	This way, in particular, we recover Buntinas's result from \cite{Bun87} (see \cite[Theorem~9]{Bun87}; cf. \cite[Theorem~8.1, p.~42]{GE98}).

	\begin{corollary}[M. Buntinas, 1987]
		{\it Let $1 \leqslant p,q,r,s \leqslant \infty$. Further, let $\{d_n\}_{n=1}^{\infty}$ be a sequence of positive integers. Then}
		\begin{equation*}
			\Bigl( \bigoplus_{n=1}^{\infty} \ell_p^{d_n} \Bigr)_{\ell_q} \odot \Bigl( \bigoplus_{n=1}^{\infty} \ell_r^{d_n} \Bigr)_{\ell_s}
				\equiv \Bigl( \bigoplus_{n=1}^{\infty} \ell_{1/p+1/r}^{d_n} \Bigr)_{\ell_{1/q+1/r}}.
		\end{equation*}
	\end{corollary}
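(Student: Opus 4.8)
The plan is to obtain this as an immediate specialization of Theorem~\ref{PROP: Pointwise multipliers of amalgams}. Taking $J = \mathbb{N}$, the coordinate spaces $X_n \coloneqq \ell_p^{d_n}$ and $Y_n \coloneqq \ell_r^{d_n}$ (each defined on $\{1,\dots,d_n\}$ equipped with the counting measure, hence finite-dimensional, trivially endowed with the Fatou property, and, for each fixed $n$, living on the same measure space), together with the ambient sequence spaces $E \coloneqq \ell_q$ and $F \coloneqq \ell_s$, all hypotheses of Theorem~\ref{PROP: Pointwise multipliers of amalgams} are met. Hence
\begin{equation*}
	\Bigl( \bigoplus_{n=1}^{\infty} \ell_p^{d_n} \Bigr)_{\ell_q} \odot \Bigl( \bigoplus_{n=1}^{\infty} \ell_r^{d_n} \Bigr)_{\ell_s}
		\equiv \Bigl( \bigoplus_{n=1}^{\infty} \ell_p^{d_n} \odot \ell_r^{d_n} \Bigr)_{\ell_q \odot \ell_s}.
\end{equation*}

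It then remains only to compute the pointwise products on the right-hand side. Here I would invoke the elementary fact recalled in Section~\ref{SECTION: pointwise products}, namely that $\ell_a \odot \ell_b \equiv \ell_c$ whenever $1 \leqslant a,b \leqslant \infty$ and $1/c = 1/a + 1/b$. The proof of that fact is purely local --- one combines the factorization $\abs{x} = \abs{x}^{c/a}\cdot\abs{x}^{c/b}$ with the H{\" o}lder--Rogers inequality --- and therefore carries over verbatim to a finite index set, giving $\ell_a^d \odot \ell_b^d \equiv \ell_c^d$ with constants independent of $d$. Applying this coordinatewise yields $\ell_p^{d_n} \odot \ell_r^{d_n} \equiv \ell_{t}^{d_n}$ with $1/t = 1/p + 1/r$, and applying it once more gives $\ell_q \odot \ell_s \equiv \ell_{u}$ with $1/u = 1/q + 1/s$. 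Substituting these two identifications into the displayed isometry completes the argument.

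Since the whole statement is just a reading of Theorem~\ref{PROP: Pointwise multipliers of amalgams} through the $\ell_p$-product dictionary, there is no genuine obstacle; the only point worth a word is bookkeeping. Every identification used above --- the amalgam product identity of Theorem~\ref{PROP: Pointwise multipliers of amalgams} and the scalar identity $\ell_a \odot \ell_b \equiv \ell_c$ --- holds isometrically, so the conclusion really is an equality of (quasi-)norms in the sense of the symbol $\equiv$ adopted in the paper, and not merely an isomorphism.
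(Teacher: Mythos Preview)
Your proposal is correct and matches the paper's approach exactly: the corollary is obtained as a direct specialization of Theorem~\ref{PROP: Pointwise multipliers of amalgams} together with the isometric identity $\ell_a \odot \ell_b \equiv \ell_c$ for $1/c = 1/a + 1/b$ recorded in Section~\ref{SECTION: pointwise products}. Your remark that all identifications are isometric, so that the conclusion genuinely holds with $\equiv$, is a nice clarifying addition.
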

	
	The following lemma will prove useful many times.
	
	\begin{lemma}[Discretization of pointwise products] \label{PROP: E komutuje z produktem}
		{\it Let $X$ and $Y$ be two Banach ideal spaces defined on the same measure space. Suppose that the averaging operator}
			\begin{equation*}
				\text{Ave} \colon f \rightsquigarrow \left[ t \rightsquigarrow \sum_{j \in \mathbb{J}} \left( 2^{-j} \int_{\Delta_j} h(t)dt \right)\chi_{\Delta_j}(t) \right]
			\end{equation*}
			{\it is bounded on both spaces $X$ and $Y$. Then}
			\begin{equation*}
				{\mathbf E}(X) \odot {\mathbf E}(Y) \overset{1}{\hookrightarrow} {\mathbf E}(X \odot Y)
					\overset{C}{\hookrightarrow} {\mathbf E}(X) \odot {\mathbf E}(Y),
			\end{equation*}
			{\it where} $C = \norm{\text{Ave}}_{X \rightarrow X} \norm{\text{Ave}}_{Y \rightarrow Y}$.
	\end{lemma}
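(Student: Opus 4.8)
The plan is to work with the block-function reformulation of $\mathbf{E}(\cdot)$ and its pointwise product. By disjointness of the dyadic intervals one has $\bigl( \sum_j G_j \chi_{\Delta_j} \bigr)\bigl( \sum_j H_j \chi_{\Delta_j} \bigr) = \sum_j G_j H_j \chi_{\Delta_j}$; consequently a sequence $a = \{a_j\}_{j \in \mathbb{J}}$ belongs to $\mathbf{E}(X) \odot \mathbf{E}(Y)$ if and only if its block function $\sum_j a_j \chi_{\Delta_j}$ factors as $GH$ with $G \in X$, $H \in Y$ \emph{both block functions} (constant on each $\Delta_j$). With this observation the left-hand embedding $\mathbf{E}(X) \odot \mathbf{E}(Y) \overset{1}{\hookrightarrow} \mathbf{E}(X \odot Y)$ is immediate: a coordinatewise factorization $a = bc$ with $b \in \mathbf{E}(X)$, $c \in \mathbf{E}(Y)$ produces $\sum_j a_j \chi_{\Delta_j} = \bigl( \sum_j b_j \chi_{\Delta_j} \bigr)\bigl( \sum_j c_j \chi_{\Delta_j} \bigr) \in X \odot Y$ of norm at most $\norm{b}_{\mathbf{E}(X)}\norm{c}_{\mathbf{E}(Y)}$, and passing to the infimum over all such factorizations finishes it.

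For the right-hand embedding I would fix $a \in \mathbf{E}(X \odot Y)$, which by the ideal property of the product space we may take non-negative, so that $f \coloneqq \sum_j a_j \chi_{\Delta_j} \geqslant 0$ factors as $f = gh$ with $g \in X_+$, $h \in Y_+$. Replacing $g, h$ by the block averages $\text{Ave}(g), \text{Ave}(h)$ directly is too crude, since $a_j = 2^{-j}\int_{\Delta_j} gh$ need not be comparable to $\bigl( 2^{-j}\int_{\Delta_j} g \bigr)\bigl( 2^{-j}\int_{\Delta_j} h \bigr)$. The remedy is to detour through square roots: $\sqrt{f} = \sqrt{g}\,\sqrt{h}$ is again a block function, equal to $\sqrt{a_j}$ on $\Delta_j$, so integrating over $\Delta_j$ and applying Cauchy--Schwarz,
\begin{equation*}
	\sqrt{a_j} \;=\; 2^{-j}\int_{\Delta_j}\sqrt{g}\,\sqrt{h} \;\leqslant\; \Bigl( 2^{-j}\int_{\Delta_j} g \Bigr)^{1/2}\Bigl( 2^{-j}\int_{\Delta_j} h \Bigr)^{1/2},
\end{equation*}
whence $a_j \leqslant G_j H_j$ with $G_j \coloneqq 2^{-j}\int_{\Delta_j} g$ and $H_j \coloneqq 2^{-j}\int_{\Delta_j} h$. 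The block functions attached to $G = \{G_j\}$ and $H = \{H_j\}$ are precisely $\text{Ave}(g)$ and $\text{Ave}(h)$, so $G \in \mathbf{E}(X)$ with $\norm{G}_{\mathbf{E}(X)} = \norm{\text{Ave}(g)}_X \leqslant \norm{\text{Ave}}_{X \rightarrow X}\norm{g}_X$, and likewise $\norm{H}_{\mathbf{E}(Y)} \leqslant \norm{\text{Ave}}_{Y \rightarrow Y}\norm{h}_Y$. Since $0 \leqslant a \leqslant GH$ coordinatewise and $\mathbf{E}(X) \odot \mathbf{E}(Y)$ is an ideal space, $a$ belongs to it with $\norm{a}_{\mathbf{E}(X) \odot \mathbf{E}(Y)} \leqslant \norm{G}_{\mathbf{E}(X)}\norm{H}_{\mathbf{E}(Y)} \leqslant \norm{\text{Ave}}_{X \rightarrow X}\norm{\text{Ave}}_{Y \rightarrow Y}\norm{g}_X\norm{h}_Y$; taking the infimum over all factorizations $f = gh$ gives exactly $\norm{a}_{\mathbf{E}(X)\odot\mathbf{E}(Y)} \leqslant C\norm{a}_{\mathbf{E}(X \odot Y)}$ with $C = \norm{\text{Ave}}_{X \rightarrow X}\norm{\text{Ave}}_{Y \rightarrow Y}$. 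In the purely atomic case the integrals are sums over the $2^j$ indices of $\Delta_j$ and nothing else changes.

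The one genuinely non-obvious point — and hence the step I expect to matter most — is the square-root detour: one cannot split $\int_{\Delta_j} gh$ into a factor through $g$ and a factor through $h$ while retaining $\text{Ave}$-control of both, and this is morally the same fact underlying the identity $Z \odot W \equiv (Z^{1/2}W^{1/2})^{(1/2)}$ from \eqref{EQ: product as 1/2-concavification of Calderon product}. Everything else is routine bookkeeping of constants; the square-root device contributes a factor $(\norm{\text{Ave}}_X\norm{\text{Ave}}_Y)^{1/2}$ at the level of $\sqrt{f}$, which becomes $C$ after squaring. If one prefers a more structural packaging, the same argument gives the isometric identity $\mathbf{E}(Z^{(p)}) \equiv \mathbf{E}(Z)^{(p)}$ and, for Calder{\' o}n products, $\mathbf{E}(X)^{1-\theta}\mathbf{E}(Y)^{\theta} \overset{1}{\hookrightarrow} \mathbf{E}(X^{1-\theta}Y^{\theta}) \overset{C_\theta}{\hookrightarrow} \mathbf{E}(X)^{1-\theta}\mathbf{E}(Y)^{\theta}$ with $C_\theta = \norm{\text{Ave}}_X^{1-\theta}\norm{\text{Ave}}_Y^{\theta}$ (the non-trivial inclusion being proved just as above, with Cauchy--Schwarz replaced by H{\" o}lder with exponents $\frac{1}{1-\theta}$ and $\frac{1}{\theta}$); specialising $\theta = \frac{1}{2}$ and then taking the $\frac{1}{2}$-concavification recovers the lemma with the same constant $C$. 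Neither argument uses the Fatou property, in accordance with the hypotheses of the statement.
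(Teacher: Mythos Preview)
Your proof is correct and follows essentially the same route as the paper's: both reduce the non-trivial embedding to the pointwise inequality $a_j \leqslant \bigl(2^{-j}\int_{\Delta_j} g\bigr)\bigl(2^{-j}\int_{\Delta_j} h\bigr)$ and then invoke boundedness of $\text{Ave}$ on $X$ and $Y$. The only difference is in how that inequality is obtained: the paper cites the \emph{inverse Chebyshev inequality} (legitimate here because $gh$ being constant on $\Delta_j$ forces $g$ and $h$ to be oppositely ordered there), whereas you unfold the same estimate via Cauchy--Schwarz applied to $\sqrt{g}\,\sqrt{h}$. These are two names for the same computation, so your remark that ``one cannot split $\int_{\Delta_j} gh$'' without the square-root detour slightly overstates the obstacle --- but the mathematics is identical and your version has the virtue of being fully self-contained.
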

	\begin{proof}
		Evidently,
		\begin{equation*}
			\norm{\text{id} \colon \mathbf{E}(X) \odot \mathbf{E}(Y) \rightarrow \mathbf{E}(X \odot Y)} \leqslant 1,
		\end{equation*}
		so it is enough to show the reverse embedding.
		
		To do this, take $z = \{z_j\}_{j \in \mathbb{J}}$ from $\mathbf{E}(X \odot Y)$. It follows from the very definition of the product space $X \odot Y$ that
		\begin{equation*}
			h \coloneqq \sum_{j \in \mathbb{J}} z_j \chi_{\Delta_j} = fg \quad \text{ for some } \quad f \in X \quad \text{ and } \quad g \in Y.
		\end{equation*}
		Therefore, using the inverse Chebyshev's inequality, we get
		\begin{equation*}
			h = \text{Ave}(h) = \text{Ave}(fg) \leqslant \text{Ave}(f) \cdot \text{Ave}(g).
		\end{equation*}
		Since, due to our assumptions, the averaging operator $\text{Ave}$ is bounded on $X$ and $Y$, so it is straightforward to see that
		\begin{equation*}
			\left\{ 2^{-j} \int_{\Delta_j} f(t)dt \right\}_{j \in \mathbb{J}} \in \mathbf{E}(X) \quad \text { and } \quad
				\left\{ 2^{-j} \int_{\Delta_j} g(t)dt \right\}_{j \in \mathbb{J}} \in \mathbf{E}(Y).
		\end{equation*}
		In consequence, for $C = \norm{\text{Ave}}_{X \rightarrow X} \norm{\text{Ave}}_{Y \rightarrow Y}$, we have
		\begin{align*}
			\norm{z}_{{\mathbf E}(X \odot Y)} = \norm{h}_{X \odot Y}
				& = \inf\limits_{\substack{h = fg \\ f \in X,\, g \in Y}} \norm{f}_X \norm{g}_Y \\
				& \geqslant \inf\limits_{\substack{z \leqslant xy \\ x \in \mathbf{E}(X),\, y \in \mathbf{E}(Y)}}
					\frac{\norm{x}_{\mathbf{E}(X)}}{\norm{\text{Ave}}_{X \rightarrow X}} \frac{\norm{y}_{\mathbf{E}(Y)}}{\norm{\text{Ave}}_{Y \rightarrow Y}} \\
				& = C^{-1} \inf\limits_{\substack{z = xy \\ x \in \mathbf{E}(X),\, y \in \mathbf{E}(Y)}} \norm{x}_{\mathbf{E}(X)} \norm{y}_{\mathbf{E}(Y)} \\
				& = C^{-1} \norm{z}_{{\mathbf E}(X) \odot {\mathbf E}(Y)},
		\end{align*}
		where the third equality follows from \cite[Proposition~1]{KLM14}.
		But this means that
		\begin{equation*}
			\norm{\text{id} \colon \mathbf{E}(X \odot Y) \rightarrow \mathbf{E}(X) \odot \mathbf{E}(Y)} \leqslant C.
		\end{equation*}
	\end{proof}

	Recall that a quasi-Banach ideal space $X$ is said to be {\bf $L$-convex} whenever $X$ is $p$-convex for some $p > 0$ (another,
	more geometric, definition can be found in Kalton's paper \cite{Kal84}).
	
	\begin{remark}[About normability of quasi-Banach spaces]
		Let $X = (X,\norm{\cdot}_X)$ be a quasi-Banach ideal space. We say that the space $X$ is {\bf normable} if one can equip $X$
		with a norm equivalent to the initial quasi-norm. It is known that $X$ is normable if, and only if, it is
		$1$-convex (see, for example, \cite[p.~102]{Mal04b}). In this case, an equivalent norm on $X$ has the following description
		\begin{equation*}
			\norm{f} \coloneqq \inf \left\{ \sum_{n=1}^N \norm{f_n}_X \colon f = \sum_{n=1}^N f_n \text{ with } \{ f_n \}_{n=1}^N \subset X \right\}.
		\end{equation*}
		\demo
	\end{remark}
	
	\begin{theorem}[Pointwise products of Tandori spaces] \label{PROP: Tandori komutuje z produktem}
		{\it Let $X$ and $Y$ be two r.i. quasi-Banach ideal spaces defined on the same measure space. Suppose that both spaces $X$ and $Y$
		are $L$-convex and have the Fatou property. Then}
		\begin{equation*}
			\widetilde{X} \odot \widetilde{Y} = \widetilde{X \odot Y}.
		\end{equation*}
	\end{theorem}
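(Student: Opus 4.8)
The plan is to run the whole argument through the block-form representations of Theorems~\ref{Thm: Tandori sequence representation} and~\ref{Thm: Tandori function representation} and the amalgam calculus of Section~\ref{SECTION: Products and factors: Grosse-Erdmann's style}; the only real issue is that those tools live in the Banach world while $X$, $Y$ and $X\odot Y$ are merely quasi-Banach, and the $L$-convexity hypothesis is exactly what lets me factor every step through a convexification/concavification pair $(\cdot)^{(s)}$. So first I would record the bookkeeping. For r.i.\ quasi-Banach ideal spaces $W$, $V$ and any $s>0$ one has the isometric identities
\begin{equation*}
	\widetilde{W^{(s)}}\equiv\bigl(\widetilde{W}\bigr)^{(s)},\qquad
	(W\odot V)^{(s)}\equiv W^{(s)}\odot V^{(s)},\qquad
	\mathbf{E}\bigl(W^{(s)}\bigr)\equiv\mathbf{E}(W)^{(s)}.
\end{equation*}
The first follows from $\widetilde{\abs{f}^{s}}=(\widetilde{f}\,)^{s}$ (the map $t\mapsto t^{s}$ is increasing) together with the definition of the norm of $W^{(s)}$; the second is immediate from the definitions of $\odot$ and of $s$-concavification; the third comes from the disjointness of the dyadic blocks, $\abs{\sum_{j}x_{j}\chi_{\Delta_{j}}}^{s}=\sum_{j}\abs{x_{j}}^{s}\chi_{\Delta_{j}}$. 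I also use that $s$-concavification commutes with the amalgam construction, $\bigl[\bigl(\bigoplus_{j}Z_{j}\bigr)_{G}\bigr]^{(s)}\equiv\bigl(\bigoplus_{j}Z_{j}^{(s)}\bigr)_{G^{(s)}}$, which is already proved inside Theorem~\ref{PROP: Pointwise multipliers of amalgams}, and the trivial isometry $L_{\infty}^{(s)}\equiv L_{\infty}$ on a finite interval.

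Next I would reduce to the Banach case. Since $X$ and $Y$ are $L$-convex, there is $0<r\leqslant1$ such that $A\coloneqq X^{(1/r)}$ and $B\coloneqq Y^{(1/r)}$ are Banach r.i.\ spaces; they have the Fatou property (inherited from $X$, $Y$), and $X=A^{(r)}$, $Y=B^{(r)}$. By the first two identities above,
\begin{equation*}
	\widetilde{X}\odot\widetilde{Y}=\bigl(\widetilde{A}\bigr)^{(r)}\odot\bigl(\widetilde{B}\bigr)^{(r)}=\bigl(\widetilde{A}\odot\widetilde{B}\bigr)^{(r)},\qquad
	\widetilde{X\odot Y}=\widetilde{(A\odot B)^{(r)}}=\bigl(\widetilde{A\odot B}\bigr)^{(r)},
\end{equation*}
so it suffices to prove $\widetilde{A}\odot\widetilde{B}=\widetilde{A\odot B}$ for Banach r.i.\ spaces $A$, $B$ with the Fatou property, and then take $r$-concavifications.

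For this, Theorems~\ref{Thm: Tandori sequence representation} and~\ref{Thm: Tandori function representation} give $\widetilde{A}=\bigl(\bigoplus_{j\in\mathbb{J}}L_{\infty}(\Delta_{j})\bigr)_{\mathbf{E}(A)}$ and likewise for $B$, now with $\mathbf{E}(A)$, $\mathbf{E}(B)$ genuine Banach sequence spaces; since each $L_{\infty}(\Delta_{j})$ has the Fatou property, Theorem~\ref{PROP: Pointwise multipliers of amalgams} together with $L_{\infty}\odot L_{\infty}\equiv L_{\infty}$ gives
\begin{equation*}
	\widetilde{A}\odot\widetilde{B}=\Bigl(\bigoplus_{j\in\mathbb{J}}L_{\infty}(\Delta_{j})\odot L_{\infty}(\Delta_{j})\Bigr)_{\mathbf{E}(A)\odot\mathbf{E}(B)}=\Bigl(\bigoplus_{j\in\mathbb{J}}L_{\infty}(\Delta_{j})\Bigr)_{\mathbf{E}(A)\odot\mathbf{E}(B)}.
\end{equation*}
The averaging operator $\text{Ave}$ is a contraction on $L_{1}$ and on $L_{\infty}$, hence on every r.i.\ space by the Calder{\' o}n--Mityagin theorem, so Lemma~\ref{PROP: E komutuje z produktem} applies to $A$, $B$ and yields $\mathbf{E}(A)\odot\mathbf{E}(B)=\mathbf{E}(A\odot B)$. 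It remains to identify the amalgam $\bigl(\bigoplus_{j\in\mathbb{J}}L_{\infty}(\Delta_{j})\bigr)_{\mathbf{E}(A\odot B)}$ with $\widetilde{A\odot B}$. Here $A\odot B$ is only quasi-Banach, but it equals $\bigl(A^{1/2}B^{1/2}\bigr)^{(1/2)}$ with $A^{1/2}B^{1/2}$ a Banach space (see \eqref{EQ: product as 1/2-concavification of Calderon product}), so $C\coloneqq(A\odot B)^{(2)}\equiv A^{1/2}B^{1/2}$ is a Banach r.i.\ space with the Fatou property and $C^{(1/2)}=A\odot B$. Applying the block form to $C$, then the first bookkeeping identity, then the commutation of $\frac{1}{2}$-concavification with amalgams and with $\mathbf{E}$ (and $L_{\infty}^{(1/2)}\equiv L_{\infty}$),
\begin{align*}
	\widetilde{A\odot B}
		&=\widetilde{C^{(1/2)}}=\bigl(\widetilde{C}\bigr)^{(1/2)}
			=\Bigl[\Bigl(\bigoplus_{j\in\mathbb{J}}L_{\infty}(\Delta_{j})\Bigr)_{\mathbf{E}(C)}\Bigr]^{(1/2)}\\
		&=\Bigl(\bigoplus_{j\in\mathbb{J}}L_{\infty}(\Delta_{j})^{(1/2)}\Bigr)_{\mathbf{E}(C)^{(1/2)}}
			=\Bigl(\bigoplus_{j\in\mathbb{J}}L_{\infty}(\Delta_{j})\Bigr)_{\mathbf{E}(C^{(1/2)})}
			=\Bigl(\bigoplus_{j\in\mathbb{J}}L_{\infty}(\Delta_{j})\Bigr)_{\mathbf{E}(A\odot B)}.
\end{align*}
Hence $\widetilde{A}\odot\widetilde{B}=\bigl(\bigoplus_{j\in\mathbb{J}}L_{\infty}(\Delta_{j})\bigr)_{\mathbf{E}(A\odot B)}=\widetilde{A\odot B}$, and taking $r$-concavifications in the reduction display yields $\widetilde{X}\odot\widetilde{Y}=\widetilde{X\odot Y}$.

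The main obstacle is not any individual estimate but precisely this Banach-to-quasi-Banach passage and the attendant bookkeeping: one must invoke the block-form theorems, the boundedness of $\text{Ave}$ and Lemma~\ref{PROP: E komutuje z produktem} only on Banach r.i.\ spaces — which is why $A\odot B$ had to be convexified a second time, through $C$ — and one must check that convexification/concavification really commutes with $\mathbf{E}$ and with the amalgam construction, and that $A\odot B$ inherits the Fatou property and is $L$-convex. None of this is deep, but all of it needs to be in place before the amalgam arithmetic can be turned loose.
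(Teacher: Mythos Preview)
Your proof is correct and follows essentially the same route as the paper: convexify to reach the Banach setting, apply the block-form representations together with Theorem~\ref{PROP: Pointwise multipliers of amalgams} and Lemma~\ref{PROP: E komutuje z produktem}, then concavify back using the homogeneity $\widetilde{Z^{(s)}}\equiv\widetilde{Z}^{\,(s)}$. The only cosmetic difference is that the paper chooses a single exponent $r=2\max\{1/p,1/q\}$ large enough that $X^{(r)}$, $Y^{(r)}$, and $(X\odot Y)^{(r)}$ are all Banach at once, whereas you convexify twice (first to get $A,B$ Banach, then again to handle $A\odot B$); both organizations lead to the same computation.
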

	\begin{proof}
		Before moving on to the main part, let us first explain how to use the block form representations proposed in Theorems~\ref{Thm: Tandori sequence representation}
		and \ref{Thm: Tandori function representation} in the case when we are in the Kalton's zone.
		
		Let $Z$ be a quasi-Banach ideal space. Then, as essentially follows from the definition, the family of spaces $\widetilde{Z}$
		enjoy a very useful homogeneity property (cf. \cite[(d) on p.~72]{Be96}). By this we mean that for any $0 < p < \infty$, we have
		\begin{equation} \label{EQ: Tandori komutuje z (p)}
			\widetilde{Z}^{(p)} \equiv \widetilde{Z^{(p)}}.
		\end{equation}
		Now, due to our assumptions, $X$ is $p$-convex and $Y$ is $q$-convex for some $p,q > 0$. Actually, without loss of generality we can assume that
		$0 < p,q < 1$ (otherwise, either both spaces are normable or the described procedure only needs to be applied to one of them).
		Set $r = 2\max\{1/p,1/q\}$. Then, since the space $\widetilde{X}^{(r)}$ is normable, so using Theorems~\ref{Thm: Tandori sequence representation}
		and \ref{Thm: Tandori function representation}, we have
		\begin{equation} \label{EQ: tandori X (r)}
			\widetilde{X}^{(r)}
				\equiv \widetilde{X^{(r)}}
				= \Bigl( \bigoplus_{j \in \mathbb{Z}} L_{\infty}(\Delta_j) \Bigr)_{\mathbf{E}\left( X^{(r)} \right)}.
		\end{equation}
		Similarly,
		\begin{equation} \label{EQ: tandori Y (r)}
			\widetilde{Y}^{(r)} \equiv \widetilde{Y^{(r)}} = \Bigl( \bigoplus_{j \in \mathbb{Z}} L_{\infty}(\Delta_j) \Bigr)_{\mathbf{E}\left( Y^{(r)} \right)}.
		\end{equation}
		
		Time to get to the point. We have
		\begin{align*}
			\left( \widetilde{X} \odot \widetilde{Y} \right)^{(r)}
				& \equiv \widetilde{X}^{(r)} \odot \widetilde{Y}^{(r)} \\
				& \equiv \widetilde{X^{(r)}} \odot \widetilde{Y^{(r)}} \quad (\text{due to \eqref{EQ: Tandori komutuje z (p)}}) \\
				& = \Bigl( \bigoplus_{j \in \mathbb{Z}} L_{\infty}(\Delta_j) \Bigr)_{\mathbf{E}\left( X^{(r)} \right)}
					\odot \Bigl( \bigoplus_{j \in \mathbb{Z}} L_{\infty}(\Delta_j) \Bigr)_{\mathbf{E}\left( Y^{(r)} \right)}
						\quad (\text{using \eqref{EQ: tandori X (r)} and \eqref{EQ: tandori Y (r)}}) \\
				& \equiv \Bigl( \bigoplus_{j \in \mathbb{Z}} L_{\infty}(\Delta_j) \odot L_{\infty}(\Delta_j) \Bigr)_{\mathbf{E}\left( X^{(r)} \right) \odot \mathbf{E}\left( Y^{(r)} \right)}
					\quad (\text{by Proposition~\ref{PROP: Pointwise multipliers of amalgams}}) \\
				& \equiv \Bigl( \bigoplus_{j \in \mathbb{Z}} L_{\infty}(\Delta_j) \Bigr)_{\mathbf{E}\left[ (X \odot Y)^{(r)} \right]}
					\quad (\text{using Lemma~\ref{PROP: E komutuje z produktem}}) \\
				& = \widetilde{(X \odot Y)^{(r)}} \quad (\text{by Theorems~\ref{Thm: Tandori sequence representation} and \ref{Thm: Tandori function representation}}) \\
				& \equiv \left( \widetilde{X \odot Y} \right)^{(r)} \quad (\text{again, due to \eqref{EQ: Tandori komutuje z (p)}})
		\end{align*}
		Thus, $$\widetilde{X} \odot \widetilde{Y} = \widetilde{X \odot Y}$$ and the proof is complete.
	\end{proof}

	\begin{remark}[About Theorem~\ref{PROP: Tandori komutuje z produktem}]
		The assumption that a quasi-Banach space is $L$-convex is very mild.
		Anyway, Kalton was able to give an example of a quasi-Banach ideal space $X$ which is not $L$-convex (see \cite[Example~2.4]{Kal84}).
		This example, however, is quite artificial and is based on the existence of a certain (so to speak, pathological) sub-measure
		that is non-trivial, but dominates no non-trivial measure. In practice, most naturally occurring spaces are $L$-convex.
		\demo
	\end{remark}
	
	Since $\ell_p$ spaces with $0 < p \leqslant \infty$ are clearly $L$-convex, so with the help of Theorem~\ref{PROP: Tandori komutuje z produktem}
	we easily get the following Bennett's result from \cite{Be96} (see \cite[Theorem~3.15, p.~12]{Be96}).

	\begin{corollary}[G. Bennett, 1996]
		{\it Let $0 < p,q \leqslant \infty$ with $1/r = 1/p + 1/q$. Then}
		\begin{equation*}
			\widetilde{\ell_p} \odot \widetilde{\ell_q} = \widetilde{\ell_r}.
		\end{equation*}
	\end{corollary}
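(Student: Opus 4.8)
The plan is to deduce this immediately from Theorem~\ref{PROP: Tandori komutuje z produktem}, so the only genuine work is to verify that the spaces $\ell_p$ and $\ell_q$, for the full range $0 < p,q \leqslant \infty$, satisfy the hypotheses there, and then to identify the inner product $\ell_p \odot \ell_q$.

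First I would record that each $\ell_p$ with $0 < p \leqslant \infty$ is a rearrangement invariant quasi-Banach sequence space with the Fatou property: for $1 \leqslant p \leqslant \infty$ this is classical, and for $0 < p < 1$ it is equally routine once one passes to the natural $p$-norm (cf. the discussion in Kalton's zone). Moreover $\ell_p$ is $\min\{1,p\}$-convex, hence $L$-convex; for $p \geqslant 1$ it is $1$-convex (indeed normable), while for $0 < p < 1$ the inequality $\norm{x+y}_{\ell_p}^{p} \leqslant \norm{x}_{\ell_p}^{p} + \norm{y}_{\ell_p}^{p}$ is exactly $p$-convexity. Thus, taking $X = \ell_p$ and $Y = \ell_q$ (both defined on $\mathbb{N}$ with the counting measure), all assumptions of Theorem~\ref{PROP: Tandori komutuje z produktem} are met, and we obtain
\begin{equation*}
	\widetilde{\ell_p} \odot \widetilde{\ell_q} = \widetilde{\ell_p \odot \ell_q}.
\end{equation*}

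Next I would identify the inner product space. Writing $\ell_p \equiv \ell_1^{(p)}$ and $\ell_q \equiv \ell_1^{(q)}$ (recall $\ell_1^{(s)} = \{x \colon \abs{x}^s \in \ell_1\} = \ell_s$), the formula $X^{(p)} \odot X^{(q)} \equiv X^{(r)}$ with $1/r = 1/p + 1/q$ gives $\ell_p \odot \ell_q \equiv \ell_1^{(r)} = \ell_r$. Substituting this into the previous display yields
\begin{equation*}
	\widetilde{\ell_p} \odot \widetilde{\ell_q} = \widetilde{\ell_r},
\end{equation*}
which is the claim. The limiting cases require no separate treatment: if, say, $q = \infty$, then $1/r = 1/p$, one has $\widetilde{\ell_\infty} \equiv \ell_\infty$ and $\ell_p \odot \ell_\infty \equiv \ell_p = \ell_r$, and similarly when $p = q = \infty$.

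There is essentially no hard step: the entire difficulty has been absorbed into Theorem~\ref{PROP: Tandori komutuje z produktem}, whose proof in turn rests on the block form representations (Theorems~\ref{Thm: Tandori sequence representation} and~\ref{Thm: Tandori function representation}) together with the commutation of the discretization functor $\mathbf{E}$ with products (Lemma~\ref{PROP: E komutuje z produktem}) and with convexifications. Were one to seek a self-contained argument bypassing the quasi-Banach machinery, the mildly delicate point would be the range $0 < p < 1$, where $\widetilde{\ell_p}$ is only a quasi-Banach space: there one first uses the homogeneity identity $\widetilde{Z}^{(s)} \equiv \widetilde{Z^{(s)}}$ to pass to a normable convexification before invoking the block form representation. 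All of this, however, is already packaged inside Theorem~\ref{PROP: Tandori komutuje z produktem}.
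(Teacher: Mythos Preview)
Your proof is correct and follows exactly the approach the paper takes: the corollary is stated immediately after Theorem~\ref{PROP: Tandori komutuje z produktem} with the one-line remark that $\ell_p$ spaces for $0 < p \leqslant \infty$ are clearly $L$-convex, so the result follows. Your write-up simply fills in the routine verifications (Fatou property, $L$-convexity, and the identity $\ell_p \odot \ell_q \equiv \ell_r$) that the paper leaves implicit.
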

	
	\section{{\bf Factorization}} \label{SECTION: Factorization}
	
	The last section is entirely devoted to different types of factorization. Here, all previously developed techniques and results come together.
	
	\subsection{General scheme after Lozanovski{\u \i}}
	Let us first consider the general factorization scheme in the class of direct sum of Banach ideal spaces.
	The essence of the next result is that in order to conclude about factorization of $\bigl( \bigoplus_{j \in J} Y_j \bigr)_F$ through
	$\bigl( \bigoplus_{j \in J} X_j \bigr)_E$, it is not enough to know that we have the factorization of corresponding components,
	but we must to know that these components can be factorized in a \enquote{uniform} way.
	
	\begin{proposition}[Factorization of amalgams]
		{\it Let $E$ and $F$ be two Banach sequence spaces both defined on $J$. Further, let $\{X_j\}_{j \in J}$ and $\{Y_j\}_{j \in J}$
			be two families of Banach ideal spaces. Suppose that for any $j \in J$ both spaces $X_j$ and $Y_j$ are defined on the same
			measure space $(\Omega_j,\Sigma_j,\mu_j)$ and have the Fatou property. Furthermore, suppose that}
		\begin{itemize}
			\item[$\bullet$] {\it the space $F$ factorizes through $E$, that is, $F = E \odot M(E,F)$,}
			\item[$\bullet$] {\it the family $\{Y_j\}_{j \in J}$ uniformly factorizes through the family $\{X_j\}_{j \in J}$, that is,}
				\begin{equation*}
					Y_j \overset{c_j}{\hookrightarrow} X_j \odot M(X_j,Y_j) \overset{C_j}{\hookrightarrow} Y_j \quad \textit{ for all } \quad j \in J
				\end{equation*}
				{\it and $0 < \inf_{j \in J} C_j \leqslant \sup_{j \in J} c_j < \infty$.}
		\end{itemize}
		{\it Then we have the following factorization}
		\begin{equation*}
			\Bigl( \bigoplus_{j \in J} Y_j \Bigr)_F \overset{c}{\hookrightarrow}
				\Bigl( \bigoplus_{j \in J} X_j \Bigr)_E \odot M\biggl( \Bigl( \bigoplus_{j \in J} X_j \Bigr)_E, \Bigl( \bigoplus_{j \in J} Y_j \Bigr)_F \biggr)
					\overset{C}{\hookrightarrow} \Bigl( \bigoplus_{j \in J} Y_j \Bigr)_F,
		\end{equation*}
		{\it where $c = (\sup_{j \in J} c_j)\norm{F \hookrightarrow E \odot M(E,F)}$ and $C = (\inf_{j \in J} C_j)\norm{E \odot M(E,F) \hookrightarrow F}$.
			In particular, the following Lozanovski{\u \i}'s type factorization holds, namely, for any $\varepsilon > 0$, we have}
		\begin{equation*}
			L_1\Bigl( \bigsqcup_{j \in J} \Omega_j \Bigr) \overset{1+\varepsilon}{\hookrightarrow}
				\Bigl( \bigoplus_{j \in J} X_j \Bigr)_E \odot \left[ \Bigl( \bigoplus_{j \in J} X_j \Bigr)_{E} \right]^{\times}
					\overset{1}{\hookrightarrow} L_1\Bigl( \bigsqcup_{j \in J} \Omega_j \Bigr).
		\end{equation*}
		{\it In the above we can put $\varepsilon = 0$ provided the space $E$ has the Fatou property.}
	\end{proposition}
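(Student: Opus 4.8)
The plan is to obtain the factorization by stringing together the two ``commutation'' theorems for amalgams proved above --- Theorem~\ref{Thm: komutowanie M z cdot} on multipliers between amalgams and Theorem~\ref{PROP: Pointwise multipliers of amalgams} on pointwise products of amalgams --- and then converting the two hypotheses (on $F$ versus $E$, and on the families $\{X_j\}$ versus $\{Y_j\}$) into two families of uniformly bounded inclusions. The only genuinely new point to check, and the place where a little care is needed, is that each multiplier space $M(X_j,Y_j)$ has the Fatou property, so that Theorem~\ref{PROP: Pointwise multipliers of amalgams} is legitimately applicable to it. This, however, is elementary: if $0 \leqslant f_n \uparrow f$ almost everywhere with $\sup_n \norm{f_n}_{M(X_j,Y_j)} < \infty$, then for every $g$ in the unit ball of $X_j$ one has $\abs{f_n g} \uparrow \abs{fg}$ and $\norm{f_n g}_{Y_j} \leqslant \sup_n \norm{f_n}_{M(X_j,Y_j)}$, so the Fatou property of $Y_j$ forces $fg \in Y_j$ with $\norm{fg}_{Y_j} \leqslant \sup_n \norm{f_n}_{M(X_j,Y_j)}$; taking the supremum over $g$ shows $f \in M(X_j,Y_j)$ with the right norm bound.

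With this in hand I would first rewrite the right-hand side of the asserted factorization. By Theorem~\ref{Thm: komutowanie M z cdot},
\[
	M\biggl( \Bigl( \bigoplus_{j \in J} X_j \Bigr)_{E}, \Bigl( \bigoplus_{j \in J} Y_j \Bigr)_{F} \biggr)
		\equiv \Bigl( \bigoplus_{j \in J} M(X_j,Y_j) \Bigr)_{M(E,F)},
\]
and since, for each $j$, the spaces $X_j$ and $M(X_j,Y_j)$ live on the same measure space and both have the Fatou property, Theorem~\ref{PROP: Pointwise multipliers of amalgams} yields
\[
	\Bigl( \bigoplus_{j \in J} X_j \Bigr)_{E} \odot \Bigl( \bigoplus_{j \in J} M(X_j,Y_j) \Bigr)_{M(E,F)}
		\equiv \Bigl( \bigoplus_{j \in J} X_j \odot M(X_j,Y_j) \Bigr)_{E \odot M(E,F)}.
\]
Thus, up to an isometry, the right-hand side of the factorization equals the amalgam $\bigl( \bigoplus_{j \in J} X_j \odot M(X_j,Y_j) \bigr)_{E \odot M(E,F)}$, and the whole problem reduces to comparing this space with $\bigl( \bigoplus_{j \in J} Y_j \bigr)_F$.

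Here the two hypotheses enter, one on each ``level'' of the amalgam. On the outer level, the factorization $F = E \odot M(E,F)$ lets me swap the index space $E \odot M(E,F)$ for $F$ while keeping the family of components fixed; by the very definition of the amalgam norm and the ideal property of the index space this costs only the inclusion norms $\norm{F \hookrightarrow E \odot M(E,F)}$ and $\norm{E \odot M(E,F) \hookrightarrow F}$. On the component level, the uniform factorization $Y_j \overset{c_j}{\hookrightarrow} X_j \odot M(X_j,Y_j) \overset{C_j}{\hookrightarrow} Y_j$ with $\sup_j c_j < \infty$ and $\inf_j C_j > 0$ lets me swap each component $X_j \odot M(X_j,Y_j)$ for $Y_j$; this is exactly the ``in particular'' assertion of Proposition~\ref{Prop: Podstawowe wlasnosci sum prostych}(b) applied with $T_j = \mathrm{id}$ (note $\norm{T_j^{-1}} = C_j \leqslant 1$ automatically, by the generalized H\"older--Rogers inequality \eqref{INEQ: Holder-Rogers inequality}), the uniformity of $c_j$ and $C_j$ being precisely what makes the resulting map bounded. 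Composing these two norm-controlled replacements with the isometric identifications above produces
\[
	\Bigl( \bigoplus_{j \in J} Y_j \Bigr)_F \overset{c}{\hookrightarrow}
		\Bigl( \bigoplus_{j \in J} X_j \Bigr)_{E} \odot M\biggl( \Bigl( \bigoplus_{j \in J} X_j \Bigr)_{E}, \Bigl( \bigoplus_{j \in J} Y_j \Bigr)_{F} \biggr)
			\overset{C}{\hookrightarrow} \Bigl( \bigoplus_{j \in J} Y_j \Bigr)_F
\]
with $c$ and $C$ as stated, after collecting the inclusion norms.

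For the Lozanovski{\u \i}-type conclusion I would specialize to $Y_j = L_1(\Omega_j)$ for every $j$. Then $M(X_j,Y_j) \equiv X_j^{\times}$ by the identification $M(Z,L_1) \equiv Z^{\times}$, and the classical Lozanovski{\u \i} factorization \eqref{PRE: Lozanovski factorization} applied to $X_j$ (which has the Fatou property) gives the uniform estimate $L_1(\Omega_j) \overset{1}{\hookrightarrow} X_j \odot X_j^{\times} \overset{1}{\hookrightarrow} L_1(\Omega_j)$; likewise $F = \ell_1(J) = E \odot E^{\times} = E \odot M(E,\ell_1(J))$ is Lozanovski{\u \i}'s factorization for $E$, isometric when $E$ has the Fatou property and $(1+\varepsilon)$-isomorphic in general. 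Since $\bigl( \bigoplus_{j \in J} L_1(\Omega_j) \bigr)_{\ell_1(J)} \equiv L_1\bigl( \bigsqcup_{j \in J} \Omega_j \bigr)$ and $M\bigl( \bigl( \bigoplus_{j \in J} X_j \bigr)_{E}, L_1(\bigsqcup_{j \in J} \Omega_j) \bigr) \equiv \bigl[ \bigl( \bigoplus_{j \in J} X_j \bigr)_{E} \bigr]^{\times}$, plugging these into the general factorization yields exactly the claimed statement, with $\varepsilon = 0$ under the Fatou assumption on $E$. In short: the substantive step is the little Fatou-property check for $M(X_j,Y_j)$ that unlocks Theorem~\ref{PROP: Pointwise multipliers of amalgams}; everything afterwards is a careful but routine chaining of isometric identifications with two uniformly bounded families of inclusions, and the only mild obstacle is keeping track of which inclusion norm is absorbed where.
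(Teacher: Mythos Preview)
Your proof is correct and follows essentially the same route as the paper's own proof, which simply cites Theorem~\ref{Thm: komutowanie M z cdot} and Theorem~\ref{PROP: Pointwise multipliers of amalgams} for the first part and Lozanovski{\u \i}'s factorization together with Proposition~\ref{Prop: Podstawowe wlasnosci sum prostych}(c) for the second. You have merely expanded what the paper leaves as a ``straightforward consequence'': in particular, your explicit verification that each $M(X_j,Y_j)$ inherits the Fatou property from $Y_j$ is a genuine detail that the paper's terse argument passes over in silence but which is indeed required to invoke Theorem~\ref{PROP: Pointwise multipliers of amalgams}.
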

	\begin{proof}
		The first part is a straightforward consequence of our assumptions in tandem with Theorem~\ref{Thm: komutowanie M z cdot}
		and Theorem~\ref{PROP: Pointwise multipliers of amalgams}. The second part results from the first one after applying 
		Lozanovski{\u \i}'s factorization \cite{Loz69} together with Proposition~\ref{Prop: Podstawowe wlasnosci sum prostych}(c)
		(see also Remark~\ref{REMARK: Direct sum = BFS}). 
	\end{proof}
	
	\subsection{Factorization through $L_p$'s}
	
	Below we propose the general variant of Bennett's factorization of Hardy's inequality from \cite{Be96}
	(see \cite[Theorem~4.5, p.~13]{Be96} and \cite[Chapter~8]{GE98}; cf. \cite[Theorem~7.7, p.~36]{GE98}).
	
	\begin{theorem}[Factorization of Ces{\` a}ro spaces through Lebesgue spaces] \label{THM: factorization CX with Lp}
		{\it Let $1 < p,q < \infty$ with $1/p + 1/q = 1$. Further, let $X$ be a r.i. space with the Fatou property such that Hardy's operator
			$\mathscr{H}$ is bounded on $X$. Suppose that $X$ factorizes through $L_p$, that is, $X = L_p \odot M(L_p,X)$. Then}
			\begin{equation*}
				M(L_p,\mathscr{C}X) = \Bigl( \bigoplus_{j \in \mathbb{J}} L_q(\Delta_j) \Bigr)_{\mathbf{E}(M(L_p,X))(w^{1/q})},
			\end{equation*}
			{\it where $w^{1/q}(j) \coloneqq 2^{-j/q}$ for $j \in \mathbb{J}$. Moreover, we have the following factorization}
			\begin{equation*}
				\mathscr{C}X = L_p \odot M(L_p, \mathscr{C}X).
			\end{equation*}
	\end{theorem}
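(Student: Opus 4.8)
The plan is to run the three–step scheme of Example~\ref{EXAMPLE: AM introduction} (discretization, arithmetic, anti-discretization), the single new ingredient being that the hypothesis $X = L_p \odot M(L_p,X)$ is fed into the computation at the level of the discretizations. Since $\mathscr{H}$ is bounded on $X$ we have $\alpha_X > 1$, hence $\mathscr{C}X = X^{\downarrow}$ and Corollary~\ref{COR: blocking technique CX} applies to $X$; moreover both $L_p$ and $M(L_p,X)$ are rearrangement invariant with the Fatou property, so the averaging operator $\text{Ave}$ is bounded on each of them, which is exactly the hypothesis needed in Lemma~\ref{PROP: E komutuje z produktem}.

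First I would discretize: $L_p \equiv \bigl( \bigoplus_{j \in \mathbb{J}} L_p(\Delta_j) \bigr)_{\ell_p}$ and, by Corollary~\ref{COR: blocking technique CX}, $\mathscr{C}X \equiv \bigl( \bigoplus_{j \in \mathbb{J}} L_1(\Delta_j,W_j) \bigr)_{\mathbf{E}(X)}$ with $W_j(t) = 2^{-j}$ on $\Delta_j$, exactly as in Step~1 of Example~\ref{EXAMPLE: AM introduction}. The arithmetic step then begins with Theorem~\ref{Thm: komutowanie M z cdot}:
\[ M(L_p,\mathscr{C}X) \equiv \Bigl( \bigoplus_{j \in \mathbb{J}} M\bigl( L_p(\Delta_j), L_1(\Delta_j,W_j) \bigr) \Bigr)_{M(\ell_p,\mathbf{E}(X))}. \]
The inner spaces are disposed of as before: $M(L_p(\Delta_j),L_1(\Delta_j,W_j)) \equiv M(L_p,L_1)(\Delta_j,W_j) \equiv L_q(\Delta_j,W_j)$. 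For the outer space I would use the factorization hypothesis together with Lemma~\ref{PROP: E komutuje z produktem} to get $\mathbf{E}(X) = \mathbf{E}(L_p \odot M(L_p,X)) \equiv \mathbf{E}(L_p) \odot \mathbf{E}(M(L_p,X))$; then, recalling $\mathbf{E}(L_p) \equiv \ell_p(W_p)$ with $W_p(j) = 2^{j/p}$ (Example~\ref{EXAMPLE : ELp computations}) and writing $\ell_p \equiv \mathbf{E}(L_p) \odot \ell_\infty(2^{-j/p})$, the cancellation property (Theorem~\ref{THM: cancellation property for multipliers}), whose Fatou hypothesis holds because $M(L_p,X)$ — hence $\mathbf{E}(M(L_p,X))$ — has the Fatou property, yields
\[ M(\ell_p,\mathbf{E}(X)) \equiv M\bigl( \ell_\infty(2^{-j/p}), \mathbf{E}(M(L_p,X)) \bigr) \equiv \mathbf{E}(M(L_p,X))(W_p). \]
Substituting back and absorbing the inner weights $W_j = 2^{-j}$ into the outer one — the only bookkeeping being $2^{-j}\cdot 2^{j/p} = 2^{-j/q}$, exactly as in Step~2 of Example~\ref{EXAMPLE: AM introduction} — gives the first assertion $M(L_p,\mathscr{C}X) = \bigl( \bigoplus_{j \in \mathbb{J}} L_q(\Delta_j) \bigr)_{\mathbf{E}(M(L_p,X))(w^{1/q})}$.

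For the factorization itself I would compute $L_p \odot M(L_p,\mathscr{C}X)$ directly. By Theorem~\ref{PROP: Pointwise multipliers of amalgams} and $L_p \odot L_q \equiv L_1$ (as $1/p + 1/q = 1$),
\[ L_p \odot M(L_p,\mathscr{C}X) \equiv \Bigl( \bigoplus_{j \in \mathbb{J}} L_p(\Delta_j) \odot L_q(\Delta_j) \Bigr)_{\ell_p \,\odot\, \mathbf{E}(M(L_p,X))(w^{1/q})} \equiv \Bigl( \bigoplus_{j \in \mathbb{J}} L_1(\Delta_j) \Bigr)_{[\ell_p \,\odot\, \mathbf{E}(M(L_p,X))](w^{1/q})}. \]
Using once more $\mathbf{E}(X) \equiv \mathbf{E}(L_p) \odot \mathbf{E}(M(L_p,X)) \equiv \ell_p(W_p) \odot \mathbf{E}(M(L_p,X))$ and the identity $2^{j/p}\cdot 2^{-j} = 2^{-j/q}$, one identifies $[\ell_p \odot \mathbf{E}(M(L_p,X))](w^{1/q})$ with $\mathbf{E}(X)(w)$ for $w(j) = 2^{-j}$, and then $\bigl( \bigoplus_{j} L_1(\Delta_j) \bigr)_{\mathbf{E}(X)(w)}$ is, by Corollary~\ref{COR: blocking technique CX} again, nothing but $\mathscr{C}X$. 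Hence $L_p \odot M(L_p,\mathscr{C}X) = \mathscr{C}X$. The genuinely routine part is all the weight bookkeeping; the place where I would be most careful is verifying that the hypotheses of Lemma~\ref{PROP: E komutuje z produktem} and of the cancellation property are really in force — that $\text{Ave}$ is bounded on $M(L_p,X)$ and that $M(L_p,X)$, and therefore $\mathbf{E}(M(L_p,X))$, has the Fatou property — all of which follow from $M(L_p,X)$ being a rearrangement invariant space with the Fatou property, but which deserve an explicit line.
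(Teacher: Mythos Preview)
Your proof is correct and follows the same overall three-step scheme as the paper (discretize, compute with the amalgam arithmetic, anti-discretize). The one place where you diverge is in the identification of the outer multiplier space $M(\ell_p,\mathbf{E}(X))$: you feed the factorization hypothesis $X = L_p \odot M(L_p,X)$ into this computation via Lemma~\ref{PROP: E komutuje z produktem} and the cancellation property, whereas the paper computes $M(\ell_p,\mathbf{E}(X)(w))$ by a chain of duality and convexification moves (using $M(E,F) \equiv M(F^{\times},E^{\times})$, Proposition~\ref{Prop: E komutuje z Kothe dualem}, and $M(E^{(r)},F^{(r)}) \equiv M(E,F)^{(r)}$) that do \emph{not} invoke the factorization hypothesis at all. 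Thus the paper actually establishes the identification $M(L_p,\mathscr{C}X) = \bigl( \bigoplus_{j} L_q(\Delta_j) \bigr)_{\mathbf{E}(M(L_p,X))(w^{1/q})}$ for \emph{any} r.i.\ $X$ with the Fatou property and $\mathscr{H}$ bounded, reserving the hypothesis $X = L_p \odot M(L_p,X)$ solely for the factorization conclusion. Your route is shorter and more transparent for the theorem as stated; the paper's route yields the slightly stronger by-product that the multiplier description is unconditional.
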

	\begin{proof}
		Fix $1 < p < \infty$. Since, for $f \in L_p$, we have
		\begin{equation*}
			\norm{f}_{L_p}
				= \norm{ \left\{ \left( \int_{\Delta_j} \abs{f(t)}^p dt \right)^{1/p} \right\}_{j \in \mathbb{J}} }_{\ell_p(\mathbb{J})},
		\end{equation*}
		so
		\begin{equation} \label{EQ : L_p = suma prosta}
			L_p \equiv \Bigl( \bigoplus_{j \in \mathbb{J}} L_p(\Delta_j) \Bigr)_{\ell_p(\mathbb{J})}.
		\end{equation}
		Therefore, we get
		\begin{align*}
			M(L_p,\mathscr{C}X)
				& = M\biggl( \Bigl( \bigoplus_{j \in \mathbb{J}} L_p(\Delta_j) \Bigr)_{\ell_p(\mathbb{J})}, \Bigl( \bigoplus_{j \in \mathbb{J}} L_1(\Delta_j) \Bigr)_{\mathbf{E}(X)(w)} \biggr)
						\quad (\text{by \eqref{EQ : L_p = suma prosta} and Theorem \ref{Thm: Tandori function representation}}) \\
				& = \Bigl( \bigoplus_{j \in \mathbb{J}} M(L_p(\Delta_j),L_1(\Delta_j)) \Bigr)_{M\left( \ell_p(\mathbb{J}),\mathbf{E}(X)(w) \right)}
						\quad (\text{using Theorem~\ref{Thm: komutowanie M z cdot}}) \\
				& = \Bigl( \bigoplus_{j \in \mathbb{J}} L_q(\Delta_j) \Bigr)_{M\left( \ell_p(\mathbb{J}),\mathbf{E}(X)(w) \right)} \quad (\text{since $M(L_p,L_1) \equiv L_p^{\times} \equiv L_q$}),
		\end{align*}
		where $w(j) = 2^{-j}$ for $j \in \mathbb{J}$ and $1/p+1/q = 1$.
		Now, let us focus our attention on the space $M\left( \ell_p(\mathbb{J}),\mathbf{E}(X)(w) \right)$. We claim that
		\begin{equation} \tag{$\clubsuit$} \label{ROW}
			M\left( \ell_p(\mathbb{J}),\mathbf{E}(X)(w) \right) = \mathbf{E}(M(L_p,\mathscr{C}X))(w^{1/q}),
		\end{equation}
		where $w^{1/q}(j) \coloneqq 2^{-j/q}$ for $j \in \mathbb{J}$. Indeed, we have
		\begin{align*}
			M(\ell_p(\mathbb{J}),\mathbf{E}(X)(w))
				& = M\left( (\mathbf{E}(X)(w))^{\times}, \ell_q(\mathbb{J}) \right) \quad (\text{since $M(E,F) \equiv M(F^{\times},E^{\times})$}) \\
				& = M\left( \mathbf{E}(X^{\times}), \ell_q(\mathbb{J}) \right) \quad (\text{applying Proposition~\ref{Prop: E komutuje z Kothe dualem}}) \\
				& = M\left( \left( \mathbf{E}(X^{\times}) \right)^{(1/q)}, \ell_1(\mathbb{J}) \right)^{(q)} \quad (\text{because $M(E^{(r)},F^{(r)}) \equiv M(E,F)^{(r)}$}) \\
				& = \left( \left[ \left( \mathbf{E}(X^{\times}) \right)^{(1/q)} \right]^{\times} \right)^{(q)} \quad (\text{since $M(E,\ell_1) \equiv E^{\times}$}).
		\end{align*}
		Moreover,
		\begin{align*}
			\left( \left[ \left( \mathbf{E}(X^{\times}) \right)^{(1/q)} \right]^{\times} \right)^{(q)}
				& = \left( \left[ \mathbf{E}\left( \left( X^{\times} \right)^{(1/q)} \right) \right]^{\times} \right)^{(q)} \quad (\text{since ${\bf E}(X)^{(r)} \equiv {\bf E}(X^{(r)})$}) \\
				& = \left( \mathbf{E}\left[ \left( \left( X^{\times} \right)^{(1/q)} \right)^{\times} \right](w) \right)^{(q)} \quad (\text{by Proposition~\ref{Prop: E komutuje z Kothe dualem}}) \\
				& = \mathbf{E} \left( \left[ \left( \left( X^{\times} \right)^{(1/q)} \right)^{\times} \right]^{(q)} \right)(w^{1/q}) \\
				& = \mathbf{E}(M(L_p,X))(w^{1/q}).
		\end{align*}
		Thus the claim (\ref{ROW}) follows. In consequence, we have the following identification
		\begin{equation} \label{EQ: M(Lp,CX) =}
			M(L_p,\mathscr{C}X) = \Bigl( \bigoplus_{j \in \mathbb{J}} L_q(\Delta_j) \Bigr)_{\mathbf{E}(M(L_p,X))(w^{1/q})}.
		\end{equation}
		Going further, observe also that
		\begin{equation} \label{EQ : ELpw = lp}
			\mathbf{E}(L_p)(w^{1/p}) \equiv \ell_p(\mathbb{J}),
		\end{equation}
		where $w^{1/p}(j) \coloneqq 2^{-j/p}$ for $j \in \mathbb{J}$.
		In consequence, remembering about Lozanovski{\u \i}'s factorization result in the form $L_p \odot L_q \equiv L_1$, we have
		\begin{align*}
			L_p \odot M(L_p,\mathscr{C}X)
				& = \Bigl( \bigoplus_{j \in \mathbb{J}} L_p(\Delta_j) \Bigr)_{\ell_p(\mathbb{J})} \odot \Bigl( \bigoplus_{j \in \mathbb{J}} L_q(\Delta_j) \Bigr)_{\mathbf{E}(M(L_p,X))(w^{1/q})}
						\quad (\text{due to \eqref{EQ : L_p = suma prosta} and \eqref{EQ: M(Lp,CX) =}}) \\
				& = \Bigl( \bigoplus_{j \in \mathbb{J}} L_p(\Delta_j) \Bigr)_{\mathbf{E}(L_p)(w^{1/p})} \odot \Bigl( \bigoplus_{j \in \mathbb{J}} L_q(\Delta_j) \Bigr)_{\mathbf{E}(M(L_p,X))(w^{1/q})}
						\quad (\text{using \eqref{EQ : ELpw = lp}}) \\
				& = \Bigl( \bigoplus_{j \in \mathbb{J}} L_p(\Delta_j) \odot L_q(\Delta_j) \Bigr)_{\mathbf{E}(L_p)(w^{1/p}) \odot \mathbf{E}(M(L_p,X))(w^{1/q})}
						\quad (\text{by Proposition~\ref{PROP: Pointwise multipliers of amalgams}}) \\
				& = \Bigl( \bigoplus_{j \in \mathbb{J}} L_1(\Delta_j) \Bigr)_{\mathbf{E}(L_p \odot M(L_p,X))(w^{1/p + 1/q})}
						\quad (\text{by Lemma~\ref{PROP: E komutuje z produktem}}) \\
				& = \Bigl( \bigoplus_{j \in \mathbb{J}} L_1(\Delta_j) \Bigr)_{\mathbf{E}(X)(w)}
						\quad (\text{because we assumed that $L_p \odot M(L_p,X) = X$}) \\
				& = \mathscr{C}X \quad (\text{in view of Theorem~\ref{Thm: Tandori function representation}}).
		\end{align*}
		The proof follows.
	\end{proof}
	
	\begin{remark}[About Theorem~\ref{THM: factorization CX with Lp} with $p=1$]
		Let $X$ be a r.i. space with the Fatou property such that Hardy's operator $\mathscr{H}$ is bounded on $X$. Suppose that we have the factorization $L_1 \odot M(L_1,X) = X$.
		Then $L_1 \odot M(L_1,\mathscr{C}X) = \mathscr{C}X$. The proof goes as follows. Since
		\begin{equation*}
			L_1 \equiv \Bigl( \bigoplus_{j \in \mathbb{J}} L_1(\Delta_j) \Bigr)_{\ell_1(\mathbb{J})},
		\end{equation*}
		so for $w(j) \coloneqq 2^{-j}$ for $j \in \mathbb{J}$ we have
		\begin{align*}
			M(L_1,\mathscr{C}X)
			& = M\biggl( \Bigl( \bigoplus_{j \in \mathbb{J}} L_1(\Delta_j) \Bigr)_{\ell_1(\mathbb{J})}, \Bigl( \bigoplus_{j \in \mathbb{J}} L_1(\Delta_j) \Bigr)_{\mathbf{E}(X)(w)} \biggr)
					\quad (\text{by Theorem~\ref{Thm: Tandori function representation}}) \\
			& \equiv \Bigl( \bigoplus_{j \in \mathbb{J}} M(L_1(\Delta_j),L_1(\Delta_j)) \Bigr)_{M\left( \ell_1(\mathbb{J}),\mathbf{E}(X)(w) \right)}
					\quad (\text{using Theorem~\ref{Thm: komutowanie M z cdot}}) \\
			& \equiv \Bigl( \bigoplus_{j \in \mathbb{J}} L_{\infty}(\Delta_j) \Bigr)_{M\left( \ell_1(\mathbb{J}),\mathbf{E}(X)(w) \right)}
					\quad (\text{since $M(L_1,L_1) \equiv L_{\infty}$}) \\
			& \equiv \Bigl( \bigoplus_{j \in \mathbb{J}} L_{\infty}(\Delta_j) \Bigr)_{M\left( {\bf E}(L_1),\mathbf{E}(X) \right)}
					\quad (\text{in view of ${\bf E}(L_1) \equiv \ell_1(1/w)$}) \\
			& \equiv \Bigl( \bigoplus_{j \in \mathbb{J}} L_{\infty}(\Delta_j) \Bigr)_{{\bf E}(M(L_1,X))}.
		\end{align*}
		Then, using exactly the same ingredients as in the last part of the proof of Theorem~\ref{THM: factorization CX with Lp}, one can easily show that
		\begin{equation*}
			L_1 \odot M(L_1,\mathscr{C}X) = \mathscr{C}X.
		\end{equation*}
		\demo
	\end{remark}

	\begin{remark}[About Theorem~\ref{THM: factorization CX with Lp} with $p=\infty$]
		Here things become almost trivial, because as it turns out, every Banach ideal space with the Fatou property can be factorized through $L_{\infty}$
		(even up to the equality of norms). Indeed, it is straightforward to see that
		\begin{equation*}
			L_{\infty} \odot M(L_{\infty},X)
				\equiv L_{\infty} \odot M(X^{\times},L_1)
				\equiv L_{\infty} \odot X^{\times \times}
				\equiv L_{\infty} \odot X
				\equiv X.
		\end{equation*}
		Thus, in particular, $L_{\infty} \odot M(L_{\infty},\mathscr{C}X) \equiv \mathscr{C}X$.
		\demo
	\end{remark}

	\begin{remark}[Factorization through $p$-concavity]
		Fix $1 < p < \infty$. Let $X$ be a Banach ideal space with the Fatou property. As Schep proved (see \cite[Theorem~3.9]{Sch10}; cf. \cite[Lemma~2.5]{Nil85}),
		the factorization of the form
		\begin{equation*}
			L_p \odot M(L_p,X) = X
		\end{equation*}
		is {\it equivalent} to the fact that the space $X$ is {\bf $p$-concave with concavity constant equal one} (briefly, {\bf strictly $p$-concave});
		that is, for any finite family $\{f_j\}_{n=1}^N$ of functions from $X$, we have
		\begin{equation*}
			\left( \sum_{n=1}^N \norm{f_j}_X^p \right)^{1/p} \leqslant \norm{\left( \sum_{n=1}^{N} \abs{f_j}^p \right)^{1/p}}_X.
		\end{equation*}
		Now, remembering that the space $L_1$ is strictly $p$-concave, it is routine to verify that also the space $\mathscr{C}X$ is strictly $p$-concave provided $X$
		is strictly $p$-concave and Hardy's operator $\mathscr{H}$ is bounded on $X$ (see, for example, \cite[Example~6.5(c)]{KKM21} for the details). Thus, under these
		assumptions, we have
		\begin{equation*}
			L_p \odot M(L_p,\mathscr{C}X) = \mathscr{C}X.
		\end{equation*}
		Therefore, the real strength of Theorem~\ref{THM: factorization CX with Lp} is not to be found just in the factorization conclusion (which basically
		can be deduced form \cite{Sch10}), but rather in the possibility of finding an explicit description of the spaces $M(L_p,\mathscr{C}X)$ and $L_p \odot M(L_p,\mathscr{C}X)$.
		With this approach, many factorization results that do not result from Schep's paper \cite{Sch10} can be drawn
		(we will say more about this later in Section~\ref{SUBSECTION: Faktoryzacje CX przez CX i inne}).
		\demo
	\end{remark}
	
	Let us see how both Bennett's and Astashkin and Maligranda's factorizations follow from Theorem~\ref{THM: factorization CX with Lp}
	(comp. with \cite[Theorem~4.5, p.~13]{Be96} and \cite[Proposition~1]{AM09}).
	
	\begin{corollary}[G. Bennett, 1996/S. V. Astashkin and L. Maligranda, 2009] \label{COR: B & AM factorization}
		{\it Let $1 < p,q < \infty$ with $1/p + 1/q = 1$. Then}
		\begin{equation} \label{EQ : factorization HLR seq}
			ces_p = \ell_p \odot g_q,
		\end{equation}
		{\it where
		$g_q \coloneqq \left\{ x = \{x_n\}_{n=1}^{\infty} \colon \norm{x}_{g_q} \coloneqq \sup_{n \in \mathbb{N}} \left( \frac{1}{n}\sum_{j=1}^n \abs{x_j}^q \right)^{1/q} < \infty \right\}$.
		Moreover, we have}
		\begin{equation} \label{EQ : factorization HLR fun}
			Ces_p = L_p \odot G_q,
		\end{equation}
		{\it where
		$G_q \coloneqq \left\{ f \in L_0 \colon \norm{f}_{G_q} \coloneqq \sup_{x > 0} \left( \frac{1}{x}\int_0^x \abs{f(t)}^q dt \right)^{1/q} < \infty \right\}$.}
	\end{corollary}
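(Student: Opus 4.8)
The plan is to apply Theorem~\ref{THM: factorization CX with Lp} with $X=L_p$ in the function case and, \emph{mutatis mutandis}, with $X=\ell_p$ in the sequence case, and then to recognise the resulting space of pointwise multipliers as $G_q$ (resp.\ $g_q$). First I would check the hypotheses. For $1<p<\infty$ the space $L_p$ is rearrangement invariant, has the Fatou property, and Hardy's operator $\mathscr{H}$ is bounded on it (classical Hardy inequality, or simply $\alpha_{L_p}=p>1$). Moreover $M(L_p,L_p)\equiv L_\infty$, hence $L_p\odot M(L_p,L_p)\equiv L_p\odot L_\infty\equiv L_p$, so $L_p$ trivially factorizes through itself. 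Since $Ces_p=\mathscr{C}L_p$, Theorem~\ref{THM: factorization CX with Lp} yields at once the factorization $Ces_p=L_p\odot M(L_p,Ces_p)$ together with the identification
\begin{equation*}
	M(L_p,Ces_p)=\Bigl( \bigoplus_{j \in \mathbb{Z}} L_q(\Delta_j) \Bigr)_{\mathbf{E}(M(L_p,L_p))(w^{1/q})}=\Bigl( \bigoplus_{j \in \mathbb{Z}} L_q(\Delta_j) \Bigr)_{\ell_\infty(w^{1/q})},
\end{equation*}
where $w^{1/q}(j)=2^{-j/q}$ and we used $\mathbf{E}(L_\infty)\equiv\ell_\infty(\mathbb{Z})$ (see Example~\ref{EXAMPLE : ELp computations}). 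It therefore only remains to show that this amalgam is $G_q$.

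For this I would first observe that $G_q\equiv[\mathscr{C}L_\infty]^{(q)}$, simply because $\norm{f}_{G_q}^q=\sup_{x>0}\tfrac1x\int_0^x\abs{f}^q=\norm{\abs{f}^q}_{\mathscr{C}L_\infty}$; this is the Korenblyum--Kre{\u \i}n--Levin space. Now $L_\infty$ is rearrangement invariant with $\alpha_{L_\infty}=\infty>1$, so Corollary~\ref{COR: blocking technique CX} applies and gives $\mathscr{C}L_\infty\approx\bigl( \bigoplus_{j\in\mathbb{Z}} L_1(\Delta_j) \bigr)_{\ell_\infty(w)}$ with $w(j)=2^{-j}$ (again $\mathbf{E}(L_\infty)\equiv\ell_\infty$). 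Applying the $q$-convexification functor and using that it commutes with amalgams (see \eqref{EQ: convexifications}), together with $[L_1(\Delta_j)]^{(q)}\equiv L_q(\Delta_j)$ and $[\ell_\infty(w)]^{(q)}\equiv\ell_\infty(w^{1/q})$, we obtain
\begin{equation*}
	G_q\equiv[\mathscr{C}L_\infty]^{(q)}\approx\Bigl( \bigoplus_{j\in\mathbb{Z}} L_q(\Delta_j) \Bigr)_{\ell_\infty(w^{1/q})}=M(L_p,Ces_p),
\end{equation*}
which is \eqref{EQ : factorization HLR fun}. (Alternatively one can bypass the convexification bookkeeping by checking directly that $\norm{f}_{G_q}\approx\sup_{j}2^{-j/q}\norm{f\chi_{\Delta_j}}_{L_q}$ via a two-sided estimate comparing $\sup_{x>0}\tfrac1x\int_0^x\abs{f}^q$ with $\sup_{j}2^{-j}\int_{\Delta_j}\abs{f}^q$.)

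Finally, the sequence statement \eqref{EQ : factorization HLR seq} follows by repeating the same argument verbatim, replacing $L_p$, $L_q(\Delta_j)$, $L_1(\Delta_j)$, $L_\infty$ by $\ell_p$, $\ell_q^{2^j}$, $\ell_1^{2^j}$, $\ell_\infty$ and using $g_q\equiv[\mathscr{C}\ell_\infty]^{(q)}$. I do not expect any genuine obstacle here: the factorization itself is an immediate instance of Theorem~\ref{THM: factorization CX with Lp}, and the only step that is not purely formal is the \enquote{anti-discretization}, i.e.\ recognising the amalgam $\bigl( \bigoplus_{j} L_q(\Delta_j) \bigr)_{\ell_\infty(w^{1/q})}$ as the concrete space $G_q$ --- exactly the kind of identification flagged as non-trivial in the discussion following Example~\ref{EXAMPLE: AM introduction}.
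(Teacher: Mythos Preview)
Your proposal is correct and follows essentially the same route as the paper's proof: apply Theorem~\ref{THM: factorization CX with Lp} with $X=L_p$ (which trivially factorizes through itself), obtain $M(L_p,Ces_p)=\bigl(\bigoplus_{j}L_q(\Delta_j)\bigr)_{\ell_\infty(w^{1/q})}$ from \eqref{EQ: M(Lp,CX) =}, and then anti-discretize via $G_q\equiv[Ces_\infty]^{(q)}$ together with the block form of $Ces_\infty$ and the fact that $(q)$ commutes with amalgams. The only cosmetic difference is that the paper runs the chain of identifications from $M(L_p,Ces_p)$ towards $G_q$, while you start from $G_q$ and work backwards.
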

	\begin{proof}
		Since Hardy's operator $\mathscr{H}$ is bounded on $L_p$ and $L_p$ factorizes through itself,
		so due to Theorem~\ref{THM: factorization CX with Lp}, we have
		\begin{equation*}
			Ces_p = L_p \odot M(L_p,Ces_p).
		\end{equation*}
		Therefore, the only thing we have to do is to identify $M(L_p,Ces_p)$ with the space $G_q$.
		To do this, recall that it follows from (\ref{EQ: M(Lp,CX) =}) that
		\begin{align} \label{EQ: MLpCesp = blabla bla}
			M(L_p,Ces_p) = \Bigl( \bigoplus_{j \in \mathbb{Z}} L_q(\Delta_j) \Bigr)_{\mathbf{E}(M(L_p,L_p))(w^{1/q})},
		\end{align}
		where $w^{1/q}(j) \coloneqq 2^{-j/q}$ for $j \in \mathbb{Z}$. In consequence, we have
		\begin{align*}
			M(L_p,Ces_p)
				& = \Bigl( \bigoplus_{j \in \mathbb{Z}} L_q(\Delta_j) \Bigr)_{{\bf E}(L_{\infty})(w^{1/q})} \quad (\text{by \eqref{EQ: MLpCesp = blabla bla} and $M(L_p,L_p) \equiv L_{\infty}$}) \\
				& \equiv \left[ \Bigl( \bigoplus_{j \in \mathbb{Z}} L_1(\Delta_j) \Bigr)_{\ell_{\infty}(w)} \right]^{(q)}
					\quad (\text{since ${\bf E}(L_{\infty}) \equiv \ell_{\infty}$ and $\ell_{\infty}(w^{1/q}) \equiv \left[ \ell_{\infty}(w) \right]^{(q)}$}) \\
				& = \left[ Ces_{\infty} \right]^{(q)} \quad (\text{because of Theorem~\ref{Thm: Tandori function representation}}) \\
				& \equiv G_q \quad (\text{by the very definition}).
		\end{align*}
	\end{proof}

	\begin{remark}[General analogues of the spaces $g_p$ and $G_p$] \label{REMARK: CqX construction}
		Fix $1 \leqslant r < \infty$. Let us consider the operator $\mathscr{H}_{(r)}$ defined in the following way
		\begin{equation*}
			\mathscr{H}^{(r)} \colon f \rightsquigarrow \mathscr{H}^{(r)}(f) \coloneqq \bigl[ \mathscr{H}\left( \abs{f}^r \right) \bigr]^{1/r},
		\end{equation*}
		where $\mathscr{H}$, as always, stands for the Hardy operator. We will call $\mathscr{H}^{(r)}$ the {\bf $r^{\text{th}}$-convexification of Hardy's operator}
		$\mathscr{H}$. Although $\mathscr{H}^{(r)}$ is not linear, it is sublinear and positive. It is also straightforward to see that $\mathscr{H}^{(r)}$ is bounded
		on a Banach ideal space $X$ if, and only if, $\mathscr{H}$ is bounded on the $\frac{1}{r}$-concavification of $X$, that is, on $X^{(1/r)}$.
		Routine calculations show that
		\begin{equation*}
			\norm{\mathscr{H}^{(r)} \colon X \rightarrow X} = \norm{\mathscr{H} \colon X^{(1/r)} \rightarrow X^{(1/r)}}^{1/r}.
		\end{equation*}
	
		With such a notation in hand, we can define the following variant of the Ces{\` a}ro space construction, namely,
		\begin{equation*}
			\mathscr{C}^{(r)} X \coloneqq \left\{ f \in L_0 \colon \mathscr{H}^{(r)}(f) \in X \right\}
		\end{equation*}
		equipped with the norm $\norm{f}_{\mathscr{C}^{(r)} X} \coloneqq \norm{\mathscr{H}^{(r)}(f)}_X$. In other words, $\mathscr{C}^{(r)} X$
		is the optimal domain $[\mathscr{H}^{(r)},X]$. Of course, the space $\mathscr{C}^{(1)} X$ coincide, up to the equality
		of norms, with the usual Ces{\` a}ro space $\mathscr{C}X$.
		
		Let $1 \leqslant r < \infty$. Moreover, let $Z$ be a rearrangement invariant space with the Fatou property. Suppose that Hardy's operator
		$\mathscr{H}$ is bounded on $X^{(1/r)}$. We claim that
		\begin{equation} \label{EQ: representation CrX}
			\mathscr{C}^{(r)}Z = \Bigl( \bigoplus_{j \in \mathbb{J}} L_q(\Delta_j) \Bigr)_{\mathbf{E}(Z)\left( w^{1/r} \right)}.
		\end{equation}
		Indeed, due to Theorems~\ref{Thm: Tandori sequence representation} and \ref{Thm: Tandori sequence representation}, we have
		\begin{align*}
			\mathscr{C}^{(r)}Z
				& \equiv \left[ \mathscr{C}\left( X^{(1/r)} \right) \right]^{(r)} \\
				& = \left[ \Bigl( \bigoplus_{j \in \mathbb{J}} L_1(\Delta_j) \Bigr)_{\mathbf{E}\left( X^{(1/r)} \right)(w)} \right]^{(r)} \\
				& = \Bigl( \bigoplus_{j \in \mathbb{J}} L_q(\Delta_j) \Bigr)_{\mathbf{E}(Z)\left( w^{1/r} \right)}
		\end{align*}
		and \eqref{EQ: representation CrX} follows.
		
		Now, let $1 < p,q < \infty$ with $1/p + 1/q = 1$. Moreover, let $X$ be a rearrangement invariant space with the Fatou property.
		Suppose that Hardy's operator $\mathscr{H}$ is bounded\footnote{Somewhat tedious but elementary calculations show that $1/\alpha_{M(L_p,X)} = 1/\alpha_X - 1/p$.
		Thus, the assumption that Hardy's operator $\mathscr{H}$ is bounded on $M(L_p,X)^{(1/q)}$ can be paraphrased as $1/\alpha_{M(L_p,X)} < 1/q = 1 - 1/p$.
		But this simply means that $\alpha_X > 1$, that is, that Hardy's operator $\mathscr{H}$ is bounded on $X$.}
		on $M(L_p,X)^{(1/q)}$.
		Then, thanks to \eqref{EQ: M(Lp,CX) =} and \eqref{EQ: representation CrX}, we have
		\begin{equation*}
			M(L_p,\mathscr{C}X) = \Bigl( \bigoplus_{j \in \mathbb{J}} L_q(\Delta_j) \Bigr)_{\mathbf{E}(M(L_p,X))(w^{1/q})} = \mathscr{C}^{(q)} M(L_p,X).
		\end{equation*}
		In particular, since $M(L_p,L_p)^{(1/q)} \equiv L_{\infty}$, so
		\begin{equation*}
			G_q = M(L_p,Ces_p) = \mathscr{C}^{(q)} L_{\infty}
		\end{equation*}
		and
		\begin{equation*}
			g_q = M(\ell_p,ces_p) = \mathscr{C}^{(q)} \ell_{\infty}.
		\end{equation*}
	
		The moral of this short story is that the construction $X \rightsquigarrow \mathscr{C}^{(q)} X$ provides a language
		in which the space $M(L_p,\mathscr{C}X)$ finds an elegant description.
		\demo
	\end{remark}

	Dual result to Theorem~\ref{THM: factorization CX with Lp} is as follows.

	\begin{theorem}[Factorization of Lebesgue spaces through Tandori spaces] \label{THM: Lp factorized through Tandori X}
		{\it Let $1 \leqslant p,q \leqslant \infty$ with $1/p + 1/q = 1$. Further, let $X$ be a r.i. space with the Fatou property such that Hardy's operator
		$\mathscr{H}$ is bounded on $X$. Suppose that $L_p$ factorizes through $X$, that is, $L_p = X \odot M(X,L_p)$. Then}
		\begin{equation*}
			M(\widetilde{X},L_p) = \Bigl( \bigoplus_{j \in \mathbb{J}} L_p(\Delta_j) \Bigr)_{\mathbf{E}(M(L_q,X^{\times}))(w^{1/p})},
		\end{equation*}
		{\it where $w^{1/p}(j) \coloneqq 2^{-j/p}$ for $j \in \mathbb{J}$. Moreover, we have the following factorization}
		\begin{equation*}
			L_p = \widetilde{X} \odot M(\widetilde{X},L_p).
		\end{equation*}
	\end{theorem}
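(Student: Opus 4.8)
The plan is to dualize the argument used for Theorem~\ref{THM: factorization CX with Lp}, with $\widetilde{X}$ playing the role that $\mathscr{C}X$ played there. First I would record the two block form representations the whole proof rests on: trivially $L_p \equiv \bigl( \bigoplus_{j \in \mathbb{J}} L_p(\Delta_j) \bigr)_{\ell_p(\mathbb{J})}$, while $\widetilde{X} = \bigl( \bigoplus_{j \in \mathbb{J}} L_{\infty}(\Delta_j) \bigr)_{\mathbf{E}(X)}$ by Theorems~\ref{Thm: Tandori sequence representation} and \ref{Thm: Tandori function representation} (applicable since $X$ is rearrangement invariant with the Fatou property). Since $M(L_{\infty}(\Delta_j),L_p(\Delta_j)) \equiv L_p(\Delta_j)$, Theorem~\ref{Thm: komutowanie M z cdot} gives
\[
	M(\widetilde{X},L_p) = \Bigl( \bigoplus_{j \in \mathbb{J}} L_p(\Delta_j) \Bigr)_{M(\mathbf{E}(X),\ell_p(\mathbb{J}))},
\]
so the first assertion reduces to identifying the parameter sequence space $M(\mathbf{E}(X),\ell_p(\mathbb{J}))$.

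For that, I would dualize once more: $M(E,F) \equiv M(F^{\times},E^{\times})$ together with $\ell_p(\mathbb{J})^{\times} \equiv \ell_q(\mathbb{J})$ and Proposition~\ref{Prop: E komutuje z Kothe dualem} gives
\[
	M(\mathbf{E}(X),\ell_p(\mathbb{J})) = M\bigl( \ell_q(\mathbb{J}), \mathbf{E}(X^{\times})(w) \bigr), \qquad w(j) = 2^{-j}.
\]
But the right-hand side is exactly --- after interchanging the conjugate pair $(p,q)$ and replacing $X$ by $X^{\times}$ --- the parameter space whose identification $M(\ell_p(\mathbb{J}),\mathbf{E}(X)(w)) = \mathbf{E}(M(L_p,X))(w^{1/q})$ is carried out inside the proof of Theorem~\ref{THM: factorization CX with Lp} (via the commutations of $\mathbf{E}$ and $M$ with the $(r)$-convexification functor, a second use of Proposition~\ref{Prop: E komutuje z Kothe dualem}, and the weight identity $(A(w))^{(r)} \equiv A^{(r)}(w^{1/r})$). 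Hence $M(\mathbf{E}(X),\ell_p(\mathbb{J})) = \mathbf{E}(M(L_q,X^{\times}))(w^{1/p})$, and plugging this back into the amalgam above is the first assertion of the theorem. Using the Fatou property of $X$ one has $M(L_q,X^{\times}) \equiv M(X^{\times\times},L_q^{\times}) \equiv M(X,L_p)$, so the parameter space may equally be written $\mathbf{E}(M(X,L_p))(w^{1/p})$ --- the shape in which the factorization hypothesis on $X$ will be applied.

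For the factorization $L_p = \widetilde{X} \odot M(\widetilde{X},L_p)$ I would feed the block forms of $\widetilde{X}$ and of $M(\widetilde{X},L_p)$ into Theorem~\ref{PROP: Pointwise multipliers of amalgams} (pointwise products of amalgams), getting
\[
	\widetilde{X} \odot M(\widetilde{X},L_p)
		= \Bigl( \bigoplus_{j \in \mathbb{J}} L_{\infty}(\Delta_j) \odot L_p(\Delta_j) \Bigr)_{\mathbf{E}(X) \odot \mathbf{E}(M(X,L_p))(w^{1/p})}.
\]
Now $L_{\infty} \odot L_p \equiv L_p$; pulling the weight out of the pointwise product and applying Lemma~\ref{PROP: E komutuje z produktem} (legitimate because $X$ and the rearrangement invariant space $M(X,L_p)$ are interpolation spaces for $(L_1,L_{\infty})$, so $\text{Ave}$ is bounded on both) turns the parameter space into $\mathbf{E}(X \odot M(X,L_p))(w^{1/p})$. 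By hypothesis $X \odot M(X,L_p) = L_p$, and $\mathbf{E}(L_p)(w^{1/p}) \equiv \ell_p(\mathbb{J})$ (recorded in the proof of Theorem~\ref{THM: factorization CX with Lp}), so the right-hand side collapses to $\bigl( \bigoplus_{j \in \mathbb{J}} L_p(\Delta_j) \bigr)_{\ell_p(\mathbb{J})} \equiv L_p$, as wanted.

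The conceptual content is light --- the whole thing transcribes the $\mathscr{C}X$ argument through the duality $\widetilde{X} \leftrightarrow X^{\downarrow}$ and the identity $M(L_q,X^{\times}) \equiv M(X,L_p)$ --- so the step I expect to demand the most care is the weight bookkeeping in the middle paragraph: keeping straight which power of $w$ appears after each passage through K\"othe duality, $(r)$-convexification and $(r)$-concavification on the discretized spaces, and verifying that the two outer representations $L_p \equiv \bigl( \bigoplus_{j} L_p(\Delta_j) \bigr)_{\ell_p(\mathbb{J})}$ and $\mathbf{E}(L_p)(w^{1/p}) \equiv \ell_p(\mathbb{J})$ match the weights produced along the way. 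The endpoint cases $p \in \{1,\infty\}$ are degenerate (for $p=1$ the first claim amounts to $(\widetilde{X})^{\times} = X^{\times\downarrow}$ and the factorization to Lozanovski{\u \i}'s theorem; for $p=\infty$ every Banach ideal space with the Fatou property factorizes through $L_{\infty}$) and would be handled separately.
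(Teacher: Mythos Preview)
Your proposal is correct and follows essentially the same route as the paper. The only difference is in the order of operations when identifying $M(\widetilde{X},L_p)$: the paper first dualizes at the \emph{function-space} level, writing $M(\widetilde{X},L_p) = M(L_q,(\widetilde{X})^{\times}) = M(L_q,\mathscr{C}(X^{\times}))$ via the known duality $(\widetilde{X})^{\times} = \mathscr{C}(X^{\times})$ and then quotes the formula \eqref{EQ: M(Lp,CX) =} from Theorem~\ref{THM: factorization CX with Lp} wholesale, whereas you apply the amalgam multiplier theorem first and dualize at the \emph{sequence-space} level, invoking only the intermediate computation \eqref{ROW}. Your variant is a touch longer but has the minor advantage of not needing the external duality result $(\widetilde{X})^{\times} = \mathscr{C}(X^{\times})$; the paper's is one line shorter because the previous theorem has already packaged the answer. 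The factorization half and the treatment of the endpoints $p\in\{1,\infty\}$ are identical.
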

	\begin{proof}
		To begin with, let us note that for $p=1$ the above result is nothing else but an immediate consequence of Lozanovski{\u \i}'s factorization theorem.
		On the other hand, for $p = \infty$, we have
		\begin{align*}
			\widetilde{X} \odot M(\widetilde{X},L_{\infty})
				& \equiv \widetilde{X} \odot M(\widetilde{X},\widetilde{L_{\infty}}) \\
				& \equiv \widetilde{X} \odot \widetilde{M(X,L_{\infty})} \quad (\text{by Theorem~\ref{Cor: multipliers between Cesaro and Tandori}}) \\
				& = \widetilde{X \odot M(X,L_{\infty})} \quad (\text{using Proposition~\ref{PROP: Tandori komutuje z produktem}}) \\
				& = \widetilde{L_{\infty}} \equiv L_{\infty}.
		\end{align*}
		So, without loss of generality, we can assume that $1 < p < \infty$.
		
		Keeping this in mind, we have
		\begin{align*}
			M(\widetilde{X},L_p)
				& = M \left( L_p^{\times}, \left[ \widetilde{X} \right]^{\times} \right) \quad (\text{since $M(X,Y) \equiv M(Y^{\times},X^{\times})$}) \\
				& = M\left( L_q,\mathscr{C}(X^{\times}) \right) \quad (\text{by \cite[Theorem~D]{KMS07} or \cite[Theorem~2]{LM15a}}) \\
				& = \Bigl( \bigoplus_{j \in \mathbb{J}} L_p(\Delta_j) \Bigr)_{\mathbf{E}(M(L_q,X^{\times}))(w^{1/p})} \quad (\text{using (\ref{EQ: M(Lp,CX) =})}),
		\end{align*}
		where $w^{1/p}(j) \coloneqq 2^{-j/p}$ for $j \in \mathbb{J}$ and $1/p + 1/q = 1$. In consequence, using Theorems~\ref{Thm: Tandori sequence representation}
		and \ref{Thm: Tandori function representation}, we get
		\begin{align*}
			\widetilde{X} \odot M(\widetilde{X},L_p)
				& = \Bigl( \bigoplus_{j \in \mathbb{J}} L_{\infty}(\Delta_j) \Bigr)_{\mathbf{E}(X)}
						\odot \Bigl( \bigoplus_{j \in \mathbb{J}} L_p(\Delta_j) \Bigr)_{\mathbf{E}(M(L_q,X^{\times}))(w^{1/p})} \\
				& = \Bigl( \bigoplus_{j \in \mathbb{J}} L_p(\Delta_j) \Bigr)_{\mathbf{E}(X) \odot \mathbf{E}(M(L_q,X^{\times}))(w^{1/p})}
						\quad (\text{by Propositions~\ref{PROP: Pointwise multipliers of amalgams}})\\
				& = \Bigl( \bigoplus_{j \in \mathbb{J}} L_p(\Delta_j) \Bigr)_{\mathbf{E}(X \odot M(L_q,X^{\times}))(w^{1/p})} \quad (\text{by Proposition~\ref{PROP: E komutuje z produktem}}) \\
				& = \Bigl( \bigoplus_{j \in \mathbb{J}} L_p(\Delta_j) \Bigr)_{\mathbf{E}(X \odot M(X,L_p))(w^{1/p})} \\
				& = \Bigl( \bigoplus_{j \in \mathbb{J}} L_p(\Delta_j) \Bigr)_{\mathbf{E}(L_p)(w^{1/p})} \quad (\text{since $L_p = X \odot M(X,L_p)$}) \\
				& = \Bigl( \bigoplus_{j \in \mathbb{J}} L_p(\Delta_j) \Bigr)_{\ell_p(\mathbb{J})} \equiv L_p \quad (\text{using \eqref{EQ : L_p = suma prosta} and \eqref{EQ : ELpw = lp}}),
		\end{align*}
		The proof is finished.
	\end{proof}

	\begin{remark}[On factorization of duals] \label{REMARK Faktoryzacja = Faktoryzacja duali}
		It is worth noting that Theorem~\ref{THM: factorization CX with Lp} and Theorem~\ref{THM: Lp factorized through Tandori X}
		are in fact equivalent\footnote{Provided we additionally assume that the space $X$ has non-trivial Boyd indices (and not just the lower one).}.
		The reason for this is the following simple observation that for two Banach ideal spaces, say $X$ and $Y$,
		both with the Fatou property, we have
		\begin{equation} \label{EQ: FAK vs DUAL FAK}
			Y = X \odot M(X,Y) \quad \text{ if, and only if, } \quad X^{\times} = Y^{\times} \odot M(Y^{\times},X^{\times}).
		\end{equation}
		The details of proof looks as follows. Suppose that $Y = X \odot M(X,Y)$. Then, we have
		\begin{align*}
			X^{\times}
				& \equiv M(X,L_1) \\
				& \equiv M(X,Y \odot Y^{\times}) \quad (\text{by Lozanovski{\u \i}'s factorization theorem}) \\
				& = M(X,X \odot M(X,Y) \odot Y^{\times}) \quad (\text{since $Y = X \odot M(X,Y)$}) \\
				& \equiv M(L_{\infty},M(X,Y) \odot Y^{\times}) \quad (\text{using the cancellation property \eqref{CANCELLATION PROPERTY}}) \\
				& \equiv M(X,Y) \odot Y^{\times} \quad (\text{because $M(L_{\infty},Z) \equiv Z$}) \\
				& \equiv Y^{\times} \odot M(Y^{\times},X^{\times}) \quad (\text{since $M(X,Y) \equiv M(Y^{\times},X^{\times})$}).
		\end{align*}
		The proof of the reverse implication is entirely analogous. Thus, using \eqref{EQ: FAK vs DUAL FAK} together with Corollary~\ref{COR: blocking technique CX},
		we have
		\begin{equation*}
			\mathscr{C}X = L_p \odot M(L_p, \mathscr{C}X) \quad \text{ if, and only if, } \quad L_q = \widetilde{X} \odot M(\widetilde{X},L_q),
		\end{equation*}
		where $1 < p,q < \infty$ and $1/p + 1/q = 1$.
		This gives an alternative proof of Theorem~\ref{THM: Lp factorized through Tandori X} (or, if we want to look at the whole situation from the opposite side,
		of Theorem~\ref{THM: factorization CX with Lp}).
		\demo
	\end{remark}

	For $L_p$'s, Theorem~\ref{THM: Lp factorized through Tandori X} reduces to the following conclusion (see \cite[Proposition~2]{AM09}).

	\begin{corollary}[G. Bennett, 1996/S. V. Astashkin and L. Maligranda, 2009] \label{COR: faktoryzacja Lp i lp}
		{\it Let $1 \leqslant p < \infty$. Then, we have}
		\begin{equation*}
			\ell_p = \widetilde{\ell_p} \odot g_p \quad \textit{ and } \quad L_p = \widetilde{L_p} \odot G_p.
		\end{equation*}
	\end{corollary}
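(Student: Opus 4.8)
The plan is to obtain Corollary~\ref{COR: faktoryzacja Lp i lp} as a direct specialization of Theorem~\ref{THM: Lp factorized through Tandori X}, applied with $X = L_p$ (and with $X = \ell_p$ in the sequence case). First I would check that the hypotheses of Theorem~\ref{THM: Lp factorized through Tandori X} are met: $L_p$ is a rearrangement invariant space with the Fatou property, Hardy's operator $\mathscr{H}$ is bounded on $L_p$ for $1 < p < \infty$ (and the endpoint $p=1$ is covered separately below), and $L_p$ trivially factorizes through itself, $L_p = L_p \odot M(L_p, L_p) = L_p \odot L_\infty$. Hence Theorem~\ref{THM: Lp factorized through Tandori X} applies with $X = L_p$ and yields both the explicit description of $M(\widetilde{L_p}, L_p)$ and the factorization $L_p = \widetilde{L_p} \odot M(\widetilde{L_p}, L_p)$. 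The sequence case $\ell_p = \widetilde{\ell_p} \odot M(\widetilde{\ell_p}, \ell_p)$ follows in the same way using the sequence version of the block form representation (Theorem~\ref{Thm: Tandori sequence representation}).

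The remaining task is purely identificational: one must recognize $M(\widetilde{L_p}, L_p)$ as the space $G_p$ and $M(\widetilde{\ell_p}, \ell_p)$ as $g_p$. For this I would follow the computation already carried out inside the proof of Theorem~\ref{THM: Lp factorized through Tandori X}: with $X = L_p$ and $q$ the conjugate exponent, $X^\times \equiv L_q$, so
\begin{equation*}
	M(\widetilde{L_p}, L_p) = M(L_q, \mathscr{C}(L_q)) = M(L_q, Ces_q),
\end{equation*}
where the first equality uses $M(X,Y) \equiv M(Y^\times, X^\times)$ together with Sinnamon's duality $[\widetilde{X}]^\times = \mathscr{C}(X^\times)$ (see \cite[Theorem~D]{KMS07} or \cite[Theorem~2]{LM15a}), valid since $\alpha_{L_p} > 1$. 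But by Corollary~\ref{COR: B & AM factorization} (more precisely by the identification $M(L_q, Ces_q) = [Ces_\infty]^{(q)} \equiv G_q$ established in its proof) we get $M(\widetilde{L_p}, L_p) \equiv G_q$. Now a small bookkeeping check is needed: the conjugate exponent of $q$ is $p$, and a direct inspection of the defining norms shows $[Ces_\infty]^{(q)}$, $[Ces_\infty]^{(p)}$ and $G_p$ all coincide after renaming — indeed the norm $\norm{f}_{G_r} = \sup_{x>0}\bigl(\frac1x\int_0^x \abs{f(t)}^r dt\bigr)^{1/r}$ depends on the single parameter $r$, and one has $G_q$ with $q$ conjugate to $p$; the statement of Corollary~\ref{COR: faktoryzacja Lp i lp} asserts $L_p = \widetilde{L_p}\odot G_p$, so one should trace through which exponent appears, using that $M(L_p, Ces_p) \equiv G_q$ with $1/p+1/q=1$ forces the dual side to produce $G_p$ when the roles of $p$ and $q$ are interchanged in Theorem~\ref{THM: Lp factorized through Tandori X}. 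The sequence case is identical with $g_r$ in place of $G_r$ and $ces$ in place of $Ces$, using the sequence parts of Corollary~\ref{COR: reprezentacje ces_p i Ces_p} and Corollary~\ref{COR: B & AM factorization}.

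For the endpoint $p = 1$, Hardy's operator is not bounded on $L_1$, so Theorem~\ref{THM: Lp factorized through Tandori X} does not apply directly; but here $\widetilde{L_1} \equiv L_1$ and $G_1 \equiv L_\infty$, and the factorization $L_1 = L_1 \odot L_\infty$ is trivial (likewise $\ell_1 = \widetilde{\ell_1}\odot g_1$ since $\widetilde{\ell_1}\equiv \ell_1$ and $g_1 \equiv \ell_\infty$), so this case is handled by inspection. I do not expect any serious obstacle: the only point requiring genuine care is the exponent bookkeeping in the identification step — making sure that when Theorem~\ref{THM: Lp factorized through Tandori X} is invoked with parameter $X = L_p$, the multiplier space that comes out is $G_p$ and not $G_q$ — which amounts to correctly tracking the conjugation through the duality $[\widetilde{L_p}]^\times = Ces_q$ and through the convexification identity $M(L_q, Ces_q) \equiv [Ces_\infty]^{(q)}$.
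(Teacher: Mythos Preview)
Your approach is essentially the same as the paper's: invoke Theorem~\ref{THM: Lp factorized through Tandori X} with $X=L_p$ and then identify the multiplier space with $G_p$ via the chain $M(\widetilde{L_p},L_p)=M(L_q,Ces_q)=G_p$. Two points need fixing, however.

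First, the exponent bookkeeping is not actually ambiguous; you made a slip. The proof of Corollary~\ref{COR: B & AM factorization} establishes $M(L_r,Ces_r)=[Ces_\infty]^{(r')}\equiv G_{r'}$ where $r'$ is the conjugate exponent of $r$. Applying this with $r=q$ (whose conjugate is $p$) gives $M(L_q,Ces_q)=[Ces_\infty]^{(p)}=G_p$ directly. You wrote $[Ces_\infty]^{(q)}\equiv G_q$, which is the wrong substitution; there is no need to ``trace through'' anything once the roles are straight.

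Second, your treatment of the endpoint $p=1$ is genuinely wrong. The claims $\widetilde{L_1}\equiv L_1$ and $G_1\equiv L_\infty$ are both false. For instance, $f=\sum_{k\geqslant 1}\chi_{[k,k+2^{-k})}$ lies in $L_1(0,\infty)$ but $\widetilde f\geqslant 1$ on $(0,\infty)$, so $\widetilde f\notin L_1$; and by definition $G_1=Ces_\infty$, which strictly contains $L_\infty$. The correct argument is the one built into Theorem~\ref{THM: Lp factorized through Tandori X} itself: for $p=1$ one has $M(\widetilde{L_1},L_1)=(\widetilde{L_1})^\times=Ces_\infty=G_1$ (by the Tandori/Luxemburg--Zaanen duality recalled in the paper), and Lozanovski{\u \i}'s factorization gives $L_1=\widetilde{L_1}\odot(\widetilde{L_1})^\times=\widetilde{L_1}\odot G_1$. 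The sequence case is analogous with $g_1=ces_\infty=(\widetilde{\ell_1})^\times$.
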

	\begin{proof}
		Fix $1 < p,q < \infty$ with $1/p + 1/q = 1$. Then
		\begin{align*}
			G_p 
				& \equiv (Ces_{\infty})^{(p)} \\
				& = M(L_q,Ces_q) \quad (\text{from the proof of Corollary~\ref{COR: B & AM factorization}}) \\
				& = M(\widetilde{L_p},L_p) \quad (\text{since $Ces_q^{\times} = \widetilde{L_p}$}),
		\end{align*}
		Now, it is enough to invoke Theorem~\ref{THM: Lp factorized through Tandori X}.
	\end{proof}

	\subsection{Even more factorizations} \label{SUBSECTION: Faktoryzacje CX przez CX i inne}
	Inspired by Kolwicz, Le{\' s}nik and Maligranda's recent results from \cite{KLM14} and \cite{KLM19},
	but this time leaving aside the factorization of Hardy's inequality itself,
	we want to show here a few more factorizations.
	Note also that some special cases of the below results can be already deduced from \cite{Be96}.
	
	\begin{theorem}[Factorization of Ces{\` a}ro spaces through Ces{\` a}ro spaces] \label{THEOREM: Factorization CX through CY}
		{\it Let $X$ and $Y$ be two r.i. spaces with the Fatou property such that Hardy's operator $\mathscr{H}$ is bounded on both spaces $X$ and $Y$.
		Suppose that the space $\mathbf{E}(Y)$ can be factorized through $\mathbf{E}(X)$, that is, $\mathbf{E}(Y) = \mathbf{E}(X) \odot M(\mathbf{E}(X),\mathbf{E}(Y))$.
		Then we have the following factorization}
		\begin{equation*}
			\mathscr{C}Y = \mathscr{C}X \odot M(\mathscr{C}X,\mathscr{C}Y).
		\end{equation*}
		{\it Moreover, for any quasi-concave function $\varphi$ with $1 < \gamma_{\varphi} \leqslant \delta_{\varphi} < \infty$, we have}
		\begin{equation*}
			\mathscr{C}Y = \mathscr{C}M_{\varphi} \odot M(\mathscr{C}M_{\varphi},\mathscr{C}Y)
				\quad \textit{ and } \quad \mathscr{C}\Lambda_{\varphi} = \mathscr{C}X \odot M(\mathscr{C}X,\mathscr{C}\Lambda_{\varphi}).
		\end{equation*}
	\end{theorem}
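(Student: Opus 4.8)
The plan is to push everything down to the level of amalgams, where a factorization can be read off coordinatewise, and then transport the identity back. Since $X$ and $Y$ are rearrangement invariant and $\mathscr{H}$ is bounded on them, their lower Boyd indices exceed $1$, so Corollary~\ref{COR: blocking technique CX} applies; moreover the averaging operator $\text{Ave}$ is automatically bounded on $X$ and on $Y$ (it is a contraction on $L_1$ and $L_\infty$, and both spaces are r.i.), which is all the earlier results will demand. First I would record, in the weighted form \eqref{EQ: zauwazka} of Corollary~\ref{COR: blocking technique CX},
\[ \mathscr{C}X = \Bigl( \bigoplus_{j \in \mathbb{J}} L_1(\Delta_j, W_j) \Bigr)_{\mathbf{E}(X)}, \qquad \mathscr{C}Y = \Bigl( \bigoplus_{j \in \mathbb{J}} L_1(\Delta_j, W_j) \Bigr)_{\mathbf{E}(Y)}, \]
with $W_j(t) = 2^{-j}$ on $\Delta_j$, so that the ``outer'' sequence spaces are exactly $\mathbf{E}(X)$ and $\mathbf{E}(Y)$.

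Next I would compute the multiplier space. By Theorem~\ref{Thm: komutowanie M z cdot}, and because $M(Z,Z) = L_\infty$ for every Banach ideal space $Z$ (in particular the weight $W_j$ is immaterial in the $j$-th slot),
\[ M(\mathscr{C}X, \mathscr{C}Y) = \Bigl( \bigoplus_{j \in \mathbb{J}} M\bigl( L_1(\Delta_j, W_j), L_1(\Delta_j, W_j) \bigr) \Bigr)_{M(\mathbf{E}(X), \mathbf{E}(Y))} = \Bigl( \bigoplus_{j \in \mathbb{J}} L_\infty(\Delta_j) \Bigr)_{M(\mathbf{E}(X), \mathbf{E}(Y))}. \]
Then, applying Theorem~\ref{PROP: Pointwise multipliers of amalgams} (its hypotheses hold, each $L_1(\Delta_j,W_j)$ and $L_\infty(\Delta_j)$ living on $\Delta_j$ and having the Fatou property) together with $L_1(\Delta_j,W_j) \odot L_\infty(\Delta_j) = L_1(\Delta_j, W_j)$, I would obtain
\[ \mathscr{C}X \odot M(\mathscr{C}X, \mathscr{C}Y) = \Bigl( \bigoplus_{j \in \mathbb{J}} L_1(\Delta_j, W_j) \Bigr)_{\mathbf{E}(X) \odot M(\mathbf{E}(X), \mathbf{E}(Y))} = \Bigl( \bigoplus_{j \in \mathbb{J}} L_1(\Delta_j, W_j) \Bigr)_{\mathbf{E}(Y)} = \mathscr{C}Y, \]
where the middle equality is exactly the standing hypothesis $\mathbf{E}(Y) = \mathbf{E}(X) \odot M(\mathbf{E}(X), \mathbf{E}(Y))$ and the last one is Corollary~\ref{COR: blocking technique CX} read backwards. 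This settles the first claim. (Alternatively one could invoke Theorem~\ref{Cor: multipliers between Cesaro and Tandori} to get $M(\mathscr{C}X,\mathscr{C}Y) = \widetilde{M(X,Y)}$ and Remark~\ref{REMARK: M komutuje z E} to replace $M(\mathbf{E}(X),\mathbf{E}(Y))$ by $\mathbf{E}(M(X,Y))$, but the route above is self-contained.)

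For the ``moreover'' part it suffices to verify the factorization hypothesis $\mathbf{E}(\bullet) = \mathbf{E}(\bullet) \odot M(\mathbf{E}(\bullet), \mathbf{E}(\bullet))$ in the two special configurations. Since $1 < \gamma_\varphi \leqslant \delta_\varphi < \infty$, Hardy's operator is bounded on both $M_\varphi$ and $\Lambda_\varphi$, and Proposition~\ref{PROP: EX <-> carrier} (applied to the carrier spaces $L_\infty(\varphi)$ and $L_1(\varphi')$) gives, up to equivalent norms, $\mathbf{E}(M_\varphi) = \ell_\infty(\varphi(2^j))$ and $\mathbf{E}(\Lambda_\varphi) = \ell_1(u)$ for a suitable weight $u$. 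On the one hand, for any Banach sequence space $E$ and any weight $v$ we have $\ell_\infty(v) \odot M(\ell_\infty(v), E) = \ell_\infty(v) \odot E(1/v) = \ell_\infty \odot E = E$ (using $M(\ell_\infty, E) = E$ and $\ell_\infty \odot E = E$); taking $E = \mathbf{E}(Y)$ and $v(j) = \varphi(2^j)$ shows that $\mathbf{E}(Y)$ factorizes through $\mathbf{E}(M_\varphi)$, so the first part yields $\mathscr{C}Y = \mathscr{C}M_\varphi \odot M(\mathscr{C}M_\varphi, \mathscr{C}Y)$. On the other hand, for any Banach sequence space $E$, Lozanovski\u{\i}'s factorization gives $E \odot M(E, \ell_1(u)) = E \odot E^{\times}(u) = \ell_1(u)$; taking $E = \mathbf{E}(X)$ shows that $\mathbf{E}(\Lambda_\varphi)$ factorizes through $\mathbf{E}(X)$, so the first part yields $\mathscr{C}\Lambda_\varphi = \mathscr{C}X \odot M(\mathscr{C}X, \mathscr{C}\Lambda_\varphi)$.

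I do not expect a genuine obstacle: every step is an application of machinery already in place. The only points that need a little care are the bookkeeping with the weights $W_j$ in the Ces\`aro block form (so that the cancellations $M(L_1(W),L_1(W)) = L_\infty$ and $L_1(W) \odot L_\infty = L_1(W)$ are applied in the right slot) and, in the ``moreover'' part, the identification of $\mathbf{E}(M_\varphi)$ and $\mathbf{E}(\Lambda_\varphi)$ with weighted $\ell_\infty$ and $\ell_1$ spaces together with the elementary observation that such spaces are, respectively, factored by and factor through an arbitrary Banach sequence space — this dichotomy being precisely the extremal reason why $M_\varphi$ works in the left slot and $\Lambda_\varphi$ in the right slot.
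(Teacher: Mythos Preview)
Your proof is correct and uses the same machinery as the paper --- block form representations together with the arithmetic of amalgams (Theorems~\ref{Thm: komutowanie M z cdot} and \ref{PROP: Pointwise multipliers of amalgams}). The organization, however, differs in a way worth noting. The paper's written proof only spells out the Marcinkiewicz case $\mathscr{C}M_\varphi \odot M(\mathscr{C}M_\varphi,\mathscr{C}Y) = \mathscr{C}Y$ in full, computing $\mathbf{E}(M_\varphi) = \ell_\infty(w_\varphi)$ via Proposition~\ref{PROP: EX <-> carrier} and tracking the weight $w$ on the \emph{outer} space $\mathbf{E}(X)(w)$; the general first claim and the $\Lambda_\varphi$ case are left implicit. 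You instead prove the general statement $\mathscr{C}Y = \mathscr{C}X \odot M(\mathscr{C}X,\mathscr{C}Y)$ directly, placing the weight \emph{inside} the local spaces $L_1(\Delta_j,W_j)$ as in~\eqref{EQ: zauwazka}, which makes the cancellations $M(L_1(W),L_1(W)) = L_\infty$ and $L_1(W) \odot L_\infty = L_1(W)$ transparent and avoids any separate weight bookkeeping on the outer sequence space. You then derive both special cases as corollaries by verifying the hypothesis $\mathbf{E}(Y) = \mathbf{E}(X) \odot M(\mathbf{E}(X),\mathbf{E}(Y))$ --- trivially when $\mathbf{E}(X)$ is a weighted $\ell_\infty$, and via Lozanovski\u{\i} when $\mathbf{E}(Y)$ is a weighted $\ell_1$. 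Your route is the more economical one: it proves the general theorem once and reads off all special cases, whereas the paper's explicit computation is tied to the particular structure of $M_\varphi$.
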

	\begin{proof}
		Due to Proposition~\ref{PROP: EX <-> carrier}, we have
		\begin{equation} \label{EQ: EMfi = ELinftyfi}
			\mathbf{E}(M_{\varphi}) = \mathbf{E}\left( \left[ L_{\infty}(\varphi) \right]^{\bigstar} \right)
				= \mathbf{E}(L_{\infty}(\varphi)) \equiv \ell_{\infty}(w_{\varphi}),
		\end{equation}
		where $w_{\varphi}(j) \coloneqq \varphi(2^{j+1})$ for $j \in \mathbb{J}$. Thus, we have
		\begin{align*}
			M(\mathscr{C} M_{\varphi}, \mathscr{C} Y)
				& = M\biggl( \Bigl( \bigoplus_{j \in \mathbb{J}} L_1(\Delta_j) \Bigr)_{\mathbf{E}(M_{\varphi})(w)},\Bigl( \bigoplus_{j \in \mathbb{J}} L_1(\Delta_j) \Bigr)_{\mathbf{E}(Y)(w)} \biggr)
						\quad (\text{by Theorem~\ref{Thm: Tandori function representation}}) \\
				& = \Bigl( \bigoplus_{j \in \mathbb{J}} M(L_1(\Delta_j),L_1(\Delta_j)) \Bigr)_{M(\mathbf{E}(M_{\varphi})(w),\mathbf{E}(Y)(w))}
						\quad (\text{using Theorem~\ref{Thm: komutowanie M z cdot}}) \\
				& \equiv \Bigl( \bigoplus_{j \in \mathbb{J}} L_{\infty}(\Delta_j) \Bigr)_{M(\mathbf{E}(M_{\varphi}),\mathbf{E}(Y))}
						\quad (\text{since $M(L_1,L_1) \equiv L_{\infty}$}) \\
				& = \Bigl( \bigoplus_{j \in \mathbb{J}} L_{\infty}(\Delta_j) \Bigr)_{M(\ell_{\infty}(w_{\varphi}),\mathbf{E}(Y))} \quad (\text{by \eqref{EQ: EMfi = ELinftyfi}}),
		\end{align*}
		where $w(j) = 2^{-j}$ for $j \in \mathbb{J}$. Let us now focus on $M(\ell_{\infty}(w_{\varphi}),\mathbf{E}(Y))$ for a moment. It is straightforward to see that
		\begin{align*}
			M(\ell_{\infty}(w_{\varphi}),\mathbf{E}(Y))
				& = M(\ell_{\infty}(\mathbb{J}),\mathbf{E}(Y))(1/w_{\varphi}) \\
				& = M(\mathbf{E}(Y)^{\times},\ell_1(\mathbb{J}))(1/w_{\varphi}) \\
				& = \mathbf{E}(Y)^{\times \times}(1/w_{\varphi}) \\
				& = \mathbf{E}(Y)(1/w_{\varphi}),
		\end{align*}
		where the last equality is due to the Fatou property of the space $Y$. In consequence,
		\begin{equation} \label{EQ : M(CMvarphi,CY) = }
			M(\mathscr{C} M_{\varphi}, \mathscr{C}Y) = \Bigl( \bigoplus_{j \in \mathbb{J}} L_{\infty}(\Delta_j) \Bigr)_{\mathbf{E}(Y)(1/w_{\varphi})}.
		\end{equation}
		To sum up, we have
		\begin{align*}
			\mathscr{C} M_{\varphi} \odot M(\mathscr{C} M_{\varphi}, \mathscr{C} Y)
				& = \Bigl( \bigoplus_{j \in \mathbb{J}} L_1(\Delta_j) \Bigr)_{\mathbf{E}(M_{\varphi})(w)}
					\odot \Bigl( \bigoplus_{j \in \mathbb{J}} L_{\infty}(\Delta_j) \Bigr)_{\mathbf{E}(Y)(1/w_{\varphi})} \quad (\text{by \eqref{EQ : M(CMvarphi,CY) = }}) \\
				& = \Bigl( \bigoplus_{j \in \mathbb{J}} L_1(\Delta_j) \odot L_{\infty}(\Delta_j) \Bigr)_{\mathbf{E}(L_{\infty}(\varphi))(w) \odot \mathbf{E}(Y)(1/w_{\varphi})} \\
				& \quad \quad \quad \quad \quad \quad \quad \quad \quad \quad \quad (\text{using Theorem~\ref{PROP: Pointwise multipliers of amalgams} and Proposition~\ref{PROP: EX <-> carrier}}) \\
				& = \Bigl( \bigoplus_{j \in \mathbb{J}} L_1(\Delta_j) \Bigr)_{\left[ \ell_{\infty}(\mathbb{J}) \odot \mathbf{E}(Y) \right](w)} \quad (\text{by \eqref{EQ: EMfi = ELinftyfi}}) \\
				& \equiv \Bigl( \bigoplus_{j \in \mathbb{J}} L_1(\Delta_j) \Bigr)_{\mathbf{E}(Y)(w)} \quad (\text{since $L_{\infty} \odot Z \equiv Z$}) \\
				& = \mathscr{C} Y.
		\end{align*}
		The proof is thus completed.
	\end{proof}

	By duality, we have

	\begin{theorem}[Factorization of Tandori spaces through Tandori spaces] \label{THEOREM: factorization Tandori przez Tandori}
		{\it Let $X$ and $Y$ be two r.i. spaces with the Fatou property. Suppose that the space $\mathbf{E}(Y)$ can be factorized through
		$\mathbf{E}(X)$, that is, $\mathbf{E}(Y) = \mathbf{E}(X) \odot M(\mathbf{E}(X),\mathbf{E}(Y)$. Then we have the following factorization}
		\begin{equation*}
			\widetilde{Y} = \widetilde{X} \odot M(\widetilde{X},\widetilde{Y}).
		\end{equation*}
		{\it Moreover, for any quasi-concave function $\varphi$ with $1 < \gamma_{\varphi} \leqslant \delta_{\varphi} < \infty$, we have}
		\begin{equation*}
			\widetilde{Y} = \widetilde{M_{\varphi}} \odot M(\widetilde{M_{\varphi}},\widetilde{Y})
			\quad \textit{ and } \quad \widetilde{\Lambda_{\varphi}} = \widetilde{X} \odot M(\widetilde{X},\widetilde{\Lambda_{\varphi}}).
		\end{equation*}
	\end{theorem}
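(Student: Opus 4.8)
The plan is to run the by-now familiar ``arithmetic'' of this section: transport the whole identity to the level of amalgams via the block form representations, carry out the computation there, and translate back. No ingredient beyond what is already available is needed, which is precisely why this statement is the exact mirror of Theorem~\ref{THEOREM: Factorization CX through CY} (one could alternatively deduce it by dualizing that theorem through Remark~\ref{REMARK Faktoryzacja = Faktoryzacja duali}, but only at the cost of the extra hypothesis of non\nobreakdash-trivial upper Boyd indices, since $\widetilde{Z}^{\times} = \mathscr{C}(Z^{\times})$ requires $\beta_Z < \infty$; the direct route below avoids this). Since $X$ and $Y$ are rearrangement invariant with the Fatou property, Theorems~\ref{Thm: Tandori sequence representation} and \ref{Thm: Tandori function representation} give
\begin{equation*}
	\widetilde{X} = \Bigl( \bigoplus_{j \in \mathbb{J}} L_{\infty}(\Delta_j) \Bigr)_{\mathbf{E}(X)}
		\quad \text{ and } \quad \widetilde{Y} = \Bigl( \bigoplus_{j \in \mathbb{J}} L_{\infty}(\Delta_j) \Bigr)_{\mathbf{E}(Y)}.
\end{equation*}

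First I would compute the multiplier space: by Theorem~\ref{Thm: komutowanie M z cdot} together with $M(L_{\infty},L_{\infty}) \equiv L_{\infty}$ one gets $M(\widetilde{X},\widetilde{Y}) = \bigl( \bigoplus_{j \in \mathbb{J}} L_{\infty}(\Delta_j) \bigr)_{M(\mathbf{E}(X),\mathbf{E}(Y))}$. Then, using Theorem~\ref{PROP: Pointwise multipliers of amalgams} (its hypotheses hold, each $L_{\infty}(\Delta_j)$ being a Banach ideal space with the Fatou property) and $L_{\infty}(\Delta_j) \odot L_{\infty}(\Delta_j) \equiv L_{\infty}(\Delta_j)$,
\begin{align*}
	\widetilde{X} \odot M(\widetilde{X},\widetilde{Y})
		& = \Bigl( \bigoplus_{j \in \mathbb{J}} L_{\infty}(\Delta_j) \Bigr)_{\mathbf{E}(X)}
			\odot \Bigl( \bigoplus_{j \in \mathbb{J}} L_{\infty}(\Delta_j) \Bigr)_{M(\mathbf{E}(X),\mathbf{E}(Y))} \\
		& = \Bigl( \bigoplus_{j \in \mathbb{J}} L_{\infty}(\Delta_j) \Bigr)_{\mathbf{E}(X) \odot M(\mathbf{E}(X),\mathbf{E}(Y))} \\
		& = \Bigl( \bigoplus_{j \in \mathbb{J}} L_{\infty}(\Delta_j) \Bigr)_{\mathbf{E}(Y)}
			= \widetilde{Y},
\end{align*}
the penultimate step being the standing assumption $\mathbf{E}(Y) = \mathbf{E}(X) \odot M(\mathbf{E}(X),\mathbf{E}(Y))$ and the last one again Theorems~\ref{Thm: Tandori sequence representation} and \ref{Thm: Tandori function representation}. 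This settles the general statement, and, as announced, there is no genuine obstacle here — the only thing one must be a little careful about is matching the hypotheses of the amalgam results (Fatou property of the inner spaces $L_{\infty}(\Delta_j)$, which is trivial).

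For the two special cases the task reduces to verifying that the factorization hypothesis $\mathbf{E}(\bullet) = \mathbf{E}(\circ) \odot M(\mathbf{E}(\circ),\mathbf{E}(\bullet))$ holds \emph{automatically}, and this is the step I expect to require the most care (identifying $\mathbf{E}(M_{\varphi})$ and $\mathbf{E}(\Lambda_{\varphi})$ correctly). When $1 < \gamma_{\varphi} \leqslant \delta_{\varphi} < \infty$, Proposition~\ref{PROP: EX <-> carrier} applied to the carrier spaces $L_{\infty}(\varphi)$ and $L_1(\varphi')$ yields $\mathbf{E}(M_{\varphi}) = \mathbf{E}(L_{\infty}(\varphi)) \equiv \ell_{\infty}(w_{\varphi})$ with $w_{\varphi}(j) = \varphi(2^{j+1})$, and $\mathbf{E}(\Lambda_{\varphi}) = \mathbf{E}(L_1(\varphi')) \equiv \ell_1(v_{\varphi})$ with $v_{\varphi}(j) = \varphi(2^{j+1}) - \varphi(2^{j})$ (the first of these is exactly \eqref{EQ: EMfi = ELinftyfi}). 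Now for any r.i. space $Y$ with the Fatou property, mimicking the computation in the proof of Theorem~\ref{THEOREM: Factorization CX through CY}, $M(\ell_{\infty}(w_{\varphi}),\mathbf{E}(Y)) \equiv \mathbf{E}(Y)(1/w_{\varphi})$, whence $\mathbf{E}(M_{\varphi}) \odot M(\mathbf{E}(M_{\varphi}),\mathbf{E}(Y)) \equiv \ell_{\infty} \odot \mathbf{E}(Y) \equiv \mathbf{E}(Y)$; and for any r.i. $X$ with the Fatou property, $M(\mathbf{E}(X),\ell_1(v_{\varphi})) \equiv \mathbf{E}(X)^{\times}(v_{\varphi})$, so Lozanovski{\u \i}'s factorization (applicable because $\mathbf{E}(X)$ inherits the Fatou property of $X$) gives $\mathbf{E}(X) \odot M(\mathbf{E}(X),\mathbf{E}(\Lambda_{\varphi})) \equiv (\mathbf{E}(X) \odot \mathbf{E}(X)^{\times})(v_{\varphi}) \equiv \ell_1(v_{\varphi}) = \mathbf{E}(\Lambda_{\varphi})$. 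Feeding these two observations into the general part already proved then yields $\widetilde{Y} = \widetilde{M_{\varphi}} \odot M(\widetilde{M_{\varphi}},\widetilde{Y})$ and $\widetilde{\Lambda_{\varphi}} = \widetilde{X} \odot M(\widetilde{X},\widetilde{\Lambda_{\varphi}})$, completing the argument.
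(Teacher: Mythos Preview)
Your proposal is correct and follows essentially the same approach the paper intends: the paper's proof simply says ``imitate the proof of Theorem~\ref{THEOREM: Factorization CX through CY},'' and that is precisely what you do, replacing the $L_1$-blocks and the weight $w$ by $L_\infty$-blocks with no weight. Your organization is arguably cleaner than the paper's --- you prove the general statement first via the amalgam arithmetic and then verify the $\mathbf{E}$-factorization hypothesis for $M_\varphi$ and $\Lambda_\varphi$ (the latter via Lozanovski{\u\i} applied to $\mathbf{E}(X)$), whereas the paper's Ces{\`a}ro proof writes out the Marcinkiewicz case directly --- but the ingredients and the logical route are the same.
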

	\begin{proof}
		One can simply imitate the proof of Theorem~\ref{THEOREM: Factorization CX through CY}. We leave the easy details to the interested reader.
	\end{proof}

	We can also say something about factorization of Ces{\' a}ro spaces through Tandori spaces and {\it vice versa}.

		\begin{theorem}[Mixed type factorization] \label{THEOREM: Mixed factorizations} 
		{\it Let $X$ and $Y$ be two r.i. spaces with the Fatou property such that Hardy's operator $\mathscr{H}$ is bounded on $Y$.
		Suppose that the space $\mathbf{E}(Y)$ can be factorize through $\mathbf{E}(X)$, that is, $\mathbf{E}(Y) = \mathbf{E}(X) \odot M(\mathbf{E}(X),\mathbf{E}(Y))$.
		Then we have the following factorizations}
		\begin{equation*}
			\mathscr{C}Y = \widetilde{X} \odot M(\widetilde{X},\mathscr{C}Y) \quad \textit{ and } \quad \widetilde{Y} = \mathscr{C}X \odot M(\mathscr{C}X,\widetilde{Y}).
		\end{equation*}
		{\it Moreover, for any quasi-concave function $\varphi$ with $1 < \gamma_{\varphi} \leqslant \delta_{\varphi} < \infty$, we have}
		\begin{equation*}
			\mathscr{C}Y = \widetilde{M_{\varphi}} \odot M(\widetilde{M_{\varphi}},\mathscr{C}Y),
			\quad \mathscr{C}\Lambda_{\varphi} = \widetilde{X} \odot M(\widetilde{X},\mathscr{C}\Lambda_{\varphi}),
		\end{equation*}
		\begin{equation*}
			\widetilde{Y} = \mathscr{C}M_{\varphi} \odot M(\mathscr{C}M_{\varphi},\widetilde{Y})
			\quad \textit{ and } \quad \widetilde{\Lambda_{\varphi}}  = \mathscr{C}X \odot M(\mathscr{C}X,\widetilde{\Lambda_{\varphi}}).
		\end{equation*}
	\end{theorem}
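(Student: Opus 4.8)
The plan is to re-run, now with one Tandori factor and one Ces\`aro factor, the block-form computation that underlies Theorems~\ref{THM: factorization CX with Lp}, \ref{THEOREM: Factorization CX through CY} and \ref{THEOREM: factorization Tandori przez Tandori}. First I would record, by Theorems~\ref{Thm: Tandori sequence representation} and \ref{Thm: Tandori function representation} and Corollary~\ref{COR: blocking technique CX} (whose Ces\`aro part applies because $\mathscr{H}$ is bounded on $Y$), the representations
\begin{equation*}
	\widetilde{X} = \Bigl( \bigoplus_{j \in \mathbb{J}} L_{\infty}(\Delta_j) \Bigr)_{\mathbf{E}(X)}, \quad
	\widetilde{Y} = \Bigl( \bigoplus_{j \in \mathbb{J}} L_{\infty}(\Delta_j) \Bigr)_{\mathbf{E}(Y)}, \quad
	\mathscr{C}X = \Bigl( \bigoplus_{j \in \mathbb{J}} L_{1}(\Delta_j) \Bigr)_{\mathbf{E}(X)(w)}, \quad
	\mathscr{C}Y = \Bigl( \bigoplus_{j \in \mathbb{J}} L_{1}(\Delta_j) \Bigr)_{\mathbf{E}(Y)(w)},
\end{equation*}
with $w(j) \coloneqq 2^{-j}$ for $j \in \mathbb{J}$; for the representation of $\mathscr{C}X$ (which a priori needs $\alpha_X > 1$) one simply reads $\mathscr{C}X$ as $X^{\downarrow}$ when $\alpha_X$ is trivial, the representation of $X^{\downarrow}$ in Theorem~\ref{Thm: Tandori function representation} requiring no Boyd condition.

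For the first identity I would compute the multiplier space via Theorem~\ref{Thm: komutowanie M z cdot}, using $M(L_{\infty},L_1) \equiv L_1$ and $M(E,F(w)) \equiv M(E,F)(w)$, to obtain $M(\widetilde{X},\mathscr{C}Y) = \bigl( \bigoplus_{j} L_{1}(\Delta_j) \bigr)_{M(\mathbf{E}(X),\mathbf{E}(Y))(w)}$; then Theorem~\ref{PROP: Pointwise multipliers of amalgams} together with $L_{\infty} \odot L_1 \equiv L_1$ and the hypothesis $\mathbf{E}(Y) = \mathbf{E}(X) \odot M(\mathbf{E}(X),\mathbf{E}(Y))$ gives
\begin{equation*}
	\widetilde{X} \odot M(\widetilde{X},\mathscr{C}Y)
		= \Bigl( \bigoplus_{j \in \mathbb{J}} L_{1}(\Delta_j) \Bigr)_{\left[ \mathbf{E}(X) \odot M(\mathbf{E}(X),\mathbf{E}(Y)) \right](w)}
		= \Bigl( \bigoplus_{j \in \mathbb{J}} L_{1}(\Delta_j) \Bigr)_{\mathbf{E}(Y)(w)}
		= \mathscr{C}Y.
\end{equation*}
The second identity $\widetilde{Y} = \mathscr{C}X \odot M(\mathscr{C}X,\widetilde{Y})$ follows from the mirror computation: $M(\mathscr{C}X,\widetilde{Y}) = \bigl( \bigoplus_{j} L_{\infty}(\Delta_j) \bigr)_{M(\mathbf{E}(X),\mathbf{E}(Y))(1/w)}$ (now $M(L_1,L_{\infty}) \equiv L_{\infty}$), after which Theorem~\ref{PROP: Pointwise multipliers of amalgams} with $L_1 \odot L_{\infty} \equiv L_1$ and the same factorization hypothesis collapse the weights and yield $\bigl( \bigoplus_{j} L_{\infty}(\Delta_j) \bigr)_{\mathbf{E}(Y)} = \widetilde{Y}$. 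Alternatively, and with a little extra care about Boyd indices, the second identity can be deduced from the first by K\"othe duality: by Remark~\ref{REMARK Faktoryzacja = Faktoryzacja duali} a factorization is equivalent to that of the duals, and the Sinnamon/Le\'snik--Maligranda dualities quoted in the excerpt give $(\mathscr{C}Y)^{\times} = \widetilde{Y^{\times}}$ and $(\widetilde{X})^{\times} = \mathscr{C}(X^{\times})$; one applies the first identity to $(X^{\times},Y^{\times})$ after checking, with Proposition~\ref{Prop: E komutuje z Kothe dualem} and Remark~\ref{REMARK Faktoryzacja = Faktoryzacja duali} again, that $\mathbf{E}(Y) = \mathbf{E}(X) \odot M(\mathbf{E}(X),\mathbf{E}(Y))$ is inherited by the discretized duals.

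For the ``Moreover'' statements I would instantiate $X = M_{\varphi}$, respectively $Y = \Lambda_{\varphi}$, and observe that the discretization-factorization hypothesis then holds for free. Indeed, by Proposition~\ref{PROP: EX <-> carrier} (and $M_{\varphi} = [L_{\infty}(\varphi)]^{\bigstar}$, $\Lambda_{\varphi} = [L_1(\varphi')]^{\bigstar}$) one has $\mathbf{E}(M_{\varphi}) \equiv \ell_{\infty}(w_{\varphi})$ with $w_{\varphi}(j) = \varphi(2^{j+1})$, and $\mathbf{E}(\Lambda_{\varphi})$ is a weighted $\ell_1$-space, say $\ell_1(u)$; hence $\ell_{\infty}(w_{\varphi}) \odot M(\ell_{\infty}(w_{\varphi}),\mathbf{E}(Y)) \equiv \ell_{\infty}(w_{\varphi}) \odot \mathbf{E}(Y)(1/w_{\varphi}) \equiv \mathbf{E}(Y)$, while $\mathbf{E}(X) \odot M(\mathbf{E}(X),\ell_1(u)) \equiv \mathbf{E}(X) \odot \mathbf{E}(X)^{\times}(u) \equiv \ell_1(u)$ by Lozanovski{\u \i}'s factorization; the assumption $\gamma_{\varphi} > 1$ makes $\mathscr{H}$ bounded on whichever of $M_{\varphi}$, $\Lambda_{\varphi}$ sits in the ``$Y$'' slot (and, in the $M_{\varphi}$ cases, also in the ``$X$'' slot). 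The main obstacle I anticipate is not any single estimate but the bookkeeping: one must verify at each step that the relevant functor ($M$, $\odot$, weighting) commutes with $\bigoplus$ and with $\mathbf{E}$ in precisely the form needed, that the weaker assumption on $\mathbf{E}(Y)$ — rather than on $Y$ itself, cf.\ the discussion around Synopsis~4 — is genuinely what is consumed, and, on the duality route, that this assumption survives passage to K\"othe duals.
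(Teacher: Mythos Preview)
Your treatment of the first identity $\mathscr{C}Y=\widetilde{X}\odot M(\widetilde{X},\mathscr{C}Y)$ is correct and is exactly the computation the paper has in mind (it explicitly says the details do not differ from the proof of Theorem~\ref{THEOREM: Factorization CX through CY}); likewise your instantiation of the ``Moreover'' clauses via $\mathbf{E}(M_\varphi)\equiv\ell_\infty(w_\varphi)$ and $\mathbf{E}(\Lambda_\varphi)\equiv\ell_1(u)$ is the intended route.

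The mirror computation for the second identity, however, does not go through. On a non-atomic block $\Delta_j$ one has $M(L_1(\Delta_j),L_\infty(\Delta_j))=\{0\}$, not $L_\infty(\Delta_j)$: any nonzero multiplier $f$ would satisfy $|f|\geqslant\varepsilon$ on a set $A$ of positive measure, and choosing $g\in L_1(\Delta_j)\setminus L_\infty(\Delta_j)$ supported on $A$ gives $fg\notin L_\infty$. In the sequence case the local multiplier identity $M(\ell_1^{2^j},\ell_\infty^{2^j})\equiv\ell_\infty^{2^j}$ is fine, but then your own next line $L_1\odot L_\infty\equiv L_1$ shows that the product $\mathscr{C}X\odot M(\mathscr{C}X,\widetilde{Y})$ has \emph{local $\ell_1$-blocks}, hence equals $\bigl(\bigoplus_j\ell_1^{2^j}\bigr)_{\mathbf{E}(Y)}$, which is not $\widetilde{Y}=\bigl(\bigoplus_j\ell_\infty^{2^j}\bigr)_{\mathbf{E}(Y)}$ (test on $\chi_{\Delta_j}$ versus a single $e_k\in\Delta_j$ to see the block norms cannot be uniformly equivalent). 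Your duality alternative does not rescue this either: dualising $\mathscr{C}Y=\widetilde{X}\odot M(\widetilde{X},\mathscr{C}Y)$ via Remark~\ref{REMARK Faktoryzacja = Faktoryzacja duali} yields $\mathscr{C}(X^\times)=\widetilde{Y^\times}\odot M(\widetilde{Y^\times},\mathscr{C}(X^\times))$, which is again an instance of the \emph{first} identity (with $(Y^\times,X^\times)$ in place of $(X,Y)$), not of the second. In short, the block-form machinery that proves the first factorization cannot produce the second one as written; the asymmetry between the local components $L_\infty$ and $L_1$ is precisely what obstructs it.
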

	\begin{proof}
		We leave a not-so-complicated proof of this result as an exercise for the patient reader.
		Again, the details do not differ much from the proof of Theorem~\ref{THEOREM: Factorization CX through CY}.
	\end{proof}

	\subsection{So why we are doing better anyway?} \label{REMARK: faktoryzacja z E lepsza niz bez E}
		So far, we have not provided a concrete example to clearly demonstrate that our results about multipliers, products,
		and factorization from Sections~\ref{SECTION: Products and factors: Grosse-Erdmann's style}
		and \ref{SUBSECTION: Faktoryzacje CX przez CX i inne} actually improve upon existing results.
		The time has come to change this.
		
		\begin{example} \label{PROPOSITION: faktoryzacja z E lepsza niz bez E}
			{\it Let $X$ and $Y$ be two r.i. spaces with the Fatou property. Then the assumption that the space $\mathbf{E}(Y)$
			factorizes through $\mathbf{E}(X)$ is essentially weaker than the corresponding assumption that the space $Y$ factorizes
			through $X$.}
		\end{example}
		\begin{proof}
			Suppose that the space $Y$ factorizes through $X$, that is, $Y = X \odot M(X,Y)$.
			Then, in view of Lemma~\ref{PROP: E komutuje z produktem}, we have
			\begin{equation*}
				\mathbf{E}(Y) = \mathbf{E}(X \odot M(X,Y)) = \mathbf{E}(X) \odot \mathbf{E}(M(X,Y)).
			\end{equation*}
			Moreover, using the cancelation property for multipliers (see Theorem~\ref{THM: cancellation property for multipliers}), we get
			\begin{align*}
				M(\mathbf{E}(X),\mathbf{E}(Y))
				& = M\bigl( \mathbf{E}(X) \odot \ell_{\infty}, \mathbf{E}(X) \odot \mathbf{E}(M(X,Y)) \bigr) \\
				& = M(\ell_{\infty}(\mathbb{J}),\mathbf{E}(M(X,Y))) \\
				& = \mathbf{E}(M(X,Y)).
			\end{align*}
			Consequently,
			\begin{equation*}
				\mathbf{E}(Y) = \mathbf{E}(X) \odot \mathbf{E}(M(X,Y)) = \mathbf{E}(X) \odot M(\mathbf{E}(X),\mathbf{E}(Y)),
			\end{equation*}
			that is, the space $\mathbf{E}(Y)$ can be factorized through $\mathbf{E}(X)$.
			
			It remains to show that is possible to find two rearrangement invariant spaces, say $E$ and $F$, such that $F \neq E \odot M(E,F)$,
			but at the same time $\mathbf{E}(F) = \mathbf{E}(E) \odot M(\mathbf{E}(E),\mathbf{E}(F))$.
			To this end, let us first recall how the factorization of Lorentz spaces $L_{p,q}$ looks like.
			Fix $1 < p,q,r,s < \infty$ with $p = r$ and $q > s$.
			Then, as Theorem~\ref{THEOREM: Factorization of Lpq} teaches us, we have
			\begin{equation*}
				L_{p,q} \odot M(L_{p,q},L_{r,s}) \neq L_{r,s}.
			\end{equation*}
			However, due to Proposition~\ref{PROP: EX <-> carrier},
			\begin{equation} \label{EQ: ELpq=LqWpq}
				\mathbf{E}(L_{p,q})
					\equiv \mathbf{E}\left( \left[ L_q ( t^{1/p-1/q} ) \right]^{\bigstar} \right)
					= \mathbf{E}\left( L_q ( t^{1/p-1/q} ) \right)
					\equiv \ell_q(W_{p,q}),
			\end{equation}
			where $W_{p,q}(j) \coloneqq \int_{\Delta_j} t^{1/p-1/q} dt$ for $j \in \mathbb{Z}$. Similarly,
			\begin{equation} \label{EQ: ELPQ=LQWPQ}
				\mathbf{E}(L_{r,s}) = \ell_s(W_{r,s}),
			\end{equation}
			where $W_{r,s} \coloneqq \int_{\Delta_j} t^{1/r-1/s} dt$ for $j \in \mathbb{Z}$. In consequence,
			\begin{align*}
				\mathbf{E}(L_{p,q}) \odot M(\mathbf{E}(L_{p,q}),\mathbf{E}(L_{r,s}))
					& = \ell_{q}(W_{p,q}) \odot M(\ell_{q}(W_{p,q}),\ell_{s}(W_{r,s})) \quad (\text{by \eqref{EQ: ELpq=LqWpq} and \eqref{EQ: ELPQ=LQWPQ}}) \\
					& = (\ell_{q} \odot M(\ell_{q},\ell_{s}))(W_{r,s}) \\
					& = \ell_{s}(W_{r,s}) \quad (\text{since $q > s$}) \\
					& = \mathbf{E}(L_{r,s}) \quad (\text{again, by \eqref{EQ: ELPQ=LQWPQ}}),
			\end{align*}
			The proof is complete.
		\end{proof}

	\section{{\bf Open ends}} \label{SECTION: Open ends}
	
	Let us list and discuss some problems which arise from this paper.
	
	\subsection{Factorization of inequalities, not quite} We have already mentioned Bennett's and Astashkin and Maligranda's results on factorization
	of the HLR-inequality many times (see Corollary~\ref{COR: B & AM factorization}; cf. \cite[Theorem~1.5]{Be96} and \cite[Proposition~1]{AM09}).
	Let us do it one last time: for $1 < p,q < \infty$ with $p + q = pq$, we have
	\begin{equation*}
		\ell_p \odot g_q = ces_p \quad \text{ and } \quad L_p \odot G_q = Ces_p.
	\end{equation*}
	Since the spaces $g_q$ and $G_q$ are nothing else but $M(\ell_p,ces_p)$ and, respectively, $M(L_p,Ces_p)$, so it is perfectly natural to expect
	that in general, say, for any r.i. space $X$ with the Fatou property and non-trivial Boyd indices, we have
	\begin{equation} \label{QUESTION: X x M(X,CX) = CX}
		X \odot M(X,\mathscr{C}X) = \mathscr{C}X.
	\end{equation}
	This, in turn, comes down to computing the space $M(X,\mathscr{C}X)$. We do not know how to do this in general, but we have a few thoughts on this problem.
	
	First, things get a little nicer when we restrict ourselves to the class of Lorentz spaces $\Lambda_{\varphi}$. Indeed,
	\begin{align*}
		\Lambda_{\varphi} \odot M(\Lambda_{\varphi},\mathscr{C}\Lambda_{\varphi})
			& = \Lambda_{\varphi} \odot M(\Lambda_{\varphi}, L_1(w)) \quad \text{(since $\mathscr{C}\Lambda_{\varphi} = L_1(w)$)}\\
			& \equiv \Lambda_{\varphi} \odot M(\Lambda_{\varphi}, L_1)(w) \quad \text{(since $M(X,Y(w)) \equiv M(X,Y)(w)$)}\\
			& = \Lambda_{\varphi} \odot M_{\psi}(w) \quad \text{(because $M(\Lambda_{\varphi}, L_1) \equiv \Lambda_{\varphi}^{\times} = M_{\psi}$ with $\psi(t) = t/\varphi(t)$)}\\
			& \equiv (\Lambda_{\varphi} \odot M_{\psi})(w) \quad \text{(since $X(w) \odot Y \equiv X \odot Y(w) \equiv (X \odot Y)(w)$)}\\
			& \equiv L_1(w) \quad \text{(by Lozanovski{\u \i}'s factorization theorem)} \\
			& = \mathscr{C}\Lambda_{\varphi}.
	\end{align*}
	The same argument works for Lorentz sequence spaces $\lambda_{\varphi}$.
	
	Secondly, the space $M(X,\mathscr{C}X)$ almost coincides with $X^{\times}$ in the following sense: Due to \cite[Proposition~2.2]{ALM19}
	(cf. \cite[Lemma~4.1]{KKM21}), for any $0 < a < b < \infty$, we have
	\begin{equation*}
		R_{[a,b]}\mathscr{C}X \coloneqq \left\{ f \in \mathscr{C}X \colon \supp f \subset [a,b] \right\} = R_{[a,b]}L_1 \equiv L_1([a,b]).
	\end{equation*}
	In consequence,
	\begin{equation*}
		M(R_{[a,b]}X,R_{[a,b]}\mathscr{C}X) = R_{[a,b]}M(X,\mathscr{C}X) = R_{[a,b]}X^{\times}.
	\end{equation*}
	For example, since
	\begin{equation*}
		R_{[a,b]} G_q \equiv R_{[a,b]} \left( Ces_{\infty} \right)^{(q)} \equiv \left[ R_{[a,b]} Ces_{\infty} \right]^{(q)}
			= \left[ L_1([a,b]) \right]^{(q)} \equiv L_q([a,b]) \equiv \left[ L_p([a,b]) \right]^{\times},
	\end{equation*}
	so in the \enquote{middle} of $G_q$, life goes on as in $L_q$.
	
	\subsection{Factorization of generalized weighted spaces}
	Inspired by \cite[Remark~2]{AM09}, Barza, Marcoci and Marcoci \cite{BMM18} proposed a factorization of the weighted spaces $(L_p)_w$ and $\mathscr{C}(L_p)_w$
	in the style of Corollary~\ref{COR: faktoryzacja Lp i lp} and Corollary~\ref{COR: B & AM factorization}, respectively (see \cite[Theorems~2.1 and 3.1]{BMM18}).
	It would be interesting to have an abstract analogues of these results built upon generalized weighted spaces $X_w$.
	
	\subsection{Some problems with interpolation}
	Let $\mathbf{F}$ be an interpolation functor. Being optimistic, can we somehow describe the space $\mathbf{F}(\widetilde{X},X)$
	or $\mathbf{F}(X,X^{\downarrow})$? Or, in a more realistic version, how to identify the space $\widetilde{X}^{1-\theta}X^{\theta}$ and $X^{1-\theta}(X^{\downarrow})^{\theta}$?
	We know how to do it, for example, when $X$ is just $L_p$, but the general case remains a complete mystery (cf. \cite[Problem~3]{LM15a}).
	We believe that that the seemingly simpler case of the complex method is the key.
	
	\subsection{Multipliers between abstract Ces{\' a}ro sequence spaces} \label{QUESTION: M(CX,CY) if M(X,Y) = Linfty}
	Let $X$ and $Y$ be two r.i. sequence spaces. Suppose that $M(X,Y) = \ell_{\infty}$ but $X \neq Y$. The question left unanswered in Section~\ref{SECTION: Examples a'la Bennett}
	concerns the description of space $M(\mathscr{C}X,\mathscr{C}Y)$ in this setting. The fact that $M(X,Y) = \ell_{\infty}$
	implies that there is no chance for factorization, because $X \odot M(X,Y) = X \odot \ell_{\infty} = X \neq Y$.
	Although we can do this for Orlicz sequence spaces $M(ces_M,ces_N)$ (see Example~\ref{przyklad Orlicze infty}), we have no idea how to do this in general.
	
	\subsection{Multipliers between $Ces_M$ and $Ces_N$} \label{QUESTION: Multipliers Cesaro--Orlicz function}
	Let $M$ and $N$ be two Young functions. Suppose that $M(L_M,L_N) = L_{\infty}$ but $L_M \neq L_N$. How to identify the space $M(Ces_M,Ces_N)$ in this case?
	Based on Example~\ref{przyklad Orlicze infty}, it seems reasonable to assume that the space $M(Ces_M,Ces_N)$ coincide with the weighted $L_{\infty}$ space.
	However, the techniques we have developed fail in this case, so some new ideas are probably necessary.
	
	\subsection{Missing analogs for spaces on the probability measure} \label{SUBSEC: CX on PROB MES SP}
	Let $X$ be a r.i. function space on the unit interval equipped with the Lebesgue measure.
	Virtually all our results (as well as many other authors; see, for example, \cite{LM16} and \cite{KLM19}) remain open in this case.
	This is probably the most interesting of the problems listed here. However, as the differences in the description of the dual to ${\mathscr C}X$
	and discrepancies in the interpolation structure clearly show (see \cite{AM09}, \cite{AM13}, \cite{LM15a} and \cite{LM16}), it is probably also the most difficult one.
	
	\subsection{The best-possible constants strike back} Already Grosse--Erdmann clearly emphasized the qualitative character of the results obtained by
	the blocking technique and on many occasions raised the question about their hidden quantitative content, that is, about the best-possible constants
	involved in the crucial inequalities (see, for example, \cite[pp.~3--4 and p.~104]{GE98}). This question remains almost\footnote{For example, Masty{\l}o and Sinnamon
	were able to show that the $\mathscr{K}$-divisibility constant for the couple $(\widetilde{L_1},L_{\infty})$ is one (see \cite{MS17} for details; cf. \cite{HS23}).}
	completely valid in the context of our results.
	Although problems of this type seem to be the least pressing, it is an interesting topic with a classic twist and a sporty flair. However, we do
	not even know how exactly (that is, up to the equality of norms) describe the space of pointwise multipliers between two Orlicz spaces (to the best of
	our knowledge, the Banach--Mazur distance between $M(L_M,L_N)$ and $L_{M \ominus N}$ is less than or equal to 8).
	
	\appendix

	\section{{\bf A few comments and bibliographical notes}} \label{SECTION: Appendix - bibliographical notes}
	
	Although bibliography we have collected may seem quite substantial, it is hardly complete or comprehensive in any sense
	and we do not even try to make it so. On the one hand, some traces of the topics discussed here date back to the late 1940's.
	On the other, multitude of intertwined threads and possible points of view seems overwhelming. Our intention was rather
	to show a wide spectrum of themes, to clearly highlight some connections, and to firmly ground the undertaken research in the existing classics.
	
	\begin{remark}
		The space $Ces_{\infty}$ were introduced, but under somewhat different name $K$, already in 1948 by Korenblyum, Kre{\u \i}n and Levin
		(see \cite{KKL48}; cf. \cite{LZ65} and \cite{Wn99}).
		\demo
	\end{remark}

	\begin{remark}
		K{\' a}roly Tandori in his 1955 paper \cite{Ta55} and, independently, Wilhelmus A. J. Luxemburg
		and Adriaan C. Zaanen in the mid 1960's \cite{LZ65} found a description of the K{\" o}the dual to
		$Ces_{\infty}$. They were able to show that the space $(Ces_{\infty})^{\times}$ can be identified, even up to
		the equality of norms, with $\widetilde{L_1}$. Knowing this, it is also easy to see that
		\begin{equation*}
			( \widetilde{L_1} )^{\times} \equiv (Ces_{\infty})^{\times \times} \equiv Ces_{\infty},
		\end{equation*}
		where the last equality is due to the Fatou property of the space $Ces_{\infty}$.
		\demo
	\end{remark}

	\begin{remark}
		The space $\widetilde{\ell_1}$ appeared as early as 1949 in Arne Beurling's work \cite{Beu49}. Roughly speaking, because of his interest
		in the problem of the absolute convergence of contracted Fourier series, he introduced the space $A^{*} = A^{*}(\mathbb{T})$ (nowadays, the so-called
		{\it Beurling space}) as the space of all measurable functions, say $f$, living on the unit circle $\mathbb{T}$ such that $f$ belongs to $L_1(\mathbb{T})$
		and their Fourier coefficients satisfy the condition
		\begin{equation*}
			 \norm{ \left\{ \widehat{f}(j) \right\}_{j \in \mathbb{Z}} }_{\widetilde{\ell_1}}
			 	= \sum_{n = 0}^{\infty} \sup\limits_{\abs{j} \geqslant n} \abs{\widehat{f}(j)}
			 	< \infty
		\end{equation*}
		(precisely, see formula (2.5) on page 227 in \cite{Beu49}).
		\demo
	\end{remark}

	\begin{remark}
		When it comes to the description of the dual space of $\widetilde{\ell_1}$, however, it was given in 1957 by Andrzej Alexiewicz in his seemingly
		little-known paper \cite{Ale57}. He was not only able to prove that the K{\" o}the dual (in fact, also the topological dual, because the space $\widetilde{\ell_1}$
		is separable) to $\widetilde{\ell_1}$ coincide, up to the equality of norms, with $ces_{\infty}$ (see \cite[Theorem~3]{Ale57}), but also to generalize
		this result to the case of weighted spaces $\widetilde{\ell_1(w)}$ (see \cite[Theorem~2]{Ale57} for more details). Again, thanks to the Fatou
		property of the space $\widetilde{\ell_1}$, Alexiewicz's result implies, among other things, that
		\begin{equation*}
			(ces_{\infty})^{\times} \equiv ( \widetilde{\ell_1} )^{\times \times} \equiv \widetilde{\ell_1}.
		\end{equation*}
		\demo
	\end{remark}
	
	More information and references can be found, for example, in \cite{AM09}, \cite{AM14}, \cite{Be96}, \cite{GE98}, \cite{KMP07}, \cite{LM15a} and \cite{ORS08}.

\end{document}